\documentclass[11pt,reqno]{amsart}
\usepackage{amssymb}
\usepackage{amsfonts}
\usepackage{amsmath}
\usepackage{stmaryrd}
\usepackage{physics}
\usepackage{braket}
\usepackage{dsfont}
\usepackage{bbold}
\usepackage{graphicx}
\usepackage{relsize}
\usepackage{makecell}
\usepackage[table,xcdraw]{xcolor}
\usepackage{enumerate}
\usepackage[pagebackref, colorlinks = true, linkcolor = blue, urlcolor  = blue, citecolor = red]{hyperref}
\usepackage[margin=1in]{geometry}
\usepackage{enumitem}
\usepackage{mathtools}
\usepackage{graphbox}
\usepackage{comment}
\usepackage{float}
\usepackage{hhline}
\usepackage[capitalize]{cleveref}
\usepackage{accents}
\usepackage[export]{adjustbox}
\usepackage{array}
\usepackage{caption} 
\captionsetup[table]{skip=10pt}

\definecolor{seagreen}{rgb}{0.18, 0.545, 0.341}

\makeatletter
\def\overUnderArrow{\@ifnextchar[\overUnderArrow@i{\overUnderArrow@i[]}}
\def\overUnderArrow@i[#1]#2#3{
  \ifx\relax#1\relax\array[b]{c}\overset{\text{#2}}{\downarrow}\\#3\endarray
  \else\ifx\relax#2\relax
    \array[t]{c}#3\\\underset{\text{#1}}{\uparrow}\endarray
  \else
    \array{c}\overset{\text{#2}}{\uparrow}\\#3\\\underset{\text{#1}}{\downarrow}\endarray
  \fi\fi}
\makeatother

\renewcommand{\epsilon}{\varepsilon}
\renewcommand{\phi}{\varphi}

\newcommand{\M}[1]{\mathcal{M}_{#1}(\mathbb{C})}

\newcommand{\Wg}{\operatorname{Wg}}
\newcommand{\Mob}{\operatorname{M\ddot{o}b}}
\renewcommand{\ring}[1]{\accentset{\circ}{#1}}
\def\restrict#1{\raise-.5ex\hbox{\ensuremath|}_{#1}}
\newcommand{\Cat}{\operatorname{Cat}}

\newtheorem{theorem}{Theorem}[section]
\newtheorem*{definition*}{Definition}
\newtheorem{proposition}[theorem]{Proposition}
\newtheorem{corollary}[theorem]{Corollary}
\newtheorem{lemma}[theorem]{Lemma}

\newtheorem{question}[theorem]{Question}

\newtheorem*{conjecture*}{Conjecture}

\theoremstyle{definition}
\newtheorem{definition}[theorem]{Definition}
\theoremstyle{definition}
\newtheorem{remark}[theorem]{Remark}
\theoremstyle{definition}
\newtheorem{example}[theorem]{Example}

\newcommand\vertarrowbox[3][6ex]{
  \begin{array}[t]{@{}c@{}} #2 \\
  \left\uparrow\vcenter{\hrule height #1}\right.\kern-\nulldelimiterspace\\
  \makebox[0pt]{\scriptsize#3}
  \end{array}
}

\definecolor{darkgreen}{rgb}{0,0.392,0}

\providecommand{\customgenericname}{}
\newcommand{\newcustomtheorem}[2]{
  \newenvironment{#1}[1]
  {
   \ifdefined\crefalias\crefalias{innercustomgeneric}{#2}\fi
   \renewcommand\customgenericname{#2}
   \renewcommand\theinnercustomgeneric{##1}
   \innercustomgeneric
  }
  {\endinnercustomgeneric}
  \ifdefined\crefname\crefname{#2}{#2}{#2s}\fi
}

\newcustomtheorem{customthm}{Theorem}
\newcustomtheorem{customlemma}{Lemma}

\DeclareMathOperator*{\Plim}{\mathbb{P}-lim}

\newcommand{\A}{\mathcal{A}}
\newcommand{\B}{\mathcal{B}}
\newcommand{\D}{\mathcal{D}}
\newcommand{\E}{\mathbb{E}}
\newcommand{\Ec}{\mathcal{E}}

\newcommand{\W}{\mathcal{W}}

\newcommand{\Comp}{\mathbb{C}}

\newcommand{\id}{\mathrm{id}}
\newcommand{\la}{\langle}
\newcommand{\ra}{\rangle}
\newcommand{\Om}{\Omega}
\newcommand{\om}{\omega}
\newcommand{\eps}{\varepsilon}
\renewcommand{\phi}{\varphi}
\newcommand{\Sindep}{S_{\textrm{indep}}}
\newcommand{\transp}{\operatorname{\top}}

\newcommand{\cyc}{\operatorname{Cycles}}

\makeatletter

\newcommand*{\underarrow}{\def\@underarrow{\relax}\@ifstar{\@@underarrow}{\def\@underarrow{\hidewidth}\@@underarrow}}
\newcommand*{\@@underarrow}[2][]{\underset{\@underarrow\substack{\uparrow\if\relax\detokenize{#1}\relax\else\\#1\fi}\@underarrow}{#2}}

\newcommand*{\overarrow}{\def\@overarrow{\relax}\@ifstar{\@@overarrow}{\def\@overarrow{\hidewidth}\@@overarrow}}
\newcommand*{\@@overarrow}[2][]{\overset{\@overarrow\substack{\if\relax\detokenize{#1}\relax\else#1\\\fi\downarrow}\@overarrow}{#2}}
\makeatother

\author{Ion Nechita}
\email{nechita@irsamc.ups-tlse.fr}
\address{Laboratoire de Physique Th\'eorique, Universit\'e de Toulouse, CNRS, UPS, France}

\author{Sang-Jun Park}
\email{spark@irsamc.ups-tlse.fr}
\address{Laboratoire de Physique Th\'eorique, Universit\'e de Toulouse, CNRS, UPS, France}

\title[Tensor free probability theory]{Tensor free probability theory:\\asymptotic tensor freeness and central limit theorem}

\begin{document}
\begin{abstract}
Voiculescu's notion of asymptotic free independence applies to a wide range of random matrices, including those that are independent and unitarily invariant. In this work, we generalize this notion by considering random matrices with a tensor product structure that are invariant under the action of local unitary matrices. Assuming the existence of the \emph{tensor distribution} limit described by tuples of permutations, we show that an independent family of local unitary invariant random matrices satisfies asymptotically a novel form of freeness, which we term \emph{tensor freeness}. It can be defined via the vanishing of mixed \emph{tensor free cumulants}, allowing the joint tensor distribution of tensor free elements to be described in terms of that of individual elements. We present several applications of these results in the context of random matrices with a tensor product structure, such as partial transpositions of (local) unitarily invariant random matrices and tensor embeddings of random matrices. Furthermore, we propose a tensor free version of the central limit theorem, which extends and recovers several previous results for tensor products of free variables.
\end{abstract}

\maketitle

\tableofcontents

\newpage

\section{Introduction}

{
Tensors \cite{kolda2009tensor,landsberg2012tensors} are fundamental mathematical objects that naturally generalize matrices to higher-dimensional settings. Formally, a tensor can be represented as a multidimensional array indexed by multiple indices, each taking values within a finite set. From a more abstract perspective, tensors are simply elements of a tensor product space that correspond to multilinear maps. Tensors are instrumental across various domains, including algebraic geometry \cite{landsberg2012tensors}, representation theory \cite{fulton1991representation}, combinatorics \cite{tao2016capset}, quantum physics \cite{orus2014practical,hayden2016holographic,bruzda2024rank}, and computer science \cite{kourtis2019fast}, due to their versatility in representing complex multilinear relations and structures. Their significance extends beyond pure mathematics; tensors play essential roles in scientific applications, such as quantum entanglement in physics \cite{jivulescu2022multipartite,bruzda2024rank} high-dimensional data analysis \cite{liu2021tensors}, machine learning \cite{sidiropoulos2017tensor}, and signal processing \cite{cichocki2015tensor}, reflecting their crucial role in modern pure and applied science.

Random tensors naturally extend the theory of random matrices, enriching the probabilistic framework to accommodate higher-dimensional data structures and interactions. \emph{Random Matrix Theory} (RMT) \cite{mehta2004random,anderson2010introduction,bai2010spectral,tao2012topics,mingo2017free} focuses on the study of matrices with random entries, providing insights into the statistical properties of their eigenvalues and eigenvectors. Originally developed to model the energy levels of complex quantum systems in nuclear physics, RMT has since found applications across various disciplines, including number theory, statistical mechanics, and wireless communications. Its ability to describe universal behaviors in large complex systems makes it a powerful tool for understanding both the behavior of generic matrices and complex phenomena where deterministic approaches are challenging.

Random matrix theory has been profoundly impacted by \emph{Free Probability Theory} \cite{voiculescu1992free,hiai2000semicircle,nica2006lectures,mingo2017free}, introduced by Voiculescu \cite{voiculescu1985symmetries}, which offers powerful tools for analyzing asymptotic behaviors of large-dimensional random matrices through concepts like free independence and free cumulants. Free probability extends classical probability to non-commutative settings, particularly focusing on the study of non-commutative random variables and non-commutative analogues of cornerstone results in classical probability, such as the \emph{Central Limit Theorem} (CLT). 

In a similar fashion, random tensor theory aims to address asymptotic statistical properties and independence notions for tensor ensembles, motivating the generalization of free probability concepts to tensor frameworks. This tensorial perspective broadens the applicability of these ideas to new classes of mathematical and physical problems, such as tensor PCA \cite{montanari2014statistical,perry2020statistical} or quantum information theory \cite{collins2013matrix,cheng2024random,fitter2024max}, where tensor ensembles and symmetries are more natural than their simpler matricial counterparts. Distributional symmetries, such as \emph{unitary invariance} (UI), are ubiquitous in random matrix theory. For random tensors, \emph{local unitary invariance} (LUI) corresponds to tensor models that are invariant under tensor products of local unitary matrices; such symmetries are more natural in the tensor case, being also motivated by various models related to quantum entanglement. 

Recent advances in random tensor theory have significantly expanded the field. Kunisky, Moore, and Wein \cite{KMW24} introduced tensor cumulants to investigate statistical inference problems in high-dimensional invariant distributions, providing explicit and nearly orthogonal bases for invariant polynomial analysis. Their work clarified computational phase transitions and hardness thresholds within statistical inference tasks such as Tensor PCA. Bonnin and Bordenave \cite{BB24} established a systematic definition of freeness applicable to tensors of arbitrary orders, generalizing the classical matrix-oriented definitions of freeness. They introduced corresponding tensor free cumulants and proved asymptotic freeness results for basic tensor models, complemented by Schwinger-Dyson equations specific to tensors. Collins, Gurau, and Lionni \cite{collins2024free} developed concepts of tensorial free cumulants and tensor freeness specifically for locally unitary invariant random tensors. Their work rigorously defined tensorial analogs of classical free cumulants and demonstrated the additive property of these cumulants for sums of independent random tensors, establishing essential algebraic structures that underpin the theory. 

The present paper contributes to this active research direction in several ways: 
\begin{itemize}
    \item On the \emph{conceptual level}, we introduce and study in detail several generalizations of concepts from free probability theory to the world of tensors, contributing to the development of random tensor theory. We gather some of the main aspects in \cref{tbl:free-vs-tensor-free-concepts}.

\begin{table}[htb!]
\begin{tabular}{|c|c|c|}
\hline
\rowcolor[HTML]{EFEFEF} Free Probability Theory                         & {Tensor} Free Probability Theory                                                                   & Reference          \\ \hline
\begin{tabular}[c]{@{}c@{}}n.c.~probability space\\ $(\A,\phi)$\end{tabular}            & \begin{tabular}[c]{@{}c@{}}$r$-partite tensor prob.~sp.\\ $(\A,\phi, {(\phi_{\underline{\alpha}})})$\end{tabular} & \cref{def:tensor-ncps} \\ \hline
\begin{tabular}[c]{@{}c@{}}moments\\ $\phi_\pi(x_1, \ldots, x_p)$, \, $\pi \in NC(p)$\end{tabular}                                        & \begin{tabular}[c]{@{}c@{}}tensor moments\\ $\phi_{\underline{\alpha}}(x_1, \ldots, x_p)$, \, $\underline{\alpha} \in (S_p)^r$\end{tabular}                                                                         &  \cref{def:tensor-ncps}                         \\ \hline
\begin{tabular}[c]{@{}c@{}}free cumulants\\ $\tilde \kappa_\pi(x_1, \ldots, x_p)$, \, $\pi \in NC(p)$ \end{tabular}                                 & \begin{tabular}[c]{@{}c@{}}tensor free cumulants\\ $\kappa_{\underline{\alpha}}(x_1, \ldots, x_p)$, \, $\underline{\alpha} \in (S_p)^r$\end{tabular}                                                                  & \cref{def:free-tensor-cumulants}                         \\ \hline
\begin{tabular}[c]{@{}c@{}}free independence\\ $\tilde \kappa_p(\ldots, \textcolor{seagreen}{x},\ldots, \textcolor{orange}{y}, \ldots) = 0$\end{tabular}                              & \begin{tabular}[c]{@{}c@{}}tensor free independence\\ $\kappa_{\underline{\alpha} \, \text{irred.}}(\ldots, \textcolor{seagreen}{x},\ldots, \textcolor{orange}{y}, \ldots) = 0$ \end{tabular}                                                               & \cref{def:tensor-freeness}                         \\ \hline
\begin{tabular}[c]{@{}c@{}}\vspace{-.2cm}asympt.~freeness\\of indep.~UI rand.~mat.\\ $\{U X_i U^* \stackrel{\text{dist.}}{=} \!\!\!\overUnderArrow[\text{indep.}]{}{X_i}\!\!\!\} \xrightarrow[N \to \infty]{} \{\!\!\!\overUnderArrow[\text{free}]{}{x_i}\!\!\!\}$\end{tabular} & \begin{tabular}[c]{@{}c@{}}\vspace{-.2cm}asympt.~tensor freeness\\ of indep.~LUI rand.~mat.\\ $\{\overUnderArrow[$U_\otimes = \otimes_{s=1}^r U_s$]{}{U_\otimes X_i  U_\otimes^*} \stackrel{\text{dist.}}{=} \!\!\!\overUnderArrow[\text{indep.}]{}{X_i}\!\!\!\} \xrightarrow[D \to \infty]{} \{\!\!\!\!\!\!\overUnderArrow[\text{tensor free}]{}{x_i}\!\!\!\!\!\!\}$\end{tabular}                                 & \cref{thm-locui-tensorfree}                        \\ \hline
\begin{tabular}[c]{@{}c@{}}free CLT\\ $\vspace{0.2cm}\!\!\!\!\!\!\!\!\!\!\!\!\!\!\overUnderArrow[]{\qquad\quad centered, free variables}{\displaystyle{\frac{x_1 + \cdots + x_N}{\sqrt N}}} \!\!\!\!\!\!\!\!\!\!\!\!\!\!\!\xrightarrow[N \to \infty]{} \tilde{\kappa}^{1/2}_2(x)\!\!\!\!\!\!\!\!\!\!\!\!\!\!\!\!\!\!\!\!\!\!\!\!\!\!\!\!\!\!\!\overUnderArrow[ semicircular r.v.\qquad\qquad\qquad]{}{s}$\!\!\!\!\!\!\!\!\!\!\!\!\!\!\!\!\!\!\!\!\!\!\!\!\!\!\!\!\!\!\end{tabular}                                       & \begin{tabular}[c]{@{}c@{}}tensor free CLT\\ $\vspace{0.2cm}\!\!\!\!\!\!\!\!\!\!\!\!\!\!\!\!\!\!\!\!\!\!\!\!\!\!\overUnderArrow[]{\qquad\qquad\qquad centered, tensor free variables}{\displaystyle{\frac{x_1 + \cdots + x_N}{\sqrt N}}} \!\!\!\!\!\!\!\!\!\!\!\!\!\!\!\!\!\!\!\!\!\!\!\!\! \xrightarrow{N \to \infty} \!\!\!\!\!\displaystyle{\sum_{\underline{\alpha}\in (S_2)^r\setminus \{\underline{\id_2}\}}} \!\!\!\!\!\kappa_{\underline{\alpha}}^{1/2}(x)\!\!\!\!\!\!\!\!\!\!\!\!\!\!\!\!\!\!\!\!\!\!\overUnderArrow[tensor free r.v.\qquad\qquad]{}{s_{\underline{\alpha}}}\!\!\!\!\!\!\!\!\!\!\!\!\!\!\!\!\!\!\!\!\!\!$ \end{tabular}                                                                        & \cref{thm-TensorCLT}                         \\ \hline
\end{tabular}
\caption{Main conceptual contributions to tensor free probability theory and their counterparts in free probability theory.}
\label{tbl:free-vs-tensor-free-concepts}
\end{table}

    \item On the \emph{applicative level}, we apply the results obtained to study the asymptotic behavior of families of random matrices having various distributional symmetries and independence structures. The common feature of these random matrix models is that the corresponding linear operators act on $r$-partite tensor products of vector spaces. We prove that, under various hypotheses: 
    \begin{itemize}
        \item Independent and unitarily invariant families are asymptotically tensor free (\cref{cor-UITensorFree}). Similar results hold for orthogonally invariant random matrices (\cref{cor-OITensorFree}).
        \item Independent and \emph{local} unitarily invariant families are asymptotically tensor free (\cref{thm-locui-tensorfree}). Similar results hold for orthogonally invariant random matrices (\cref{thm-LocOI-tensorfree}).
        \item Partial transpositions of unitarily invariant random matrices are asymptotically tensor free (\cref{thm-LocUITranspose}). Similar results hold for independent families (\cref{thm-indepTranspose}) and orthogonally invariant random matrices (\cref{thm-OITranspose}).
        \item Tensor embeddings of a bipartite unitarily invariant random matrix are asymptotically tensor free (\cref{thm:embed-different-spaces}).
        \item Tensor freely independent, identically distributed elements satisfy a tensor free central limit theorem (\cref{thm-TensorCLT}). In the bipartite case, we obtain the distribution of the central limit (\cref{thm-CLTBipartite}).
    \end{itemize}
\end{itemize}

Our work considers exclusively matrices acting on a tensor product vector space, whereas the recent articles \cite{KMW24,BB24,collins2024free} deal with tensors of arbitrary rank. This focus allows us to follow closely Speicher's combinatorial formulation of Voiculescu's free probability theory, providing us with the tools to investigate relevant problems in random matrix theory related to multi-matrix models and their partial transpositions, as well as tensor generalizations of cornerstone results in non-commutative probability theory such as the tensor free central limit theorem.   

Importantly, the notion of tensor distribution is \emph{richer} than the usual notion of distribution from non-commutative probability theory, since it encapsulates a larger class of trace invariants. This means that the associated notion of freeness, tensor freeness, is different than Voiculescu's free independence. Although tensor freeness corresponds to the usual notion of freeness in the matrix case ($r=1$), we show in \cref{ex-TensorFreeNonFree} that already for $r=2$, there are examples of tensor freely independent elements that are not free in the usual sense. In the case of multipartite random matrices having (global) unitary invariance, the two notions are equivalent asymptotically, see \cref{thm-ui-tensorfree} and \cref{thm-oi-tensorfree} for the orthogonal case. Let us also mention that in the case of tensor embeddings of bipartite random matrices, tensor free independence captures more general situations (\cref{thm:embed-different-spaces}) than Voiculescu's freeness (\cref{cor:embeddings-free}).

Having established that tensor distributions and tensor free independence capture more general situations than the usual notions of non-commutative distributions and free independence, we would like to point out the newly introduced concepts allow us, in some cases, to obtain results about free (asymptotic) independence. This is the case, for example, in \cref{thm-indepTranspose} (part (2)), \cref{thm-OITranspose}, and \cref{cor:embeddings-free}. These results are either new or generalize previously known facts about the asymptotic freeness of certain classes of random matrices. Their proofs depend crucially on the newly introduced concept of tensor free independence, which provides a unified combinatorial framework capable of analyzing the asymptotic behavior of various random matrix models with tensor structures.

We present below two of the main results of this work that we also touched upon in \cref{tbl:free-vs-tensor-free-concepts}. 

First, let us state in more detail our main result regarding the asymptotic tensor freeness of independent families of random (local) unitarily invariant families of random matrices. 
}

\begin{theorem} [\cref{thm-indepTranspose}, Asymptotic tensor freeness of (local) unitary invariant random matrices]
Let $X_N^{(1)},\ldots, X_N^{(L)}$ be independent families of $N^r\times N^r$ random matrices, and let us identify $\M{N^r}\cong \M{N}^{\otimes r}$.
\begin{enumerate}
    \item If each $X_N^{(i)}$ is local unitary invariant, has tensor factorization property \cref{eq-condition-TensorFact}, and converges in tensor distribution, then the $L$ families of all partial transposes 
            $$\Big( \big\{(t_1\otimes \cdots \otimes t_r)(X_N^{(i)}): t_1,\ldots, t_r\in \{\id_N, \top\} \big\}\Big)_{i\in [L]}$$
    are asymptotically tensor free as $N\to \infty$, where $\top:\M{N}\to \M{N}$ is the transpose map.
    
    \item If each $X_N^{(i)}$ is unitary invariant, has the factorization property \cref{eq-condition-Fact}, and converges in distribution, then all the $2^r L$ random matrices $\left((t_1\otimes\cdots\otimes t_r)\big(X_N^{(i)}\big) \right)_{i\in [L],\, t_1,\ldots, t_r\in \{\id_N,\top\}}$ are both asymptotically free and asymptotically tensor free as $N\to \infty$.
\end{enumerate}
\end{theorem}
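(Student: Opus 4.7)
The plan is to adapt the Weingarten-calculus proof of \cref{thm-locui-tensorfree} so that it accommodates partial transposes. For part (1), start by using local unitary invariance: each $X_N^{(i)}$ has the same tensor distribution as $U_\otimes^{(i)} X_N^{(i)} (U_\otimes^{(i)})^*$, where $U_\otimes^{(i)} = U_1^{(i)} \otimes \cdots \otimes U_r^{(i)}$ and all $U_s^{(i)}$ are independent Haar unitaries on $U(N)$, with independence holding across both $s$ and $i$ by the assumption that the $L$ families are independent. The key structural identity is
$$(t_1 \otimes \cdots \otimes t_r)\!\left(U_\otimes^{(i)} X_N^{(i)} (U_\otimes^{(i)})^*\right) = W_\otimes^{(i,\underline{t})} \cdot (t_1 \otimes \cdots \otimes t_r)(X_N^{(i)}) \cdot (W_\otimes^{(i,\underline{t})})^*,$$
where $W_s^{(i,\underline{t})}$ equals $U_s^{(i)}$ if $t_s = \id_N$ and $\overline{U_s^{(i)}}$ if $t_s = \top$. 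Since $\overline{U_s^{(i)}}$ is again Haar and independent of all the other unitaries, the joint distribution of the $L$ enlarged families of partial transposes is controlled by a single Weingarten framework on $U(N)^{\times rL}$.

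For a mixed tensor moment $\phi_{\underline{\alpha}}(y_1, \ldots, y_p)$ with $\underline{\alpha} \in (S_p)^r$ and each $y_j$ drawn from family $i_j$, apply the Weingarten formula independently for each $s \in [r]$ and each $i \in [L]$. The result is a sum over tuples of permutations weighted by products of $\Wg$ functions, coupled to the tensor distributions of the individual $X_N^{(i)}$ via the tensor factorization hypothesis \cref{eq-condition-TensorFact}. As $N \to \infty$, the Weingarten asymptotics on $U(N)$ select the tuples of permutations lying on geodesics associated with $\underline{\alpha}$ in each tensor factor, and the tensor factorization identifies the leading order with the tensor moment-cumulant expansion evaluated on the putative tensor free limit. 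When the input indices mix across families, the independence of the Haar unitaries forces the surviving permutation tuples to respect the partition of inputs by family, which is incompatible with the irreducibility of $\underline{\alpha}$; hence the leading contribution vanishes. This is exactly the vanishing-of-mixed-cumulants characterization of tensor free independence from \cref{def:tensor-freeness}.

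For part (2), global unitary invariance together with the factorization property \cref{eq-condition-Fact} implies local unitary invariance and the tensor factorization property \cref{eq-condition-TensorFact}, so part (1) directly yields asymptotic tensor freeness of all $2^r L$ partial transposes. Ordinary asymptotic freeness in Voiculescu's sense is obtained by running the analogous Weingarten computation with a single Haar unitary on $U(N^r)$: the combinatorics collapse to the standard non-crossing partition structure, and the partial transposes contribute independent leading-order pairings via the same $U$ versus $\overline{U}$ bookkeeping as in part (1). The main obstacle is the combinatorial control of the Weingarten sums in the tensor setting, where one must carry out a genus-type analysis simultaneously on each of the $r$ tensor factors and match the surviving diagrams to the admissible tuples in $(S_p)^r$ encoding tensor free cumulants. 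Verifying that mixed irreducible contributions indeed vanish at leading order is the crux of the argument and rests on the independence of the local Haar unitaries across the $L$ families, together with the careful accounting of the conjugate unitaries introduced by the transposed tensor factors.
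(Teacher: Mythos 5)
There is a genuine gap, and it sits at the technical heart of the problem. Your claim that ``$\overline{U_s^{(i)}}$ is again Haar and independent of all the other unitaries'' is false: $\overline{U}$ is a deterministic function of $U$ (it has the same Haar \emph{marginal} law, but the pair $(U,\overline{U})$ is maximally correlated, cf.~\cref{ex-UI}(4)). Consequently there is no ``single Weingarten framework on $U(N)^{\times rL}$'' with $rL$ independent unitaries. In any mixed moment involving both $X^{(i)}$ and a partial transpose $(X^{(i)})^{\underline t}$, the same Haar unitary $U_s^{(i)}$ appears both as $U$ boxes and as $\overline U$ boxes whose input/output roles have been swapped by the transpose. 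The unitary Weingarten formula must then pair these boxes of the \emph{same} unitary, and the transposes twist the resulting wirings by the involutions $\eps_{f_s}\in\bigsqcup_k S(\{k,-k\})$; the leading-order analysis is a geodesic problem for pairings in $S_{\pm p}$ (the conditions $\rho\delta\leq\pi\delta\leq\eps_s\alpha_s\delta\alpha_s^{-1}\delta\eps_s$ of \cref{lem-pairings,lem-PairingGeodesic}), not a product of $r$ independent unitary Weingarten sums as your sketch suggests. Without this pairing analysis the identification of the surviving terms --- and hence the vanishing of mixed irreducible cumulants --- is not established. The paper avoids redoing this computation in the mixed-family setting by observing that each family $\bigcup_{\underline t}(\W_N^{(i)})^{\underline t}$ of all partial transposes is \emph{local orthogonal invariant} (\cref{ex-UI}(6)) and converges in tensor distribution by \cref{thm-LocUITranspose}, so that \cref{thm-LocOI-tensorfree} (which is where the $\mathcal P_2(\pm p)$ combinatorics is carried out once and for all) applies directly.

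Your part (2) is also incomplete on the ordinary (non-tensor) freeness claim. Asserting that ``the combinatorics collapse to the standard non-crossing partition structure'' skips the actual content: one needs that for $\underline t\notin\{(1,\dots,1),(-1,\dots,-1)\}$ the limiting free cumulants of $(\W_N^{(i)})^{\underline t}$ of order $\geq 3$ vanish (\cref{eq-UITranspCumulant2}, coming from $S_{NC}(\gamma_p)\cap S_{NC}(\gamma_p^{-1})\cong NC_{1,2}(p)$), and one needs a decomposition of the full collection into groups --- the paper first gets freeness of the $2^r$ unions $\bigcup_i(\W_N^{(i)})^{\underline t}$ over $\underline t$ via \cref{thm-LocUITranspose}(2), then freeness within each fixed $\underline t$ across $i$, and concludes by associativity. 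The reduction you invoke at the start of part (2) (UI $+$ factorization $+$ convergence $\Rightarrow$ tensor factorization $+$ tensor convergence) is correct but is itself \cref{prop-ui-tensormoment}, which requires its own Weingarten argument.
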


This result actually follows from a combination of two special cases. First, when partial transposition is not considered, we recover the tensor free independence between $L$ independent, LUI matrices $X_N^{(1)},\ldots, X_N^{(L)}$ (\cref{thm-locui-tensorfree}). Second, in the case $L=1$ and $X_N=X_N^{(1)}$, the existence of tensor distribution limit of $X_N$ automatically guarantees the existence of joint limit distribution of the family $\{(t_1\otimes \cdots \otimes t_r)(X_N): t_1,\ldots, t_r\in \{\id_N, \top\} \big\}$, under the assumption of local unitary invariance, and further implies (tensor) freeness between them when $X_N$ is globally UI (\cref{thm-LocUITranspose}). In particular, this recovers and generalizes the previous findings from \cite{MP24,PY24}. Additionally, we establish analogous results for \textit{(local) orthogonal invariant} random matrices; we refer to \cref{thm-oi-tensorfree,thm-LocOI-tensorfree,thm-OITranspose},

Secondly, we present our main results regarding the tensor free central limit theorem. 

\begin{theorem} [\cref{thm-TensorCLT,thm-CLTBipartite}, Central limit theorems for tensor free elements]

Let $\{x_i\}_{i=1}^{\infty}$ be a family of identically tensor distributed, and tensor free elements in an $r$-partite algebraic tensor probability space $(\A, \varphi, (\varphi_{\underline{\alpha}}))$. {Furthermore, suppose $\kappa_{\underline{\alpha}}(x_1)\geq 0$ for all $\underline{\alpha}\in (S_2)^r\setminus \{\underline{\id_2}\}$. Then we have
    $$\frac{x_1+\cdots+x_N-N\varphi(x_1)}{\sqrt{N}}\to \sum_{\underline{\alpha}\in (S_2)^r\setminus \{\underline{\id_2}\}} \sqrt{\kappa_{\underline{\alpha}}(x_1)}\,s_{\underline{\alpha}} \;\;\text{ in tensor distribution as $N\to \infty$},$$
where $(s_{\underline{\alpha}})_{\underline{\alpha}\in (S_2)^r\setminus \{\underline{\id_2}\}}$ are tensor free family of semicircular elements.

}

In the bipartite case ($r=2$), the limiting random variable above has (usual) distribution
    $$\big(D_{\sqrt{\kappa_{\gamma_2,\id_2}(x_1)}}[\mu_{SC}] * D_{\sqrt{\kappa_{\gamma_2,\id_2}(x_1)}}[\mu_{SC}]\big)\boxplus D_{\sqrt{\kappa_{\gamma_2,\gamma_2}(x_1)}}[\mu_{SC}],$$
where $*$ denotes the classical convolution, $\boxplus$ denotes the (additive) free convolution, $\mu_{SC}$ is the standard semicircular distribution, and $D$ denotes the dilation operator.
\end{theorem}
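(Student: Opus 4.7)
The plan is to mimic Speicher's combinatorial proof of the free CLT in the tensor setting, using the moment-cumulant formula of \cref{def:free-tensor-cumulants} together with the vanishing of mixed tensor free cumulants from \cref{def:tensor-freeness}. After assuming $\varphi(x_1)=0$ (this is a harmless recentering) and setting $S_N = N^{-1/2}(x_1+\cdots+x_N)$, multilinearity gives, for any $p\geq 1$ and $\underline{\alpha}\in(S_p)^r$,
$$
\varphi_{\underline{\alpha}}(S_N,\ldots,S_N) \;=\; N^{-p/2} \sum_{i_1,\ldots,i_p=1}^{N} \varphi_{\underline{\alpha}}(x_{i_1},\ldots,x_{i_p}).
$$
Expanding each summand via the tensor moment-cumulant relation produces a sum indexed by a partition $\pi$ of $[p]$ and a compatible tuple of permutations attached to the blocks of $\pi$. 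Tensor free independence forces the cumulants to vanish whenever a block of $\pi$ mixes distinct $x_{i_j}$, so only tuples $(i_1,\ldots,i_p)$ whose index partition is refined by $\pi$ contribute. Summing over such index tuples yields a factor $N^{|\pi|}$, leaving the overall scaling $N^{|\pi|-p/2}$.

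The next step is the standard asymptotic analysis. Centering kills every term in which $\pi$ has a singleton block, since the corresponding cumulant factor reduces to $\varphi(x_1)=0$. Therefore $|\pi|\leq p/2$, and in the limit $N\to\infty$ only the saturating case $|\pi|=p/2$ survives, forcing $p$ to be even and $\pi$ to be a pair partition. Using identical tensor distribution, each surviving cumulant depends only on $x_1$, so the limit of $\varphi_{\underline{\alpha}}(S_N,\ldots,S_N)$ is a combinatorial sum over pair partitions $\pi$ and assignments of $\underline{\alpha}'\in(S_2)^r$ to each pair, of products of $\kappa_{\underline{\alpha}'}(x_1)$. Introducing an auxiliary tensor free family $(s_{\underline{\alpha}})_{\underline{\alpha}\in(S_2)^r\setminus\{\underline{\id_2}\}}$ with $\kappa_{\underline{\alpha}}(s_{\underline{\alpha}})=1$ and all other tensor cumulants zero, the same moment-cumulant expansion computes the tensor moments of $\sum_{\underline{\alpha}}\sqrt{\kappa_{\underline{\alpha}}(x_1)}\,s_{\underline{\alpha}}$ and matches the limit term by term; the required nonnegativity $\kappa_{\underline{\alpha}}(x_1)\geq 0$ is used only to extract real square roots. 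This proves convergence in tensor distribution.

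For the bipartite case $r=2$, the three nontrivial elements $(\gamma_2,\id_2),\,(\id_2,\gamma_2),\,(\gamma_2,\gamma_2)$ of $(S_2)^2\setminus\{\underline{\id_2}\}$ give three semicircular summands. To recover the \emph{usual} distribution under $\varphi$, one reads off $\varphi(\cdot^p) = \varphi_{\underline{\id_p}}(\cdot,\ldots,\cdot)$ from the tensor moment-cumulant relation. A direct inspection shows that $s_{\gamma_2,\gamma_2}$ has $\varphi$-moments given by a sum over \emph{non-crossing} pair partitions, hence is semicircular under $\varphi$ and freely independent from the other two summands; on the other hand, $s_{\gamma_2,\id_2}$ and $s_{\id_2,\gamma_2}$ have $\varphi$-moments given by sums over \emph{all} pair partitions, making them (classical) semicirculars that are classically independent of each other, with commuting joint moments. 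Dilating by the respective $\sqrt{\kappa_{\underline{\alpha}}(x_1)}$ and summing therefore produces the classical convolution of the two partially-transposed pieces, further freely convolved with the globally-transposed piece, yielding the stated formula.

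The main obstacle will be the precise bookkeeping inside the tensor moment-cumulant formula: unlike the classical or free cases, sums are indexed by tuples of permutations rather than non-crossing partitions alone, and one must check carefully that the singleton-killing argument and the $|\pi|=p/2$ saturation correctly assemble the surviving terms into the tensor moments of the limiting family $(s_{\underline{\alpha}})$. In the bipartite identification, the subtle point is separating the \emph{classical} (all pair partitions) behaviour of the $(\gamma_2,\id_2)$- and $(\id_2,\gamma_2)$-types from the \emph{free} (non-crossing only) behaviour of the $(\gamma_2,\gamma_2)$-type at the level of the trace $\varphi$, which is where the non-trivial structure of tensor freeness versus ordinary freeness becomes visible.
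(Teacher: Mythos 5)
Your proof of the first statement (convergence in tensor distribution) is correct, but it takes the moment route where the paper takes the cumulant route: the paper simply applies the additivity of tensor free cumulants (\cref{prop-TensorCumulantAdditivity}) to get $\kappa_{\underline{\alpha}}(\bar x_N)=N^{1-p/2}\kappa_{\underline{\alpha}}(x_1-\varphi(x_1))$ for irreducible $\underline{\alpha}\in (S_p)^r$, so that only the $p=2$ cumulants survive, and then matches cumulants with those of $\sum_{\underline{\alpha}}\sqrt{\kappa_{\underline{\alpha}}(x_1)}\,s_{\underline{\alpha}}$. Your expansion of $\varphi_{\underline{\alpha}}(S_N,\ldots,S_N)$ over index tuples, with the singleton-killing and $\#\pi=p/2$ saturation, is the classical Speicher-style argument and assembles to the same answer (the multiplicativity of $\kappa_{\underline{\beta}}$ over the blocks of $\bigvee_s\Pi(\beta_s)$ guarantees the $N^{\#\pi}$ count is exact even when indices on different blocks coincide). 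The cumulant route is shorter and avoids exactly the bookkeeping you flag as the main obstacle; you should also note that the existence of the limiting family $(s_{\underline{\alpha}})$ needs a construction (the paper realizes it via \cref{lem-TensorLimitRealization} and tensor GUE models, \cref{rmk-TensorCLT}).

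The bipartite identification is where there is a genuine problem. You assert that $s_{\gamma_2,\id_2}$ and $s_{\id_2,\gamma_2}$ ``have $\varphi$-moments given by sums over \emph{all} pair partitions''. That is false: every $s_{\underline{\alpha}}$ is a \emph{semicircular} element, so its individual $\varphi$-moments are Catalan numbers, i.e.\ sums over \emph{non-crossing} pairings only (a sum over all pairings would make them Gaussian, and the limit law would then be a classical convolution of Gaussians rather than the stated $D_a[\mu_{SC}]*D_b[\mu_{SC}]$). What is actually true — and what needs proof — is that the two elements are individually semicircular but \emph{classically independent of each other}: for any word, the surviving terms force $\beta_1$ to be a non-crossing pairing of the $(\gamma_2,\id_2)$-positions and $\beta_2$ a non-crossing pairing of the $(\id_2,\gamma_2)$-positions, with no joint non-crossing constraint linking them, whence the mixed moments factorize as $\Cat_{k_A}\Cat_{k_B}$. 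Likewise, the free independence of $s_{\gamma_2,\gamma_2}$ from the other two does not follow from ``direct inspection'' of moments; it requires either checking via \cref{prop-cumulant-from-tensorcumulant} that all mixed free cumulants $\tilde\kappa_\alpha$ vanish (for $p\geq 3$ because an irreducible tuple whose blocks are all pairs cannot have $\Pi(\beta_1)\vee\Pi(\beta_2)=1_p$, and for $p=2$ by tensor freeness), or — as the paper does — realizing the three elements as the $d\to\infty$ limits of $G_d^{(1)}\otimes I_d$, $I_d\otimes G_d^{(2)}$ and $G_d^{(12)}$, so that classical independence of the first two and freeness from the third follow from probabilistic independence and Voiculescu's asymptotic freeness theorem (\cref{thm-UIasympfree}). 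Your final formula is correct, but as written the argument supporting it is both unproved and, in the ``all pair partitions'' claim, contradictory with the conclusion.
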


In the framework of tensor free probability, the central limit is characterized not by a single universal element but by a linear combination of $2^r-1$ semicircular elements $(s_{\underline{\alpha}})_{\underline{\alpha}\in (S_2)^r\setminus \{\underline{\id_2}\}}$, with coefficients determined by the tensor free cumulants of order 2 of the variables $x_i$.
Actually, the family $(s_{\underline{\alpha}})$ can always be constructed as the limit of \textit{tensor GUE models} and turns out to be \textit{$\eps$-free} \cite{CC21,CGVH24} for some choice of an adjacency matrix $\eps$; see \cref{rmk-TensorCLT} for details. Moreover, our result for the bipartite case recovers and extends the previous results on tensor products of free variables \cite{LSY24,Sko24}, as discussed in \cref{sec-CLTProdFree}.

\medskip

Note that our results are restricted to \textit{first-order behavior}. In particular, the assumptions we require for tensor freeness in this paper are minimal in some sense. The study of higher-order limits \cite{CMSS07,collins2024free} and strong convergence is postponed for future work.

\medskip

{
This paper is organized as follows. \cref{sec:preliminary} reviews preliminary concepts including permutations, partitions, free cumulants, Weingarten calculus, and distributional symmetries of random matrices. \cref{sec-TensorNCPS} introduces the algebraic framework of $r$-partite tensor probability spaces based on tensor trace invariants and defines convergence in tensor distribution. \cref{sec:tensor-free-cumulants} defines tensor free cumulants as a generalization of free cumulants using moment-cumulant formulas involving permutation tuples. \cref{sec:tensor-free-independence} introduces the central concept of tensor free independence, characterized by vanishing mixed tensor free cumulants, and examines its fundamental properties. \cref{sec-UItensorfree} is dedicated to the study of globally invariant random matrices from a tensor free probability perspective. In \cref{sec-LocUITensorFree} we present a key result showing that independent, locally invariant random matrices that converge in tensor distribution are asymptotically tensor free and discuss further asymptotic properties of LUI random matrices. In \cref{sec-TensorFreeNonIndep} we consider models of non-independent matrices, proving asymptotic tensor freeness for partial transposes (\cref{thm-indepTranspose}) and tensor embeddings (\cref{thm:embed-different-spaces}) of certain random matrices. Finally, \cref{sec:tensor-free-CLT} develops a tensor free central limit theorem (\cref{thm-TensorCLT}), demonstrating convergence towards a combination of tensor free semicircular elements.   

}

\section{Background on random matrix theory and free probability}\label{sec:preliminary}

In this section, we provide several preliminary notions and results from combinatorics, random matrix theory, and free probability. We also introduce notation used throughout the paper.

\subsection{Permutations and partitions}\label{sec:preliminary-permutations}

Throughout this paper, we denote by $S(A)$ the symmetric group acting on a finite set $A$ and by $S_p:=S([p])$ the symmetric group of order $p$, where $[p]:=\{1,2,\ldots, p\}$. For $\sigma\in S_p$, we use $\# \sigma$ to denote the \emph{number of disjoint cycles} in $\sigma$ and $|\sigma|$ to denote the \textit{length} of $\sigma$, that is, the minimum number of transpositions whose product equals $\sigma$. Both $\# \sigma$ and $|\sigma|$ depend only on the conjugacy class of $\sigma$, and they satisfy the relation $\#\sigma+|\sigma|=p$. The notation $|\cdot|$ is used in this paper exclusively to denote the length of permutations and not for \textit{cardinality} of sets; we use $\operatorname{Card}(\cdot)$ for the latter. {For these basic statistics on $S_p$, we refer the reader to \cite[Lecture 23]{nica2006lectures}.} Moreover, we use the notation 
$$\gamma_p:=(1\,2\,\cdots\,p)\in S_p$$
for the \emph{full cycle permutation} {and we shall denote by $\cyc(\sigma)$ the set of cycles of the permutation $\sigma$ (including singletons)}.

The length function $|\cdot|$ induces the metric on $S_p$, $d(\alpha,\beta):=|\alpha^{-1}\beta|$, from the fact that it satisfies the triangle inequality $|\alpha|+|\beta|\geq |\alpha\beta|$ for all $\alpha,\beta\in S_p$ and $|\sigma|=0$ if and only if $\sigma=\id_p$ (where $\id_p$ denotes the identity permutation), see \cite[Proposition 23.9]{nica2006lectures}. Let us define the following \emph{partial order relation} on $S_p$: $\alpha\leq \beta$ if they satisfy the equality
\begin{equation} \label{eq-PermGeod}
    d(\id_p,\alpha)+d(\alpha,\beta)=d(\id_p,\beta) \iff |\alpha|+|\alpha^{-1}\beta|=|\beta|,
\end{equation}
i.e.~the path $\id_p \to \alpha \to \beta$ is a \emph{geodesic}. Then it is straightforward to check that $(S_p,\leq)$ is a partially ordered set. For $\sigma\in S_p$, let us denote by $S_{NC}(\sigma)$ the set of permutations $\alpha$ such that $\alpha\leq \sigma$.

Let $\mathcal{P}(A)$ the set of all \emph{partitions} of the finite set $A$. If $A$ is totally ordered, we denote by $NC(A)$ the set of \textit{non-crossing partitions} of $A$, that is partitions $\pi$ for which there do not exist $i<j<k<l\in A$ such that $i \stackrel{\pi}{\sim} k$ and $j \stackrel{\pi}{\sim} l$; we refer the reader to \cite[Lectures 9 and 10]{nica2006lectures} for the combinatorics of non-crossing partitions. Furthermore, $\mathcal{P}_2(A)$ denotes the set of all pair partitions (or \emph{pairings}) of $A$, i.e.~partitions $\pi$ all whose blocks are of size $2$. As before, we simply denote by $\mathcal{P}(p)$, $NC(p)$, and $\mathcal{P}_2(p)$ when $A=[p]$. We again denote by $\leq$ the \textit{reversed refinement order} for partitions, i.e.~$\tau\leq \pi$ if each block of $\tau$ is completely contained in one of the blocks of $\pi$. Note that $(\mathcal{P}(p),\leq)$ is a \textit{lattice}: for all two partitions $\pi,\rho\in \mathcal{P}(p)$, there exists
\begin{enumerate}
    \item the \textit{join} $\pi\vee \rho\in \mathcal{P}(p)$ which is a minimum partition satisfying $\pi\vee \rho\geq \pi$ and $\pi\vee \rho\geq \rho$,

    \item the \textit{meet} $\pi\wedge \rho\in \mathcal{P}(p)$ which is a maximum partition satisfying $\pi\wedge \rho\leq \pi$ and $\pi\wedge \rho\leq \rho$.
\end{enumerate}
The poset $(NC(p),\leq)$ is also a lattice \cite[Proposition 9.17]{nica2006lectures}, and both sets $\mathcal{P}(p)$ and $NC(p)$ share the same minimum partition $0_p=\{\{1\},\ldots, \{p\}\}$ and maximum partition $1_p=\{\{1,\ldots, p\}\}$. On the other hand, the two join operations $\vee_{\mathcal{P}}$ on $\mathcal{P}(p)$ and $\vee_{NC}$ on $NC(p)$ are distinct in general: if we take two non-crossing partitions $\pi=\{\{1,3\},\{2\},\{4\}\}, \rho=\{\{2,4\},\{1\},\{3\}\}$ in $NC(4)$, then $\pi\vee_{\mathcal{P}} \rho=\{\{1,3\},\{2,4\}\}$ while $\pi\vee_{NC} \rho=1_4$. However, we always have $\wedge_{\mathcal{P}}=\wedge_{NC}$.

The set $S_{NC}(\sigma)$ can be completely described through the comparison with the non-crossing partitions. To this end, let $\Pi:S(A)\to \mathcal{P}(A)$ be the natural projection map by identifying each cycle of $\sigma\in S(A)$ as a block of $\Pi(\sigma)${, by discarding the order of its elements}. Furthermore, let us denote the \textit{disjoint product} of permutations $\sigma_i\in S(A_i)$ by $\bigsqcup_{i=1}^k \sigma_i\in S(\bigsqcup_{i=1}^k A_i)$, and analogously the disjoint concatenation $\bigsqcup_{i=1}^k \pi_i\in \mathcal{P}(\bigsqcup_{i=1}^k A_i)$ of partitions $\pi_i\in \mathcal{P}(A_i)$. For example, we can write the cycle decomposition of $\sigma\in S_p$ as $\sigma=\bigsqcup_i c_i$ when each $c_i$ is a full cycle on a set $A_i$ and $[p]=\bigsqcup_i A_i$.

The following facts are well-known in combinatorics. We refer to \cite{biane1997some,nica2006lectures, mingo2017free} for the proof.

\begin{proposition} \label{prop:lattice-structure}
For the full cycle $\gamma_p=(1\,2\,\cdots\,p)\in S_p$, the map $\Pi:S_p\to \mathcal{P}(p)$ induces the natural order isomorphism between $(S_{NC}(\gamma_p),\leq)$ and $(NC(p),\leq)$. Specifically,
\begin{enumerate}
    \item $\alpha\in S_{NC}(\gamma_p)$ if and only if $\Pi(\alpha)\in NC(p)$ and every cycle of $\alpha$ can be written in the form $(j_1\,j_2\,\cdots j_l)$ for $j_1<j_2<\cdots j_l$,

    \item For $\alpha,\beta\in S_{NC}(\gamma_p)$, $\alpha\leq \beta$ if and only if $\Pi(\alpha)\leq \Pi(\beta)$.
\end{enumerate}
\end{proposition}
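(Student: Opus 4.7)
The plan is to prove statements (1) and (2) in sequence, leveraging the arithmetic identity $|\sigma|=p-\#\sigma$ for $\sigma\in S_p$ already mentioned in the text. Using this identity, the geodesic condition $\alpha\leq\gamma_p$, namely $|\alpha|+|\alpha^{-1}\gamma_p|=|\gamma_p|=p-1$, becomes the equivalent \emph{genus-zero equation}
$$\#\alpha+\#(\alpha^{-1}\gamma_p)=p+1.$$
This reformulation, which links the geodesic order with the cycle structures of $\alpha$ and $\alpha^{-1}\gamma_p$ (both visible through $\Pi$), is the central tool I would use throughout.

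For statement (1), I would argue by induction on $p$. The key structural lemma to establish is that any $\alpha\in S_p$ satisfying the genus-zero equation possesses at least one cycle whose support is a cyclic interval $\{i,i+1,\ldots,i+k-1\}$ of $[p]$ and on which $\alpha$ acts as the ascending cycle $(i\,i+1\,\cdots\,i+k-1)$; this is the classical observation at the root of Biane's bijection. Peeling off such a cycle yields a genus-zero pair on the remaining $p-k$ elements (which, after relabeling, carry an ordinary full-cycle structure), to which the induction hypothesis applies. Reinserting the peeled interval cycle then gives a non-crossing partition whose blocks are exactly the cycles of $\alpha$ written in ascending order, since an interval can never cross another block. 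Conversely, starting from $\pi\in NC(p)$, I would associate the permutation $\alpha$ whose cycles are the blocks of $\pi$ in ascending order; picking an interval block of $\pi$ (which exists for any non-crossing partition) and applying induction in reverse verifies the genus-zero equation, hence $\alpha\leq\gamma_p$. Uniqueness of the preimage in $S_{NC}(\gamma_p)$ is immediate from the first half of (1), since a non-ascending cycle would spoil the genus-zero equation.

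For statement (2), given $\alpha,\beta\in S_{NC}(\gamma_p)$, I would translate $\alpha\leq\beta$ in $S_p$ into $\Pi(\alpha)\leq\Pi(\beta)$ in $NC(p)$ as follows. The forward direction uses the additivity $|\alpha|+|\alpha^{-1}\beta|=|\beta|$ combined with $\beta\leq\gamma_p$ to obtain $\alpha\leq\gamma_p$ (by the triangle equality along $\id_p\to\alpha\to\beta\to\gamma_p$); restricting to each cycle of $\beta$ then shows that $\alpha$ induces a genus-zero permutation on that cycle, so by (1) its cycles there are ascending sub-intervals of the cycle of $\beta$. This forces $\Pi(\alpha)\leq\Pi(\beta)$. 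Conversely, given $\Pi(\alpha)\leq\Pi(\beta)$, I would build an explicit geodesic by successively merging the ascending sub-blocks of $\alpha$ inside each block of $\beta$, one transposition at a time; each merge reduces $\#$ by exactly one, so after $\#\alpha-\#\beta=|\beta|-|\alpha|$ steps we reach $\beta$ along a path of minimal length, giving $\alpha\leq\beta$.

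The principal obstacle will be the structural interval-cycle lemma underlying (1): extracting from the genus-zero equation the existence of an ascending interval cycle requires a careful cycle-counting argument for $\alpha^{-1}\gamma_p$, exploiting that the product of $\alpha$ with $\gamma_p$ must "split" one cycle of $\gamma_p$ maximally. Once that lemma is in hand, the rest of (1) and all of (2) reduce to bookkeeping about how ascending cycles nest inside intervals, and the order-isomorphism statement follows without further combinatorial input.
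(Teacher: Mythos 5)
The paper does not actually prove this proposition: it is stated as a well-known fact with a pointer to Biane's paper and to \cite[Lecture 23]{nica2006lectures}, so there is no in-text argument to compare against. Your plan is essentially the classical route from those references: rewrite the geodesic condition as the genus-zero equation $\#\alpha+\#(\alpha^{-1}\gamma_p)=p+1$, peel off an ascending interval cycle, and induct. The architecture is sound, and part (2) follows from part (1) in the way you describe.

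That said, as written the proposal concentrates \emph{all} of the combinatorial content into the interval-cycle lemma, which you flag as ``the principal obstacle'' but do not prove; until that lemma is established the argument is a scaffold rather than a proof. Two concrete ways to close it: (i) avoid the lemma entirely by inducting on $|\alpha|$ via single transposition multiplications, using the standard dichotomy that multiplying by a transposition $(i\,j)$ either splits a cycle (when $i,j$ lie in the same cycle) or merges two cycles, and tracking exactly when the genus-zero equation is preserved --- this is the proof of \cite[Proposition 23.23]{nica2006lectures}; or (ii) prove the lemma directly by examining the cycle of $\beta:=\alpha^{-1}\gamma_p$ through a suitably chosen point and showing that genus zero forces $\beta$ to fix some $j$ with $\alpha(j)=\gamma_p(j)$, from which an ascending interval cycle of $\alpha$ can be grown. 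A second, smaller gap sits in the forward direction of (2): before ``restricting $\alpha$ to each cycle of $\beta$'' you must know that every cycle of $\alpha$ is contained in a cycle of $\beta$; this follows from $|\alpha|+|\alpha^{-1}\beta|=|\beta|$ because a minimal transposition factorization of $\beta$ never uses a transposition joining two distinct cycles of $\beta$, but it deserves to be said explicitly. With those two points supplied, your argument is a complete and faithful reconstruction of the proof the paper delegates to the literature.
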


\begin{proposition} \label{prop:lattice-structure2}
Let $\sigma\in S_p$ and $\sigma=\bigsqcup_{i=1}^k c_i$ be the cycle decomposition of $\sigma$. Then $S_{NC}(\sigma)\cong S_{NC}(c_1)\times\cdots \times S_{NC}(c_k)$, in the sense that $\alpha\in S_{NC}(\sigma)$ if and only if we can write $\alpha=\bigsqcup_{i=1}^k\alpha_i$ for some $\alpha_i\in S_{NC}(c_i)$ for each $i$. In particular, $\leq$ induces a lattice structure on $S_{NC}(\sigma)$ for every permutation $\sigma$.

Furthermore, elements in $S_{NC}(\sigma)$ are completely determined by their associated partitions: if $\alpha,\alpha'\in S_{NC}(\sigma)$ and $\Pi(\alpha)=\Pi(\alpha')$, then $\alpha=\alpha'$.
\end{proposition}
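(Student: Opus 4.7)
The plan is to reduce everything to the single‐cycle case, where Proposition~\ref{prop:lattice-structure} applies after relabelling.

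First I would record two elementary facts about the length function on $S_p$. Fact (a): if $\beta, \beta'$ act on disjoint subsets of $[p]$, then $|\beta \sqcup \beta'| = |\beta| + |\beta'|$, since any minimal transposition factorisation of $\beta \sqcup \beta'$ splits into transpositions supported in the two parts (a transposition mixing the two parts would merge cycles from different sides and can be replaced by two ``pure'' transpositions without increasing length). Fact (b): the geodesic refinement property, that is, if $\alpha \le \sigma$ then $\Pi(\alpha) \le \Pi(\sigma)$, so every cycle of $\alpha$ is contained in the support of exactly one cycle $c_i$ of $\sigma$. This is the classical Hurwitz/Biane observation that a minimal factorisation of $\sigma = \alpha \cdot (\alpha^{-1}\sigma)$ into transpositions decomposes as a concatenation of minimal factorisations of each $c_i$, so in particular every factor, and hence $\alpha$ itself, preserves the set $\mathrm{supp}(c_i)$. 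I would cite \cite{biane1997some,nica2006lectures,mingo2017free} for (b) rather than reprove it.

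Granting (a) and (b), for $\alpha \in S_{NC}(\sigma)$ let $\alpha_i$ be the restriction of $\alpha$ to $\mathrm{supp}(c_i)$; Fact (b) says this is well defined and $\alpha = \bigsqcup_{i=1}^k \alpha_i$, and consequently $\alpha^{-1}\sigma = \bigsqcup_{i=1}^k \alpha_i^{-1} c_i$. Then by Fact (a),
\[
|\sigma| = |\alpha| + |\alpha^{-1}\sigma| = \sum_{i=1}^k \bigl(|\alpha_i| + |\alpha_i^{-1} c_i|\bigr) \ge \sum_{i=1}^k |c_i| = |\sigma|,
\]
where the inequality is the triangle inequality applied termwise. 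Equality forces $|\alpha_i| + |\alpha_i^{-1} c_i| = |c_i|$ for every $i$, i.e.~$\alpha_i \in S_{NC}(c_i)$. Conversely, Fact (a) shows that any disjoint product $\bigsqcup \alpha_i$ with $\alpha_i \in S_{NC}(c_i)$ lies in $S_{NC}(\sigma)$. This gives the claimed bijection $S_{NC}(\sigma) \cong S_{NC}(c_1) \times \cdots \times S_{NC}(c_k)$.

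To promote the bijection to an order isomorphism, I would take $\alpha, \beta \in S_{NC}(\sigma)$ with decompositions $\alpha = \bigsqcup \alpha_i$ and $\beta = \bigsqcup \beta_i$, and compute as above
\[
|\alpha| + |\alpha^{-1}\beta| - |\beta| = \sum_{i=1}^k \bigl(|\alpha_i| + |\alpha_i^{-1}\beta_i| - |\beta_i|\bigr),
\]
each summand being non‐negative. So $\alpha \le \beta$ is equivalent to $\alpha_i \le \beta_i$ in $S_{NC}(c_i)$ for all $i$. Since each $(S_{NC}(c_i),\le)$ is isomorphic to the lattice $(NC(\mathrm{supp}(c_i)),\le)$ by Proposition~\ref{prop:lattice-structure} (applied after relabelling $\mathrm{supp}(c_i)$ so that $c_i$ becomes the full cycle on its support), the product inherits a lattice structure, and componentwise meets and joins transport back to $S_{NC}(\sigma)$.

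Finally, for the uniqueness statement, suppose $\Pi(\alpha) = \Pi(\alpha')$ with $\alpha, \alpha' \in S_{NC}(\sigma)$. Fact (b) implies $\Pi(\alpha_i) = \Pi(\alpha'_i)$ cycle by cycle, so it suffices to show that an element of $S_{NC}(c_i)$ is determined by its underlying partition; this is exactly Proposition~\ref{prop:lattice-structure}(1), which asserts that the cycles must be listed in the cyclic order inherited from $c_i$. The main obstacle in this argument is really Fact (b): once the refinement property $\Pi(\alpha) \le \Pi(\sigma)$ is available, everything else is bookkeeping with the additivity of $|\cdot|$ on disjoint supports.
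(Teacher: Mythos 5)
Your argument is correct. Note that the paper itself does not prove this proposition: it is stated as a well-known fact with references to \cite{biane1997some,nica2006lectures,mingo2017free}, so there is no in-paper proof to compare against; your proposal supplies a complete argument along the standard lines. The reduction to single cycles via the refinement property $\Pi(\alpha)\leq\Pi(\sigma)$, the additivity of $|\cdot|$ on disjoint supports, and the termwise equality case of the triangle inequality is exactly the classical route, and the uniqueness statement correctly reduces to \cref{prop:lattice-structure}(1) after relabelling each cycle support. Two minor remarks: your justification of Fact (a) via splitting transpositions is looser than necessary — it follows in one line from $|\beta|=\operatorname{Card}(\operatorname{supp})-\#\beta$ and the fact that cycle counts add over disjoint supports; and since Fact (b) carries essentially all the non-trivial content (as you acknowledge), citing it from the same sources the paper cites for the whole proposition is legitimate but means your proof is a reduction to the literature rather than fully self-contained, which is consistent with how the paper treats the statement.
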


{As an example, for a permutation $\sigma ={(1\; 7\; 3) (2\; 5\; 6\; 4)}\in S_{7}$, we have
\begin{align*}
    (1\;3)(2\;4)(5\;6)\leq \sigma &\text{ and } (2\;6\;4)\leq \sigma,  
    \\
    (2\;3)\not\leq \sigma,\; (2\;6)(4\;5)\not\leq \sigma, &\text{ and } (2\;4\;6)\not\leq \sigma;
\end{align*}
{the last three relations fail, respectively, because: $2$ and $3$ do not belong to the same cycle of $\sigma$, the cycles $(26)$ and $(45)$ are crossing in $\sigma$, and the cycle $(246)$ is ordered differently in $\sigma$.}

Thanks to the lattice structure on $S_{NC}(\sigma)$, the join and meet operations on $S_{NC}(\sigma)$ is well-defined. We remark that the meet operation $\wedge$ is independent of the choice of $\sigma$: if $\alpha,\beta\in S_{NC}(\sigma)\cap S_{NC}(\sigma')$, then $\alpha\wedge_{S_{NC}(\sigma)}\beta=\alpha\wedge_{S_{NC}(\sigma')}\beta$. However, join operation $\vee$ may depend on $\sigma$: if $\alpha,\beta,\sigma,\sigma'\in S_5$ are 
    $$\alpha=(1\,2)(3\,4\,5),\;\; \beta=(2\,3),\;\; \sigma=\gamma_5,\;\;\sigma'=(1\,2\,5\,3\,4),$$
then $\alpha,\beta\in S_{NC}(\sigma)\cap S_{NC}(\sigma')$, but $\alpha\vee_{S_{NC}(\sigma)}\beta=\sigma$ while $\alpha\vee_{S_{NC}(\sigma')}\beta=\sigma'$.
}

{In this paper, we shall often consider \emph{tuples} of permutations and partitions in many kinds of computations. We shall use \emph{underlined symbols} to denote such tuples of objects. For example, we usually denote permutations and partitions by Greek letters, e.g.~$\alpha,\beta,\sigma,\tau \in S_p$ and $\pi,\tau\in \mathcal{P}(p)$; an $r$-tuple of permutations and partitions will be denoted thus by $\underline{\alpha} \in S_p^r$ and $\underline{\pi}\in \mathcal{P}(p)^r$, etc. In other words, $\underline{\alpha} = (\alpha_1, \alpha_2, \ldots, \alpha_r)$ and $\underline{\pi}=(\pi_1,\ldots, \pi_r)$. Furthermore, we naturally extend the operations on tuples of permutations or partitions in a component-wise manner: for instance,
\begin{align*}
    \underline{\sigma}\cdot\underline{\tau}&:=(\sigma_1\tau_1,\ldots, \sigma_r\tau_r), \quad  \underline{\sigma}^{-1}:=(\sigma_1^{-1},\ldots, \sigma_r^{-1}), \quad \alpha\underline{\sigma}\beta:=(\alpha\sigma_1\beta,\ldots, \alpha \sigma_r \beta)\\
    \underline{\pi}\vee\underline{\pi'}&:=(\pi_1\vee\pi'_1,\ldots, \pi_r\vee\pi'_r), \quad \underline{\pi}\wedge \underline{\pi'}:=(\pi_1\wedge \pi'_1,\ldots, \pi_r\wedge \pi'_r)\\
    \underline{\pi}\vee \rho&:=(\pi_1\vee\rho,\ldots, \pi_r\vee\rho), \quad \underline{\pi}\wedge \rho:=(\pi_1\wedge \rho,\ldots, \pi_r\wedge \rho),
\end{align*}
for $\underline{\sigma},\underline{\tau}\in S_p^r$, $\alpha,\beta\in S_p$, $\underline{\pi},\underline{\pi'}\in \mathcal{P}(p)^r$, and $\rho\in \mathcal{P}(p)$. Since most of our work is about generalizing Voiculescu's free probability theory (and especially Speicher's combinatorial approach to it) to the $r$-tensor setting, this notation will be very useful in our work. Most of the combinatorial machinery of free probability theory (moments, cumulants, non-crossing partitions, etc) will be generalized to the $r$-tensor setting by replacing the objects in the free case by $r$-tuples of objects in the tensor free case.
}

{
\begin{definition}
For positive integers $r,p$, we endow the set $(S_p)^r$ of $r$-tuples of permutations of $p$ elements with a \textit{partial order}, as follows: for $\underline \alpha, \underline \beta \in (S_p)^r$, 
$$ \underline \alpha \leq \underline \beta \qquad \text{ if } \qquad \forall s \in {[r]}, \quad  |\alpha_s| + |\alpha_s^{-1} \beta_s| = |\beta_s|.$$
In other words, $\underline \alpha \leq \underline \beta$ if for all $s=1,2,\ldots, r$, the permutation $\alpha_s$ lies on the geodesic $\id_p \to \alpha_s \to \beta_s$ between the identity permutation $\id_p$ and the permutation $\beta_s$.
{We also write $\underline{\alpha} \in S_{NC}(\underline{\beta})$.}
\end{definition}

With this definition, the poset $\big((S_p)^r, \leq \big)$ is the $r$-times direct product (see \cite[Definition 9.27]{nica2006lectures}) of the poset $(S_p,\leq)$. Using the characterization of geodesics in $S_p$ from \cref{prop:lattice-structure,prop:lattice-structure2}, we can describe the partial order on $(S_p)^r$ even more explicitly: $\underline \alpha \leq \underline \beta$ if the following conditions hold for all $s \in [p]$: 
\begin{itemize}
	\item every cycle of $\alpha_s$ is contained inside some cycle of $\beta_s$ (as sets)
	\item the cycles of $\alpha_s$ induce non-crossing partitions on the cycles of $\beta_s$
	\item the cycles of $\alpha_s$ have the same cyclic order as the cycles of $\beta_s$ they belong to.
\end{itemize}

Note that the poset $\big( (S_p)^r, \leq \big)$ is \emph{not} a lattice for $r \geq 1$ and $p \geq 3$. For example, in $S_3$ the two full cycles $(123)$ and $(132)$ do not admit a common larger element: $\sigma \geq (123) \implies \sigma = (123)$ and similarly for $(132)$. The same two full cycles do not admit a largest common smaller element: the three transpositions $(12)(3)$, $(13)(2)$, and $(1)(23)$ are all common smaller elements, and maximal with respect to $\leq$. This is in contrast with the case of the poset of non-crossing partitions, which is a lattice. In a similar vein, for all permutations $\alpha \in S_p$, the poset $S_{NC}(\alpha)$ is a lattice, see \cref{prop:lattice-structure}.

}

{Let us further collect several properties of \textit{pairings} discussed in \cite{MP13,MP19}{; these properties will be particularly useful in the study of transpositions of random matrix models and, more generally, in the study of random matrix models with orthogonal invariance}. First, we denote by $\mathcal{P}_2(\pm p)$ the set of pairings on the set $[\pm p] := [p] \sqcup [-p]$, where $[-p] := \{-1, -2, \dots, -p\}$. Throughout this paper, we will fix the pairing $\delta := (1, -1)(2, -2) \cdots (p, -p) \in \mathcal{P}_2(\pm p)$ and consider two natural embeddings
\begin{enumerate}
    \item $\mathcal{P}_2(\pm p)\hookrightarrow S_{\pm p}:=S([\pm p])$, \quad $\pi\mapsto \pi$ (identifying each pair as a transposition), 

    \item $S_p\hookrightarrow S_{\pm p}$, \quad $\alpha\mapsto \alpha:= \alpha\sqcup \id_{[-p]}$.
\end{enumerate}
Following these identifications, one can consider two other embeddings 
\begin{enumerate}
    \item $S_p\hookrightarrow \mathcal{P}_2(\pm p)$, \quad $\alpha\mapsto \alpha\delta\alpha^{-1}=(\alpha(1), -1)\cdots (\alpha(p), -p)$,

    \item $S_p\hookrightarrow S_{\pm p}$, \quad $\alpha\mapsto \alpha\delta\alpha^{-1}\delta = \alpha \sqcup (\delta\alpha\delta)^{-1}$,
\end{enumerate}
as in \cite{MP19,collins2024free}. Here the multiplications above are well-defined in $S_{\pm p}$. One can greatly benefit from these identifications when we deal with the join operation between two pairings, as shown in the lemmas below.

\begin{lemma}[{\cite[Lemma 2]{MP13}}] \label{lem-PairingSup}
Let $\pi,\rho \in \mathcal{P}_2(\pm p)$ be pairings and $c= (i_1\;i_2\;\cdots\; i_n)$ a cycle of $\pi\rho\in S_{\pm p}$. Let $j_k=\rho(i_k)$. Then $(j_n\; j_{n-1}\; \cdots \; j_1)=\rho c^{-1}\rho$ is also a cycle of $\pi\rho$, these two cycles are distinct, and $\{i_1, j_1, i_2,j_2, \ldots, i_n, j_n\}$ is a block of $\pi\vee \rho\in \mathcal{P}(\pm p)$. In particular:
$$2\underbrace{\#(\pi\vee \rho)}_{\substack{\text{number of blocks of}\\\text{the partition $\pi \vee \rho$}}} =\underbrace{\#(\pi\rho)}_{\substack{\text{number of cycles of}\\\text{the permutation $\pi\rho$}}}.$$
\end{lemma}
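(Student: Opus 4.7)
The plan is to exploit the elementary identity $\rho(\pi\rho)\rho^{-1} = \rho\pi = (\pi\rho)^{-1}$, which follows from $\rho^2 = \id$ and says that conjugation by $\rho$ inverts $\pi\rho$. Every part of the lemma falls out of this observation together with the assumption that \emph{both} $\pi$ and $\rho$ are pairings (i.e.\ fixed-point-free involutions).

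\emph{Producing the second cycle.} If $c$ is a cycle of $\pi\rho$, then $\rho c\rho^{-1}$ is a cycle of $\rho(\pi\rho)\rho^{-1} = (\pi\rho)^{-1}$, hence its inverse $\rho c^{-1}\rho$ is a cycle of $\pi\rho$. Computing this conjugation on $c^{-1}=(i_n\,\cdots\,i_1)$ yields $(\rho(i_n)\,\cdots\,\rho(i_1)) = (j_n\,\cdots\,j_1)$, as required.

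\emph{Distinctness.} Suppose for contradiction that $c$ and $\rho c^{-1}\rho$ are the same cycle of $\pi\rho$. Then their supports coincide, so $\rho$ stabilizes $S := \{i_1, \ldots, i_n\}$; since $\rho|_S$ is still a fixed-point-free involution, $n = |S|$ must be even. The equality $c = \rho c^{-1}\rho$ also says $\rho|_S$ conjugation-inverts the $n$-cycle $c|_S$, so $\langle c|_S, \rho|_S\rangle$ is the dihedral group of order $2n$ acting on $S$, and $\pi|_S = (\pi\rho)\rho|_S = c \cdot \rho|_S$ is another reflection in this group. In $D_{2n}$ with $n$ even, the $n$ reflections split into two conjugacy classes, those with $0$ fixed points (through midpoints of opposite edges of the $n$-gon) and those with $2$ fixed points (through opposite vertices), and left multiplication by the rotation $c$ swaps the two classes. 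Hence $\rho|_S$ and $c\cdot\rho|_S = \pi|_S$ cannot both be fixed-point-free on $S$, contradicting that $\pi$ and $\rho$ are pairings.

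\emph{Block identification and counting.} By definition $i_k \stackrel{\rho}{\sim} j_k$, and $\pi(j_k) = \pi\rho(i_k) = i_{k+1}$ gives $j_k \stackrel{\pi}{\sim} i_{k+1}$, so all $2n$ elements of $\{i_1, j_1, \ldots, i_n, j_n\}$ (distinct, by the previous step) lie in a single block of $\pi \vee \rho$. Conversely this set is closed under both $\pi$ and $\rho$, so it is exactly a block. Since every block of $\pi \vee \rho$ is closed under $\pi$ and $\rho$, hence under $\pi\rho$, it is a union of $\pi\rho$-cycle supports, which by the above come in distinct conjugate pairs $\{c, \rho c^{-1}\rho\}$, giving one pair per block and therefore $\#(\pi\rho) = 2\,\#(\pi\vee\rho)$. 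The main obstacle is the distinctness step: it genuinely requires \emph{both} $\pi$ and $\rho$ to be fixed-point-free, so any proof must use both hypotheses simultaneously; the dihedral-group packaging above is the cleanest way I see to do this, though a direct combinatorial argument producing a $\pi$- or $\rho$-fixed point from $c = \rho c^{-1}\rho$ would also work.
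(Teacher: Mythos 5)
Your proof is correct. The paper states this lemma only as a citation to [MP13, Lemma 2] and gives no proof of its own, so there is nothing to compare against; your route (conjugation by $\rho$ inverts $\pi\rho$, then a parity-of-reflections contradiction for distinctness, then closure under $\pi$ and $\rho$ for the block identification) is essentially the standard argument. One cosmetic caveat: for $n\le 2$ the group $\langle c|_S,\rho|_S\rangle$ is not dihedral of order $2n$ (for $n=2$ one has $\rho|_S=c|_S$, so the group has order $2$), but the explicit computation underlying your claim --- $\rho|_S(x)=a-x$ with no fixed point forces $a$ odd, whence $\pi|_S(x)=(a+1)-x$ fixes $x=(a+1)/2$ --- still produces the contradiction in those degenerate cases, so nothing breaks.
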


\begin{lemma} [{\cite[Section 5]{MP19}}] \label{lem-PairingEmb}
The map $\alpha \mapsto \alpha\delta\alpha^{-1}$ is a bijection from $S_p$ onto $\mathcal{P}_2^{\delta}(\pm p)$, the set of pairings $\pi$ such that $\pi(k)\in [-p]$ for all $k\in [p]$ (or equivalently, $\pi\delta$ leaves $[p]$ invariant: $\pi\delta\big([p])=\pi([-p])\subset [p]$). Furthermore, $\pi\in \mathcal{P}^{\delta}_2(\pm p) \mapsto \pi\delta\big|_{[p]}\in S_p$ is the inverse map.
\end{lemma}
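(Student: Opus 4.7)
The plan is to verify both directions of the bijection by direct computations, using carefully the embedding $S_p\hookrightarrow S_{\pm p}$ as $\alpha\mapsto \alpha\sqcup \id_{[-p]}$ (so every $\alpha\in S_p$ fixes the negative indices when viewed in $S_{\pm p}$).

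First, I would check that $\alpha\mapsto \alpha\delta\alpha^{-1}$ is well defined with target $\mathcal{P}_2^{\delta}(\pm p)$. Since conjugation preserves cycle type and $\delta$ is a product of $p$ disjoint transpositions, $\alpha\delta\alpha^{-1}$ is automatically a pairing in $\mathcal{P}_2(\pm p)$. To describe it concretely, I would use that $\alpha$ fixes every element of $[-p]$: for each $k\in[p]$,
\begin{equation*}
(\alpha\delta\alpha^{-1})(\alpha(k))=\alpha(\delta(k))=\alpha(-k)=-k,
\end{equation*}
which simultaneously gives the explicit product-of-transpositions formula of the statement and shows $(\alpha\delta\alpha^{-1})([p])\subseteq[-p]$, i.e.\ $\alpha\delta\alpha^{-1}\in\mathcal{P}_2^{\delta}(\pm p)$.

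Next I would verify that the candidate inverse $\pi\mapsto\pi\delta|_{[p]}$ lands in $S_p$. Since $\pi$ is a fixed-point-free involution with $\pi([p])\subseteq[-p]$, involutivity forces $\pi([-p])\subseteq[p]$, so $\pi\delta(k)=\pi(-k)\in[p]$ for every $k\in[p]$; this yields a well-defined element $\alpha_\pi:=\pi\delta|_{[p]}\in S_p$ (injectivity is immediate from $\pi$ being a bijection).

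The remaining step, which is the actual content, is to show that these two assignments are mutual inverses. In one direction, the explicit formula from the first step gives $(\alpha\delta\alpha^{-1})(-k)=\alpha(k)$, hence $\alpha_{\alpha\delta\alpha^{-1}}=\alpha$. In the other, given $\pi\in\mathcal{P}_2^{\delta}(\pm p)$, I would first obtain $\alpha_\pi^{-1}(k)=-\pi(k)$ from involutivity and then compute
\begin{equation*}
(\alpha_\pi\delta\alpha_\pi^{-1})(k)=\alpha_\pi(\delta(-\pi(k)))=\alpha_\pi(\pi(k))=\pi(k),
\end{equation*}
where the last equality uses that $\pi(k)\in[-p]$ is fixed by $\alpha_\pi$.

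I do not expect a genuine obstacle: the lemma is essentially a change of bookkeeping between permutations of $[p]$ and pairings of $[\pm p]$ anchored at $\delta$. The only thing to guard against is the dual role of the symbols, since elements of $S_p$ must be extended by the identity on $[-p]$ when multiplied inside $S_{\pm p}$, whereas pairings in $\mathcal{P}_2(\pm p)$ act nontrivially on both halves. Once this convention is fixed and applied uniformly, the verification above is entirely mechanical.
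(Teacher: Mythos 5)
Your proof is correct. The paper states this lemma without proof (citing [MP19, Section 5]), and your direct verification — the explicit formula $\alpha\delta\alpha^{-1}=\prod_k(\alpha(k),-k)$, the well-definedness of $\pi\mapsto\pi\delta|_{[p]}$ via involutivity and cardinality, and the two composition checks — is exactly the right mechanical argument; the only step left implicit is that agreement of $\alpha_\pi\delta\alpha_\pi^{-1}$ and $\pi$ on $[p]$ extends to $[-p]$, which is immediate since both are involutions mapping $[p]$ onto $[-p]$.
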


Recall from the definition of disjoint product that $\bigsqcup_{k=1}^p S(\{k,-k\})$ is the set of permutations $\eps\in S_{\pm p}$ such that $\eps(k)\in \{\pm k\}$ for each $k\in [p]$. In particular, $\delta\in \bigsqcup_{k=1}^p S(\{k,-k\})$, every element $\eps\in \bigsqcup_{k=1}^p S(\{k,-k\})$ satisfies $\eps^2=\id_{\pm p}$, and all such $\eps$ commute with each other.

\begin{lemma} \label{lem-pairings}
\begin{enumerate}
    \item For every $\pi\in \mathcal{P}_2(\pm p)$, one can write $\pi=\eps \sigma\delta\sigma^{-1}\eps$ or equivalently,
        $$\pi\delta=(\eps\sigma\eps)\delta(\eps\sigma\eps)^{-1}\delta=\eps\sigma\delta\sigma^{-1}\delta\eps,$$
    for some $\sigma\in S_p$ and $\eps\in \bigsqcup_{k=1}^p S(\{k,-k\})\subset S_{\pm p}$. Moreover, we have $|\pi\delta|=2|\sigma|$.

    \item For $\pi\in \mathcal{P}_2(\pm p)$ and $\alpha\in S_p$, we have $\pi\delta\in S_{NC}(\alpha\delta\alpha^{-1}\delta)$ if and only if there exists $\sigma\in S_{NC}(\alpha)$ such that $\pi=\sigma\delta\sigma^{-1}$. Moreover, the corresponding $\sigma$ is uniquely determined.

    \item For $\pi,\rho\in \mathcal{P}_2(\pm p)$, we have $\rho\delta\in S_{NC}(\pi\delta)$ if and only if there exists $\sigma, \tau\in S_p$ and $\eps\in \bigsqcup_{k=1}^p S(\{k,-k\})$ such that $\tau\in S_{NC}(\sigma)$ and
        $$\pi=\eps\sigma\delta\sigma^{-1}\eps,\quad \rho=\eps\tau\delta\tau^{-1}\eps.$$
\end{enumerate}
\end{lemma}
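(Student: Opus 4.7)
The plan is to prove the three parts in sequence, with (2) and (3) reducing cleanly to Lemma \ref{lem-PairingEmb} together with the construction produced in part (1).

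For part (1), the main task is to construct $\eps\in \bigsqcup_{k=1}^p S(\{k,-k\})$ such that $\eps\pi\eps\in \mathcal{P}_2^\delta(\pm p)$; once this is done, Lemma \ref{lem-PairingEmb} yields a unique $\sigma\in S_p$ with $\eps\pi\eps=\sigma\delta\sigma^{-1}$. I would classify each pair of $\pi$ as \emph{positive} (both elements in $[p]$), \emph{negative} (both in $[-p]$), or \emph{mixed}, and encode the question of whether $\eps$ swaps $k\leftrightarrow -k$ by a bit $x_k\in\{0,1\}$. A direct check shows that $\eps\pi\eps\in \mathcal{P}_2^\delta(\pm p)$ holds if and only if a linear system over $\mathbb{F}_2$ is satisfied: positive and negative pairs $\{a,b\}$ impose $x_{|a|}+x_{|b|}=1$, non-$\delta$ mixed pairs impose $x_{|a|}+x_{|b|}=0$, and $\delta$-pairs impose no constraint. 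Since each $k\in[p]$ appears in at most two such equations (from the pairs of $k$ and of $-k$), the constraint graph on $[p]$ has maximum degree $2$ and decomposes into paths and cycles; a parity argument tracking sign alternation around each cycle shows that every cycle contains an even number of ``odd'' equations, which makes the system solvable. Once $\eps$ exists, the length identity $|\pi\delta|=2|\sigma|$ follows from $\pi\delta=\eps(\sigma\delta\sigma^{-1}\delta)\eps$ (using $\eps\delta=\delta\eps$) together with the decomposition of $\sigma\delta\sigma^{-1}\delta$ as $\sigma$ on $[p]$ and $\sigma^{-1}$ on $[-p]$; conjugation preserves length.

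For part (2), the identity $\alpha\delta\alpha^{-1}\delta=\alpha\sqcup(\delta\alpha\delta)^{-1}$ shows that every cycle of $\alpha\delta\alpha^{-1}\delta$ lies entirely in $[p]$ or in $[-p]$. If $\pi\delta\leq \alpha\delta\alpha^{-1}\delta$, then every cycle of $\pi\delta$ must also sit in one of these halves, so $\pi\delta$ preserves $[p]$ and $[-p]$. Since $\pi\delta(k)=\pi(-k)$ for $k\in[p]$, this forces $\pi\in \mathcal{P}_2^\delta(\pm p)$, and Lemma \ref{lem-PairingEmb} gives the unique $\sigma\in S_p$ with $\pi=\sigma\delta\sigma^{-1}$. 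The factorization $\pi\delta=\sigma\sqcup(\delta\sigma\delta)^{-1}$ then splits the geodesic identity in $S_{\pm p}$ into two copies of $|\sigma|+|\sigma^{-1}\alpha|=|\alpha|$ in $S_p$, i.e.~$\sigma\in S_{NC}(\alpha)$. The converse is a direct verification using the same decomposition, and uniqueness of $\sigma$ is immediate from Lemma \ref{lem-PairingEmb}.

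For part (3), I would conjugate by $\eps$ to reduce to part (2). Writing $\pi=\eps\sigma\delta\sigma^{-1}\eps$ via part (1) and using $\eps\delta=\delta\eps$, we get $\pi\delta=\eps(\sigma\delta\sigma^{-1}\delta)\eps$. Conjugation by $\eps$ is an involutive isometry of $S_{\pm p}$, hence preserves $\leq$ and induces a bijection $S_{NC}(\pi\delta)\to S_{NC}(\sigma\delta\sigma^{-1}\delta)$ via $a\mapsto \eps a\eps$. Given $\rho\delta\leq\pi\delta$, the image $\eps\rho\delta\eps=(\eps\rho\eps)\delta$ lies in $S_{NC}(\sigma\delta\sigma^{-1}\delta)$, so part (2) produces $\tau\in S_{NC}(\sigma)$ with $\eps\rho\eps=\tau\delta\tau^{-1}$, i.e.~$\rho=\eps\tau\delta\tau^{-1}\eps$. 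The converse follows by running the same steps backward. The main obstacle is the existence of $\eps$ in part (1): while the reductions in (2) and (3) are clean consequences of Lemma \ref{lem-PairingEmb}, the sided decomposition of $\alpha\delta\alpha^{-1}\delta$, and the commutation $\eps\delta=\delta\eps$, the combinatorial parity argument producing $\eps$ is the genuinely nontrivial ingredient.
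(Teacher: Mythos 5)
Your parts (2) and (3) follow essentially the same route as the paper: (2) reduces to \cref{lem-PairingEmb} via the observation that $\alpha\delta\alpha^{-1}\delta=\alpha\sqcup(\delta\alpha\delta)^{-1}$ stabilizes the two halves of $[\pm p]$, and (3) conjugates by $\eps$ (using $\eps\delta=\delta\eps$ and the fact that conjugation is an isometry) to reduce to (2); both are correct. Part (1), however, is a genuinely different argument. The paper obtains $\eps$ and $\sigma$ directly from \cref{lem-PairingSup}: the cycles of $\pi\delta$ come in mirror pairs $c_k,\,c_k'=\delta c_k^{-1}\delta$, so $\pi\delta=(c_1\cdots c_l)\delta(c_1\cdots c_l)^{-1}\delta$, and one simply chooses $\eps$ to flip the support of $c_1\cdots c_l$ into $[p]$. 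Your route instead solves for the sign-flip pattern of $\eps$ as a linear system over $\mathbb{F}_2$ on the constraint graph whose edges are the non-$\delta$ pairs of $\pi$; since every vertex has degree $0$ or $2$, the components are isolated vertices and cycles, and your consistency condition does hold: traversing a cycle, the two pairs meeting at a vertex $k$ use $k$ and $-k$ respectively, so the parity labels record sign changes in a cyclic sign sequence, of which there is always an even number. Both approaches are valid. The paper's is shorter and reuses the already-established cycle structure of $\pi\rho$ for pairings (and it makes the relation $|\pi\delta|=2|\sigma|$ immediate from the mirror-cycle decomposition); yours is more elementary and self-contained but requires the graph/parity analysis, which you only sketch — if you write it up, you should spell out why the two edges at a vertex use opposite signs of that vertex and why this forces an even number of same-sign pairs per cycle, since that is the crux of solvability.
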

\begin{proof}
(1) According to \cref{lem-PairingSup}, the cycle decomposition of $\pi\delta$ is of the form
    $\pi\delta=c_1c_1'\cdots c_l c_l'$,
where the cycle $c_k=(i_1\,\cdots\, i_n)$ induces another cycle $c_k'=(-i_n\,\cdots\,-i_1)=\delta c_k^{-1}\delta$. Since all the cycles $c_1,\ldots, c_l, c_1',\ldots, c_k'$ commute, we can write
    $$\pi\delta=(c_1\cdots c_l)\delta (c_1\cdots c_l)^{-1}\delta.$$
Note that the permutation $c_1\cdots c_l$ stabilizes a set $I_p$ (i.e., $(c_1\cdots c_l)\big|_{I_p}=\id_{I_p}$) having cardinality $p$ and $I_p\sqcup (-I_p)= [\pm p]$ since $c_k$ and $c_k'$ are disjoint cycles acting on the same set up to opposite signs. Therefore, we can find a permutation $\eps\in \bigsqcup_{k=1}^p S(\{k,-k\})$ such that $\sigma=\eps(c_1\cdots c_l)\eps$ stabilizes the set $\eps(I_p)=[-p]$, i.e., $\sigma\in S_p$. Since $\eps$ commutes with $\delta$, we have
    $$\pi\delta=(\eps\sigma\eps)\delta (\eps\sigma\eps)^{-1}\delta=\eps\sigma\delta\sigma^{-1}\delta\eps.$$
Moreover, we have $|\pi\delta|=|\sigma\delta\sigma^{-1}\delta|=|\sigma|+|\delta\sigma^{-1}\delta|=2|\sigma|$ since $\sigma$ and $\delta\sigma^{-1}\delta$ have disjoint cycle decompositions (after ignoring all singletons).

(2) Since $\alpha\delta\alpha^{-1}\delta=\alpha\sqcup (\delta\alpha\delta)^{-1}$ leaves $[p]$ invariant, \cref{prop:lattice-structure2} implies that $\pi\delta$ also leaves $[p]$ invariant. Therefore, \cref{lem-PairingEmb} implies that there exists unique $\sigma\in S_p$ such that $\pi=\sigma\delta\sigma^{-1}$. Furthermore, $\sigma\in S_{NC}(\alpha)$ again by \cref{prop:lattice-structure2}.

(3) Let us write $\pi=\eps\sigma\delta\sigma^{-1}\eps$ as in (1). Then the condition $\rho\delta\in S_{NC}(\pi\delta)$ is equivalent to
\begin{align*}
    |\rho\delta|+|(\rho\delta)^{-1}(\eps\sigma\delta\sigma^{-1}\eps\delta)|=|\eps\sigma\delta\sigma^{-1}\eps\delta| &\iff |\eps\rho\delta\eps|+|(\eps\rho\delta\eps)^{-1}(\sigma\delta\sigma^{-1}\delta)|=|\sigma\delta\sigma^{-1}\delta|\\
    &\iff (\eps\rho\eps)\delta\in S_{NC}(\sigma \delta\sigma^{-1}\delta),
\end{align*}
since $\eps$ and $\delta$ commute. Therefore, the conclusion follows from (2).
\end{proof}

We present in \cref{fig:example-pairing-epsilon-delta} the decomposition of the pairing $\pi = (1\, 2)(-2 \, 3)(-3\, 4)(-1 \, -4)$ according to the result above:
$$(1\, 2)(-2 \, 3)(-3\, 4)(-1 \, -4) =  \underbrace{(-1 \, 1)}_{\epsilon} \cdot \underbrace{(1\, 2\, 3\, 4)}_{\sigma} \cdot \delta \cdot (1\, 2\, 3\, 4)^{-1} \cdot (-1 \, 1).$$

\begin{figure}[!htb]
    \centering
    \includegraphics[width=0.8\linewidth]{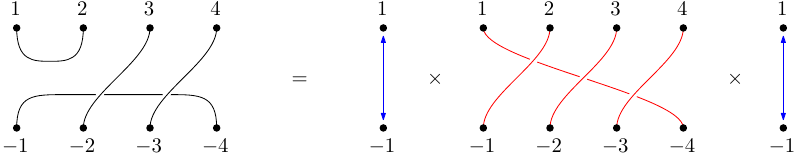}
    \caption{Factoring of $\pi = (1\, 2)(-2 \, 3)(-3\, 4)(-1 \, -4)$ as $\pi = \textcolor{blue}{\eps} \textcolor{red}{\sigma\delta\sigma^{-1}}\textcolor{blue}{\eps}$, with $\textcolor{blue}{\eps} = (-1 \, 1)$ and $\textcolor{red}{\sigma\delta\sigma^{-1}} = \prod_{i=1}^4 (-i\;\sigma(i))$.}
    \label{fig:example-pairing-epsilon-delta}
\end{figure}
}

\subsection{Moments and free cumulants associated to permutations} \label{sec:preliminary-FreeProb}

Let us recall basic notions from Free Probability Theory{; the reader can consult the reference texts \cite{voiculescu1992free, nica2006lectures, mingo2017free} for further information}. A \textit{non-commutative probability space} $(\A,\varphi)$ consists of a unital algebra $\A$ over $\Comp$ and a unital linear functional $\varphi:\A\to \Comp$ (called an \textit{expectation} or a \textit{state}). For our purposes, we always assume that $\varphi$ is \textit{tracial}, i.e.~$\varphi(ab)=\varphi(ba)$ for all $a,b\in \A$. Elements in $\A$ are called \textit{non-commutative random variables}. The \textit{distribution} of a family $\W\subset \A$ is the data of (joint) moments $\big(\varphi(P(\W))\big)_P$, where $P$ runs over all \emph{non-commutative} complex polynomials on finitely many variables, evaluated on entries from $\W$. Using linearity, the state $\varphi$ is completely determined by moments of monomials
    $$\varphi(x_1 \cdots x_p); \quad p\geq 1,\;\;,x_1,\ldots, x_p\in \A.$$
For a set $I$, unital subalgebras $(\A_i)_{i\in I}$ of $\A$ are called \textit{freely independent} (or \textit{free}) if every alternating product of centered elements is centered, i.e.,
    $$x_i\in \A_{f(i)}, \;\varphi(x_i)=0 \text{ for $i\in [p]$ and $f(i)\neq f(i+1)$ for $i=1,\ldots, p-1$} \implies \varphi(x_1\cdots x_p)=0.$$
Furthermore, subsets $(\W_i)_{i\in I}$ are called free if the unital algebras generated by each $\W_i$ are free. The definition of freeness provides a universal rule to compute the joint distribution of $\bigcup_{i\in I}\A_i$ (resp. $\bigcup_{i\in I}\W_i$) in terms of marginal distributions of $\A_i$ (resp. $\W_i$) (\cite[Lecture 5]{nica2006lectures}).

We sometimes consider the case where $\A$ has an \textit{involution} $*:\A\to \A,\;\; a\mapsto a^*$, which is conjugate-linear and $(ab)^*=b^*a^*$ for $a,b\in \A$. We call $(\A,\varphi)$ a \textit{$*$-probability space} if it is non-commutative probability space with an involution and $\varphi(a^*a)\geq 0$ for all $a\in \A$. Then the \emph{$*$-distribution} of $\W\subset \A$ is defined as the distribution of $\W\cup \W^*$ and the \emph{$*$-free independence} between $\W_1,\ldots, \W_L$ is defined as the freeness between $\W_1\cup \W_1^*,\ldots, \W_L\cup \W_L^*$. However, for full generality, non-commutative probability spaces in this paper have the minimal structure and may not necessarily have an involution unless otherwise specified.

The notion of free cumulants, introduced by Speicher \cite{speicher1994multiplicative}, provides a useful tool when we deal with free independence. For each $p\geq 1$ and permutation $\sigma\in S_p$, we first associate a multilinear functional $\varphi_{\sigma}:\A^p\to \mathbb{C}$ defined by
\begin{equation} \label{eq-MomentPerm}
    \varphi_{\sigma}(x_1,\ldots, x_p)=\prod_{\substack{c\in \cyc(\sigma)\\ c=(i_1\,i_2\,\cdots\,i_n)}} \varphi(x_{i_1}\cdots x_{i_n}).
\end{equation}
For example, we have
    $$\varphi_{(13)(624)(5)}(x_1,\ldots, x_6)=\varphi(x_1x_3)\varphi(x_6 x_2 x_4) \varphi(x_5), \quad \varphi_{\gamma_p}(x_1,\ldots, x_p)=\varphi(x_1\cdots x_p).$$
This notation mimics the one when the algebra $\A$ is a matrix algebra, and the functional $\phi$ is the trace; see e.g. \cref{eq-SingleTraceInv}.

Then for $\sigma\in S_p$, we define the \textit{free cumulants} $\tilde{\kappa}_{\sigma}:\A^p\to \Comp$ as multilinear functionals satisfying the so-called \textit{free moment-cumulant relation} 
\begin{equation} \label{eq-FreeMomentCumulant}
    \varphi_{\sigma}(x_1, \ldots, x_p)=\sum_{\tau\in S_{NC}(\sigma)} \tilde{\kappa}_{\tau}(x_1,\ldots, x_p), \quad \sigma\in S_p,\;\; x_1,\ldots, x_p\in \A.
\end{equation}
From the M\"{o}bius inversion formula over each lattice $S_{NC}(\sigma)$, one can invert the above relation and write the formula of $\tilde{\kappa}_{\sigma}$ in terms of $\varphi_{\tau}$'s:
\begin{equation} \label{eq-FreeMomentCumulant2}
    \tilde{\kappa}_{\sigma}(x_1, \ldots, x_p)=\sum_{\tau\in S_{NC}(\sigma)} \varphi_{\tau}(x_1,\ldots, x_p) \Mob(\tau^{-1}\sigma), \quad \sigma\in S_p,\;\; x_1,\ldots, x_p\in \A,
\end{equation}
where the \textit{M\"{o}bius function} $\Mob:S_p\to \Comp$ is defined by
    $$\Mob(\sigma) := \prod_{c \in \cyc(\sigma)} (-1)^{|c|} \operatorname{Cat}_{|c|}$$
with the Catalan numbers $\operatorname{Cat}_n := \frac{1}{n+1} \binom{2n}{n}$ (recall that the length $|c|$ of a cycle $c$ equals the cardinality of $c$ minus $1$). {In particular, this shows that the implicit definition of free cumulant $\tilde \kappa_\tau$ from \cref{eq-FreeMomentCumulant} does not depend on the permutation $\sigma \geq \tau$.} {Finally, we denote by $\varphi_{\sigma}(x):=\varphi_{\sigma}(x,x,\ldots, x)$ and $\tilde{\kappa}_{\sigma}(x):=\tilde{\kappa}_{\sigma}(x,x,\ldots, x)$ for simplicity.} Let us recall the \textit{multiplicativity} of $(\varphi_{\sigma})_{\sigma\in \bigsqcup_{p\geq 1}S_p}$ and $(\tilde{\kappa}_{\sigma})_{\sigma\in \bigsqcup_{p\geq 1}S_p}$ \cite[Lecture 11]{nica2006lectures}: for $\alpha\in S_p$, $\beta\in S_q$, and $x_1,\ldots, x_p,y_1,\ldots, y_q\in \A$,
\begin{align*}
    \varphi_{\alpha\sqcup\beta} (x_1,\ldots, x_p,y_1,\ldots, y_q) &= \varphi_{\alpha} (x_1,\ldots, x_p)\varphi_{\beta} (y_1,\ldots, y_q),\\
    \tilde{\kappa}_{\alpha\sqcup\beta} (x_1,\ldots, x_p,y_1,\ldots, y_q) &= \tilde{\kappa}_{\alpha} (x_1,\ldots, x_p)\tilde{\kappa}_{\beta} (y_1,\ldots, y_q).
\end{align*}

The free cumulants are the free analogs of \textit{classical cumulants} $k_p:(L^{\infty-}(\mathbb{P}))^p\to \Comp$, where $L^{\infty-}(\mathbb{P})=\bigcap_{p=1}^{\infty}L^p(\Om,\mathcal{F},\mathbb{P})$ is the set of (complex-valued) random variables with respect to a probability space $(\Om,\mathcal{F},\mathbb{P})$ having finite moments of all orders (see \cite[Examples 1.4.(1) and Definition 11.28]{nica2006lectures}). The classical moment-cumulant relations correspond to
\begin{align*}
    \E_\pi(X_1,\cdots ,X_p) &= \sum_{\rho\leq \pi} k_{\rho}(X_1,\ldots, X_p),\\
    k_\pi(X_1,\cdots ,X_p) &= \sum_{\rho\leq \pi} \E_{\rho}(X_1,\ldots, X_p)\Mob_{\mathcal{P}}(\rho,\pi),
\end{align*}
for $\pi\in \mathcal{P}(p)$, where $\E_{\pi}(X_1,\ldots, X_p):=\prod_{B\in \pi}\E\Big[\prod_{j\in B} X_j\Big]$ and $\Mob_{\mathcal{P}}$ denotes the M\"{o}bius function associated from the lattice $(\mathcal{P}(p),\leq)$ (we refer to \cite[Exercise 10.31-33]{nica2006lectures}).

\medskip

Let us simply write $\tilde{\kappa}_p:=\tilde{\kappa}_{\gamma_p}$. One can also use the notations $\tilde{\kappa}_{\pi}$ and $\varphi_{\pi}$ for a \textit{non-crossing partitions} $\pi\in NC(p)$, via the identification $S_{NC}(\gamma_p)\cong NC(p)$ in \cref{prop:lattice-structure}. In this case, we have 
    $$\tilde \kappa_{\pi}(x_1,\ldots, x_p)=\prod_{B\in \pi}\tilde{\kappa}_{{\rm Card}(B)}((x_j)_{j\in B}),\quad \varphi_{\pi}(x_1,\cdots x_p)=\prod_{B\in \pi}\varphi \Big(\vec{\prod}_{j\in B}\,x_j \Big),$$
where the product $\vec{\prod}$ is taken in increasing order of the indices $j\in B$. Furthermore, the free moment-cumulant relation becomes
    $$\varphi_{\pi}(x_1,\ldots, x_p)=\sum_{\substack{\rho\in NC(p)\\ \rho\leq \pi}}\tilde{\kappa}_{\rho}(x_1,\ldots, x_p), \quad \pi\in NC(p).$$
While these notations based on non-crossing partitions are more standard in the literature, our notation using permutations will become more natural when we consider the tensor versions of moments and free cumulants later.

One of the most important properties of free cumulants is that the free independence is characterized by the vanishing of mixed free cumulants. For details, we refer to \cite{speicher1994multiplicative} and \cite[Theorem 11.16 and 11.20]{nica2006lectures}.

\begin{theorem} \label{thm-FreeSubsets}
Subsets $\W_1,\ldots, \W_L$ in a non-commutative probability space $(\A,\varphi)$ are freely independent if and only if {every mixed free cumulant vanishes}, i.e., for every $p\geq 2$,
\begin{center}
    $\tilde{\kappa}_p(x_{1},\ldots, x_{p})=0$ whenever $x_{j}\in \A_{f(j)}$ and $f(l)\neq f(k)$ for some $l,k\in [p]$.
\end{center}
\end{theorem}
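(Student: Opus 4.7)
The plan is to verify both implications via the moment-cumulant relation \eqref{eq-FreeMomentCumulant} and the combinatorics of non-crossing partitions.

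For the direction ``vanishing of mixed cumulants $\Rightarrow$ free independence,'' I would consider centered inputs $x_i \in \A_{f(i)}$ with $f(i) \neq f(i+1)$ for all $i \in [p-1]$, and expand
$$\varphi(x_1 \cdots x_p) = \varphi_{\gamma_p}(x_1, \ldots, x_p) = \sum_{\pi \in NC(p)} \tilde \kappa_\pi(x_1, \ldots, x_p)$$
via \eqref{eq-FreeMomentCumulant}. The vanishing hypothesis annihilates every term whose partition $\pi$ possesses a non-monochromatic block under $f$, so the sum collapses onto $f$-monochromatic non-crossing partitions. I would then invoke the standard combinatorial fact that every $\pi \in NC(p) \setminus \{0_p\}$ contains an \emph{interval block} of consecutive integers: in an alternating coloring such an interval block must be a singleton, whose factor $\tilde \kappa_1(x_i) = \varphi(x_i) = 0$ kills the term. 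The remaining partition $0_p$ contributes $\prod_i \varphi(x_i) = 0$ by centering. Thus $\varphi(x_1 \cdots x_p) = 0$, which is precisely the defining property of free independence.

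For the converse direction, I would use the uniqueness of free cumulants. Define auxiliary multilinear functionals $\kappa'_p$ which agree with $\tilde \kappa_p$ when all arguments lie in a common subalgebra $\A_j$ and vanish on mixed inputs, extended to partitions by $\kappa'_\pi(x_1, \ldots, x_p) := \prod_{B \in \pi} \kappa'_{|B|}((x_j)_{j \in B})$ for $\pi \in NC(p)$. By the M\"obius inversion \eqref{eq-FreeMomentCumulant2}, the family $(\tilde \kappa_p)_{p \geq 1}$ is the unique solution to the moment-cumulant relation \eqref{eq-FreeMomentCumulant}, so it suffices to establish
$$\varphi(x_1 \cdots x_p) = \sum_{\pi \in NC(p)} \kappa'_\pi(x_1, \ldots, x_p) \qquad \forall \, x_i \in {\textstyle\bigcup_j} \A_j.$$
If this identity holds, then $\kappa' = \tilde \kappa$ as families of multilinear functionals, forcing every mixed $\tilde \kappa_p$ to vanish.

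The main obstacle is verifying the moment-cumulant identity for $\kappa'$. I would argue by induction on $p$, using multilinearity together with the product-formula for $\tilde \kappa$ (which expresses $\tilde\kappa_n$ with two consecutive arguments merged in terms of $\tilde\kappa_{n+1}$) to reduce to the case of alternating centered inputs. For such inputs the left-hand side vanishes by the freeness assumption, while the right-hand side is a sum over $f$-monochromatic non-crossing partitions, to which the same ``interval block is a singleton'' argument from the first direction applies and shows that the sum also vanishes. Uniqueness of cumulants then closes the argument and yields the vanishing of every mixed free cumulant.
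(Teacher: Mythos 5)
The paper does not supply its own proof of this statement: it is quoted as background, with an explicit pointer to Speicher's original work and to \cite[Theorems 11.16 and 11.20]{nica2006lectures}. Measured against that standard proof, your proposal is correct and follows essentially the same route. Your first direction (vanishing mixed cumulants $\Rightarrow$ freeness) is exactly the textbook argument: expand the alternating centered moment over $NC(p)$, discard non-monochromatic blocks by hypothesis, and kill every monochromatic $\pi$ via the interval-block-must-be-a-singleton observation together with $\tilde\kappa_1(x_i)=\varphi(x_i)=0$.

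For the converse your organization differs slightly from Nica--Speicher's: they induct directly on the order of the mixed cumulant (alternating case via the moment expansion, non-alternating case via the cumulants-with-products formula), whereas you first establish the moment identity $\varphi(x_1\cdots x_p)=\sum_{\pi\in NC(p),\,\pi\le\ker f}\tilde\kappa_\pi(x_1,\ldots,x_p)$ and then conclude by M\"obius inversion / uniqueness that your auxiliary $\kappa'$ coincides with $\tilde\kappa$. Both are standard and equivalent in substance; your version has the small advantage of delivering \cref{cor-freemoment} as a byproduct. The one step where the real work is compressed is the reduction "multilinearity + product formula $\Rightarrow$ alternating centered case": to push the constraint $\pi\le\ker f$ through the merging of two adjacent same-colored entries you need that, for $\sigma$ the interval partition pairing positions $i,i+1$ with $f(i)=f(i+1)$, one has $\pi\vee_{NC}\sigma\le\ker f\iff\pi\le\ker f$, which rests on the fact that joining a non-crossing partition with an interval partition agrees with the join in $\mathcal P(p)$. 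This is a known lemma rather than a gap, but it deserves to be stated if you write the argument out in full.
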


\cref{thm-FreeSubsets}, combined with the moment-free cumulant relations in \cref{eq-FreeMomentCumulant}, provides another characterization of freeness via the joint distribution of free elements, which is particularly useful for obtaining the asymptotic freeness of random matrices. For a set $I$ and a function $f:[p]\to I$, we introduce a partition $\ker{f}\in \mathcal{P}(p)$ which consists of the inverse images of $f$:
\begin{equation} \label{eq-KernelPartition}
    \ker{f}:=\{f^{-1}(i): i\in f([p])\}.
\end{equation}
That is, $l,k\in [p]$ are in the same block of $\ker{f}$ if and only if $f(l)=f(k)$. {For example, if $I = [3]$ and $f: [5] \to I$ is given by $f(1)=f(3)=f(4) = 1$ and $f(2) = f(5) = 3$, then $\ker f = \left\{ \{1,3,4\}, \{2,5\} \right\}$.}

\begin{corollary} \label{cor-freemoment}
Subsets $\W_1,\ldots, \W_L\subset (\A,\varphi)$ are freely independent if and only if for every function $f:[p]\to [L]$ and $x_j\in \W_{f(j)}$,
\begin{equation} \label{eq-FreeMoment}
    \displaystyle\varphi(x_{1}\cdots x_{p}) = \sum_{\substack{\pi\in NC(p), \\ \pi\leq {\ker f}}}\prod_{i\in f([p])} \tilde{\kappa}_{\pi}(x_1,\ldots, x_p) = \sum_{\substack{\pi\in NC(p), \\ \pi\leq {\ker f}}}\prod_{i\in f([p])} \tilde{\kappa}_{\pi|_{f^{-1}(i)}}((x_j)_{j\in f^{-1}(i)}).
\end{equation}
\end{corollary}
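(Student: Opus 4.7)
The plan is to derive this corollary directly from the combinatorial characterization of freeness in \cref{thm-FreeSubsets}, combined with the universal free moment-cumulant relation \cref{eq-FreeMomentCumulant} specialized to $\sigma = \gamma_p$. Both implications will hinge on the multiplicativity of free cumulants recalled right after \cref{eq-FreeMomentCumulant2}.

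For the forward direction, I would start from the expansion
$$\varphi(x_1 \cdots x_p) \;=\; \varphi_{\gamma_p}(x_1,\ldots,x_p) \;=\; \sum_{\pi\in NC(p)} \tilde{\kappa}_{\pi}(x_1,\ldots,x_p).$$
Assuming the $\W_i$ are free, \cref{thm-FreeSubsets} yields the vanishing of every mixed free cumulant of order $\geq 2$. If $\pi \not\leq \ker f$, then by definition of $\ker f$ some block $B \in \pi$ contains indices $l,k$ with $f(l) \neq f(k)$; by multiplicativity, the factor $\tilde{\kappa}_{\operatorname{Card}(B)}\bigl((x_j)_{j\in B}\bigr)$ is a genuine mixed cumulant and therefore vanishes, so $\tilde{\kappa}_\pi(x_1,\ldots,x_p)=0$. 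The surviving partitions are exactly those with $\pi \leq \ker f$; for any such $\pi$ its blocks are partitioned according to the fibers of $f$, and one more application of multiplicativity rewrites the restricted sum in the product form on the right-hand side of \cref{eq-FreeMoment}.

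For the reverse direction, I would again invoke \cref{thm-FreeSubsets} and prove by induction on $p \geq 2$ the vanishing of $\tilde{\kappa}_p(x_1,\ldots,x_p)$ whenever $x_j \in \W_{f(j)}$ with $\ker f \neq 1_p$. Subtracting the assumed identity from the universal moment-cumulant expansion yields
$$\sum_{\substack{\pi\in NC(p) \\ \pi\not\leq \ker f}} \tilde{\kappa}_\pi(x_1,\ldots,x_p) \;=\; 0.$$
For the base case $p = 2$ with $f(1) \neq f(2)$, the only partition contributing is $\pi = 1_2$, which gives $\tilde{\kappa}_2(x_1,x_2) = 0$ directly. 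For $p \geq 3$, I would isolate the $\pi = 1_p$ contribution, which equals $\tilde{\kappa}_p(x_1,\ldots,x_p)$. Every other $\pi$ in the sum has some block $B_0$ whose indices are sent by $f$ to at least two distinct values, and since $\pi \neq 1_p$ we have $\operatorname{Card}(B_0) < p$; by the inductive hypothesis the factor attached to $B_0$ vanishes, and multiplicativity forces $\tilde{\kappa}_\pi(x_1,\ldots,x_p) = 0$. What remains is $\tilde{\kappa}_p(x_1,\ldots,x_p) = 0$, which closes the induction; freeness of the $\W_i$ then follows from \cref{thm-FreeSubsets}.

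The main subtlety I would flag is the size control on the witnessing block $B_0$ in the inductive step, since the inductive hypothesis is only available for cumulants of strictly smaller order. Isolating the $\pi = 1_p$ term up front makes this automatic: every remaining partition in the sum has strictly more than one block, forcing $\operatorname{Card}(B_0) < p$. Beyond this bookkeeping the argument is a transparent unpacking of the moment-cumulant dictionary.
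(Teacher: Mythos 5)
Your proposal is correct and follows exactly the route the paper intends: the corollary is stated as an immediate consequence of \cref{thm-FreeSubsets} combined with the moment--cumulant relation \cref{eq-FreeMomentCumulant} and the multiplicativity of $\tilde{\kappa}_\pi$ over blocks, and your argument is a faithful, complete unpacking of that combination (including the correct observation that a block witnessing $\pi\not\leq\ker f$ necessarily has cardinality at least $2$, and the induction isolating the $1_p$ term for the converse). No gaps.
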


\subsection{Unitary and orthogonal Weingarten calculus}

As the main technical tool, we introduce the (graphical) Weingarten calculus \cite{collins2003moments,collins2006integration,collins2010randoma,CMN22} which is frequently applied to compute many kinds of integrals involving Haar random unitary or orthogonal matrices. 

\begin{definition}
Let $p, d$ be positive integers.
\begin{enumerate}
    \item For $\sigma\in S_p$, the \emph{unitary Weingarten function} $\Wg_d^{(U)}(\sigma)$ is the coefficient of the (pseudo) inverse of the function $\sigma\mapsto d^{\#\sigma}$ under the convolution in the group algebra $\Comp[S_p]$. In other words, for the element $\Phi:=\sum_{\sigma\in S_p} d^{\#\sigma}\,\sigma\in \Comp[S_p]$, we have
        $$\Phi^{-1}=\sum_{\sigma\in S_p} \Wg_d^{(U)}(\sigma)\,\sigma.$$

    \item For $\pi,\rho\in \mathcal{P}_2(\pm p)$, the \emph{orthogonal Weingarten function} $\Wg_d^{(O)}(\pi,\rho)$ is the $(\pi,\rho)$-matrix component of the (pseudo) inverse of the linear transformation $\Psi:\Comp[\mathcal{P}_2(\pm p)]\to \Comp[\mathcal{P}_2(\pm p)]$ defined by $\Psi:=\sum_{\pi,\rho\in \mathcal{P}_2(\pm p)}d^{\#(\pi\vee \rho)} E_{\pi,\rho}$. Here we consider $\Comp[\mathcal{P}_2(\pm p)]$ as a vector space with an orthonormal basis $\{e_{\pi}:\pi\in \mathcal{P}_2(\pm p)\}$, and $\{E_{\pi,\rho}\}_{\pi,\rho\in \mathcal{P}_2(\pm p)}$ denotes the family of matrix units. In other words, $\Psi e_{\pi}=\sum_{\rho\in \mathcal{P}_2(\pm p)} d^{\#(\rho\vee \pi)} e_{\rho}$ and 
        $$\Psi^{-1}=\sum_{\pi,\rho\in \mathcal{P}_2(\pm p)}\Wg_d^{(O)}(\pi,\rho) E_{\pi,\rho}.$$
\end{enumerate}
\end{definition}

It is shown \cite{collins2006integration} that if $d\geq p$, then $\Phi$ and $\Psi$ are invertible, so the Weingarten functions ${\rm Wg}_d^{(U)}$ and ${\rm Wg}_d^{(O)}$ are uniquely defined. Since we are mainly interested in the asymptotic limit $d\to \infty$, we only consider the case $d\geq p$ in this paper. Then $\Wg_d^{(U)}(\sigma)$ and $\Wg_d^{(O)}(\pi,\rho)$ are rational functions of $d$ that depend only on the conjugacy class of the permutations $\sigma\in S_p$ and $\pi\rho \in S_{\pm p}$, respectively. Furthermore, these two functions share similar asymptotics
\begin{align}
    \Wg_d^{(U)} (\sigma) &= \Mob(\sigma) d^{-2p+\#(\sigma)} \big(1+O(d^{-2})\big), \label{eq-WeinAsymp1}\\
    \Wg_d^{(O)} (\pi,\rho) &= \Mob(\pi\vee\rho) d^{-2p+\#(\pi\vee\rho)} \big(1+O(d^{-1})\big). \label{eq-WeinAsymp2}
\end{align}
Here we define $\Mob(\pi\vee\rho):=\Mob(c_1\cdots c_l)=\prod_{i=1}^l {\rm Cat}_{|c_i|}$, where $\pi\rho\in S_{\pm p}$ has the cycle decomposition
    $$\pi\rho=c_1c_1'\cdots c_l c_l'$$
and $|c_i|=|c_i'|$ for each $i=1,\ldots, l$, as in \cref{lem-PairingSup}. Note that $\Mob(\pi\vee \rho)$ is well-defined since the M\"{o}bius function depends only on the conjugacy class of its permutation input. For example, if $\pi=\sigma\delta\sigma^{-1}$ and $\rho=\tau\delta\tau^{-1}$ for some permutations $\sigma,\tau\in S_p$, then
\begin{equation} \label{eq-MobPairing}
    \Mob(\pi\vee \rho)=\Mob(\tau^{-1}\sigma)
\end{equation}
since $\pi\rho$ is in the same conjugacy class with $\tau^{-1}\sigma\delta\sigma^{-1}\tau\delta=(\tau^{-1}\sigma)\sqcup \delta(\tau^{-1}\sigma)^{-1}\delta$.

The functions $\Wg^{(U)}_d$ and $\Wg^{(O)}_d$ are related to integrals with respect to the Haar measure on the unitary group $\mathcal{U}_d$ and (real) orthogonal group $\mathcal{O}_d$, respectively, as in the result below.

\begin{theorem} [{\cite{collins2003moments,collins2006integration}}] \label{thm:Weingarten}
Let $p,d$ be positive integers.
\begin{enumerate}
    \item For tuples of indices $\underline{i}=(i_1,\ldots ,i_p)$, $\underline{i}'=(i_1',\ldots, i'_p)$, $\underline{j}=(j_1,\ldots ,j_p)$, $\underline{j}'=(j_1',\ldots, j_p')$ in $[d]^p$ and for a $d\times d$ Haar unitary matrix $U=(U_{ij})$,
    \begin{equation}\label{eq:UWg}
        \int_{\mathcal{U}_d} U_{i_1j_1} \cdots U_{i_pj_p}
        \overline{U}_{i_1' j_1'} \cdots
        \overline{U}_{i_p',j_p'}\, \mathrm{d}U=
    \sum_{\sigma, \tau\in S_{p}} \delta_{\underline{i}', \underline{i}\circ \sigma}\;\delta_{\underline{j}', \underline{j}\circ \tau} \Wg_d^{(U)}(\tau^{-1}\sigma).
    \end{equation}
    Here we identify each tuple as a function, e.g., $\underline{i}:[p]\to [d]$ with $\underline{i}(k)=i_k$.

    \item For tuples of indices $\underline{i}^{\pm}=(i_1, i_{-1},\ldots ,i_p, i_{-p})$, $\underline{j}^{\pm}=(j_1,j_{-1}\ldots ,j_p, j_{-p})$ in $[d]^{2p}$ and for a $d\times d$ Haar orthogonal matrix $O=(O_{ij})$,
    \begin{equation}\label{eq:OWg}
        \int_{\mathcal{O}_d} O_{i_1j_1}O_{i_{-1}j_{-1}} \cdots O_{i_pj_p}O_{i_{-p}j_{-p}}
        \mathrm{d}O=
        \sum_{\pi,\rho \in \mathcal{P}_2(\pm p)}\delta_{\underline{i}^{\pm},\, \underline{i}^{\pm}\circ \pi} \; \delta_{\underline{j}^{\pm},\, \underline{j}^{\pm}\circ \rho} \Wg_d^{(O)}(\pi,\rho).
    \end{equation}
    Here we identify the two tuples as functions $\underline{i}^{\pm},\underline{j}^{\pm}:[\pm p]\to [d]$.
\end{enumerate} 
\end{theorem}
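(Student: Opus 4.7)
The plan is to exploit the bi-invariance of the Haar measure together with Schur--Weyl (unitary case) or Brauer (orthogonal case) duality, which reduces the evaluation of these integrals to a linear-algebra computation on finite-dimensional spaces indexed by permutations or pairings. In both cases one packages the integrand as a tensor, identifies the ambient space of group-invariants spanned by explicit ``combinatorial'' vectors, expands the integral in that basis, and then pins down the coefficients as the (pseudo)inverse of an explicit Gram matrix --- which is exactly how $\Wg_d^{(U)}$ and $\Wg_d^{(O)}$ are defined.

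For the unitary formula \eqref{eq:UWg}, I would first view
\begin{equation*}
T^{(U)} \;:=\; \int_{\mathcal{U}_d} U^{\otimes p} \otimes \bar U^{\otimes p}\, dU
\;\in\; \mathrm{End}\bigl((\C{d})^{\otimes p}\bigr) \otimes \mathrm{End}\bigl((\C{d})^{\otimes p}\bigr),
\end{equation*}
so that the left-hand side of \eqref{eq:UWg} is the evaluation of $T^{(U)}$ on basis elements. Left- and right-invariance of the Haar measure under $U \mapsto VUW$ for $V,W \in \mathcal{U}_d$ shows that $T^{(U)}$ commutes with $V^{\otimes p} \otimes \bar V^{\otimes p}$ on the ``row'' indices and analogously on the ``column'' indices. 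By Schur--Weyl duality, this commutant is spanned by the permutation operators $P_\sigma : \ket{\underline i} \mapsto \ket{\underline i \circ \sigma^{-1}}$ for $\sigma \in S_p$, giving an expansion
\begin{equation*}
T^{(U)} \;=\; \sum_{\sigma,\tau \in S_p} c(\sigma, \tau)\, P_\sigma \otimes P_\tau,
\end{equation*}
whose matrix elements are precisely the Kronecker deltas $\delta_{\underline i',\, \underline i \circ \sigma}\,\delta_{\underline j',\, \underline j \circ \tau}$. A change-of-variable argument (replacing $U$ by $UW$ with $W$ a permutation matrix) forces $c(\sigma,\tau)$ to depend only on $\tau^{-1}\sigma$. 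To identify $c$ with $\Wg_d^{(U)}$, one pairs the expansion against $P_\alpha \otimes P_\beta$: the Hilbert--Schmidt relation $\mathrm{Tr}(P_\sigma^\ast P_\alpha) = d^{\#(\sigma^{-1}\alpha)}$ yields precisely the Gram-matrix defining $\Phi = \sum_\sigma d^{\#\sigma}\sigma$, and the idempotency of the Haar projection onto invariants produces the equation that characterizes $c$ as the (pseudo)inverse of $\Phi$, i.e., $c = \Wg_d^{(U)}$.

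For the orthogonal formula \eqref{eq:OWg}, the argument is structurally identical but uses the Brauer algebra in place of the symmetric group: since $O = \bar O$ for $O \in \mathcal{O}_d$, one packages the integrand as $T^{(O)} := \int_{\mathcal{O}_d} O^{\otimes 2p}\,dO$, and the first fundamental theorem of invariant theory for $\mathcal{O}_d$ asserts that the invariants of $(\C{d})^{\otimes 2p}$ are spanned by the contraction tensors $\xi_\pi := \sum_{\underline i^{\pm} = \underline i^{\pm}\circ \pi} \ket{\underline i^\pm}$ indexed by pairings $\pi \in \mathcal{P}_2(\pm p)$. Bi-invariance under $O \mapsto VOW$ then gives $T^{(O)} = \sum_{\pi,\rho} c^{(O)}(\pi,\rho)\, \xi_\pi \otimes \xi_\rho$, and \cref{lem-PairingSup} identifies the Gram-matrix entries $\langle \xi_\pi, \xi_\rho\rangle = d^{\#(\pi \vee \rho)}$, which is exactly $\Psi$. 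The analogue of the idempotency equation on invariants forces $c^{(O)} = \Wg_d^{(O)}$.

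The main technical obstacle is the step ``invariance implies spanned by permutations/pairings, and the expansion coefficients are the (pseudo)inverse of the Gram matrix''. When $d < p$ the vectors $P_\sigma$ (respectively $\xi_\pi$) fail to be linearly independent, the Gram matrix is only positive semidefinite, and one must genuinely pass to the Moore--Penrose pseudoinverse restricted to the image --- this is the source of the qualification ``pseudo'' in the definition of $\Wg_d$. Under the standing assumption $d \geq p$, however, Schur--Weyl/Brauer duality guarantees linear independence, $\Phi$ and $\Psi$ are invertible, and the Weingarten functions are the honest inverses, yielding the identities as stated.
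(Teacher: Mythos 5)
The paper does not prove this theorem; it is quoted from \cite{collins2003moments,collins2006integration}, and your proposal is precisely the standard invariant-theory proof given there: average to get the projection onto the invariants of $U^{\otimes p}\otimes \bar U^{\otimes p}$ (resp.~$O^{\otimes 2p}$), identify the invariants via Schur--Weyl (resp.~the first fundamental theorem for $\mathcal O_d$, i.e.~the Brauer picture), and invert the Gram matrix of the spanning vectors, which is exactly $\Phi$ (resp.~$\Psi$). The outline is correct, including the remark about linear dependence and the pseudo-inverse when $d<p$.

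One small slip: replacing $U$ by $UW$ with $W$ a $d\times d$ permutation matrix only permutes the values of the column indices inside $[d]$; it does not act on the positions in $[p]$, so it cannot force $c(\sigma,\tau)$ to depend only on $\tau^{-1}\sigma$. That dependence is not needed as a separate step anyway: the Gram matrix $\big(d^{\#(\tau^{-1}\sigma)}\big)_{\sigma,\tau}$ is the matrix of convolution by $\Phi$ on $\Comp[S_p]$, so its (pseudo)inverse is again a convolution operator, and the coefficient $c(\sigma,\tau)=\Wg_d^{(U)}(\tau^{-1}\sigma)$ comes out with the correct dependence directly from the projection formula you write next.
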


We shall mainly use the \textit{graphical} formulation of these formulae, introduced in \cite{collins2010randoma}; we refer the reader to the review article \cite{CN16} for a pedagogical presentation of the unitary graphical Weingarten formula. {The graphical notation for tensors is due to \cite{penrose1971applications}, see also \cite{bridgeman2017handwaving} or \cite{taylor2024introduction} for modern presentations.}

We can slightly modify all the argument for unitary integration to obtain {graphical representation} for orthogonal Weingarten calculus which greatly simplifies many computations involving integrations over orthogonal matrices. A \textit{removal} $r\in \mathcal{P}_2(\pm p)^2$ describes a method for pairing the decorations of $O$ boxes in a given diagram $\D$. Suppose $\D$ contains exactly $2p$ boxes associated with the same Haar orthogonal matrix $O$, labeled as $\pm 1,\ldots, \pm p$. Each $O$ box is decorated with white and black markings corresponding to its output and input, respectively. A removal $r=(\pi,\rho)\in \mathcal{P}_2(\pm p)^2$ then transforms $\D$ into a new diagram $\D^{(O)}_r=\D_{\pi,\rho}^{(O)}$ through the following steps: first, erase all $O$ boxes while preserving the attached decorations. Then, use $\pi$ to pair the (inner parts of the) white decorations from the erased $k$-th $O$ box with those from the erased $\pi(k)$-th $O$ box. Similarly, $\rho$ connects the black decorations in the same manner.

We restate below \cref{thm:Weingarten} in the graphical language. 

\begin{theorem} \label{thm-GraphWeingarten}
{
Let $\mathcal D$ be a tensor diagram containing $p$ boxes corresponding to Haar-distributed random \emph{unitary} matrices $U \in \mathcal U_d$ and also $p$ boxes corresponding to $\bar U$. Then:
    $$\E_{U}[\D]=\sum_{r=(\sigma, \tau) \in S_p^2}\D_{r}^{(U)} \Wg_d^{(U)}(\tau^{-1}\sigma),$$
{where $\sigma$ (resp. $\tau$) contributes the connection of output (resp. input) nodes between $k$-th $\bar{U}$ box and $\sigma(k)$-th (resp. $\tau(k)$-th) $U$ box for each $k\in [p]$.}
}

In a similar manner, let $\mathcal D$ be a tensor diagram containing $2p$ boxes corresponding to Haar-distributed random \emph{orthogonal} matrices $O \in \mathcal O_d$. Then:
    $$\E_{O}[\D]=\sum_{r=(\pi,\rho)\in \mathcal{P}_2(\pm p)^2}\D_r^{(O)} \Wg_d^{(O)}(\pi,\rho).$$
\end{theorem}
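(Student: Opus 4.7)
The plan is to reduce both claims to the scalar Weingarten formulae in \cref{thm:Weingarten} by expanding the tensor diagram $\D$ into a scalar expression indexed by the ``dangling'' wires attached to each Haar box, and then identify the result of applying the removal operation with the sum over indices that remains after the Weingarten formula is applied inside.

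First I would set up notation for the unitary case. By definition, a tensor diagram $\D$ encodes a multilinear contraction of tensors along wires; fixing an orthonormal basis of $\C{d}$, we can expand each wire as a sum over an index in $[d]$ and each box as the corresponding tensor entry. Hence $\D$ can be written as a finite scalar sum
\[
\D \;=\; \sum_{\text{all internal indices}} \Big(\prod_{\text{non-}U\text{ boxes}} (\text{their entries})\Big)\,\prod_{k=1}^{p} U_{i_k j_k}\, \overline{U}_{i'_k j'_k},
\]
where the correspondence $k \mapsto (i_k, j_k)$ (resp.\ $k \mapsto (i'_k, j'_k)$) records the indices of the output/input wires of the $k$-th $U$ box (resp.\ $k$-th $\bar U$ box), and all these indices appear (possibly identified with other indices) among the internal indices of $\D$. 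Taking expectation commutes with the finite sum, so I would pull $\E_U$ inside and apply \cref{eq:UWg} to each fixed choice of the tuples $\underline i, \underline j, \underline{i}', \underline{j}' \in [d]^p$.

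The application yields
\[
\E_U[\D] \;=\; \sum_{\sigma,\tau \in S_p} \Wg_d^{(U)}(\tau^{-1}\sigma) \sum_{\text{indices}} \Big(\prod_{\text{non-}U\text{ boxes}} (\text{entries})\Big)\,\prod_{k=1}^{p} \delta_{i'_k, i_{\sigma(k)}}\,\delta_{j'_k, j_{\tau(k)}}.
\]
The key identification is then: for fixed $(\sigma,\tau)$, the Kronecker deltas $\delta_{i'_k, i_{\sigma(k)}}$ force the output wire of the $k$-th $\bar U$ box to carry the same index as the output wire of the $\sigma(k)$-th $U$ box, and similarly the input deltas connect inputs via $\tau$. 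After performing these identifications, the $U$ and $\bar U$ boxes themselves play no role (their entries have been integrated out), so erasing them while joining their decorations according to $(\sigma,\tau)$ produces exactly the diagram $\D_r^{(U)}$ defined by the removal $r=(\sigma,\tau)$. This gives the stated formula.

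For the orthogonal case the argument is identical, with the only bookkeeping change that each $O$-box contributes a single real entry $O_{ij}$ rather than a pair $(U_{ij}, \overline{U}_{i'j'})$; grouping the $2p$ boxes by the labels $\pm 1,\dots,\pm p$ and applying \cref{eq:OWg} produces a sum over pairs $(\pi,\rho)\in \mathcal{P}_2(\pm p)^2$ with Kronecker deltas $\delta_{\underline i^\pm,\, \underline i^\pm\circ \pi}$ and $\delta_{\underline j^\pm,\, \underline j^\pm\circ \rho}$. These deltas pair the white (output) decorations via $\pi$ and the black (input) decorations via $\rho$, which by the definition in the paragraph preceding the statement is precisely the removal $\D_{\pi,\rho}^{(O)}$.

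The only substantive point to nail down is the diagrammatic identification in the last step of each case: that \emph{gluing} decorations according to a permutation (or pairing) coming from the Kronecker deltas coincides with the removal operation. This is a matter of checking that the conventions for labeling the $k$-th $U/\bar U$ box and its output/input decorations match those implicit in the graphical definition of $\D_r^{(U)}$ and $\D_r^{(O)}$. Once the conventions are fixed, the proof is otherwise a direct transcription of \cref{thm:Weingarten} into the tensor-network language, so I do not anticipate any further obstacle beyond this bookkeeping.
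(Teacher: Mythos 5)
Your proposal is correct, and it is exactly the argument the paper implicitly relies on: the paper does not write out a proof of \cref{thm-GraphWeingarten} but simply states that it is a ``restatement'' of \cref{thm:Weingarten} in graphical language, deferring to the references \cite{collins2010randoma,CN16} for the unitary case and noting that the orthogonal case follows by the same modification. What you have done is reconstruct that translation in full: expand the diagram as a scalar index sum, push $\E_U$ (resp.\ $\E_O$) through the finite sum, apply \cref{eq:UWg} (resp.\ \cref{eq:OWg}), and identify the Kronecker deltas $\delta_{i'_k,\, i_{\sigma(k)}}$ and $\delta_{j'_k,\, j_{\tau(k)}}$ (resp.\ $\delta_{\underline i^\pm,\, \underline i^\pm\circ\pi}$ and $\delta_{\underline j^\pm,\, \underline j^\pm \circ\rho}$) with the wire gluings that define the removal diagram $\D_r^{(U)}$ (resp.\ $\D_r^{(O)}$). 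One small point worth tightening in your write-up is the convention check at the end: the labeling in \cref{eq:UWg} gives $i'_k = i_{\sigma(k)}$, so $\sigma$ indeed sends the output of the $k$-th $\bar U$ box to the output of the $\sigma(k)$-th $U$ box, matching the sentence in the theorem statement; once you observe this the ``bookkeeping'' you flag is done and there is no residual gap.
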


{
Let us illustrate the result above by a simple example, which is relevant to the setting of this work. We shall consider both the unitary and the orthogonal case, in order to emphasize the differences between them. 

Let $X \in \M{d} \otimes \M{d}$ be a bipartite matrix. We are interested in computing the average 
$$\E[(U \otimes U) X (U \otimes U)^*]=\E[(U \otimes U) X (\bar{U} \otimes \bar{U})^{\top}]\in \M{d}\otimes \M{d}$$
where $U$ is a Haar-distributed random unitary, respectively orthogonal, matrix. We depict the diagram $\mathcal D$ corresponding to this computation in \cref{fig:E-UU-X-UUstar}.

\begin{figure}[htb]
    \centering
    \includegraphics[width=0.35\linewidth]{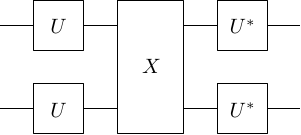}
    \caption{Diagram for the matrix product $(U \otimes U) X (U \otimes U)^*$.}
    \label{fig:E-UU-X-UUstar}
\end{figure}

Let us start with the \emph{unitary case}. The (graphical) unitary Weingarten formula from \cref{thm-GraphWeingarten} expands the expectation as a sum over a pair $\sigma, \tau \in S_2 = \{\id, (12)\}$. The four terms are of the form $\D_{\sigma,\tau}^{(U)} \Wg_d^{(U)}(\sigma,\tau)$, where $\D_{\sigma,\tau}^{(U)}$ is the diagram $\mathcal D$ from \cref{fig:E-UU-X-UUstar} after the removal operation for the permutations $\sigma, \tau$, and $\Wg_d^{(U)}(\sigma,\tau)$ is the corresponding unitary Weingarten function. Note that the permutation $\sigma$ corresponds to the resulting operator, while the permutation $\tau$ is responsible for the scalar coefficient (involving the matrix $X$) in front of it. 

\begin{table}[!htb]
\begin{tabular}{r|c|c|}
\cline{2-3}
& \cellcolor[HTML]{EFEFEF}\textcolor{blue}{$\tau = \id$} & \cellcolor[HTML]{EFEFEF}\textcolor{blue}{$\tau=(12)$} \\ \hline
\multicolumn{1}{|r|}{\cellcolor[HTML]{EFEFEF}\textcolor{red}{$\sigma=\id$}} & \includegraphics[width=130pt,valign=c,margin=10pt]{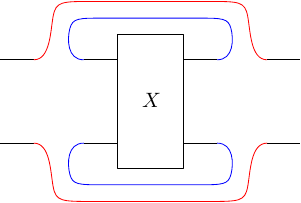} $\cdot \frac{1}{d^2-1}$ & \includegraphics[width=130pt,valign=c,margin=10pt]{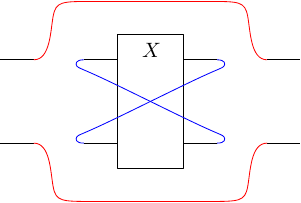} $\cdot \frac{-1}{d(d^2-1)}$                         \\ \hline
\multicolumn{1}{|r|}{\cellcolor[HTML]{EFEFEF}\textcolor{red}{$\sigma=(12)$}} & \includegraphics[width=130pt,valign=c,margin=10pt]{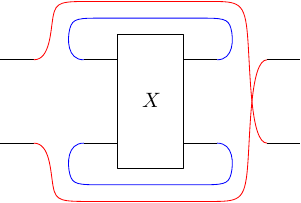} $\cdot \frac{-1}{d(d^2-1)}$                         & \includegraphics[width=130pt,valign=c,margin=10pt]{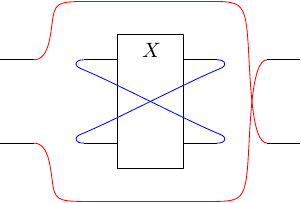} $\cdot \frac{1}{d^2-1}$                         \\ \hline
\end{tabular}
\end{table}

Gathering the four contributions, we obtain: 
$$\E_{\mathcal U_d} [(U \otimes U) X (U \otimes U)^*] = \frac{d\Tr X - \Tr(F_dX)}{d(d^2-1)} \cdot  I_{d^2} + \frac{d \Tr(F_dX) - \Tr X}{d(d^2-1)} \cdot  F_d,$$
where $F_d:=\sum_{i,j=1}^d E_{ij}\otimes E_{ji}$.

\medskip 

Let us consider now the \emph{orthogonal case}: the matrix $U$ is sampled from the Haar measure on the orthogonal group $\mathcal O_d$. In this case, the graphical orthogonal Weingarten formula from \cref{thm-GraphWeingarten} expands the expectation as a sum over two pairings $\pi, \rho \in \mathcal P_2(\pm 2)$. Note that the set $\mathcal P_2(\pm 2)$ has three elements, so the total sum has $9$ terms. The extra terms come from the possible pairing of a $U$ box with the other $U$ box, which was impossible in the unitary case. {Importantly, we label the two orthogonal $U$ boxes on the left of $X$ by, respectively $U^{(+1)}$ and $U^{(+2)}$, and those on the right of the $X$ box by, respectively, $U^{(-1)}$ and $U^{(-2)}$ (see \cref{fig:E-UU-X-UUstar}); this labeling will allow us to explicitly describe how the pairings $\pi$ and $\rho$ from the orthogonal Weingarten calculus act on the diagram.} We present in \cref{fig:example-Wg-O} two such terms. 

\begin{figure}[!htb]
    \centering
    \includegraphics[width=130pt,align=c]{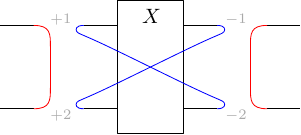} \qquad\qquad\qquad \includegraphics[width=130pt,align=c]{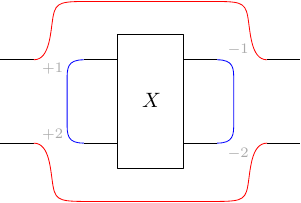}
    \caption{Two removal diagrams obtained from \cref{fig:E-UU-X-UUstar}, using different pairings $\pi,\rho$. On the left panel, we have $(\textcolor{red}{\pi}, \textcolor{blue}{\rho}) = (\textcolor{red}{(1\,2)(-1\,-2)}, \textcolor{blue}{(1\,-2)(-1\,2)})$. On the right panel, we have $(\textcolor{red}{\pi}, \textcolor{blue}{\rho}) = (\textcolor{red}{(1\,-1)(2\,-2)}, \textcolor{blue}{(1\,2)(-1\,-2)})$.}
    \label{fig:example-Wg-O}
\end{figure}

Gathering all the contributions, we have: 
\begin{align*}
    \E_{\mathcal O_d} [(U \otimes U) X (U \otimes U)^*] = &\frac{(d+1)\Tr X - \Tr(F_d X) - \Tr(d\omega_d X)}{d(d+2)(d-1)} \cdot  I_{d^2}\\
    &+ \frac{-\Tr X  + (d+1) \Tr(F_d X) - \Tr(d\omega_d X)}{d(d+2)(d-1)} \cdot  F_d\\
    &+ \frac{-\Tr X - \Tr(F_d X) +(d+1) \Tr(d\omega_d X)}{d(d+2)(d-1)} \cdot  d\omega_d,
\end{align*}
where $d\omega_d$ is the \emph{unnormalized maximally entangled state} 
$$d\omega_d = \sum_{i,j=1}^d e_i \otimes e_i \cdot e_j^* \otimes e_j^* = \sum_{i,j=1}^d E_{ij} \otimes E_{ij} \in \M{d} \otimes \M{d}$$
and the value of the orthogonal Weingarten function, in the case $p=2$, reads (see \cite{collins2006integration} or \cite[Example 3.1]{collins2009some}):
\begin{equation} \label{eq-OWg-dim2}
    \Wg_d^{(O)}(\pi,\rho) = \begin{cases}
    \frac{d+1}{d(d+2)(d-1)} &\text{ if } \pi = \rho\\
    \frac{-1}{d(d+2)(d-1)} &\text{ if } \pi \neq \rho.
\end{cases}
\end{equation}

}

\subsection{Unitary and orthogonal invariant random matrices}

All the random matrices in this paper are assumed to have finite moments of all orders:
    $$X\in \M{N}\otimes L^{\infty -}(\mathbb{P})\cong \mathcal{M}_N\big(L^{\infty-}(\mathbb{P})\big).$$
Note that $\big(\mathcal{M}_N (L^{\infty-}(\mathbb{P})),\E\circ \tr\big)$ is a $*$-probability space, where $\tr:=\frac{1}{N}\Tr$ is the \emph{normalized trace} (see \cite[Example 1.4 (3)]{nica2006lectures}). 

For each $N\geq 1$, let us denote by $\W_N=\{X_{j,N}\}_{j\in J}$ a family of $N\times N$ random matrices. Then we say that $\W_N$ \textit{converges in distribution} as $N\to \infty$ if there is a family $\W=\{x_j\}_{j\in J}$ in a noncommutative probability space $(\A,\varphi)$ such that, for all $p\geq 1$ and $j_1,\ldots, j_p\in J$, we have
    $$\lim_{N\to \infty}\E[\tr(X_{j_1,N}\cdots X_{j_p,N})] = \varphi(x_{j_1} \cdots x_{j_p}).$$
by linearity, this is equivalent to the asymptotic behavior
    $$\lim_{N\to \infty}\E[\tr(P(\W_N))]=\varphi(P(\W))$$
for every noncommutative polynomial $P$ in a finite number of formal variables indexed by $J$. Let us simply write $\W_N\xrightarrow{\text{distr}} \W$. Families $\W_N^{(1)},\ldots, \W_N^{(L)}$ of $N\times N$ random matrices are called \textit{asymptotically free} if $\W_N^{(1)},\ldots, \W_N^{(L)}\xrightarrow{\text{distr}} \W^{(1)},\ldots, \W^{(L)}$ jointly and the families $\W^{(1)},\ldots, \W^{(L)}$ are freely independent.

We remark that these definitions describe the asymptotic behavior in average (with respect to $\E$). Several other modes of convergences have been considered in literature, including
\begin{itemize}
    \item \textit{convergence in probability}: $\tr(X_{j_1,N}\cdots X_{j_p,N})\to \varphi(x_{j_1} \cdots x_{j_p})$ in probability as $N\to \infty$, for all $j_1,\ldots, j_p\in J$,

    \item \textit{almost sure convergence}: $\tr(X_{j_1,N}\cdots X_{j_p,N})\to \varphi(x_{j_1} \cdots x_{j_p})$ almost surely as $N\to \infty$, for all $j_1,\ldots, j_p\in J$.
\end{itemize}
Many results in Random Matrix Theory, stated in terms of the average distribution, also have counterparts in almost sure behavior. In this paper, we primarily focus on the \textit{averaged behavior}, though many of the results have almost sure analogs. We refer to {\cref{rmk-UIasympFree,rmk-UITensorAlmostSure,rmk-LocUIAlmostSure,prop-FactorConvInProb,prop-TensorFactConvProb}} for further discussion on this matter.

\medskip

We are mainly interested in the following symmetry conditions on random matrices.

\begin{definition}
Let $\W_N$ be a family of $N\times N$ random matrices.
\begin{enumerate}
    \item $\W_N$ is called \emph{unitary invariant (UI)} (resp. \emph{orthogonal invariant (OI)}) if the joint probability law of $\W_N$ is the same with that of $U\W_N U^*:=\{UXU^*: X\in \W_N\}$ for all unitary matrices $U\in \mathcal{U}_N$ (resp. orthogonal matrices $U\in \mathcal{O}_N$).

    \item Suppose $N=d_1\cdots d_r$ and let us identify the matrices as elements in $\bigotimes_{s=1}^r \M{d_s}\cong \M{N}$. Then $\W_N$ is called \emph{local-unitary invariant {(LUI)}} (resp. \emph{local-orthogonal invariant {(LOI)}}) if $\W_N$ has the same probability law with $U\W_N U^*$ where $U=\bigotimes_{s=1}^r U_s$ is taken over arbitrary tensor product of unitaries $U_s\in \mathcal{U}_{d_s}$ (resp. orthogonal matrices $U_s\in \mathcal{O}_{d_s}$) for $s\in [r]$.
\end{enumerate}

\end{definition}

\begin{example} \label{ex-UI}
Let us collect several typical examples of (local-) UI and OI random matrices.
\begin{enumerate}
    \item GUE matrices, square Ginibre matrices with i.i.d.~standard complex Gaussian entries, complex Wishart matrices, and Haar random unitary matrices are UI. Moreover, GOE matrices, square  Ginibre matrices with i.i.d.~(real) Gaussian entries, real Wishart matrices, and Haar random orthogonal matrices are OI but not UI.

    \item For any family $\W$ of random matrices and a Haar random unitary matrix $U$ independent from $\W$, $U\W U^*$ is UI. Furthermore, $U\W_N U^*$ is LUI if $N=d_1\cdots d_r$ and $U=\bigotimes_{s=1}^r U_s$ is a Haar random \textit{local-unitary} matrix. That is, $U_1,\ldots, U_r$ are independent Haar random unitaries of size $d_1,\ldots, d_r$, respectively, which are also independent from $\W$. We also have the OI and LOI analogs in straightforward ways.

    \item If $\W^{(1)},\ldots, \W^{(r)}$ are independent families of UI (resp.~OI) random matrices of sizes $d_1,\ldots, d_r$, respectively, then $\W=\{\bigotimes_{s=1}^rX_s:X_s\in \W^{(s)}\}$ is LUI (resp.~LOI).
    
    \item The set $\W_1\cup \W_2$ may not be UI even if $\W_1$ and $\W_2$ are each UI. For example, for a Haar unitary matrix $U$, neither the set $\{U, U^{\top}\}$ nor $\{U,\overline{U}\}$ is UI while each individual matrix $U, U^{\top}$ and $\overline{U}$ is Haar-distributed (\cite[Lemma 13]{MP16}).

    \item If $\W$ is UI, then $\W\cup \W^*$ is UI. Similarly, if $\W$ is OI, then $\W\cup \W^{\top}\cup \overline{\W}\cup \W^*$ is OI.

    \item If $\W$ is LOI, then the family of \emph{partial transposes} $\bigcup_{\underline{\nu}\in \{\id, \top\}^r}\W^{\underline{\nu}}$ is LOI where $\W^{\underline{\nu}}:=\{(\nu_1\otimes\cdots \otimes \nu_r)(X):X\in \W\}$. We refer to \cref{sec-PTTensorfree} for further discussion on partial transposes.
\end{enumerate}
\end{example}

Let us record the following invariance-preserving property of an independent family of random matrices for later use.

\begin{proposition} \label{prop-IndepUI}
Suppose $\W_0, \W_1,\ldots, \W_L$ are independent families of $N\times N$ random matrices, $\W_1,\ldots \W_L$ are unitary invariant, and let $U$ be a $N\times N$ Haar unitary matrix independent from $\bigcup_{i=0}^L \W_i$. Then $U\W_0 U^*, \ldots, U\W_L U^*$ are independent UI families. In particular, $\bigcup_{i=1}^L\W_i$ is UI.
\begin{proof}
$\W_0,\ldots \W_L$ have the same joint distribution with $\W_0, U_1 \W_1U^*, \ldots U_L\W_L U_L^*$ where $U_1,\ldots U_L$ are independent Haar random unitaries that are independent from $\{U\}\cup (\bigcup_{i=0}^L \W_i)$. Then the assertion follows from the fact that $U, UU_1, \ldots UU_L$ are independent Haar unitary matrices. Indeed, for any continuous functions $f_0,\ldots, f_L:\mathcal{U}_N\to \Comp$, we have
\begin{align*}
    \E[f_0(U)f_1(UU_1)\cdots f_L(UU_L)]&= \E_U\big[f_0(U)\,\E_{U_1,\ldots, U_L}[f_1(UU_1)\cdots f_L(UU_L)|\,U]\big]\\
    &=\E[f_0(U)]\,\E[f_1(U_1)]\cdots \E[f_L(U_L)],
\end{align*}
where the last equality follows from \cite[Example 4.1.7]{durrett2019probability} and the invariance property of Haar measures. Therefore, $(U,UU_1,\ldots, UU_L)$ has the same joint probability law with $(U,U_1,\ldots, U_L)$.
\end{proof}
\end{proposition}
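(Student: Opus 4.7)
The plan is to leverage the UI property of $\W_1,\ldots,\W_L$ by introducing $L$ auxiliary independent Haar unitaries $U_1,\ldots,U_L$, and then reduce the statement to the claim that $U, UU_1,\ldots, UU_L$ form a family of mutually independent Haar unitaries.

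First, I would enlarge the probability space so that there exist Haar unitary matrices $U_1,\ldots,U_L$ which are mutually independent and also independent from $\{U\}\cup \bigcup_{i=0}^L \W_i$. By the UI hypothesis on each $\W_i$ for $i\geq 1$, together with the independence of the families $\W_0,\W_1,\ldots,\W_L$, we obtain the equality in joint law
$$(\W_0,\W_1,\ldots,\W_L) \stackrel{d}{=} (\W_0,\, U_1\W_1 U_1^*,\,\ldots,\, U_L\W_L U_L^*).$$
Conjugating both sides by $U$, which is a distribution-preserving operation carried out on the same enlarged space, yields
$$(U\W_0 U^*,\, U\W_1 U^*,\ldots,\, U\W_L U^*) \stackrel{d}{=} (U\W_0 U^*,\, (UU_1)\W_1 (UU_1)^*,\,\ldots,\, (UU_L)\W_L(UU_L)^*).$$

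The key step is to show that $U, UU_1,\ldots, UU_L$ are $L+1$ mutually independent Haar unitaries. This follows by conditioning on $U$ and exploiting the left-invariance of the Haar measure: for bounded measurable $f_0,f_1,\ldots,f_L:\mathcal{U}_N\to\Comp$,
$$\E\!\left[f_0(U)\prod_{i=1}^L f_i(UU_i)\right] = \E\!\left[f_0(U)\,\E\!\left[\prod_{i=1}^L f_i(UU_i)\,\Big|\,U\right]\right] = \E[f_0(U)]\prod_{i=1}^L \E[f_i(U_i)],$$
where the inner conditional expectation is computed using the fact that, conditionally on $U$, each $UU_i$ is still Haar-distributed and the $U_i$'s are jointly independent.

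Once this independence is established, the blocks $(U,\W_0), (UU_1,\W_1), \ldots, (UU_L,\W_L)$ are jointly independent (each block being built from independent constituents), and within each block $i\geq 1$, the Haar unitary $UU_i$ is independent of $\W_i$. Hence every $(UU_i)\W_i(UU_i)^*$ is UI and the $L+1$ conjugated families are mutually independent, proving the first assertion. For the "in particular" part, specializing to $\W_0=\emptyset$ gives $(U\W_1 U^*,\ldots,U\W_L U^*)\stackrel{d}{=}(\W_1,\ldots,\W_L)$ jointly, which is precisely the UI property of $\bigcup_{i=1}^L \W_i$. The main technical subtlety is the conditioning argument guaranteeing the independence of $U, UU_1,\ldots, UU_L$; this rests on Fubini together with the bi-invariance of the Haar measure on $\mathcal{U}_N$, and everything else is bookkeeping.
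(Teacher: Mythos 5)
Your proposal is correct and follows essentially the same route as the paper: introduce auxiliary independent Haar unitaries $U_1,\ldots,U_L$ to replace each $\W_i$ by $U_i\W_iU_i^*$ using unitary invariance, and then establish that $U, UU_1,\ldots,UU_L$ are mutually independent Haar unitaries via conditioning on $U$ and the invariance of the Haar measure. The only difference is cosmetic: you spell out the final block-independence bookkeeping in slightly more detail than the paper does.
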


Note that the UI condition above is essential to obtain the independence between $U\W_0 U^*, \ldots, U\W_L U^*$. For example, the deterministic diagonal matrix units $E_{11},\ldots E_{NN}\in \M{N}$ are (probabilistically) independent while $U E_{11}U^*, \ldots U E_{NN}U^*$ are not since $\sum_{i=1}^N U E_{ii}U^*\equiv I_N$. Furthermore, we can obtain the OI, LUI, and LOI analogs of \cref{prop-IndepUI} in an obvious manner.

Now let us introduce several technical conditions on families of random matrices, which are frequently considered in relation with their asymptotic behaviors. First, for a permutation $\sigma\in S_p$ we define the \textit{(unitary) trace invariant} $\tr_{\sigma}:\M{N}^p\to \Comp$ as a multilinear functional induced from the $*$-probability space $(\M{N},\tr)$ of $N\times N$ matrices as in \cref{eq-MomentPerm}. In other words, we have $\tr_{\sigma}=\frac{1}{N^{\#\sigma}}\Tr_{\sigma}$, where
\begin{equation} \label{eq-SingleTraceInv}
    \Tr_{\sigma}(X^{(1)},\ldots, X^{(p)}):=\prod_{\substack{c\in \cyc(\sigma)\\ c=(i_1\,i_2\,\cdots\,i_n)}} \Tr(X_{i_1}X_{i_2}\cdots X_{i_n})=\sum_{\underline{i},\underline{j}\in [N]^p}\left(\prod_{k=1}^p X^{(k)}_{i_k,j_k}\right)\delta_{\underline{j},\, \underline{i}\circ \sigma}.
\end{equation}

\begin{definition} \label{def-FactProp-BddMoment}
Let $\W_N$ be a family of $N\times N$ random matrices.
\begin{enumerate}
    \item $\W_N$ is said to satisfy the \emph{factorization property} if, for all $X_1,\ldots, X_p,Y_1,\ldots, Y_q\in \W_N$, $\alpha\in S_p$, and $\beta\in S_q$, we have as $N\to \infty$,
    \begin{equation} \label{eq-condition-Fact}
        \E[\tr_{\alpha\sqcup \beta}(X_1,\ldots, X_p,Y_1,\ldots, Y_q)]= \E[\tr_{\alpha}(X_1,\ldots, X_p)]\E[\tr_{\beta}(Y_1,\ldots, Y_q)]+o(1)
    \end{equation}

    \item $\W_N$ is said to satisfy the \emph{bounded moments property} if
    \begin{equation} \label{eq-condition-Bdd}
        \sup_N \big|\E[\tr_{\sigma}(X_1,\ldots, X_p)]\big|<\infty, \quad X_1,\ldots, X_p\in \mathcal{W}_N, \;\;\sigma\in S_p .
    \end{equation}
\end{enumerate}
\end{definition}

By the multilinearity of the trace invariant, the factorization property is equivalent to
\begin{equation} \label{eq-condition-1}
    \E[\tr(P_1(\mathcal{W}_N))\cdots \tr(P_l(\mathcal{W}_N))]=\prod_{j=1}^l \E[\tr(P_j(\mathcal{W}_N))]+o(1).
\end{equation}
for every $l\geq 1$ and for any non-commutative polynomials $P_1,\ldots, P_l$ which has been considered in \cite{collins2003moments,male2020traffic,CDM24}. Furthermore, by the classical moment-cumulant relation, this condition is also equivalent to
\begin{equation} \label{eq-condition-1'}
    k_l(\tr(P_1(\mathcal{W}_N)), \ldots, \tr(P_l(\mathcal{W}_N)))\to 0
\end{equation}
for $l\geq 2$ where $k_l$ is the classical cumulant, as considered in \cite[Definition 2.1]{CMSS07}. On the other hand, $\W_N$ satisfies the bounded moments property if $\sup_N \E[\|X\|_{\infty}]<\infty$ holds for all $X\in \mathcal{W}_N$, or more generally,
\begin{equation} \label{eq-condition-5}
    \sup_N \E[\tr|X|^p]<\infty, \quad p\geq1,\;\; X\in \mathcal{W}_N,
\end{equation}
by the H\"{o}lder inequality. Note that \cref{eq-condition-5} holds whenever $\mathcal{W}_N$ converges in $*$-distribution. 

Actually, the factorization property of random matrices is closely related to convergence \textit{in probability} (denoted by $\Plim$), as stated in the proposition below.

\begin{proposition} \label{prop-FactorConvInProb}
Suppose $\W_N$ \emph{converges in $*$-distribution} to $\W$, i.e.~$\W_N, \W_N^*$ converge jointly in distribution to $\W,\W^*$ (in the sense of expectation) for a family $\W$ in a $*$-probability space $(\A,\varphi)$ and its adjoint set $\W^*=\{x^*:x\in \W\}$. Then the same convergence $\W_N,\W_N^*\xrightarrow{\text{distr}} \W,\W^*$ holds in probability if and only if $\W_N\cup \W_N^*$ satisfies the factorization property.
\end{proposition}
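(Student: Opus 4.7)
The plan is to leverage the identity $\overline{\tr X} = \tr(X^*)$ to reduce both directions of the equivalence to a variance-type calculation based on the $l=2$ case of the factorization property applied to the pair $(P, P^*)$. For a non-commutative polynomial $P$ in the variables of $\W_N \cup \W_N^*$, set $Z_N := \tr(P(\W_N, \W_N^*))$; then $\overline{Z_N} = \tr(P^*(\W_N, \W_N^*))$, and the convergence-in-$*$-distribution hypothesis yields $\E Z_N \to z := \varphi(P)$ and $\E \overline{Z_N} \to \bar z$.

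For the \emph{backward} direction (factorization $\Rightarrow$ convergence in probability), I apply \cref{eq-condition-1} to the pair $(P_1, P_2) = (P, P^*)$ to obtain
\begin{equation*}
\E|Z_N|^2 \,=\, \E[\tr(P)\,\tr(P^*)] \,=\, \E\tr(P)\cdot\E\tr(P^*) + o(1) \,\longrightarrow\, |z|^2 .
\end{equation*}
Since $|\E Z_N|^2 \to |z|^2$ as well, one concludes $\operatorname{Var}(Z_N) \to 0$. Chebyshev's inequality then gives $Z_N - \E Z_N \to 0$ in probability; combining with $\E Z_N \to z$ yields $Z_N \to \varphi(P)$ in probability for every polynomial $P$, which is the desired convergence.

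For the \emph{forward} direction (convergence in probability $\Rightarrow$ factorization), fix $l \geq 2$ and polynomials $P_1,\dots,P_l$, and write $Z_N^{(j)} := \tr(P_j)$, $z_j := \varphi(P_j)$. By hypothesis $Z_N^{(j)} \to z_j$ in probability and $\E Z_N^{(j)} \to z_j$, hence $\prod_j Z_N^{(j)} \to \prod_j z_j$ in probability. Promoting this to convergence in expectation demands a uniform $L^{1+\epsilon}$ bound on the product, which follows from uniform $L^q$ control on each factor $Z_N^{(j)}$: the matrix H\"older inequality $|\tr(A)|^q \leq \tr|A|^q$ combined with the non-commutative H\"older inequality $\|X_1 \cdots X_k\|_q \leq \prod_i \|X_i\|_{qk}$ for a monomial $A = X_1\cdots X_k$ reduces the required bound to the bounded-moments condition \cref{eq-condition-5}, which in turn follows automatically from the convergence in $*$-distribution since each $\E\tr((X^*X)^k)$ converges to the finite quantity $\varphi((x^*x)^k)$. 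The main obstacle is this uniform integrability bookkeeping; once in place, convergence in probability together with uniform integrability yields $\E\prod_j Z_N^{(j)} \to \prod_j z_j = \prod_j \E Z_N^{(j)} + o(1)$, which is the factorization property \cref{eq-condition-1}.
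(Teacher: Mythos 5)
Your proof is correct and follows essentially the same route as the paper's: the backward direction is the paper's variance computation (the paper phrases $\E|Z_N|^2-|\E Z_N|^2$ as the classical cumulant $k_2(\tr(P),\tr(P^*))$), and the forward direction is the paper's uniform-integrability-plus-Vitali argument, with the paper obtaining the uniform $L^2$ bound by chaining scalar H\"older across the $l$ factors and then matrix H\"older to reach $\E[\tr(|P_j|^{2l})]$, which converges because $|P_j|^{2l}$ is itself a monomial in the $*$-closed family. Your extra detour through the non-commutative H\"older inequality $\|X_1\cdots X_k\|_q\leq\prod_i\|X_i\|_{qk}$ to land on \cref{eq-condition-5} is a harmless variation of that last step.
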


\begin{proof}

The sufficiency is clear: for $X_1,\ldots, X_p\in \W_N\cup \W_N^*$, the factorization property implies that
    $${\rm var}\big(\tr(X_1\cdots X_p)\big)=k_2\big(\tr(X_1\cdots X_p),\tr(X_p^*\cdots X_1^*)\big)\to 0 \quad \text{as $N\to \infty$,}$$
so $\tr(X_1\cdots X_p)$ converges in probability:
$$\Plim_{N \to \infty}\tr(X_1\cdots X_p) = \lim_{N\to \infty}\E[\tr(X_1\cdots X_p)]$$

For the necessity, let $X_1,\ldots, X_L\in \W_N\cup \W_N^*$ and suppose $X_1,\ldots, X_L\to x_1,\ldots, x_L$ jointly in $*$-distribution, both in expectation and in probability. In order to show \cref{eq-condition-1}, we may take $P_j$'s as monomials in $X_1,\ldots, X_L$. Then, for the random variables $Z_N:=\prod_{j=1}^l \tr(P_j(X_1,\ldots, X_L))$, we can repeatedly apply H\"{o}lder's inequality to obtain that 
    $$\E[|Z_N|^2]\leq \prod_{j=1}^l \E\Big[ \big|\tr\big(P_j(X_1,\ldots, X_L)\big)\big|^{2l}\Big]^{1/l}\leq \prod_{j=1}^l \E\Big[ \tr\big(|P_j(X_1,\ldots, X_L)|^{2l}\big)\big|\Big]^{1/l}$$
where the matrix H\"{o}lder's inequality $|\tr(Y)|\leq (\tr|Y|^{2l})^{\frac{1}{2l}}$, $Y\in \M{N}$, was applied for the last inequality above.
Since each $|P_j(X_1,\ldots, X_L)|^{2l}=\big(P_j(X_1,\ldots, X_L)^* P_j(X_1,\ldots, X_L)\big)^l$ is a monomial in $X_1,\ldots, X_L,X_1^*,\ldots, X_L^*$, we have
    $$\limsup_{N\to \infty} \E[|Z_N|^2]\leq \prod_{j=1}^l \varphi\big((P_j(x_1,\ldots, x_L)^*P_j(x_1,\ldots, x_L))^l\big)^{1/l},$$
and in particular, $\sup_N \E[|Z_N|^2]<\infty$. 
Therefore, the random variables $(Z_N)_{N\geq 1}$ are uniformly integrable, and one has
    $$\lim_{N\to \infty}\E[Z_N]=\E\big[\Plim_{N \to \infty} Z_N\big]=\prod_{j=1}^l \varphi(P_j(x_1,\ldots, x_L))$$
from the Vitali convergence theorem (\cite[Theorem 4.6.2 and 4.6.3]{durrett2019probability}).
\end{proof}

On the other hand, a stronger condition called the \textit{bounded cumulants property} \cite{MP13,MP16,MP24} has been introduced to obtain almost-sure (and even stronger) behavior. $\W_N$ satisfies bounded cumulants property if it satisfies the bounded moments property \cref{eq-condition-Bdd} and, for every $l\geq 2$ and for any non-commutative polynomials $P_1,\ldots, P_l$, we have
\begin{equation} \label{eq-condition-BCP}
    \sup_N \big|k_l(\Tr(P_1(\mathcal{W}_N)), \ldots, \Tr(P_l(\mathcal{W}_N))\big)\big|<\infty.
\end{equation}
Clearly the bounded cumulants property implies the factorization property from \cref{eq-condition-1'}. If $\W_N$ satisfies bounded cumulants property and is convergent in $*$-distribution, then the same convergence holds almost surely, by the fact that ${\rm var}\big(\tr(P(\W_N)) \big)=O(N^{-2})$ holds for all non-commutative polynomials $P$ and the application of Borel-Cantelli lemma. Furthermore, it is known \cite[Proposition 16]{MP16} that bounded cumulants property implies
\begin{equation} \label{eq-condition-BCP'}
    \E[\tr\big(P_1(\mathcal{W}_N))\cdots \tr(P_l(\mathcal{W}_N))]=\prod_{j=1}^l \E[\tr(P_l(\mathcal{W}_N))]+O(N^{-2}).
\end{equation}
for all non-commutative polynomials $P_1,\ldots, P_l$. A wide range of random matrices satisfies bounded cumulants property, including GUE, GOE, (real and complex) Wishart matrices, Haar random unitary / orthogonal matrices, and every family of random matrices of the form $U\mathcal{W}_N U^*$ where $\mathcal{W}_N$ is a family of deterministic matrices which is convergent in $*$-distribution and $U$ is a Haar random unitary / orthogonal matrix. Furthermore, if $\mathcal{W}_N$ is UI and satisfies the bounded cumulants property, then $\mathcal{W}_N\cup \mathcal{W}_N^{\top}$ satisfies the bounded cumulants property \cite[Corollary 29]{MP16}.

\medskip

One of our main results \cref{thm-indepTranspose}  characterizes the behavior of independent random matrices and their (partial) transposes within tensor product structures. Regarding independent random matrices, let us recall \textit{Voiculescu's asymptotic freeness theorem} \cite{Voi91, Voi98, collins2003moments} which characterizes the asymptotic freeness of independent UI random matrices.

\begin{theorem} \label{thm-UIasympfree}
Let $\W_N^{(1)},\ldots, \W_N^{(L)}$ be {independent} families of $N\times N$ random matrices such that:
\begin{enumerate}
    \item Each family, except possibly one, is UI.
    
    \item Each family has a first order limit distribution, i.e., it converges in distribution and satisfies the factorization property \cref{eq-condition-1}.
\end{enumerate}
Then $\{\W_N^{(i)}\}_{i\in [L]}$ are asymptotically free as $d\to \infty$. In particular, the family $\bigcup_{i=1}^L \W_N^{(i)}$ converges jointly in distribution and satisfies the factorization property.
\end{theorem}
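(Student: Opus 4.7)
The plan is to verify asymptotic freeness of $\W_N^{(1)}, \ldots, \W_N^{(L)}$ by directly computing arbitrary joint moments and matching them to the free moment formula of \cref{cor-freemoment}: it is enough to show that for every $p\geq 1$, every $f:[p]\to[L]$, and every $X_k\in \W_N^{(f(k))}$,
$$\lim_{N\to\infty}\E[\tr(X_1\cdots X_p)] \;=\; \sum_{\substack{\pi\in NC(p)\\ \pi\leq \ker f}} \prod_{B\in \pi} \tilde\kappa_{|B|}\bigl((x_k)_{k\in B}\bigr),$$
where $x_k$ denotes the limit of $X_k$ in the abstract non-commutative probability space. This single identity yields simultaneously the joint distributional convergence, the asymptotic free independence, and (applied to arbitrary products rather than single traces of products) the joint factorization property.

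Without loss of generality, assume $\W_N^{(2)}, \ldots, \W_N^{(L)}$ are UI. The individual UI of each family together with the independence across families ensures that the joint law of the original tuple coincides with that of $\bigl(\W_N^{(1)}, U_2 \W_N^{(2)} U_2^*, \ldots, U_L \W_N^{(L)} U_L^*\bigr)$, where $U_2, \ldots, U_L$ are independent Haar unitaries, independent of all the $\W_N^{(i)}$'s. Writing $X_k = V_k Y_k V_k^*$ with $V_k:=U_{f(k)}$ (and $V_k:=I_N$ if $f(k)=1$) and $Y_k\in \W_N^{(f(k))}$, I first condition on the $Y_k$'s and integrate out the $U_i$'s one at a time using the graphical Weingarten formula of \cref{thm-GraphWeingarten}. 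For each $i\geq 2$ this produces a sum indexed by pairs $(\sigma_i,\tau_i)\in S_{p_i}^2$ with $p_i:=|f^{-1}(i)|$, weighted by $\Wg_N^{(U)}(\tau_i^{-1}\sigma_i)$; the associated removal diagram evaluates to a product of traces $\tr_{\alpha_i}\bigl((Y_k)_{k\in f^{-1}(i)}\bigr)$ for some $\alpha_i\in S_{p_i}$ determined by the cyclic word $f$ and by $\sigma_i,\tau_i$, while the family $\W_N^{(1)}$ contributes directly as $\tr_{\alpha_1}\bigl((Y_k)_{k\in f^{-1}(1)}\bigr)$ with no Weingarten averaging.

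Taking $N\to\infty$, the factorization property \cref{eq-condition-Fact} together with the distributional convergence of each family allows me to replace $\E\bigl[\prod_i \tr_{\alpha_i}(\cdots)\bigr]$ by $\prod_i \varphi_{\alpha_i}(\cdots)+o(1)$, while the Weingarten asymptotics \cref{eq-WeinAsymp1} give $\Wg_N^{(U)}(\tau_i^{-1}\sigma_i)=\Mob(\tau_i^{-1}\sigma_i)N^{-2p_i+\#(\tau_i^{-1}\sigma_i)}(1+O(N^{-2}))$. A standard genus-type count shows that the resulting power of $N$ is non-positive, with equality precisely when the permutations $\sigma_i,\tau_i$ lie on geodesics compatible with the cyclic order on $[p]$ induced by $f$, i.e.\ when they assemble into a single non-crossing partition $\pi\in NC(p)$ refining $\ker f$. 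The M\"obius inversion \cref{eq-FreeMomentCumulant2} then repackages the surviving product of $\Mob$'s and moments $\varphi_{\alpha_i}$ as a product of free cumulants $\tilde\kappa_{|B|}$ over the blocks $B\in\pi$, matching exactly the right-hand side displayed above.

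The main obstacle is the combinatorial bookkeeping in the last step: one must carefully organize the several independent Weingarten expansions (one per UI family) and the fixed contribution of the possibly non-UI family $\W_N^{(1)}$ into a single non-crossing partition $\pi\leq \ker f$ of the global index set $[p]$, and then recognize the resulting leading sum as the Speicher moment-cumulant expansion. This is the technical heart of Collins' Weingarten-calculus proof of Voiculescu's theorem \cite{collins2003moments}: the factorization hypothesis decouples the different families at leading order, while the UI assumption is needed only to enable the Haar randomization step, so the possibly non-UI family $\W_N^{(1)}$ costs nothing beyond its directly evaluated cycle-moments.
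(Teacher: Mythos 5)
The paper does not actually prove this statement: \cref{thm-UIasympfree} is recalled as Voiculescu's classical asymptotic freeness theorem with citations to \cite{Voi91, Voi98, collins2003moments}, so there is no internal proof to compare against. The closest thing in the paper is the proof of the tensor generalization \cref{thm-locui-tensorfree}, and your sketch follows essentially the same strategy as that proof (and as Collins' original argument): randomize by Haar unitaries, expand via the graphical Weingarten formula, use the per-family factorization property and independence to decouple the expectations, identify the surviving terms with geodesic/non-crossing data below $\ker f$, and repackage via M\"obius inversion into the moment formula of \cref{cor-freemoment}. The genus count you invoke is correct: the total exponent of $N$ is $p-1-\bigl(|\sqcup_i\tau_i|+|(\sqcup_i\tau_i)^{-1}(\sqcup_i\sigma_i)|+|(\sqcup_i\sigma_i)^{-1}\gamma_p|\bigr)\leq 0$, with equality exactly on the geodesic $\id\to\sqcup\tau_i\to\sqcup\sigma_i\to\gamma_p$, which forces $\Pi(\sqcup_i\sigma_i)\in NC(p)$ with $\Pi(\sqcup_i\sigma_i)\leq\ker f$.

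One point in your write-up is imprecise and would need repair in a full proof: you claim the possibly non-UI family $\W_N^{(1)}$ ``contributes directly as $\tr_{\alpha_1}\bigl((Y_k)_{k\in f^{-1}(1)}\bigr)$ with no Weingarten averaging'' and ``costs nothing beyond its directly evaluated cycle-moments.'' If you leave $\W_N^{(1)}$ unrandomized, its boxes sit on the wires that would otherwise close into loops, so after the removal its matrices appear inside a trace invariant $\Tr_{\beta}$ where $\beta$ depends on $\gamma_p$ \emph{and} on the removal permutations $\sigma_i$ of the other families --- it is not a fixed $\alpha_1$ that factors out, and its contribution is entangled with the loop count. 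The clean way to handle this (the one the paper uses for \cref{thm-locui-tensorfree}, via \cref{prop-IndepUI}) is to first conjugate the entire collection by one additional common Haar unitary, which leaves all joint moments unchanged and makes every family UI; then each family gets its own independent Weingarten pair $(\sigma_i,\tau_i)$ and the bookkeeping you describe goes through exactly as stated. With that adjustment your outline is a correct proof strategy.
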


\begin{remark} \label{rmk-UIasympFree}
Note that \cref{thm-UIasympfree} already includes the result on $*$-distribution behavior: if each $\W_N^{(i)}$ converges in $*$-distribution (both in expectation and in probability), then $(\W_N^{(i)})_{i\in [L]}$ are asymptotically $*$-free (both in expectation and in probability), just by replacing $\W_N$ with $\widetilde{W}_N=\W_N\cup \W_N^*$. Furthermore, we can show that if each family satisfies the stronger condition \cref{eq-condition-BCP'}, then $(\W_N^{(i)})_{i\in [L]}$ are almost surely asymptotically $*$-free.
\end{remark}

Asymptotic behaviors of independent random matrices have been studied across a wide range of classes, including Wigner matrices \cite{Dyk93}, OI random matrices \cite{collins2006integration}, and permutation invariant random matrices \cite{male2020traffic,CDM24}. For our purposes, we recall the asymptotic freeness result of OI random matrices.

\begin{theorem} [\cite{collins2006integration}] \label{thm-OIAsympFree}
Let $(\W_N^{(i)})_{i\in [L]}$ be {independent} families of $N\times N$ random matrices such that each family, except possibly one, is OI.
\begin{enumerate}
    \item If each family $\W_N^{(i)}$ converges in distribution and satisfies the factorization property, and if $\W_N^{(i)}\cup (\W_N^{(i)})^{\top}$ satisfies the bounded moments condition \cref{eq-condition-Bdd},
    then $(\W_N^{(i)})_{i\in [L]}$ are asymptotically free, and the family $\bigcup_{i=1}^L \W_N^{(i)}$ satisfies the factorization property.
    
    \item If each family $\W_N^{(i)}\cup (\W_N^{(i)})^{\top}$ converges in distribution and satisfies the factorization property, then the families $\big(\W_N^{(i)}\cup (\W_N^{(i)})^{\top})_{i\in [L]}$ are asymptotically free, and the family $\bigcup_{i=1}^L \big(\W_N^{(i)}\cup (\W_N^{(i)})^{\top}\big)$ satisfies the factorization property.
\end{enumerate}
\end{theorem}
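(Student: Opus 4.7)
The plan is to adapt Voiculescu's argument for unitarily invariant ensembles (\cref{thm-UIasympfree}) to the orthogonal setting via the graphical orthogonal Weingarten calculus of \cref{thm-GraphWeingarten}. As a preparatory step, I would use the orthogonal analog of \cref{prop-IndepUI} to write each OI family in the explicit form $\W_N^{(i)} = O_i \widetilde{\W}_N^{(i)} O_i^{\top}$ for independent Haar orthogonal matrices $O_1,\ldots,O_L$ (or $i\in[L-1]$ if the last family is not OI), where $\bigcup_i \widetilde{\W}_N^{(i)}$ is independent of the $O_j$'s and each $\widetilde{\W}_N^{(i)}$ has the same joint law as $\W_N^{(i)}$. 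Given a function $f:[p]\to[L]$ and $X_k \in \W_N^{(f(k))}$, one then computes $\E[\tr(X_1\cdots X_p)]$ by integrating out each $O_i$ in turn, which by \cref{thm-GraphWeingarten} yields, for each $i\in[L]$, a double sum over pairings $(\pi_i,\rho_i)\in\mathcal{P}_2(\pm p_i)^2$ (with $p_i=\operatorname{Card}(f^{-1}(i))$) weighted by $\Wg_N^{(O)}(\pi_i,\rho_i)$; the outer legs of the $2p_i$ erased $O$-boxes are glued by $\pi_i$ and the inner legs by $\rho_i$, reshaping the ambient index-loop structure carried by the $\widetilde{X}_k$ boxes.

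Using the orthogonal Weingarten asymptotic \eqref{eq-WeinAsymp2}, a standard length-counting argument (analogous to \cref{lem-PairingSup} and the unitary-case analysis) shows that the joint contribution is maximized precisely when, for each $i$, the pair $(\pi_i,\rho_i)$ is of \emph{non-crossing type with trivial $\eps$-component} in the decomposition of \cref{lem-pairings}, i.e.~$\pi_i=\rho_i=\sigma_i\delta\sigma_i^{-1}$ for some $\sigma_i\in S_{NC}(\gamma_{p_i})$ corresponding to a non-crossing pair-partition on the positions $f^{-1}(i)$. When a pair $(\pi_i,\rho_i)$ carries a nontrivial $\eps_i\in\bigsqcup_k S(\{k,-k\})$, some $\widetilde X_k$'s appear inside the resulting trace invariant in \emph{transposed} form, because such an $\eps$ reverses the orientation of a strand; the bounded moments hypothesis on $\W_N^{(i)}\cup(\W_N^{(i)})^{\top}$ is exactly what controls these transpose-twisted contributions uniformly in $N$, so that subleading terms are genuinely $o(1)$. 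Gathering the leading contributions and invoking the factorization property of each marginal $\W_N^{(i)}$ to split the trace along the blocks of $\sigma_i$, one recovers precisely the free moment formula \eqref{eq-FreeMoment}, establishing asymptotic freeness.

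The factorization property of $\bigcup_i \W_N^{(i)}$ follows by the same Weingarten expansion applied to a product $\E[\tr_\alpha(X_1,\ldots,X_p)\cdot\tr_\beta(Y_1,\ldots,Y_q)]$, combined with the multiplicativity of $\tr_\sigma$ under the disjoint union $\alpha\sqcup\beta$ and the observation that any cross-contribution (pairings connecting an $\alpha$-index to a $\beta$-index) carries an extra negative power of $N$. Part~(2) then follows from Part~(1) by enlarging each family to $\W_N^{(i)}\cup(\W_N^{(i)})^{\top}$: each enlarged family is again OI by \cref{ex-UI} (parts~(5)--(6)), and the bounded moments hypothesis on the enlarged family now follows automatically from its convergence in distribution. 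The main obstacle, compared with the unitary proof of \cref{thm-UIasympfree}, is precisely the bookkeeping of transposes encoded by the $\eps$-factors in \cref{lem-pairings}: the orthogonal Weingarten sum ranges over $\mathcal{P}_2(\pm p_i)$ rather than $S_{p_i}^2$, producing many additional pairings that twist the $\widetilde X_k$'s by transposition, and it is exactly the hypothesis on $\W_N^{(i)}\cup(\W_N^{(i)})^{\top}$ that tames these extra terms.
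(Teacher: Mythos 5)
The paper itself offers no proof of this theorem --- it is quoted from \cite{collins2006integration} --- so the relevant comparison is with the paper's own proofs of its orthogonal generalizations (\cref{prop-oi-tensormoment}, \cref{thm-LocOI-tensorfree}, specialized to $r=1$), which follow exactly the route you outline: conjugate each OI family by an independent Haar orthogonal matrix, expand with the graphical orthogonal Weingarten formula, and isolate leading terms via the length function on $S_{\pm p}$ and the $\eps\sigma\delta\sigma^{-1}\eps$ decomposition of \cref{lem-pairings}. Your identification of the role of the bounded moments hypothesis on $\W_N^{(i)}\cup(\W_N^{(i)})^{\top}$ (taming the transpose-twisted pairings with nontrivial $\eps$), and your reduction of part (2) to part (1) via \cref{ex-UI}(5), are both correct; for the latter, note that boundedness of $\E[\tr_\sigma]$ for general $\sigma$ uses the factorization hypothesis together with convergence in distribution, not convergence alone.

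There is, however, one genuine error in the combinatorial core. You assert that the leading contribution comes from pairs with $\pi_i=\rho_i=\sigma_i\delta\sigma_i^{-1}$. The correct condition (see the exponent computation in the proof of \cref{prop-oi-tensormoment} and \cref{lem-pairings}(2)--(3)) is $\rho_i\delta\leq\pi_i\delta\leq$ the ambient pairing determined by $\gamma_p$ and $f$, i.e.\ $\pi_i=\sigma_i\delta\sigma_i^{-1}$ and $\rho_i=\tau_i\delta\tau_i^{-1}$ for two generally \emph{distinct} permutations with $\tau_i\leq\sigma_i$ on a geodesic. The terms with $\rho_i\neq\pi_i$ survive at leading order because the larger trace factor $N^{\#(\rho_i\vee\delta)}$ exactly compensates the smaller Weingarten weight, and it is precisely the resulting inner sum $\sum_{\tau_i\leq\sigma_i}\E[\tr_{\tau_i}(\cdots)]\,\Mob(\tau_i^{-1}\sigma_i)$ that performs the M\"obius inversion converting moments into the free cumulants $\tilde{\kappa}_{\sigma_i}$ of \cref{eq-FreeMoment}. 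Keeping only the diagonal $\pi_i=\rho_i$ would yield $\sum_{\sigma}\varphi_{\sigma}$ over non-crossing $\sigma\leq\ker f$ rather than $\sum_{\sigma}\tilde{\kappa}_{\sigma}$, which is not the free moment formula and does not characterize freeness. With this correction, the rest of your plan goes through.
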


Note that in \cref{thm-OIAsympFree}, unlike in \cref{thm-UIasympfree}, conditions must be imposed not only on $\W_N^{(i)}$ but also on $(\W_N^{(i)})^{\top}$ to determine the asymptotic behavior of $\bigcup_{i=1}^L \W_N^{(i)}$. This requirement is essential (see the remark below \cite[Theorem 5.2]{collins2006integration}) and can be justified by considering the \textit{orthogonal trace invariants}. 

{
For a pairing $\pi\in \mathcal{P}_2(\pm p)$, define the \textit{(orthogonal) trace invariant} $\Tr_{\pi\vee\delta}:(\M{N})^p\to \Comp$ by
\begin{equation} \label{eq-OITraceInv}
    \Tr_{\pi\vee\delta}(X^{(1)},\ldots, X^{(p)}):=\sum_{\forall\, k,\,i_{\pm k}\in [N]} \left(\prod_{k=1}^p X^{(k)}_{i_{k},i_{-k}}\right)\delta_{\underline{i}^{\pm},\, \underline{i}^{\pm}\circ \pi}
\end{equation}
and its normalized value $\tr_{\pi\vee\delta}:=\frac{1}{N^{\#(\pi\vee\delta)}}\Tr_{\pi\vee\delta}$ (recall that $\underline{i}^{\pm}(k):=i_k$ for $k\in [\pm p]$ as in \cref{eq:OWg}). We can imagine the diagrams of $X^{(k)}$'s whose outputs and inputs are labeled as $k$ and $-k$, resp., and then we connect the legs according to $\pi$. For example, we have $\Tr_{\delta\vee \delta}(X_1,\ldots, X_p)=\prod_{k=1}^p \Tr(X_k)$, $\Tr_{\gamma_p\delta\gamma_p^{-1}\vee\delta}(X_1,\ldots, X_p)=\Tr(X_1\cdots X_p)$, and
    \begin{equation}\label{eq:orthogonal-trace-invariant}
        \Tr_{(1,2)(-1,-2)(-3,-5)(5,4)(-4,3)\vee\delta}(X_1,X_2,X_3,X_4,X_5)=\Tr(X_1 X_2^{\top}) \Tr(X_3 X_5^{\top}X_4).
    \end{equation}    
\begin{figure}
    \centering
    \includegraphics[width=0.8\linewidth]{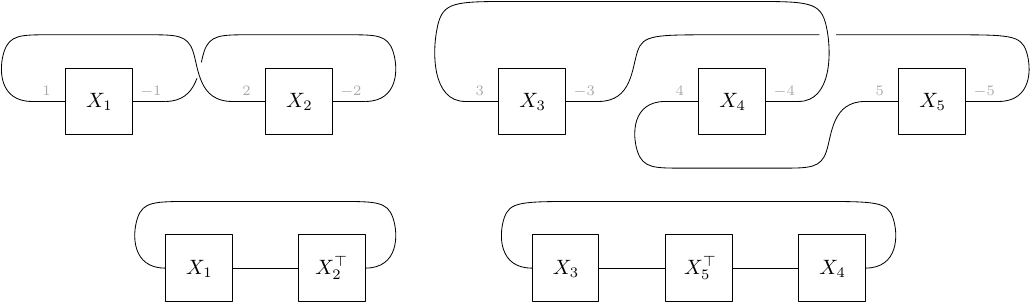}
    \caption{The trace invariant from \cref{eq:orthogonal-trace-invariant}. In the top row, the invariant is constructed from the paring $(1,2)(-1,-2)(-3,-5)(5,4)(-4,3) \in \mathcal P_2(\pm 5)$. The bottom diagram is obtained from the top diagram by moving the boxes and represents the invariant in terms of the matrices and their transposes.}
    \label{fig:orthogonal-trace-invariant}
\end{figure}
See \cref{fig:orthogonal-trace-invariant} for a graphical representation of this invariant. More generally, suppose $\pi=\eps\sigma\delta\sigma^{-1}\eps$ as in \cref{lem-pairings}. Then we have ({\cite[Lemma 5]{MP13}})
\begin{equation} \label{eq-OITraceInvTransp}
    \Tr_{\pi\vee\delta} (X_1,\ldots, X_p)=\Tr_{\sigma}(X_1^{f_1},\ldots, X_p^{f_p}), \quad \tr_{\pi\vee\delta} (X_1,\ldots, X_p)=\tr_{\sigma}(X_1^{f_1}, \ldots, X_p^{f_p})
\end{equation}
where $f_k=\begin{cases}
    1 & \text{if $\eps(k)=k$,}\\
    \top & \text{if $\eps(k)=-k$,}
\end{cases}$ for $k=1,\ldots, p$. In particular, $\tr_{\sigma\delta\sigma^{-1}\vee \delta}=\tr_{\sigma}$ for $\sigma\in S_p$.

Note that $\tr_{\pi\vee\delta}$ satisfies the invariance property under conjugation with respect to orthogonal matrices:
\begin{equation} \label{eq-OITraceInvProp}
    \tr_{\pi\vee\delta} (X_1,\ldots, X_p)=\tr_{\pi\vee\delta} (OX_1 O^{\top},\ldots, O X_p O^{\top}), \quad X_1,\ldots, X_p\in \M{N},\;\; O\in \mathcal{O}_N.
\end{equation}
Conversely, we can show that any multilinear functional $\mathcal{L}:\M{N}^p\to \Comp$ satisfying the invariance property as above can be written as a linear combination of $\{\tr_{\pi\vee \delta}:\pi\in \mathcal{P}_2(\pm p)\}$ {by taking the expectation in  \cref{eq-OITraceInvProp} with respect to a Haar random orthogonal matrix $O$ and applying orthogonal Weingarten calculus (\cref{thm:Weingarten} (2))}. In other words, the family $\{\tr_{\pi\vee \delta}:\pi\in \mathcal{P}_2(\pm p)\}$ contains the full information on the behavior of OI random matrices. Moreover, by the relation \cref{eq-OITraceInvTransp}, this information is equivalently encoded by $\{\tr_{\sigma}:\sigma\in S_p\}$ for OI families and their transposes.
}

\medskip

We finally recall the result in \cite{MP16} describing the asymptotic freeness between UI random matrices and their transposes.

\begin{theorem} \label{thm-UITranspFreeSingle}
Let $\W_N$ be a family of $N\times N$ UI random matrices which converges in distribution and satisfies the factorization property \cref{eq-condition-Fact}. Then $\W_N$ and $\W_N^{\top}$ are asymptotically free as $N\to \infty$, and $\W_N \cup \W_N^{\top}$ satisfies the factorization property.
\end{theorem}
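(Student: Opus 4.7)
The plan is to combine the UI hypothesis with the unitary Weingarten calculus to compute the leading-order asymptotics of every mixed moment of $\W_N \cup \W_N^{\top}$. The key observation is that, since $\W_N$ is unitarily invariant, the joint distribution of $\W_N \cup \W_N^{\top}$ coincides with that of $U\W_N U^* \cup \overline U\, \W_N^{\top} U^{\top}$ whenever $U \in \mathcal{U}_N$ is Haar-distributed and independent of $\W_N$. This allows us to inject a fresh Haar unitary into every mixed moment and then average over it; in other words, although $\W_N$ and $\W_N^{\top}$ share all their randomness, UI lets us pretend that an extra independent Haar unitary sits on top of both families.

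Concretely, for a mixed product $\tr(Z_1 \cdots Z_p)$ with $Z_k \in \W_N \cup \W_N^{\top}$, I would write $Z_k = U X_k U^*$ when $Z_k \in \W_N$ and $Z_k = \overline U X_k U^{\top}$ when $Z_k \in \W_N^{\top}$, where $X_k$ is the corresponding element of $\W_N$ or $\W_N^{\top}$. Each factor contributes exactly one $U$ and one $\overline U$ to the associated tensor diagram, so the balanced graphical Weingarten formula (\cref{thm-GraphWeingarten}) applies and expands the $U$-expectation as a sum over pairs $(\sigma, \tau) \in S_p \times S_p$ of removal diagrams weighted by $\Wg_N^{(U)}(\tau^{-1}\sigma)$. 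Using the asymptotic $\Wg_N^{(U)}(\rho) = \Mob(\rho) N^{-2p+\#\rho}(1+O(N^{-2}))$ together with the factorization property of $\W_N$ (which converts expectations of products of trace invariants of the $X_k$ into products of expectations plus vanishing error), one can isolate the $(\sigma, \tau)$ pairs that saturate the relevant geodesic inequality in $S_p$ and hence contribute at order $N^0$.

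The central combinatorial step is to verify that the surviving contributions match exactly the non-crossing moment formula from \cref{cor-freemoment} for two freely independent families whose individual limits are those of $\W_N$ and $\W_N^{\top}$. Convergence in distribution of $\W_N^{\top}$ follows from that of $\W_N$ through the identity $\tr_\sigma(X_1^{\top}, \ldots, X_p^{\top}) = \tr_{\sigma^{-1}}(X_1, \ldots, X_p)$, which relates its trace invariants to those of the original family. Once asymptotic freeness is established, the factorization property of $\W_N \cup \W_N^{\top}$ falls out of the same Weingarten analysis applied to products of traces, since the removal mechanism treats disjoint cycles independently and the bounded-moment consequences of convergence in distribution rule out boundary effects.

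I expect the main obstacle to be the bookkeeping in the leading-order extraction. Each transposed factor $Z_k = \overline U X_k U^{\top}$ swaps the input and output legs of $X_k$ in the diagram, shifting the row/column index constraints of Weingarten by one cyclic position relative to the untransposed case. Tracking this twist, which effectively conjugates $(\sigma, \tau)$ by a permutation encoding the transposition pattern, and showing that the resulting maximum-$\#\rho$ configurations correspond bijectively to non-crossing partitions respecting the separation between $\W_N$ and $\W_N^{\top}$ positions is the heart of the argument; once the bijection is set up, matching the numerical weights against those in \cref{eq-FreeMoment} is essentially a Möbius-inversion exercise over the lattice $S_{NC}(\gamma_p)$.
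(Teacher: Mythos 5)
Your plan is essentially the paper's own argument: the paper obtains this statement as the $r=1$ case of \cref{thm-LocUITranspose}(2), whose proof is exactly the route you describe — inject a Haar unitary via UI, apply the graphical Weingarten formula (\cref{lem-PartialTransExpectation}), track the transposition twist through a conjugation by the permutation $\eps_f$ encoding which factors are transposed, characterize the geodesic-saturating configurations (\cref{lem-PairingGeodesic}), and match the surviving terms with the non-crossing moment formula of \cref{cor-freemoment}. The combinatorial step you flag as the heart of the argument is indeed the content of \cref{lem-PairingGeodesic} (imported from \cite{MP19}), and the rest proceeds as you outline.
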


In fact, the original result of \cite{MP16} imposes the stronger condition of bounded cumulants property (\cref{eq-condition-BCP}) to analyze the \textit{second-order behavior} (i.e., fluctuations of moments) of random matrices. The statement in \cref{thm-UITranspFreeSingle} follows as a special case of our more general result, \cref{thm-LocUITranspose}.

\section{Algebraic tensor probability spaces} \label{sec-TensorNCPS}

The main motivation for the current work is to understand the large dimension limits of random matrices acting on a tensor product of vector spaces. This tensor product structure comes with a natural distributional invariance, that induced by tensor products of Haar-distributed random unitary matrices (also known as \emph{local-unitaries} in quantum information theory \cite{chitambar2014everything}). The natural tensor invariants with respect to the action of the local-unitary group are the so-called \emph{(tensor) trace invariants} \cite{vrana2011algebraically, CGL23a, CGL23b, collins2024free, BB24}. We shall first review these invariants for (multipartite) matrices that motivate our main definition later. 

For $p\geq 1$ and for $r$-partite matrices $X^{(1)},\ldots, X^{(p)}\in \bigotimes_{s=1}^r \M{d_s}$, define the \emph{tensor trace invariant} or \emph{local unitary trace invariant} associated to a $r$-tuple of permutations $\underline{\alpha}=(\alpha_1,\ldots, \alpha_r)\in (S_p)^r$ by
\begin{equation}\label{eq:def-trace-invariant}
    \Tr_{\underline{\alpha}}(X^{(1)},\ldots, X^{(p)}):=\sum_{\text{all indices}}\left(\prod_{k=1}^p X_{i_1^{(k)}\ldots i_r^{(k)}, j_1^{(k)}\ldots j_r^{(k)}}^{(k)}\right)\prod_{s=1}^r \left(\prod_{k=1}^p \delta_{i_s^{(\alpha_s(k))},j_s^{(k)}}\right).
\end{equation}
{The equation above is the tensor version of \cref{eq-SingleTraceInv}.} Note that the $r$ permutations connect, on each of the $r$ tensor factors, the $p$ matrices in the following way: for all $i \in [p]$ and all $s \in [r]$, the $s$-th input of the matrix $X^{(i)}$ is connected to the $s$-th output of the matrix $X^{(\alpha_s(i))}$.

For example, for three bipartite ($r=2$) matrices $X^{(1)},X^{(2)},X^{(3)}\in \M{d}^{\otimes 2}$ and two permutations $\alpha=(1,2)(3), \beta=(1)(2,3)\in S_3$, we have
    $$\Tr_{\alpha,\beta}(X^{(1)},X^{(2)},X^{(3)})=\sum_{a_1,a_2,a_3,b_1,b_2,b_3\in [d]}X^{(1)}_{a_1b_1, a_2b_1}X^{(2)}_{a_2b_2, a_1b_3}X^{(3)}_{a_3b_3, a_3b_2}.$$
We refer to \cref{fig:Tensormoment} for the diagram representation of the above value.

\begin{figure}
 \centering
 \includegraphics[width=0.35\linewidth] {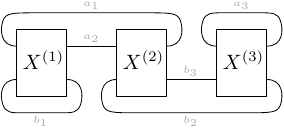}
 \caption{The diagram for the trace invariant $\Tr_{(12)(3), (1)(23)}(X_1,X_2,X_3)$. The summation is over the six indices $a_{1,2,3},b_{1,2,3}$. The first permutation $\alpha_1=(12)(3)$ connects the top legs of the tensors, while the second permutation $\alpha_2=(1)(23)$ connects the bottom legs. On each level $s=1,2$, the input (leg on the RHS) of the $i$-th box is connected to the output (leg on the LHS) of the $\alpha_s(i)$-th box.}
 \label{fig:Tensormoment}
 \end{figure}

In the case where the invariant is described by a single permutation used on all levels $\alpha_s = \sigma \in S_p$ for all $s \in [r]$, the tensor trace invariant reduces to the usual trace invariant (\cref{eq-SingleTraceInv}) of matrices $X^{(i)} \in \M{d_1\cdots d_r}$, ignoring the tensor product structure:  $\Tr_{\underline{\alpha}}=\Tr_{\sigma}$. In particular, remembering that $\gamma_p = (1,2,\ldots, p)$ denotes the full cycle permutation, we have 
$$\Tr_{\underline{\gamma_p}}\left(X^{(1)},\ldots, X^{(p)}\right)=\Tr (X^{(1)}\cdots X^{(p)}).$$

Considering different permutations on the tensor factors captures genuine tensor behavior and, in particular, gives access to important notions such as the \emph{partial trace} $\Tr_2(X) := [\id \otimes \Tr](X)$:
$$\Tr_{\gamma_p,\id_p}(X)=\Tr\big[(\Tr_2 X)^p\big]$$
or the \emph{partial transposition} $X^\Gamma := [\id \otimes \transp](X)$:
    $$\Tr_{\alpha,\beta}(X^{\Gamma})=\Tr_{\alpha,\beta^{-1}}(X).$$

As we have already mentioned, the functionals $\Tr_{\underline{\alpha}}$ are multilinear in $X^{(i)}$'s and invariant under \emph{conjugation by local-unitaries}: for all $U_s \in \mathcal {U}_{d_s}$,
    \begin{equation}
        \Tr_{\underline{\alpha}}(UX^{(1)}U^*,\ldots, UX^{(p)}U^*)=\Tr_{\underline{\alpha}}(X^{(1)},\ldots, X^{(p)}),\qquad \text{where }U:=\bigotimes_{s=1}^r U_s.
    \end{equation} 
Conversely, every multilinear functional $\mathcal{L}:(\bigotimes_{s=1}^r \M{d_s})^p\to \Comp$ having this property can be written as a linear combination of $\Tr_{\underline{\alpha}}$'s. Indeed, for Haar random local-unitary $U=\bigotimes_{s=1}^r U_s$, we can apply the Weingarten calculus \cref{thm:Weingarten} to obtain
\begin{align*}
    &\mathcal{L}(X^{(1)},\ldots, X^{(p)})=\E_{U}\big[\mathcal{L}(UX^{(1)}U^*,\ldots, UX^{(p)}U^*) \big]\\
    &= \sum_{\underline{\sigma},\underline{\tau}\in (S_p)^r} \sum_{\text{all indices}}  \left(\prod_{k=1}^p X_{j_1^{(k)}\ldots j_r^{(k)}, {j'_1}^{(k)}\ldots {j'_r}^{(k)}}^{(k)}\right) \, \mathcal{L}\big(E_{\underline{i}^{(1)}\underline{i'}^{(1)}},\ldots, E_{\underline{i}^{(p)}\underline{i'}^{(p)}}\big)\\
    &\qquad\qquad\qquad\qquad \times \prod_{s=1}^r \left(\prod_{k=1}^p \delta_{i_s^{(\sigma_s(k))}{i'}_s^{(k)}} \delta_{j_s^{(\tau_s(k))}{j'}_s^{(k)}}\right)\Wg_{d_s}^{(U)}(\tau_s^{-1}\sigma_s)\\
    &=\sum_{\underline{\tau}}\Tr_{\underline{\tau}}(X^{(1)},\ldots, X^{(p)})\sum_{\underline{\sigma}}   \sum_{\text{all indices}} \mathcal{L}\big(E_{\underline{i}^{(1)}\underline{i'}^{(1)}},\ldots, E_{\underline{i}^{(p)}\underline{i'}^{(p)}}\big)\prod_{s=1}^r \left(\prod_{k=1}^p \delta_{i_s^{(\sigma_s(k))}{i_s'}^{(k)}}\right)\Wg_{d_s}^{(U)}(\tau_s^{-1}\sigma_s),
\end{align*}
where $E_{\underline{i}\,\underline{j}}:=\bigotimes_{s=1}^r E_{i_s j_s}$ are matrix units in $\bigotimes_{s=1}^r \M{d_s}$.

Finally, let us introduce the \emph{normalized} versions of the tensor trace invariants:     
\begin{equation}\label{eq:def-normalized-trace-invariant}
    \tr_{\underline{\alpha}}:=\frac{1}{d_1^{\#\alpha_1}\cdots d_r^{\#\alpha_r}}\Tr_{\underline{\alpha}}.
\end{equation}
Note that this choice of the normalization is motivated by the applications in random matrix theory, and especially by our main results, \cref{thm-locui-tensorfree,thm-indepTranspose}. Other normalizations are also relevant, see \cite{CGL23b,collins2024free}.

\medskip

Let us now introduce the definition of algebraic tensor probability spaces.

\begin{definition} \label{def:tensor-ncps}
An \textit{$r$-partite (algebraic) tensor probability space} is a triple $(\A,\varphi, (\varphi_{\underline{\alpha}}))$ where $(\A,\varphi)$ is a non-commutative probability space with a tracial unital linear functional $\varphi$, and for each $p\geq 1$ and $\underline{\alpha}\in (S_p)^r$,  $\varphi_{\underline{\alpha}}:\A^p\to \Comp$ is a multilinear functional satisfying the following conditions:
\begin{enumerate}
    \item \emph{Consistency}: $\varphi_{\underline{\alpha}}=\varphi_{\alpha}$ if $\alpha_s\equiv \alpha$ for some $\alpha\in S_p$.
    \label{def:tensor-ncps-consistency}
    
    \item \emph{Permutation invariance}: $\varphi_{\sigma\underline{\alpha} \sigma^{-1}}(x_1,\ldots, x_p)=\varphi_{\underline{\alpha}}(x_{\sigma(1)},\ldots, x_{\sigma(p)})$.
    \label{def:tensor-ncps-permutation-invariance}
    
    \item \emph{Multiplicativity}: for $\underline{\alpha}\in (S_p)^r$, $\underline{\beta}\in (S_q)^r$, and $x_1,\ldots, x_p, y_1,\ldots, y_q\in \A$, we have
        $$\varphi_{\underline{\alpha}\sqcup \underline{\beta}}(x_1,\ldots, x_p,y_1,\ldots y_q)=\varphi_{\underline{\alpha}}(x_1,\ldots, x_p)\varphi_{\underline{\beta}}(y_1,\ldots y_q).$$
    \label{def:tensor-ncps-multiplicativity}
    
    \item \emph{Unitality}: for $p\geq 2$, $\underline{\alpha}\in (S_p)^r$, and $x_1,\ldots, x_p\in \A$ with $x_j=1$ for some $j\in [p]$, we have
        $$\varphi_{\underline{\alpha}}(x_1,\ldots, x_p)=\varphi_{\underline{\alpha}'}(x_1,\ldots, x_{j-1},x_{j+1},\ldots, x_p),$$
    where $\underline{\alpha}'\in (S_{p-1})^r$ is obtained by first ``erasing'' $j$ in every permutation $\alpha_s$ and then reindexing $j+1,\ldots, p$ as $j,\ldots, p-1$ to define $\alpha'_s$. 
    \label{def:tensor-ncps-unitality}

    \item \emph{Substitution property}: if there exist $i \neq j \in [p]$ such that $\alpha_s(i) = j$ for all $s \in [r]$, then 
    $$\varphi_{\underline \alpha}(\underline x) = \varphi_{\underline{\alpha'}}(\underline{x'}),$$
    where 
    $$\underline{x'} = (x_1, \ldots, \underbrace{x_jx_i}_{\text{pos.~$i$}}, \ldots, \underbrace{\emptyset}_{\text{pos.~$j$}}, \ldots, x_p)$$
    and, for all $s \in [p]$, $\alpha'_s\in S([p]\setminus \{j\})$ is defined by
    $$\alpha'_s(k) = \begin{cases} 
    	\alpha_s(k) &\qquad \text{ if } k \neq i\\
	\alpha_s(j) &\qquad \text{ if } k=i.
	\end{cases}$$
    \label{def:tensor-ncps-substitution}
\end{enumerate}
\end{definition}

Note that when using a single non-commutative random variable, we denote by $\varphi_{\underline{\alpha}}(x):=\varphi_{\underline{\alpha}}(x,\ldots, x)$.

{
\begin{remark}
Since the state $\phi$ from the definition above is completely determined by the collection $(\varphi_{\underline{\alpha}})$ using the consistency condition from \cref{def:tensor-ncps-consistency}, one could remove it from the definition. We decided to keep the state $\phi$ in order to introduce tensor probability spaces as generalizations of usual non-commutative probability spaces (which are pairs $(\A, \phi)$). This choice allows us to discuss simultaneously and compare the usual notion of freeness and the newly introduced notion of tensor freeness (e.g.~\cref{prop:tensor-free-tensor-product}), as well as convergence results (e.g.~\cref{thm-ui-tensorfree}).
\end{remark}
}
\begin{remark}
    The conditions from \cref{def:tensor-ncps-consistency}, \cref{def:tensor-ncps-permutation-invariance}, \cref{def:tensor-ncps-substitution} imply that the state $\phi$ must be \emph{tracial}. Indeed, with $\underline{\alpha} = \underline{(12)}$ (i.e. $\alpha_s\equiv (12)$) and $\sigma = (12)$, we have
    $$\phi(x_1 x_2) = \phi_{\underline{(12)}}(x_1, x_2) = \phi_{(12) \cdot \underline{(12)} \cdot (12)^{-1}}(x_2, x_1) = \phi_{\underline{(12)}}(x_2, x_1) = \phi(x_2 x_1).$$
\end{remark}
\begin{remark}\label{rk:unitality}
    Let us comment on the unitality condition from \cref{def:tensor-ncps-unitality}, which might seem peculiar at first sight. Firstly, this condition is analogous to the assumption (Id) in \cite[p.9]{BB24}. For example, it entails that 
    $$\phi_{\substack{(1\,2\,5\,3)(4\,6)\\(1)(3\,4\,5) (2\,6)}}(x_1, 1, x_3, x_4, x_5, x_6) = \phi_{\substack{(1\,4\,2)(3\,5)\\(1)(2\,3\,4)(5)}}(x_1,x_3, x_4, x_5, x_6).$$
    Above, the erasure of $2$ from the permutation $\alpha_1=(1\,2\,5\,3)(4\,6)\in S_6$ gives the permutation $(1\,4\,2)(3\,5)\in S_5$. 

    Note that, in general, the unitality condition implies that all the tensor moments of $1 \in \A$ are equal to 1: 
    $$\forall \underline{\alpha} \in S_p^r, \qquad \phi_{\underline{\alpha}}(1) = 1.$$
    The unitality property will be crucial in \cref{prop-1-TensorFree}.
\end{remark}

Let us point out that the technical conditions in the definition above are very close to the ones used in related work, such as \cite{male2020traffic} or \cite{BB24}.

We consider now an example in order to clarify the substitution property \cref{def:tensor-ncps}-\cref{def:tensor-ncps-substitution}. We have:
	$$\varphi_{(1423),(1342)}(x_1,x_2,x_3,x_4) = \varphi_{(132),(123)}(x_1,x_3,x_2\cdot x_4),$$	
	with the choice $i=4$, $j=2$, see \cref{fig:substitution-property}.
    \begin{figure}[htbp]
        \begin{center}
		\includegraphics{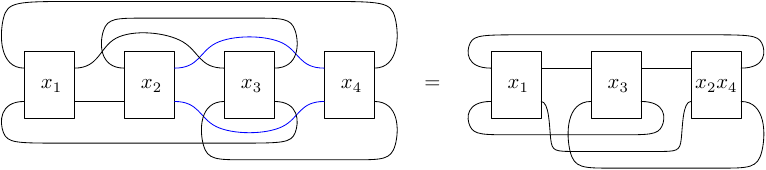}
        \caption{An example of the application of the substitution property from \cref{def:tensor-ncps}. We simplify the diagram on the left-hand-side by noticing that all the wires (blue) going out of $x_4$ connect to $x_2$. Hence, we can take the product in the algebra $x_2 \cdot x_4$ and reduce the number of elements from $p=4$ to $p=3$.}
        \label{fig:substitution-property}
        \end{center}
    \end{figure}

{
\begin{proposition} \label{prop-TensorMomentIndep}
    The family of expectation functionals 
    $$\varphi_{\underline \alpha}, \qquad \underline \alpha \in \bigsqcup_{p=1}^\infty (S_p)^r$$
    satisfying the hypotheses from \cref{def:tensor-ncps} only depends on the independent subfamily 
    $$\varphi_{\underline \alpha}, \qquad \underline \alpha \in \Sindep,$$
    where $\Sindep$ contains one representative from each conjugacy class of permutation $r$-tuples that are simultaneously 
    \begin{itemize}
        \item \emph{irreducible}: $\bigvee_{s=1}^r \Pi(\alpha_s) = 1_p$;
        \item \emph{branching}: there do not exist $i \neq j \in [p]$ such that $\alpha_s(i)=j$ for all $s \in [r]$.
    \end{itemize}
\end{proposition}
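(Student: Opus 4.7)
The plan is to proceed by strong induction on $p \geq 1$, showing that any $\varphi_{\underline{\alpha}}$ with $\underline{\alpha} \in (S_p)^r$ can be expressed in terms of the values $\{\varphi_{\underline{\beta}}\}_{\underline{\beta} \in \Sindep}$ through repeated application of the axioms of \cref{def:tensor-ncps}. The base case $p = 1$ is immediate, since the only tuple $\underline{\id_1} \in (S_1)^r$ is vacuously both irreducible and branching, hence covered by $\Sindep$.

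For $p \geq 2$, I would split into three complementary cases according to the structure of $\underline{\alpha}$. If $\underline{\alpha}$ is \emph{reducible}, set $\pi := \bigvee_{s=1}^r \Pi(\alpha_s) = \{B_1, \ldots, B_k\}$ with $k \geq 2$; each $\alpha_s$ preserves every block $B_l$, so $\alpha_s = \bigsqcup_l \alpha_s|_{B_l}$. I would then choose any $\sigma \in S_p$ mapping the consecutive integer intervals $\{1, \ldots, |B_1|\}, \{|B_1|+1, \ldots, |B_1|+|B_2|\}, \ldots$ bijectively onto $B_1, B_2, \ldots, B_k$. By permutation invariance (\cref{def:tensor-ncps-permutation-invariance}), $\varphi_{\underline{\alpha}}(x_1,\ldots,x_p)$ equals $\varphi_{\sigma^{-1}\underline{\alpha}\sigma}(x_{\sigma(1)},\ldots,x_{\sigma(p)})$, and the conjugated tuple is a genuine disjoint union in the sense of \cref{def:tensor-ncps-multiplicativity}. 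Multiplicativity then factorizes this as a product of $k$ tensor moments, each indexed by an irreducible $r$-tuple on a set of size strictly less than $p$, and the induction hypothesis applies to each factor.

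If instead $\underline{\alpha}$ is irreducible but \emph{not branching}, there exist distinct $i, j \in [p]$ with $\alpha_s(i) = j$ for all $s \in [r]$, and the substitution property (\cref{def:tensor-ncps-substitution}) identifies $\varphi_{\underline{\alpha}}(x_1,\ldots,x_p)$ with $\varphi_{\underline{\alpha'}}$ evaluated on the $(p-1)$-tuple obtained by replacing the $i$-th entry by $x_j x_i$ and deleting the $j$-th entry; this reduces to the induction hypothesis on $\underline{\alpha'} \in (S_{p-1})^r$. Finally, if $\underline{\alpha}$ is both irreducible and branching, its conjugacy class contains a chosen representative $\underline{\beta} \in \Sindep$, and writing $\underline{\alpha} = \tau \underline{\beta} \tau^{-1}$ for some $\tau \in S_p$, permutation invariance yields $\varphi_{\underline{\alpha}}(x_1,\ldots,x_p) = \varphi_{\underline{\beta}}(x_{\tau(1)},\ldots,x_{\tau(p)})$.

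The main bookkeeping point is checking that the recursion terminates, but this is clear since both the reducible decomposition and the non-branching substitution strictly decrease the arity $p$, while the branching-irreducible case is a direct read-off from $\Sindep$ with no recursion. Apart from this, the argument is a direct unwinding of the axioms and I do not anticipate a deep obstacle; the most delicate technical step is simply setting up the conjugating permutation $\sigma$ in the reducible case so that its action matches the consecutive-block convention of the multiplicativity axiom. Notably, the consistency and unitality axioms (\cref{def:tensor-ncps-consistency,def:tensor-ncps-unitality}) play no role in this particular reduction.
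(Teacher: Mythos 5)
Your proof is correct and follows essentially the same route as the paper's (much terser) argument: reducibility is handled by combining permutation invariance with multiplicativity to factor over the blocks of $\bigvee_s \Pi(\alpha_s)$, non-branching tuples are collapsed via the substitution property, and the remaining irreducible branching tuples are conjugated onto their representatives in $\Sindep$. Your write-up merely makes explicit the induction on $p$ and the termination check that the paper leaves implicit, and your observation that consistency and unitality are not needed here is accurate.
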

 }
\begin{proof}
This follows easily from the various properties of the functionals $\varphi_{\underline{\alpha}}$ from \cref{def:tensor-ncps}. For example, the multiplicativity and the permutation invariance conditions above imply a much stronger multiplicativity property for the functionals $\varphi$. Indeed, if there exists a partition $\pi\in \mathcal{P}(p)$ such that $\pi \geq \Pi(\alpha_s)$ for all $s \in [r]$, then 
$$\varphi_{\underline \alpha}(x_1,\ldots, x_p) = \prod_{b \in \pi} \varphi_{\underline \alpha \restrict{b}}\big((x_j)\restrict b\big).$$
For example, we have 
$$\varphi_{(13)(2),(1)(2)(3)}(x,y,z) = \varphi_{(12),(1)(2)}(x,z) \varphi(y).$$

Similarly, one can use the substitution property to reduce the size of the permutations by taking products in the algebra $\mathcal A$.
\end{proof}

Note that, in the case $r=1$, $\Sindep$ contains only one element:  $\alpha_1 = (1)$, for $p=1$. This $\phi_{(1)}$ functional is precisely the trace $\varphi$ from the usual (i.e. non-tensor) non-commutative probability theory. On the other hand, in the case $r=2$, the set $\Sindep$ is infinite, but we can enumerate its first $p$-levels. At $p=1$, we have $\phi_{(1)^2} =\phi$. At $p=2$, we only have two new functionals $\phi_{\underline{\alpha}}(x,y)$ (instead of $(2!)^2=4)$:
\smallskip
\begin{center}
\begin{tabular}{|
>{\columncolor[HTML]{EFEFEF}}r|c|c|}
\hline
 & \cellcolor[HTML]{EFEFEF}$(1)(2)$ & \cellcolor[HTML]{EFEFEF}$(12)$ \\ \hline
$(1)(2)$ & $\phi(x)\phi(y)$ & $\phi_{(1)(2),(12)}(x,y)$ \\ \hline
$(12)$ & $\phi_{(12),(1)(2)}(x,y)$ & $\phi(xy)$ \\ \hline
\end{tabular}
\end{center}
\smallskip

Let us now discuss examples. 

\begin{example}
    For positive integers $d_1, d_2, \ldots, d_r$, 
    $$\left( \bigotimes_{s=1}^r \M{d_s}, \tr, (\tr_{\underline{\alpha}})_{\underline{\alpha} \in \bigsqcup_{p=1}^\infty S_p^r} \right)$$
    is an $r$-partite tensor probability space. Recall that the \emph{normalized} tensor trace invariants were defined in \cref{eq:def-trace-invariant} and \eqref{eq:def-normalized-trace-invariant}. This is the canonical example, and we shall make use of the graphical notation (like in \cref{fig:Tensormoment}) for tensor trace invariants as often as possible. 
\end{example}

One can construct tensor probability spaces out of (usual, non-tensor) non-commutative probability spaces using the (algebraic) tensor product. 

\begin{example}\label{ex:tensor-product-of-ncps}
More generally, given $r$ non-commutative probability spaces $(\mathcal A^{(s)}, \phi^{(s)})$, $s \in [r]$, define their \emph{(algebraic) tensor product}
    $$ \left( \mathcal A := \bigotimes_{s=1}^r \mathcal A^{(s)}, \phi := \bigotimes_{s=1}^r \phi^{(s)}, (\phi_{\underline{\alpha}})_{\underline{\alpha} \in \bigsqcup_{p=1}^\infty S_p^r} \right),$$
    where $\phi_{\underline{\alpha}}$ is defined on simple tensors by
    $$\phi_{\underline{\alpha}}\left( \bigotimes_{s=1}^r x^{(s)}_1, \ldots, \bigotimes_{s=1}^r x^{(s)}_p \right) := \prod_{s=1}^r \phi^{(s)}_{\alpha_s} \left( x^{(s)}_1, \ldots, x^{(s)}_p \right)$$
    and extended by linearity to the whole tensor space. We remark that all the tensor products here are \textit{algebraic}, and therefore, elements in the algebra $\A$ are linear combinations of finite number of elements of the form $\bigotimes_{s=1}^r x^{(s)}$. 
\end{example}

One can use the example above to construct commutative tensor probability spaces consisting of $r$-tuples of random variables out of commutative probability spaces of the form $\A^{(s)}=(L^{\infty-}(\mathbb P), \mathbb E)$.

\medskip

Let us introduce now the important notion of convergence in tensor distribution, similar to the usual convergence in distribution for non-commutative random variables \cite[Definition 8.1]{nica2006lectures} or to the convergence in traffic-distribution \cite[Definition 1.2 and Theorem 1.8]{male2020traffic}.

\begin{definition}
    Consider a sequence of families $\W_N= (x_{N,j})_{j \in J} \subseteq \mathcal A^{(N)}$ from some non-commutative probability spaces $(\A^{(N)},\varphi^{(N)})$ that are endowed with multilinear functionals $(\phi^{(N)}_{\underline{\alpha}})_{\underline{\alpha} \in \bigsqcup_{p=1}^\infty S_p^r}$. Similarly, consider a family $\W=(x_j)_{j\in J} \subseteq \mathcal A$, where again $(\A,\varphi)$ is a non-commutative probability space endowed with multilinear functionals $\phi_{\underline{\alpha}}$; here we do not require that $\phi^{(N)}_{\underline{\alpha}}$ or $\phi_{\underline{\alpha}}$ satisfy all the assumptions \cref{def:tensor-ncps-consistency}-\cref{def:tensor-ncps-substitution} from \cref{def:tensor-ncps}, but we assume the relations
        $$\varphi^{(N)}=\varphi^{(N)}_{\underline{\id_1}}, \quad \varphi=\varphi_{\underline{\id_1}}.$$
    We say that the sequence $\W_N$ \emph{converges in tensor distribution} to $\W$ if, for all $p \geq 1$, for all $\underline{\alpha} \in S_p^r$, and for all $j:[p] \to J$, we have 
    $$\lim_{N \to \infty} \phi^{(N)}_{\underline{\alpha}}\big( x_{N,j(1)}, x_{N,j(2)}, \ldots, x_{N,j(p)} \big) = \phi_{\underline{\alpha}}( x_{j(1)}, x_{j(2)}, \ldots, x_{j(p)}).$$
We simply write $\W_N\xrightarrow{\otimes\text{-distr}} \W$.
\end{definition}

\medskip

Next, we would like to consider the case of \textit{random matrices}. In the following, we take $r$ sequences $d_s=d_{s,N}$ ($s=1,\ldots, r$) of positive integers which increase to $\infty$ as $N\to \infty$. Let $D=D_N:=d_{1,N}\cdots d_{r,N}$, and let us identify $\M{D}\cong \M{d_1}\otimes\cdots \otimes \M{d_r}$.
Note that one cannot simply extend the usual setting from \cite[Examples 1.4.(3)]{nica2006lectures} by taking tensor products because, even in the $r=1$ case, the multilinear functionals
    $$\mathbb E \circ \tr_{\underline{\alpha}}:\Big(\big({\bigotimes}_{s=1}^r \mathcal{M}_{d_s}\big)(L^{\infty-}\big(\mathbb{P})\big)\Big)^p\to \Comp, \quad p\geq 1, \;\; \underline{\alpha}\in S_p^r,$$
do not satisfy in general the multiplicativity condition from \cref{def:tensor-ncps}-\cref{def:tensor-ncps-multiplicativity}. To address this issue in the case $r=1$, we introduced the \textit{factorization property} in \cref{def-FactProp-BddMoment}. We now extend the conditions from \cref{def-FactProp-BddMoment} to their tensor analogs as follows.

\begin{definition} \label{def-Tensor-FactProp-BddMoment}
Let $\W_N$ be a family of $D_N\times D_N$ random matrices for each $N\geq 1$.
\begin{enumerate}
    \item $\W_N$ is said to satisfy the \emph{tensor factorization property} if, for all $X_1,\ldots, X_p,Y_1,\ldots, Y_q\in \W_N$, $\underline{\alpha}\in S_p^r$, and $\underline{\beta}\in S_q^r$, we have as $N\to \infty$,
    \begin{equation} \label{eq-condition-TensorFact}
        \E[\tr_{\underline{\alpha}\sqcup \underline{\beta}}(X_1,\ldots, X_p,Y_1,\ldots, Y_q)]= \E[\tr_{\underline{\alpha}}(X_1,\ldots, X_p)]\E[\tr_{\underline{\beta}}(Y_1,\ldots, Y_q)]+o(1)
    \end{equation}

    \item $\W_N$ is said to satisfy the \emph{bounded tensor moments property} if
    \begin{equation} \label{eq-condition-TensorBdd}
        \sup_N \big|\E[\tr_{\underline{\sigma}}(X_1,\ldots, X_p)]\big|<\infty, \quad X_1,\ldots, X_p\in \mathcal{W}_N, \quad p\geq 1, \;\;\underline{\sigma}\in S_p^r .
    \end{equation}
\end{enumerate}
\end{definition}

When 
$r=1$, the tensor factorization property and the bounded tensor moments property coincide with the factorization property and the bounded moments property, respectively. In general, properties involving tensors impose strictly stronger assumptions than those without tensor product structures. For example, the convergence in distribution does not necessarily imply the bounded moments property \cref{eq-condition-Bdd}; however, the (stronger) convergence in \textit{tensor} distribution always implies the bounded \textit{tensor} moments property \cref{eq-condition-TensorBdd}.

Note that if $\W_N$, endowed with the functionals $\mathbb E \circ \tr_{\underline{\alpha}}$, converges in tensor distribution and if the corresponding limit $\W$ is taken from an $r$-partite tensor probability space $(\A,\varphi,(\varphi_{\underline{\alpha}}))$ as in \cref{def:tensor-ncps}, then $\W_N$ necessarily satisfies tensor factorization property \cref{eq-condition-TensorFact} by the multiplicativity of $\varphi_{\underline{\alpha}}$. The converse statement also can be shown: given tensor factorization property, random matrices always converge in tensor distribution to limit elements in some tensor probability space. This result is the direct analogue of \cite[Lemma 4.11]{male2020traffic}. 

\begin{lemma} \label{lem-TensorLimitRealization}
Suppose a family of $D_N\times D_N$ random matrices $\mathcal{W}_N=(X_{N,j})_{j\in J}$ satisfies the following conditions:
\begin{enumerate}
    \item the limit $\lim_{N\to \infty}\E[\tr_{\underline{\alpha}}(X_1,\ldots, X_p)]$ exists for all $\underline{\alpha}\in (S_p)^r$ and $X_1,\ldots, X_p\in \mathcal{W}_N$;

    \item the \emph{tensor factorization property} \cref{eq-condition-TensorFact}.
\end{enumerate}
Then, there exists a family $\mathcal{W}=(x_j)_{j\in J}$ in some $r$-partite algebraic tensor probability space $(\A,\varphi,(\varphi_{\underline{\alpha}}))$ such that $\mathcal{W}_N$ converges in tensor distribution to $\mathcal{W}$ as $N \to \infty$.
\end{lemma}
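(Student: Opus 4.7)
The plan is to construct the limiting tensor probability space by the standard universal construction from free probability: take $\mathcal A := \mathbb C\langle x_j : j \in J\rangle$, the free unital algebra on non-commuting indeterminates indexed by $J$, and for each $p \geq 1$, each $\underline{\alpha} \in (S_p)^r$, and each tuple $(j_1,\ldots,j_p) \in J^p$, set
$$\varphi_{\underline \alpha}(x_{j_1},\ldots,x_{j_p}) := \lim_{N \to \infty} \E\bigl[\tr_{\underline \alpha}(X_{N,j_1},\ldots,X_{N,j_p})\bigr],$$
with the limit existing by assumption (1). Extend each $\varphi_{\underline \alpha} \colon \mathcal A^p \to \mathbb C$ multilinearly to all monomials in $\mathcal A^p$, and finally set $\varphi := \varphi_{\underline{\id_1}}$. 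The family $\W = (x_j)_{j\in J} \subseteq \mathcal A$ will then automatically satisfy $\W_N \xrightarrow{\otimes\text{-distr}} \W$, by construction. The task is thus to check that the triple $(\mathcal A, \varphi, (\varphi_{\underline \alpha}))$ actually satisfies the five axioms \cref{def:tensor-ncps-consistency}--\cref{def:tensor-ncps-substitution} of \cref{def:tensor-ncps}.

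The strategy for each axiom is the same: verify the analogous identity at the level of the normalized tensor trace invariants $\tr_{\underline \alpha}$ on $\bigotimes_{s=1}^r \M{d_s}$ and then pass to the limit. For the \emph{consistency property} \cref{def:tensor-ncps-consistency}, this reduces to the elementary identity $\tr_{(\alpha,\ldots,\alpha)} = \tr_\alpha = \prod_{c \in \cyc(\alpha)}\tr(X_{i_1}\cdots X_{i_n})$ applied to the matrices, combined with the tensor factorization property \cref{eq-condition-TensorFact} used inductively on the cycles (noting that $\tr_\alpha$ for $\alpha$ with $\#\alpha$ cycles already factorizes as a product of single-trace terms on the matrix side). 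The \emph{permutation invariance} \cref{def:tensor-ncps-permutation-invariance} is visible directly from the graphical/indicial definition of $\tr_{\underline \alpha}$ in \cref{eq:def-trace-invariant} by relabelling the dummy indices. The \emph{substitution property} \cref{def:tensor-ncps-substitution} follows from the graphical observation that when $\alpha_s(i) = j$ for every $s \in [r]$, all wires leaving the $i$-th box connect into the $j$-th box on every level, so the two boxes may be merged into a single box carrying the matrix product $X^{(j)} X^{(i)}$, yielding exactly the diagram described by $\underline{\alpha'}$. The \emph{unitality} \cref{def:tensor-ncps-unitality} is immediate at the matrix level once one inserts the identity $I_{D_N} = \bigotimes_s I_{d_s}$ at position $j$: the delta from $X^{(j)}$ merges the incoming and outgoing wires at position $j$, reducing the diagram to the one for $\underline{\alpha'}$, and the chosen normalization $\tr_{\underline \alpha} = D_N^{-\sum_s \#\alpha_s} \Tr_{\underline \alpha}$ accounts exactly for the change in the number of cycles when erasing $j$ from each $\alpha_s$. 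All of these are identities at the matrix level that persist under the limit.

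The main obstacle is the \emph{multiplicativity} axiom \cref{def:tensor-ncps-multiplicativity}, and this is precisely where assumption (2) is used. Indeed, if the limits only existed, we would at best obtain an additive collection of functionals; but for
$$\varphi_{\underline\alpha \sqcup \underline\beta}(x_1,\ldots,x_p,y_1,\ldots,y_q) = \varphi_{\underline\alpha}(x_1,\ldots,x_p)\,\varphi_{\underline\beta}(y_1,\ldots,y_q)$$
to hold on generators, we need exactly that
$$\lim_{N\to\infty}\E\bigl[\tr_{\underline\alpha \sqcup \underline\beta}(\cdots)\bigr] = \lim_{N\to\infty}\E\bigl[\tr_{\underline\alpha}(\cdots)\bigr]\,\lim_{N\to\infty}\E\bigl[\tr_{\underline\beta}(\cdots)\bigr],$$
which is the tensor factorization property \cref{eq-condition-TensorFact}. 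Once multiplicativity is established on generators, it extends to all of $\mathcal A^p \times \mathcal A^q$ by multilinearity. A minor technical point to handle is that multilinear extension must be well-defined: this is automatic because $\mathcal A$ is the free algebra, so monomials form a basis, and each $\varphi_{\underline \alpha}$ is defined unambiguously on this basis.
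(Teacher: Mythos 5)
Your proposal follows the same route as the paper: take the free algebra on the generators, define the tensor moment functionals as limits of the matrix trace invariants, and verify the axioms of \cref{def:tensor-ncps}, with the tensor factorization property \cref{eq-condition-TensorFact} supplying multiplicativity (and, as you correctly note, also consistency, since $\E[\tr_\alpha]$ is an expectation of a \emph{product} of traces). The only imprecision is the phrase ``extend multilinearly to all monomials'': multilinearity alone does not produce values on tuples of monomials of degree $\geq 2$, so one should instead define $\varphi_{\underline{\alpha}}(P_1,\ldots,P_p):=\lim_{N}\E\big[\tr_{\underline{\alpha}}(P_1(\W_N),\ldots,P_p(\W_N))\big]$ directly on tuples of polynomials (as the paper does), the existence of this limit following from hypothesis (1) via the matrix-level substitution identity that rewrites a trace invariant of products of matrices as a trace invariant of the individual factors with an enlarged permutation tuple.
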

\begin{proof}
Let $\A=\Comp\la (x_j)_{j \in J} \ra$ be the set of non-commutative polynomials in the formal variables $x_j$. For every $p\geq 1$ and $\underline{\alpha}\in (S_p)^r$, we can define the functional $\varphi_{\underline{\alpha}}:\A^p\to \Comp$ by
\begin{equation}
    \varphi_{\underline{\alpha}}(P_1,\ldots, P_p):=\lim_{N\to \infty} \E\left[\tr_{\underline{\alpha}}(P_1(\mathcal{W}_N), \ldots, P_1(\mathcal{W}_N)\right].
\end{equation}
Then we can check that $\varphi_{\underline{\alpha}}$ is well-defined multilinear functional, $(\A,\varphi,(\varphi_{\underline{\alpha}}))$ is an $r$-partite non-commutative probability space where $\varphi(a):=\varphi_{\underline{\id_1}}(a)$, and $\W_N\xrightarrow{\otimes\text{-distr}} \W:=(x_j)_{j\in J}$ as $N\to \infty$.
\end{proof}

As in \cref{prop-FactorConvInProb}, the tensor factorization property has a close relation with the convergence in probability.

\begin{proposition} \label{prop-TensorFactConvProb}
Let $\W_N$ be a family of $D_N\times D_N$ random matrices, and suppose all the limits
    $$\lim_{N\to \infty}\E \big[\tr_{\underline{\alpha}}(X_1,\ldots, X_p)\big]$$
exist for $p\geq 1$, $\underline{\alpha}\in (S_p)^r$, and 
$X_1,\ldots, X_p\in \W_N\cup \W_N^*$. Then $\W_N\cup \W_N^*$ satisfies the tensor factorization property \cref{eq-condition-TensorFact} if and only if the same convergences hold in probability.
\end{proposition}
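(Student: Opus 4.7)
The proof will follow the two-pronged structure of the non-tensor analog \cref{prop-FactorConvInProb}, and its heart lies in two algebraic identities that hold directly from the definition \eqref{eq:def-trace-invariant} of the tensor trace invariants. First, complex conjugation yields $\overline{\tr_{\underline{\alpha}}(X_1,\ldots,X_p)} = \tr_{\underline{\alpha}^{-1}}(X_1^*,\ldots,X_p^*)$, where $\underline{\alpha}^{-1}=(\alpha_1^{-1},\ldots,\alpha_r^{-1})$ (one verifies this by conjugating matrix entries, which swaps outputs with inputs and thereby reverses the cycle orientations on each of the $r$ levels). Second, the product of two tensor trace invariants merges into a single one: $\tr_{\underline{\alpha}}(\underline{X}) \cdot \tr_{\underline{\beta}}(\underline{Y}) = \tr_{\underline{\alpha} \sqcup \underline{\beta}}(\underline{X}, \underline{Y})$, since the disjoint union of the permutations factorizes the summation over indices as well as the normalization constant.

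For the sufficiency direction (tensor factorization implies convergence in probability), fix $\underline{\alpha} \in (S_p)^r$ and $X_1,\ldots,X_p \in \W_N \cup \W_N^*$, and set $Z_N := \tr_{\underline{\alpha}}(X_1,\ldots,X_p)$. Combining the two identities, $|Z_N|^2 = \tr_{\underline{\alpha} \sqcup \underline{\alpha}^{-1}}(X_1,\ldots,X_p, X_1^*,\ldots,X_p^*)$, so the tensor factorization property applied to this invariant yields $\E[|Z_N|^2] = \E[Z_N]\,\E[\overline{Z_N}] + o(1) = |\E[Z_N]|^2 + o(1)$. Hence $\mathrm{var}(Z_N) \to 0$, and since $\lim_N \E[Z_N]$ exists by hypothesis, Chebyshev's inequality gives $Z_N \to \lim_N \E[Z_N]$ in probability.

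For the necessity direction, fix $\underline{\alpha} \in (S_p)^r$, $\underline{\beta} \in (S_q)^r$, and $X_1,\ldots,X_p, Y_1,\ldots,Y_q \in \W_N \cup \W_N^*$, set $A_N := \tr_{\underline{\alpha}}(X_1,\ldots,X_p)$, $B_N := \tr_{\underline{\beta}}(Y_1,\ldots,Y_q)$, and denote the limits $a := \lim_N \E[A_N]$, $b := \lim_N \E[B_N]$. By assumption of convergence in probability, $A_N \to a$ and $B_N \to b$ in probability, so by the continuous mapping theorem $A_N B_N \to ab$ in probability as well. To upgrade this to convergence in expectation, I would establish $L^2$-boundedness of $(A_N B_N)_N$: applying the two identities again, $|A_N B_N|^2 = \tr_{\underline{\gamma}}(X_1,\ldots,X_p, X_1^*,\ldots,X_p^*, Y_1,\ldots,Y_q, Y_1^*,\ldots,Y_q^*)$ with $\underline{\gamma} := \underline{\alpha} \sqcup \underline{\alpha}^{-1} \sqcup \underline{\beta} \sqcup \underline{\beta}^{-1}$. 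Since this is itself a tensor trace invariant of elements of $\W_N \cup \W_N^*$, its expectation converges by hypothesis, and in particular $\sup_N \E[|A_N B_N|^2] < \infty$. This yields uniform integrability of $(A_N B_N)_N$, and the Vitali convergence theorem then gives $\E[A_N B_N] \to ab = \lim_N \E[A_N] \cdot \lim_N \E[B_N]$, which is precisely the tensor factorization property.

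The main technical point is the verification of the two algebraic identities relating tensor trace invariants, conjugation, and product; once these are in hand, the probabilistic arguments (variance-to-zero for sufficiency, uniform integrability plus Vitali for necessity) are entirely parallel to the non-tensor proof, and indeed no new difficulty arises from the tensor setting beyond keeping track of the extra index $s \in [r]$ in the underlined permutations.
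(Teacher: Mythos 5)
Your proof is correct and follows essentially the same route as the paper's: the sufficiency direction is the identical variance-to-zero argument via $|Z_N|^2=\tr_{\underline{\alpha}\sqcup\underline{\alpha}^{-1}}(\underline{X},\underline{X}^*)$, and the necessity direction is the same uniform-integrability-plus-Vitali argument, since your $A_NB_N$ equals the paper's $Z_N=\tr_{\underline{\alpha}\sqcup\underline{\beta}}(\underline{X},\underline{Y})$ by the multiplicativity identity you state. The two algebraic identities you isolate (conjugation reverses the permutations on the adjoints, and products of invariants merge under disjoint union) are exactly the facts the paper uses implicitly.
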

\begin{proof}
If $\W_N\cup \W_N^*$ satisfies the tensor factorization property, then as $N\to \infty$,
    $${\rm var}\big(\tr_{\underline{\alpha}}(\underline{X})\big)=\E\big[\tr_{\underline{\alpha}\sqcup \underline{\alpha}^{-1}}(\underline{X}, \underline{X}^*)\big]-\E\big[\tr_{\underline{\alpha}}(\underline{X})\big] \E\big[\tr_{\underline{\alpha}^{-1}}(\underline{X}^*)\big]\to 0,$$
where $\underline{X}=(X_1,\ldots, X_p)$ and $\underline{X}^*=(X_1^*,\ldots,X_p^*)$. Therefore, $\displaystyle \tr_{\underline{\alpha}}(\underline{X})\to \lim_{N\to \infty}\E[\tr_{\underline{\alpha}}(\underline{X})]$ in probability as $N\to \infty$.

Conversely, assume the convergence in probability, i.e., $\displaystyle \tr_{\underline{\alpha}}(\underline{X})\to \lim_{N\to \infty}\E[\tr_{\underline{\alpha}}(\underline{X})]$ in probability for every $p\geq 1$ and $\underline{\alpha}\in (S_p)^r$. Then for $\underline{\alpha}\in (S_p)^r$, $\underline{\beta}\in (S_q)^r$, and $X_1,\ldots, X_p,Y_1,\ldots, Y_q\in \W_N\cup \W_N^*$, set $Z_N=\tr_{\underline{\alpha}\sqcup \underline{\beta}}(X_1,\ldots, X_p,Y_1,\ldots, Y_q)$. Then we have
    $$|Z_N|^2=\tr_{\underline{\alpha}\sqcup \underline{\beta}\sqcup \underline{\alpha}^{-1}\sqcup \underline{\beta}^{-1}}(\underline{X},\underline{Y},\underline{X}^*,\underline{Y}^*),$$
and therefore $\sup_N\E[|Z_N|^2]<\infty$. In other words, the sequence $(Z_N)_{N\geq 1}$ is uniformly integrable. Hence, the Vitali convergence theorem implies that
    $$\lim_{N\to\infty}\E \big[\tr_{\underline{\alpha}\sqcup\underline{\beta}}(\underline{X},\underline{Y})\big]= \E\big[\Plim_{N\to \infty} \tr_{\underline{\alpha}}(\underline{X})\tr_{\underline{\beta}}(\underline{Y})\big] = \lim_{N\to\infty}\E \big[\tr_{\underline{\alpha}}(\underline{X})\big] \cdot \lim_{N\to\infty}\E \big[\tr_{\underline{\beta}}(\underline{Y})\big],$$
which establishes the desired tensor factorization property.
\end{proof}

For later use, let us briefly introduce the tensor analogue of orthogonal trace invariants. For a tuple of pairings $\underline{\pi}=(\pi_1,\ldots, \pi_r)\in \mathcal{P}_2(\pm p)^r$, define the \textit{local orthogonal trace invariant} $\Tr_{\underline{\pi}\vee\delta}:(\bigotimes_{s=1}^r\M{d_s})^p\to \Comp$ by
\begin{equation} \label{eq-LocOITraceInv}
    \Tr_{\underline{\pi}\vee\delta}(X^{(1)},\ldots, X^{(p)}):=\sum_{\text{all indices}} \left(\prod_{k=1}^p X^{(k)}_{i_1^{(k)}\cdots \, i_r^{(k)},i_1^{(-k)}\cdots \, i_r^{(-k)}}\right)\prod_{s=1}^r \left(\prod_{(k,k')\in \pi_s}\delta_{i_s^{(k)},i_s^{(k')}}\right)
\end{equation}
and its normalized value $\tr_{\underline{\pi}\vee\delta}:=\frac{1}{d_1^{\#(\pi_1\vee\delta)}\cdots \,d_r^{\#(\pi_r\vee\delta)}}\Tr_{\underline{\pi}\vee\delta}$. We can imagine the diagrams of $X^{(k)}$'s whose outputs and inputs are labeled as $k$ and $-k$, resp., and then we connect their tensor legs according to each $\pi_s$.

Similarly to the case of the orthogonal trace invariant $\Tr_{\pi\vee\delta}$, we have for $\pi_s=\eps_s\sigma_s\delta\sigma_s^{-1}\eps_s$, 
\begin{equation}
    \Tr_{\underline{\pi}\vee\delta} (X_1,\ldots, X_p)=\Tr_{\underline{\sigma}}(X_1^{\underline{f}(1)}\cdots X_p^{\underline{f}(p)}),
\end{equation}
where $X^{\underline{t}}:=(t_1\otimes\cdots \otimes t_r)(X)$ denotes the \textit{partial transpose} for $\underline{t}\in \{\id,\top\}^r$ and $\underline{f}=(f_1,\ldots, f_r):[p]\to \{\id,\top\}^r$ where $f_s(k)=\begin{cases}
    \id & \text{if $\eps_s(k)=k$,}\\
    \top & \text{if $\eps_s(k)=-k$,}
\end{cases}$ for $k=1,\ldots, p$. In particular, $\Tr_{\underline{\sigma\delta\sigma^{-1}}\vee\delta}=\Tr_{\underline{\sigma}}$ for $\underline{\sigma}\in (S_p)^r$. Furthermore, for every multilinear functional $\mathcal{L}:\big(\bigotimes_{s=1}^r \M{d_s}\big)^p\to \Comp$ which is invariant under local-orthogonal matrices
    $$\mathcal{L}(OX^{(1)}O^{\top},\ldots, OX^{(p)}O^{\top})=\mathcal{L}(X^{(1)},\ldots, X^{(p)}),\quad  \text{where } O:=\bigotimes_{s=1}^rO_s,\;\; O_s\in \mathcal{O}_{d_s},$$
$\mathcal{L}$ can be written as a linear combination of $\{\Tr_{\underline{\pi}\vee\delta}\}_{\underline{\pi}\in \mathcal{P}_2(\pm p)^r}$ thanks to the orthogonal Weingarten calculus \cref{thm:Weingarten}.

\section{Tensor free cumulants}\label{sec:tensor-free-cumulants}

The notion of \emph{freeness}, introduced by Voiculescu \cite{voiculescu1985symmetries,voiculescu1992free}, originates in operator algebra and captures the lack of relations between the group algebras of free groups. Later, Speicher developed a combinatorial approach to freeness \cite{speicher1994multiplicative,nica2006lectures} using the notion of \emph{free cumulants} that is based on the lattice of \emph{non-crossing partitions}. We refer the reader to \cref{sec:preliminary} for some background material on these notions. 

In this section, we generalize Speicher's free cumulants to the tensor case, and in the next section we introduce the corresponding notion of \emph{tensor freeness}. Let us emphasize that there have been many recent works investigating the same problem, and developing very similar notions of tensor cumulants \cite{KMW24,BB24,collins2024free}. {A common thread is the definition of moments or invariants based on \emph{combinatorial structures} such as trace invariants encoded by permutations or graph contractions, and the subsequent definition of cumulants through moment-cumulant formulas. 

While sharing these foundational principles, the specific definitions and contexts differ significantly. Kunisky, Moore, and Wein \cite{KMW24} address real symmetric tensors under orthogonal $\mathcal O(d)$ invariance, defining ``tensorial finite free cumulants'' by averaging distinct-index graph moments over the Haar measure. Bonnin and Bordenave \cite{BB24} introduce an abstract algebraic framework using combinatorial maps, defining tensor freeness and associated cumulants via M\"obius inversion on a poset of maps. Collins, Gurau, and Lionni \cite{collins2024free} focus on local-unitary (LU) invariant complex tensors, distinguishing between ``pure'' and ``mixed'' types, and define finite-size cumulants using Weingarten calculus. After taking the large $N$ limit, they obtain the asymptotic version of free cumulants which, at first order, coincide with our notion of tensor free cumulants defined below. The specific invariants (permutation-based traces, graph moments, map traces) and the nature of the cumulants thus vary depending on the framework and objectives of each paper. 

Our approach is closer in nature the original definition of free cumulants by Speicher \cite{speicher1994multiplicative}, see also \cite{nica2006lectures,mingo2017free}. We generalize free cumulants to the tensor case by firstly embedding non-crossing partitions as geodesic permutations and secondly considering $r$-tuples of such permutations to address multipartite matrices. We proceed then in a similar way as the other research discussed above by making use of the M\"obius inversion to relate tensor cumulants to tensor moment functionals and definte tensor free independence by the vanishing of mixed tensor free cumulants.}

\begin{definition}\label{def:free-tensor-cumulants}
    Given an $r$-partite tensor probability space $(\A,\varphi, (\varphi_{\underline{\alpha}}))$, we introduce the \emph{tensor free cumulants} $(\kappa_{\underline{\beta}})$ associated to the tensor moment functionals $(\phi_{\underline{\alpha}})$ by the following equivalent formulas:

    \begin{enumerate}    
        \item The \emph{tensor free moment-cumulant formula}: for all $p\geq 1$ and all $\underline{\alpha}\in (S_p)^r$ 
        \begin{equation}\label{eq:tensor-moment-free-cumulant}
    	   \varphi_{\underline{\alpha}}(x_1,\ldots, x_p):=\sum_{\underline{\beta} \in S_{NC}(\underline{\alpha})}  \kappa_{\underline{\beta}}(x_1,\ldots, x_p).
    \end{equation}
    \item The \emph{tensor free cumulant-moment formula}: for all $p\geq 1$ and all $\underline{\beta}\in (S_p)^r$ 
    \begin{equation}\label{eq:tensor-free-cumulant-moment}
        \kappa_{\underline{\beta}}(x_1,\ldots, x_p):=\sum_{\underline{\alpha} \in S_{NC}(\underline{\beta})}  \varphi_{\underline{\alpha}}(x_1,\ldots, x_p) \Mob(\underline{\alpha}^{-1}\underline{\beta}).
    \end{equation}
    \end{enumerate}
    Above, we recall that by $\underline{\alpha} \in S_{NC}(\underline{\beta})$ we mean that $\alpha_s \in S_{NC}(\beta_s)$ for all $s \in [r]$, and that we write
    $$\Mob(\underline{\alpha}^{-1}\underline{\beta}):= \prod_{s=1}^r \Mob (\alpha_s^{-1}\beta_s),$$
    see \cref{sec:preliminary-permutations,sec:preliminary-FreeProb}. {Furthermore, we simply denote by $\kappa_{\underline{\alpha}}(x):=\kappa_{\underline{\alpha}}(x,x,\ldots, x)$.}
\end{definition}
First, note that the equations \eqref{eq:tensor-moment-free-cumulant} and \eqref{eq:tensor-free-cumulant-moment} extend the free moment-cumulant formulas \eqref{eq-FreeMomentCumulant} \eqref{eq-FreeMomentCumulant2}. The equivalence between \eqref{eq:tensor-moment-free-cumulant} and \eqref{eq:tensor-free-cumulant-moment} is straightforward and follows from the usual (non-tensor) case by multiplying $r$-times the corresponding equations, see \cite[Proposition 11.4]{nica2006lectures}. This equivalence is a particular instance of the M\"obius inversion formulas in a lattice, see \cite[Lecture 10]{nica2006lectures}. In particular, tensor free cumulants are exactly the usual, non-tensor, free cumulants in the trivial case $r=1$.

{
\begin{remark}\label{rk:tensor-free-cumulants-permutation-invariance}
    Thanks to the permutation invariance and multiplicativity of the M\"{o}bius function:
    \begin{align*}
        \Mob(\sigma \alpha \sigma^{-1})=\Mob(\alpha)&, \quad \alpha, \sigma\in S_p,\\
        \Mob(\alpha\sqcup \beta)=\Mob(\alpha)\Mob(\beta)&,\quad \alpha\in S_p,\;\; \beta\in S_q,
    \end{align*}
    tensor free cumulants inherit the permutation invariance property and multiplicativity (\cref{def:tensor-ncps}-\cref{def:tensor-ncps-permutation-invariance,def:tensor-ncps-multiplicativity}) from the tensor moment functionals $(\phi_{\underline{\alpha}})$.

    On the other hand, let us comment that tensor free cumulants do not satisfy the consistency in the case $r\geq 2$: even if $\alpha_s\equiv \sigma$, we have $\kappa_{\underline{\alpha}} \neq \tilde{\kappa}_{\sigma}$ in general while $\varphi_{\underline{\alpha}}=\varphi_{\sigma}$. See the discussion below and \cref{prop-cumulant-from-tensorcumulant} for precise relations between $\kappa_{\underline{\alpha}}$ and $\tilde{\kappa}_{\sigma}$.
\end{remark}

}

Let us explore the definition of tensor free cumulants via some examples and compare it to the definition of free cumulants (\cref{sec:preliminary-FreeProb}). Let $(\A,\varphi, (\varphi_{\underline{\alpha}}))$ be an $r$-partite tensor probability space, and consider $(\tilde \kappa_\beta)$ the usual free cumulants associated to $(\A,\varphi)$. The case $p=1$ is trivial;
$$\kappa_{(1), \ldots, (1)}(x) = \varphi(x) = \tilde \kappa_{(1)}(x).$$
In the case $p=r=2$, we have four tensor free cumulants: 
\begin{align*}
	\kappa_{(1)(2),(1)(2)}(x,y) &= \varphi_{(1)(2),(1)(2)}(x,y) = \varphi(x)\varphi(y) = \tilde \kappa_{(1)(2)}(x,y)\\
	\kappa_{(1)(2),(1,2)}(x,y) &= \varphi_{(1)(2),(1,2)}(x,y) - \varphi_{(1)(2),(1)(2)}(x,y) = \varphi_{(1)(2),(1,2)}(x,y) - \phi(x)\phi(y)\\
	\kappa_{(1,2),(1)(2)}(x,y) &= \varphi_{(1,2),(1)(2)}(x,y) - \varphi_{(1)(2),(1)(2)}(x,y) = \varphi_{(1,2),(1)(2)}(x,y) - \phi(x)\phi(y)\\
	\kappa_{(1,2),(1,2)}(x,y) &= \varphi_{(1,2),(1,2)}(x,y) - \varphi_{(1,2),(1)(2)}(x,y) - \varphi_{(1)(2),(1,2)}(x,y) + \varphi_{(1)(2),(1)(2)}(x,y)\\
    &= \phi(xy) - \varphi_{(1,2),(1)(2)}(x,y) - \varphi_{(1)(2),(1,2)}(x,y)+\phi(x)\phi(y).	
\end{align*}
Compare the last formula with the one for (non-tensor) free cumulant
$$\tilde \kappa_{(1,2)}(x,y) = \phi(xy) - \phi(x)\phi(y).$$
{The above simple computations show how the tensor case is different and contains more information than the usual (non-tensor) setting.}

Free cumulants can be expressed in terms of tensor free cumulants as follows. We recall that, for a given permutation $\alpha \in S_p$, the set of geodesic permutations $S_{NC}(\alpha)$ is a \emph{lattice}, isomorphic to 
$$\bigtimes_{c \in \cyc(\alpha)} NC({\rm Card}(c)),$$
hence the join operation $\vee$ is well-defined on $S_{NC}(\alpha)$; see \cref{prop:lattice-structure,prop:lattice-structure2}.

\begin{proposition}\label{prop-cumulant-from-tensorcumulant} 
Let $(\mathcal A, \phi, (\phi_{\underline{\alpha}}))$ be a tensor probability space and consider the tensor free cumulants $(\kappa_{\underline{\alpha}})$ as well as the usual, non-tensor, free cumulants $\tilde \kappa_\alpha$ associated to the functional $\phi$. Then, for all $p \geq 1$, all $\alpha \in S_p$, and all $x_1, \ldots, x_p \in \A$, we have
\begin{equation} \label{eq-cum-tensorcum}
    \tilde \kappa_{\alpha}(x_1,\ldots, x_p)=\sum_{\substack{\beta_s\in S_{NC}(\alpha)\\ \beta_1\vee\cdots\vee\beta_r=\alpha}}\kappa_{\underline{\beta}}(x_1,\ldots, x_p).
\end{equation}
\end{proposition}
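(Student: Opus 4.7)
The plan is to derive the formula from the two moment-cumulant inversion formulas (the usual one for $\tilde\kappa$ and the tensor one for $\kappa$) plus the consistency axiom from \cref{def:tensor-ncps}-\cref{def:tensor-ncps-consistency}. Starting from the cumulant-moment formula \eqref{eq-FreeMomentCumulant2} applied to $\alpha$, and then invoking the consistency relation $\varphi_\tau = \varphi_{(\tau,\tau,\ldots,\tau)}$ to re-expand each $\varphi_\tau$ via the tensor moment-cumulant formula \eqref{eq:tensor-moment-free-cumulant}, I obtain
\begin{align*}
\tilde\kappa_\alpha(x_1,\ldots,x_p)
&= \sum_{\tau \in S_{NC}(\alpha)} \varphi_\tau(x_1,\ldots,x_p)\,\Mob(\tau^{-1}\alpha) \\
&= \sum_{\tau \in S_{NC}(\alpha)} \Mob(\tau^{-1}\alpha) \sum_{\underline{\beta}\in S_{NC}(\tau)^r} \kappa_{\underline{\beta}}(x_1,\ldots,x_p).
\end{align*}

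Next, I would swap the order of summation. Since $S_{NC}(\tau) \subseteq S_{NC}(\alpha)$ whenever $\tau \leq \alpha$, the outer index $\underline{\beta}$ ranges over $S_{NC}(\alpha)^r$, and the condition $\underline{\beta} \in S_{NC}(\tau)^r$ becomes $\beta_s \leq \tau$ for every $s \in [r]$. By \cref{prop:lattice-structure,prop:lattice-structure2}, the poset $S_{NC}(\alpha)$ is a lattice, so the latter condition is equivalent to $\beta_1 \vee \cdots \vee \beta_r \leq \tau$, where the join is taken in $S_{NC}(\alpha)$. Setting $\gamma := \beta_1 \vee \cdots \vee \beta_r$, this yields
$$\tilde\kappa_\alpha(x_1,\ldots,x_p) = \sum_{\underline{\beta}\in S_{NC}(\alpha)^r} \kappa_{\underline{\beta}}(x_1,\ldots,x_p) \sum_{\tau:\ \gamma \leq \tau \leq \alpha} \Mob(\tau^{-1}\alpha).$$

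To conclude, I would invoke the defining property of the M\"obius function on the lattice $S_{NC}(\alpha)$: for any $\gamma \leq \alpha$,
$$\sum_{\tau:\ \gamma \leq \tau \leq \alpha} \Mob(\tau^{-1}\alpha) = \delta_{\gamma,\alpha}.$$
This identity is the standard Möbius inversion fact, since $\Mob(\tau^{-1}\alpha)$ plays the role of $\mu(\tau,\alpha)$ in the isomorphism $S_{NC}(\alpha) \cong \prod_c NC(|c|)$ from \cref{prop:lattice-structure2}. Substituting collapses the inner sum to the indicator of $\gamma = \alpha$, giving
$$\tilde\kappa_\alpha(x_1,\ldots,x_p) = \sum_{\substack{\underline{\beta}\in S_{NC}(\alpha)^r \\ \beta_1 \vee \cdots \vee \beta_r = \alpha}} \kappa_{\underline{\beta}}(x_1,\ldots,x_p),$$
which is \eqref{eq-cum-tensorcum}.

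The only subtle point is ensuring that the join $\beta_1 \vee \cdots \vee \beta_r$ is unambiguously defined: since the poset of all permutations of $[p]$ is not a lattice in general, it matters that each $\beta_s$ lies in the lattice $S_{NC}(\alpha)$ (where the join does exist and is intrinsic once one fixes $\alpha$). This is automatic here because the range of summation is $S_{NC}(\tau)^r$ with $\tau \leq \alpha$, so every $\beta_s$ lies in $S_{NC}(\alpha)$. Beyond this book-keeping, the argument is purely a Möbius inversion computation in the lattice $S_{NC}(\alpha)$.
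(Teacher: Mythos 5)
Your proof is correct and follows exactly the same route as the paper's: expand $\tilde\kappa_\alpha$ via the free cumulant-moment formula, re-expand each moment $\varphi_\tau$ using the tensor moment-cumulant formula, swap sums, and collapse the inner Möbius sum over $\{\tau : \beta_1\vee\cdots\vee\beta_r \leq \tau \leq \alpha\}$ to the indicator of $\beta_1\vee\cdots\vee\beta_r = \alpha$. The extra remark about the join being well-defined inside the lattice $S_{NC}(\alpha)$ is a sensible clarification but not a divergence from the paper's argument.
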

\begin{proof}
Let us simply denote by $\underline{x}=(x_1,\ldots, x_p)$. Since $\varphi_{\sigma}=\varphi_{\sigma,\ldots, \sigma}$, we can combine the free moment-cumulant relation \cref{eq-FreeMomentCumulant2} with its tensor analogue \cref{eq:tensor-moment-free-cumulant} to have
\begin{align*}
	 \tilde \kappa_{\alpha}(\underline x) &= \sum_{\sigma \in S_{NC}(\alpha)} \varphi_\sigma(\underline x) \Mob(\sigma^{-1} \alpha) = \sum_{\sigma \in S_{NC}(\alpha)} \Mob(\sigma^{-1}\alpha) \sum_{\underline \beta \in S_{NC}(\sigma)^r \subseteq S_{NC}(\alpha)^r} \kappa_{\underline \beta}(\underline x)\\
	 &= \sum_{\underline \beta \in S_{NC}(\alpha)^r} \kappa_{\underline \beta}(\underline x)  \sum_{\substack{\sigma \in S_{NC}(\alpha)\\\beta_s \leq \sigma \leq \alpha \, \, \forall s \in [r]}} \Mob(\sigma^{-1} \alpha)\\
	 &= \sum_{\underline \beta \in S_{NC}(\alpha)^r} \kappa_{\underline \beta}(\underline x)  \underbrace{\sum_{\beta_1\vee \cdots \vee \beta_r \leq \sigma \leq \alpha} \Mob(\sigma^{-1}\alpha)}_{=\mathbf 1_{\beta_1\vee \cdots \vee \beta_r = \alpha}} \\
	 &= \sum_{\substack{\underline \beta \in S_{NC}(\alpha)^r \\ \beta_1\vee \cdots \vee \beta_r = \alpha}} \kappa_{\underline \beta}(\underline x)
\end{align*}
\end{proof}
As an example, let us consider the case $p=2$ and $r=2$. We focus only on the connected free cumulant, that is we take $\alpha = \gamma_2 = (12)$. 
\begin{align*}
	\tilde \kappa_{(12)}(x,y) &= \varphi(xy) - \varphi(x)\varphi(y) = \varphi_{\substack{(12)\\(12)}}(x,y) - \varphi_{\substack{(1)(2)\\(1)(2)}}(x,y) \\
	&= \kappa_{\substack{(12)\\(12)}}(x,y) + \kappa_{\substack{(12)\\(1)(2)}}(x,y) + \kappa_{\substack{(1)(2)\\(12)}}(x,y) + \kappa_{\substack{(1)(2)\\(1)(2)}}(x,y) - \kappa_{\substack{(1)(2)\\(1)(2)}}(x,y) \\
	&= \kappa_{\substack{(12)\\(12)}}(x,y) + \kappa_{\substack{(12)\\(1)(2)}}(x,y) + \kappa_{\substack{(1)(2)\\(12)}}(x,y),
\end{align*}
which is consistent with the result above. 

In the next lemma, we show that in the case where tensor probability space is obtained via a tensor product construction as in \cref{ex:tensor-product-of-ncps}, the tensor free cumulants also factorize. 

{
\begin{lemma} \label{lem-TensorCumulantProd}
Consider a $p$-tuple $\underline{x}=(x_1, \ldots, x_p)$ of non-commutative random variables in an $r$-partite tensor probability space $(\A,\varphi, (\varphi_{\underline{\alpha}}))$. Suppose there exist $r$  usual (non-tensor) non-commutative probability spaces $(\mathcal A^{(s)}, \varphi^{(s)})$ and elements $y_i^{(s)}$, $i \in [p]$, $s \in [r]$, such that $\varphi_{\underline{\alpha}}(\underline{x}) = \prod_s \varphi^{(s)}_{\alpha_s}(\underline{y}^{(s)})$ for all $\underline{\alpha}\in S_p^r$. Then we also have
    $$\kappa_{\underline{\alpha}}(\underline{x}) = \prod_s \tilde{\kappa}^{(s)}_{\alpha_s}(\underline{y}^{(s)}), \quad \underline{\alpha}\in S_p^r$$
    where $\tilde{\kappa}^{(s)}$ are the (usual) free cumulant functionals associated to $\varphi^{(s)}$.

    In particular, if $\mathcal A$ is obtained via a tensor product construction $\mathcal A=\bigotimes_s \A^{(s)}$ as in \cref{ex:tensor-product-of-ncps}, with $x_i = \bigotimes_s x_i^{(s)}$,
    then also 
    \begin{equation} \label{eq-TensorFreeCumulantsFact}
        \kappa_{\underline{\alpha}}(\underline{x}) = \prod_s \tilde \kappa^{(s)}_{\alpha_s}(\underline{x}^{(s)}), \quad \underline{\alpha}\in S_p^r.
    \end{equation}
\end{lemma}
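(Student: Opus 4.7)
The plan is to apply the tensor free cumulant-moment formula \eqref{eq:tensor-free-cumulant-moment} directly, exploiting the product structure of the hypothesis together with the multiplicativity of the M\"obius function over tuples. The key observation is that the index set $S_{NC}(\underline{\beta}) = \prod_{s=1}^r S_{NC}(\beta_s)$ factorizes by definition, so any sum indexed by $\underline{\alpha} \in S_{NC}(\underline{\beta})$ whose summand is a product over $s$ of terms depending only on $\alpha_s$ will factor as a product of $r$ independent sums.

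Concretely, starting from
$$\kappa_{\underline{\beta}}(\underline{x}) = \sum_{\underline{\alpha} \in S_{NC}(\underline{\beta})} \varphi_{\underline{\alpha}}(\underline{x}) \, \Mob(\underline{\alpha}^{-1}\underline{\beta}),$$
I would substitute the assumed factorization $\varphi_{\underline{\alpha}}(\underline{x}) = \prod_s \varphi^{(s)}_{\alpha_s}(\underline{y}^{(s)})$ and recall from \cref{def:free-tensor-cumulants} that $\Mob(\underline{\alpha}^{-1}\underline{\beta}) = \prod_s \Mob(\alpha_s^{-1}\beta_s)$. Factoring the sum over the $r$ coordinates and recognizing each coordinate sum as the ordinary (non-tensor) free cumulant-moment formula \eqref{eq-FreeMomentCumulant2} applied to $\tilde{\kappa}^{(s)}_{\beta_s}$ yields
$$\kappa_{\underline{\beta}}(\underline{x}) = \prod_{s=1}^r \left( \sum_{\alpha_s \in S_{NC}(\beta_s)} \varphi^{(s)}_{\alpha_s}(\underline{y}^{(s)}) \, \Mob(\alpha_s^{-1}\beta_s) \right) = \prod_{s=1}^r \tilde{\kappa}^{(s)}_{\beta_s}(\underline{y}^{(s)}).$$

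For the second assertion, if $\A = \bigotimes_s \A^{(s)}$ as in \cref{ex:tensor-product-of-ncps} and $x_i = \bigotimes_s x_i^{(s)}$, then by the very definition of $\varphi_{\underline{\alpha}}$ on simple tensors we have $\varphi_{\underline{\alpha}}(\underline{x}) = \prod_s \varphi^{(s)}_{\alpha_s}(\underline{x}^{(s)})$, so the hypothesis of the first part holds with $\underline{y}^{(s)} = \underline{x}^{(s)}$ and \cref{eq-TensorFreeCumulantsFact} follows immediately. The entire argument is essentially bookkeeping; there is no real obstacle, provided one is careful that the poset $S_{NC}(\underline{\beta})$ is the direct product of the posets $S_{NC}(\beta_s)$, which is part of the definition given in the preliminaries.
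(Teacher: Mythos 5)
Your proof is correct and follows exactly the route the paper intends: the paper leaves the proof to the reader, remarking only that it is an easy application of the tensor cumulant-moment formula \eqref{eq:tensor-free-cumulant-moment} and the non-tensor one \eqref{eq-FreeMomentCumulant2}, which is precisely your factorization of the sum over the product poset $S_{NC}(\underline{\beta})=\prod_s S_{NC}(\beta_s)$ using the multiplicativity of the M\"obius function.
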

}
\begin{proof}
    The proof is an easy application of the formulas \eqref{eq:tensor-free-cumulant-moment} and \eqref{eq-FreeMomentCumulant2}  relating moments and free cumulants in the tensor and non-tensor cases, hence it is left to the reader.
\end{proof}

\begin{example} [Tensor Haar unitary] \label{ex-TensorHaarUnitaries}
\textit{Haar unitaries} are unitary elements in usual ($r=1$) non-commutative probability space such that their moments are given by 
    $$\forall p \in \mathbb Z \qquad \varphi(u^p) = \mathds 1_{p=0}.$$
    Equivalently, for all functions $\varepsilon:[p] \to \{\pm 1\}$, 
    $$\varphi_\alpha(u^{\varepsilon(1)}, \ldots, u^{\varepsilon(p)}) = \prod_{c \in \alpha} \mathds 1_{\sum_{i \in c} \varepsilon(i) = 0},$$
    meaning that the function $\varepsilon$ is balanced on the cycles of $\alpha$. In particular, permutations having at least one odd cycle yield zero moments. It was shown in \cite[Lecture 15]{nica2006lectures} that the only non-vanishing free cumulants of $u$ are 
    $$\kappa_{2p}(u, u^*, \ldots, u, u^*) = \kappa_{2p}(u^*, u, \ldots, u^*,u) = (-1)^{p-1} \Cat_{p-1}.$$
    More generally,
    $$\kappa_\alpha(u^{\varepsilon(1)}, \ldots, u^{\varepsilon(p)}) = \prod_{c \in \alpha} \mathds 1_{\varepsilon \text{ is alternating on $c$}},$$
    meaning that the function $\varepsilon$ restricted on $c$ is of the form $(+,-, \cdots, +, -)$ or a cyclic permutation thereof.
The distribution of a \textit{tensor Haar unitary element} $u_\otimes := u^{\otimes r}$ can be either recovered by taking the tensor product of the (usual) free cumulants above, or directly: 
    $$\varphi_{\underline{\alpha}}(u_\otimes^{\varepsilon(1)}, \ldots, u_\otimes^{\varepsilon(p)}) = \prod_{s=1}^r \prod_{c \in \alpha_s} \mathds 1_{\sum_{i \in c} \varepsilon(i) = 0}.$$

\end{example}

Another very important example is that of \emph{tensor semicircular elements} which appear as limits of the tensor free central limit theorem, see \cref{sec:tensor-free-CLT}.

\section{Tensor free independence}\label{sec:tensor-free-independence}

Having introduced tensor free cumulants, we can now define the crucial notion of \emph{tensor freeness}. Recall from \cref{prop-TensorMomentIndep} that a tuple of permutations $\underline{\alpha}\in S_p^r$ is said to be \textit{irreducible} if $\Pi(\alpha_1)\vee_{\mathcal{P}}\cdots \vee_{\mathcal{P}} \Pi(\alpha_r)=1_p$. This definition is equivalent to the condition that, for any two distinct elements $k\neq l\in [p]$, there exists $s\in [r]$ such that $k$ and $l$ belong to the same cycle of $\alpha_s$ (such a tuple $\underline{\alpha}$ is referred to as \emph{connected} in \cite{collins2024free}). Moreover, for a function $f:[p]\to I$, $\ker f\in \mathcal{P}(p)$ is the partition consisting of inverse images of $f$ (see \cref{eq-KernelPartition}).

\begin{definition}\label{def:tensor-freeness}
    Let $(\A,\varphi, (\varphi_{\underline{\alpha}}))$ be an $r$-partite tensor probability space. For a set $I$, unital sub-algebras $(\A_i)_{i\in I}$ of $\A$ are called \emph{tensor freely independent} (or \emph{tensor free})
    if every {mixed} tensor free cumulant vanishes, i.e. $\kappa_{\underline{\alpha}}(x_1,\ldots, x_p)=0$ whenever $\underline{\alpha}\in S_p^r$ is irreducible, $x_j\in \A_{f(j)}$, and $f(k)\neq f(l)$ for some $k,l \in [p]$. {Equivalently, for arbitrary (not necessarily irreducible) $\underline{\alpha} \in S_p^r$:
    $${\bigvee}_{\mathcal{P}} \Pi(\alpha_s) \nleq \ker f \implies \kappa_{\underline{\alpha}} (\underline{x}) = 0.$$}
    Furthermore, subsets $(\W_i)_{i\in I}$ of $\A$ are called \emph{tensor free} if the unital algebras $\A_i$ generated by $\W_i$ are tensor free. In particular, non-commutative random variables $(x_i)_{i\in I}\in \A$ are tensor free if the unital algebras they generate are tensor free.
\end{definition}

Note that the notion of free independence for tensors introduced in the definition above allows to compute \emph{mixed} tensor moments of a family of tensor freely independent random variables in terms of their individual, \emph{marginal}, tensor distribution (see \cref{prop-tenfreemoment}). In this sense, it serves the same purpose as the usual notion of freeness. Let us also point out that the notion of tensor free independence reduces to the usual, non-tensor, notion of free independence in the trivial case $r=1$.

\medskip

We first examine how tensor freeness arises from usual freeness in the case of the tensor product construction from \cref{ex:tensor-product-of-ncps}.

{
\begin{proposition}\label{prop:tensor-free-tensor-product}
    Consider $r$ non-commutative probability spaces $(\A^{(s)}, \phi^{(s)})$, $s \in [r]$, and perform the tensor product construction from \cref{ex:tensor-product-of-ncps} to obtain an $r$-partite tensor probability space 
    $$(\A,\varphi, (\varphi_{\underline{\alpha}})):=\bigotimes_{s=1}^r \left( \A^{(s)}, \phi^{(s)} \right).$$
    Let also, for every $s \in [r]$, $\A^{(s)}_1, \ldots, \A^{(s)}_L \subseteq \A^{(s)}$ be freely independent (in the usual, non-tensor sense) unital subalgebras, and consider their tensor products
    $$\forall i \in [L], \qquad \A_i := \bigotimes_{s=1}^r \A^{(s)}_i.$$
    Then, the unital subalgebras $\A_1, \ldots, \A_L \subseteq \A$ are \emph{tensor freely independent}. 
\end{proposition}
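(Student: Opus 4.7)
The strategy is to check the vanishing of mixed tensor free cumulants required by \cref{def:tensor-freeness} directly, by reducing to the one-level (non-tensor) case where the classical characterization of freeness applies on each factor $\A^{(s)}$. First I would invoke multilinearity of $\kappa_{\underline{\alpha}}$ to restrict attention to simple tensors: it suffices to show $\kappa_{\underline{\alpha}}(x_1, \ldots, x_p) = 0$ whenever $\underline{\alpha} \in S_p^r$ is irreducible, $f: [p] \to [L]$ is non-constant, and each $x_j = \bigotimes_{s=1}^r x_j^{(s)}$ with $x_j^{(s)} \in \A^{(s)}_{f(j)}$. On such simple-tensor inputs, \cref{lem-TensorCumulantProd} gives the factorization
$$\kappa_{\underline{\alpha}}(x_1, \ldots, x_p) = \prod_{s=1}^r \tilde{\kappa}^{(s)}_{\alpha_s}(x_1^{(s)}, \ldots, x_p^{(s)}).$$

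Next, for each fixed $s \in [r]$, I would combine the multiplicativity of $\tilde{\kappa}^{(s)}_{\alpha_s}$ over the cycles of $\alpha_s$ with the vanishing-of-mixed-free-cumulants characterization of free independence (\cref{thm-FreeSubsets}) applied to $\A^{(s)}_1, \ldots, \A^{(s)}_L$: for any cycle $c$ of $\alpha_s$ of length at least two, the factor $\tilde{\kappa}^{(s)}_c\bigl((x_j^{(s)})_{j \in c}\bigr)$ vanishes unless $f$ is constant on $c$, while singleton cycles contribute scalars $\phi^{(s)}(x_j^{(s)})$ which are trivially ``constant on the cycle''. Hence a nonzero product forces $\Pi(\alpha_s) \leq \ker f$ for every $s$.

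Finally, taking the join over $s \in [r]$ yields $\bigvee_s \Pi(\alpha_s) \leq \ker f$; but the irreducibility hypothesis $\bigvee_s \Pi(\alpha_s) = 1_p$ then forces $\ker f = 1_p$, i.e.~$f$ constant, contradicting our assumption. This gives the desired vanishing. Once \cref{lem-TensorCumulantProd} is in hand, the only genuinely combinatorial step is this passage from ``$f$ constant on each cycle of each $\alpha_s$'' to ``$f$ constant on $[p]$'' via the irreducibility of $\underline{\alpha}$, and it is immediate from the definition; I do not anticipate a serious obstacle.
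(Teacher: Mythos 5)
Your proof is correct and follows essentially the same route as the paper's: reduce to simple tensors by multilinearity, factorize the tensor free cumulant via \cref{lem-TensorCumulantProd}, and kill the product using the vanishing of mixed free cumulants in (at least) one factor $\A^{(s_0)}$. The only cosmetic difference is that the paper directly exhibits the single level $s_0$ and cycle where a mixed cumulant occurs, whereas you argue contrapositively that a nonzero product would force $\bigvee_s \Pi(\alpha_s) \leq \ker f$, contradicting irreducibility; the two phrasings are equivalent.
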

\begin{proof}
    Consider $p$ elements $x_1, \ldots, x_p \in \A$ such that $x_j \in \A_{f(j)}$ for a non-constant function $f:[p] \to [L]$, and let $\underline{\alpha} \in S_p^r$ be an {irreducible} $r$-tuple of permutations. Since $f$ is non-constant, $\bigvee_s \Pi(\alpha_s) = 1_p \nleq \ker f$, hence there must exist some $s_0 \in [r]$ such that $\Pi(\alpha_{s_0}) \nleq \ker f$. In particular, there exist $k,l \in [p]$ such that $f(k) \neq f(l)$ and $k$ and $l$ belong to the same cycle $c=(i_1\;\cdots\;i_n)$ of $\alpha_{s_0}$. Considering general elements
    $$\forall i \in [L], \qquad x_i = \sum_{j \in J_i} \bigotimes_{s=1}^r x^{(s)}_{j|i},$$
    we have that, for all choices of $j_1, \ldots, j_{n}$, $\tilde \kappa^{(s_0)}_c(x^{(s_0)}_{j_1|i_1}, \ldots, x^{(s_0)}_{j_{n}|i_n})$ is a \emph{mixed cumulant}, hence it is zero by the free independence assumption for the sub-algebras $\A^{(s_0)}_i$. We have thus 
    $$\kappa_{\underline{\alpha}}(x_1, \ldots, x_p) = \sum_{\underline{j} \in \bigtimes_{i=1}^p J_i} \prod_{s=1}^r \prod_{c_s \in \alpha_s} \tilde \kappa_{c_s}(\underline{x}^{(s)}_{\underline{j}}\big|_{c_s})=0,$$
    proving the claim of tensor free independence.
\end{proof}
}

We can now present a very simple example of tensor free random variables which are not free in the usual, non-tensor, sense. 

{
\begin{example} \label{ex-TensorFreeNonFree}
    For $r=2$, consider $k$ freely independent semicircular random variables $s_1, \ldots, s_k$ in some $*$-probability space $(\A,\phi)$. Then, the random variables $s_1 \otimes s_1, s_2 \otimes s_2, \ldots, s_k \otimes s_k \in (\A, \phi)^{\otimes 2}$ are:
    \begin{itemize}
        \item tensor freely independent
        \item not freely independent (in the usual sense).
    \end{itemize}

    The first point follows from \cref{prop:tensor-free-tensor-product}, while the second point has been established in several previous works \cite[Theorem 1.8]{collins2017freeness}, \cite[Corollary 1.2]{LSY24}; see also \cite[Proposition 5.9]{nica2016free} for the relation between the sum of these random variables and meanders.

   On the other hand, our notion of tensor free cumulants provides a simple argument to show non-freeness between $x=s_1\otimes s_1$ and $y=s_2\otimes s_2$. Indeed, \cref{prop-cumulant-from-tensorcumulant,lem-TensorCumulantProd} and the freeness between $s_1, s_2$ imply that
    \begin{align*}
        \tilde{\kappa}_4(x^2,y^2,x^2,y^2)&=\sum_{\substack{\beta_1,\beta_2\in S_{NC}(\gamma_4)\\ \beta_1\vee\beta_2=\gamma_4}}\kappa_{\beta_1,\beta_2}(x^2,y^2,x^2,y^2)\\
        &=\sum_{\substack{\beta_1,\beta_2\in S_{NC}(\gamma_4)\\ \beta_1\vee\beta_2=\gamma_4}} \tilde{\kappa}_{\beta_1}(s_1^2,s_2^2,s_1^2,s_2^2) \; \tilde{\kappa}_{\beta_2}(s_1^2,s_2^2,s_1^2,s_2^2)\\
        &=\sum_{\{\beta_1,\beta_2\}=\{(13)(2)(4),(1)(3)(24)\}} \tilde{\kappa}_{\beta_1}(s_1^2,s_2^2,s_1^2,s_2^2) \; \tilde{\kappa}_{\beta_2}(s_1^2,s_2^2,s_1^2,s_2^2)\\
        &=2\cdot \tilde{\kappa}_2(s_1^2) \, \tilde{\kappa}_1(s_2^2)^2 \,  \tilde{\kappa}_2(s_2^2) \, \tilde{\kappa}_1(s_1^2)^2 \neq 0.\\
    \end{align*}
\end{example}
}

The following result contains the tensor version of \cite[Remark 5.20]{nica2006lectures}; we leave the proof to the reader. 

\begin{proposition}
    The tensor freeness is \emph{symmetric} and \emph{associative}: for subalgebras $\A, \B, \mathcal{C}$ of a tensor probability space
    \begin{enumerate}
        \item If $\A$ is tensor free from $\B$, then $\B$ is tensor free from $\A$.
    
        \item If $\A$ and $\B$ are tensor free and if $\A\cup \B$ and $\mathcal{C}$ are tensor free, then $\A$, $\B$, and $\mathcal{C}$ are tensor free.
    \end{enumerate}
\end{proposition}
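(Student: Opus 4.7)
The plan is to verify both statements directly from \cref{def:tensor-freeness}, using the characterization of tensor freeness as the vanishing of all mixed tensor free cumulants $\kappa_{\underline{\alpha}}$. The key observation is that this characterization is stated in a form that is manifestly symmetric in the family of subalgebras and depends on the indexing of the arguments only through (non-constancy of) the labeling function $f$. Both claims should therefore follow by essentially formal manipulations, in parallel with the proof of symmetry and associativity for Voiculescu's free independence in \cite[Remark 5.20]{nica2006lectures}.

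For part~(1), I would simply observe that the condition ``$\kappa_{\underline{\alpha}}(x_1, \ldots, x_p) = 0$ for every irreducible $\underline{\alpha}\in S_p^r$, every $x_j \in \A_{f(j)}$, and every non-constant $f\colon [p]\to \{A,B\}$'' is invariant under the swap $A \leftrightarrow B$ (which only post-composes $f$ with the nontrivial element of $S(\{A,B\})$ and thus preserves non-constancy). Hence tensor freeness of the pair $(\A, \B)$ is equivalent to tensor freeness of the pair $(\B, \A)$.

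For part~(2), I would fix $p \geq 1$, an irreducible tuple $\underline{\alpha}\in S_p^r$, elements $x_1, \ldots, x_p$, and a non-constant labeling $f\colon [p]\to \{A, B, C\}$ placing each $x_j$ in the corresponding algebra among $\A, \B, \mathcal{C}$; the goal is to show $\kappa_{\underline{\alpha}}(x_1, \ldots, x_p) = 0$. I would split into two cases according to whether $f$ takes the value $C$. If the range of $f$ is contained in $\{A, B\}$, the vanishing is immediate from the first hypothesis (tensor freeness of $\A$ and $\B$). Otherwise $f$ attains $C$ and, being non-constant, also attains $A$ or $B$; then the coarser labeling $\tilde f\colon [p] \to \{A \cup B, C\}$ obtained by identifying the labels $A$ and $B$ is non-constant, every $x_j$ lies either in the unital algebra generated by $\A \cup \B$ or in $\mathcal{C}$, and the second hypothesis yields the desired vanishing.

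I do not expect a genuine obstacle in this argument: it is essentially a case analysis that unpacks the definition, and in particular there is no need for the inductive arguments on alternating centered products that appear in the classical proofs of associativity for free independence. The only subtle point worth flagging is that, by the conventions in \cref{def:tensor-freeness}, the second hypothesis concerns the \emph{unital algebra generated by} $\A \cup \B$, so in the second case above one uses silently that any element of $\A$ or of $\B$ is automatically an element of this generated algebra.
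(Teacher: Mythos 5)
Your proof is correct, and it is exactly the argument the authors intend when they ``leave the proof to the reader'': with tensor freeness \emph{defined} by the vanishing of mixed tensor free cumulants, symmetry is immediate from the symmetry of the defining condition in the index set, and associativity follows from your two-case analysis on whether the labeling function attains the value $C$ (using that elements of $\A$ and $\B$ lie in the unital algebra generated by $\A\cup\B$). No gaps.
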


As advertised, tensor freeness gives a universal way to compute the \emph{joint} tensor distribution of tensor freely independent elements from their marginal tensor distribution. This can be either expressed from the vanishing property of tensor free cumulants, or at the level of the trace invariants, as follows. We refer the reader to \cref{cor-freemoment} for the similar result in the case of usual (non-tensor) freeness.

\begin{proposition}\label{prop-tenfreemoment}
Subalgebras $\A_1,\ldots, \A_L$ of an $r$-partite tensor probability space are tensor freely independent if and only if for $p \geq 1$, $\underline{\alpha}\in S_p^r$, for every function $f:[p]\to [L]$, and $x_j\in \A_{f(j)}$, $j \in [p]$:
\begin{equation} \label{eq-TensorFreeMoment}
    \varphi_{\underline{\alpha}}(x_1,\ldots, x_p) = \sum_{\substack{\underline{\beta}\in S_{NC}(\underline{\alpha}) \\ \bigvee_{\mathcal{P}}\Pi(\beta_s)\leq  \ker f}}\kappa_{\underline{\beta}}(x_1,\ldots, x_p)=\sum_{\substack{\underline{\beta}\in S_{NC}(\underline{\alpha}) \\ \bigvee_{\mathcal{P}}\Pi(\beta_s)\leq  \ker f}}\prod_{i\in f([p])} \kappa_{\underline{\beta}|_{f^{-1}(i)}}((x_j)_{j\in f^{-1}(i)}).
\end{equation}
\end{proposition}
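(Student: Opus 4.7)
The plan is to deduce both directions from the tensor moment–cumulant pair \eqref{eq:tensor-moment-free-cumulant}–\eqref{eq:tensor-free-cumulant-moment} of \cref{def:free-tensor-cumulants}, combined with the multiplicativity of $\kappa_{\underline{\beta}}$ under the disjoint product $\sqcup$ (noted in \cref{rk:tensor-free-cumulants-permutation-invariance}) together with a factor-wise M\"obius-inversion identity on the product lattice $S_{NC}(\underline{\alpha}) = \prod_{s=1}^r S_{NC}(\alpha_s)$.

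For the forward direction, assuming tensor freeness, I would start from the tensor moment–cumulant formula $\varphi_{\underline{\alpha}}(\underline{x}) = \sum_{\underline{\beta}\in S_{NC}(\underline{\alpha})}\kappa_{\underline{\beta}}(\underline{x})$ and argue that every term with $\pi_{\underline{\beta}} := \bigvee_{\mathcal{P}} \Pi(\beta_s) \nleq \ker f$ already vanishes. The key observation is that every $\underline{\beta}$ decomposes componentwise as $\underline{\beta} = \bigsqcup_{B \in \pi_{\underline{\beta}}} \underline{\beta}|_{B}$, where each restriction $\underline{\beta}|_{B}$ is an irreducible $r$-tuple of permutations on the block $B$. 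Multiplicativity of tensor free cumulants then gives $\kappa_{\underline{\beta}}(\underline{x}) = \prod_B \kappa_{\underline{\beta}|_{B}}((x_j)_{j\in B})$; if $\pi_{\underline{\beta}} \nleq \ker f$, at least one block $B$ carries a non-constant $f|_{B}$, and the corresponding irreducible mixed cumulant vanishes by \cref{def:tensor-freeness}. The second equality in \eqref{eq-TensorFreeMoment} (the product form) then follows by re-applying multiplicativity along $\ker f$: the surviving constraint $\pi_{\underline{\beta}} \leq \ker f$ forces every cycle of every $\beta_s$ to lie entirely inside a single fiber $f^{-1}(i)$, so $\underline{\beta} = \bigsqcup_{i \in f([p])} \underline{\beta}|_{f^{-1}(i)}$ and the cumulant factorizes accordingly.

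For the backward direction, I would feed the assumed formula into the cumulant–moment inversion \eqref{eq:tensor-free-cumulant-moment}. Substituting the hypothesized expansion for every $\varphi_{\underline{\beta}}(\underline{x})$ with $\underline{\beta}\leq\underline{\alpha}$ and swapping the two summations, the inner sum becomes
\[
  \sum_{\underline{\beta}:\ \underline{\gamma}\leq \underline{\beta}\leq \underline{\alpha}} \Mob(\underline{\beta}^{-1}\underline{\alpha}) = \prod_{s=1}^r \sum_{\beta_s:\ \gamma_s\leq \beta_s\leq \alpha_s} \Mob(\beta_s^{-1}\alpha_s) = \prod_{s=1}^r \mathds{1}_{\gamma_s = \alpha_s} = \mathds{1}_{\underline{\gamma} = \underline{\alpha}},
\]
where I use that $\Mob(\cdot^{-1}\cdot)$ is the M\"obius function of each lattice $S_{NC}(\alpha_s)$ (cf.~\cref{eq-FreeMomentCumulant,eq-FreeMomentCumulant2}) and that M\"obius functions multiply on products of posets. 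The double sum collapses to $\kappa_{\underline{\alpha}}(\underline{x}) \cdot \mathds{1}_{\bigvee_{\mathcal{P}} \Pi(\alpha_s) \leq \ker f}$, so $\kappa_{\underline{\alpha}}(\underline{x})$ is forced to be $0$ as soon as $\underline{\alpha}$ is irreducible (making $\bigvee_{\mathcal{P}} \Pi(\alpha_s) = 1_p$) while $f$ is non-constant (making $\ker f \neq 1_p$).

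There is no genuine obstacle here; the argument is essentially mechanical once the definitions are unpacked. The main points requiring care are the block decomposition in the forward direction, where one verifies that $\underline{\beta}|_B$ is genuinely irreducible on $B$ and that multiplicativity of $\kappa$ factors across the blocks of $\pi_{\underline{\beta}}$, both of which follow from \cref{prop:lattice-structure2} applied componentwise and from the multiplicativity of $\Mob$.
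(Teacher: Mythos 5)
Your proof is correct and follows essentially the same route as the paper's: the forward direction applies the tensor moment--cumulant formula \eqref{eq:tensor-moment-free-cumulant} and discards the mixed cumulants (your block decomposition along $\bigvee_{\mathcal{P}}\Pi(\beta_s)$ just spells out the ``equivalently'' clause of \cref{def:tensor-freeness}), and the backward direction is the same M\"obius-inversion argument via \eqref{eq:tensor-free-cumulant-moment}. No gaps; you simply include more detail than the paper's two-line proof.
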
 
\begin{proof}
The tensor freeness implies the formula \cref{eq-TensorFreeMoment} by applying the tensor moment-free cumulant formula \cref{eq:tensor-moment-free-cumulant} and ignoring the mixed tensor free cumulants. Conversely, if \cref{eq-TensorFreeMoment} holds for every $\underline{\alpha}\in S_p^r$, then the inversion formula \cref{eq:tensor-free-cumulant-moment} implies that $\kappa_{\underline{\beta}}(x_1,\ldots, x_p)=0$ whenever $\bigvee_{\mathcal{P}}\Pi(\beta_s)\not\leq \ker f$.
\end{proof}

Although tensor freeness is not the same notion as freeness, they share several similarities. For example, if $x,y$ are two tensor free elements, then the formula \cref{eq-TensorFreeMoment} implies that
    $$\varphi(xy)=\varphi_{\underline{\gamma_2}}(x,y)=\kappa_{\underline{\id_2}}(x,y)=\varphi(x)\varphi(y),$$
since $\Pi(\beta_s)\leq \ker f=0_2$ implies $\beta_s\equiv \id_2$. On the other hand, the same formula $\varphi(xy)=\varphi(x)\varphi(y)$ holds if $x,y$ are \textit{freely independent}, following from \cref{eq-FreeMoment}. More generally, if $(\A_i)_{i\in [L]}$ are freely independent unital subalgebras of a non-commutative probability space $(\A,\varphi)$, $x_j\in \A_{f(j)}$ for $j\in [p]$, and if $\ker f$ is non-crossing, then \cref{eq-FreeMoment} and \cref{eq-FreeMomentCumulant} imply that
    $$\varphi(x_1\cdots x_p)=\sum_{\substack{\pi\in NC(p)\\ \pi\leq \ker f}}\tilde{\kappa}_{\pi}(x_1,\ldots, x_p)=\varphi_{\ker f}(x_1,\ldots, x_p)=\prod_{i\in f([p])}\varphi_{f^{-1}(i)}((x_j)_{j\in f^{-1}(i)}).$$
We show in the following that the same formula holds for \textit{tensor free} elements.

\begin{corollary} \label{cor-TensorFreeFactor}
Let $f$ and $x_1,\ldots x_p$ be as in \cref{prop-tenfreemoment}, and suppose that, for all $s\in [r]$, there exists $\sigma_s\in S_p$ such that $\Pi(\sigma_s)=\ker f$ and $\sigma_s,\alpha_s\in S_{NC}(\gamma^{(s)})$ for some full cycle $\gamma^{(s)}$. Then we have
    $$\varphi_{\underline{\alpha}}(x_1,\ldots, x_p)=\varphi_{\underline{\alpha}\wedge \underline{\sigma}}(x_1,\ldots, x_p)=\prod_{i\in f([p])}\varphi_{\underline{\alpha}\wedge \underline{\sigma}|_{f^{-1}(i)}}((x_j)_{j\in f^{-1}(i)}).$$
In particular, whenever $\ker f\in NC(p)$, we have
    $$\displaystyle \varphi(x_1\cdots x_p)=\varphi_{\ker f}(x_1,\ldots, x_p)=\prod_{i\in f([p])}\varphi_{f^{-1}(i)}((x_j)_{j\in f^{-1}(i)}).$$
\end{corollary}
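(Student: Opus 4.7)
The plan is to start from the moment--cumulant characterization of tensor freeness in \cref{prop-tenfreemoment}, which expresses $\varphi_{\underline{\alpha}}(x_1,\ldots,x_p)$ as a sum of tensor free cumulants $\kappa_{\underline{\beta}}(x_1,\ldots,x_p)$ indexed by those $\underline{\beta}\in S_{NC}(\underline{\alpha})$ satisfying $\bigvee_{\mathcal{P}}\Pi(\beta_s)\leq \ker f$. I will show that this index set coincides with $\{\underline{\beta} : \underline{\beta}\leq \underline{\alpha}\wedge\underline{\sigma}\}$, so that the tensor moment--cumulant formula \eqref{eq:tensor-moment-free-cumulant} applied in reverse yields the first claimed equality $\varphi_{\underline{\alpha}}(\underline{x})=\varphi_{\underline{\alpha}\wedge\underline{\sigma}}(\underline{x})$. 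The factorization and the ``in particular'' statement will then follow from the multiplicativity axiom of a tensor probability space.

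The technical heart is the identification of the two index sets. The condition $\bigvee_{\mathcal{P}}\Pi(\beta_s)\leq \ker f$ holds if and only if $\Pi(\beta_s)\leq \Pi(\sigma_s)$ for every $s\in[r]$. By transitivity of $\leq$ on $S_p$, $\beta_s\leq\alpha_s\leq\gamma^{(s)}$ places $\beta_s$ inside $S_{NC}(\gamma^{(s)})$, where $\sigma_s$ also lives by hypothesis. The order isomorphism of \cref{prop:lattice-structure} then translates $\Pi(\beta_s)\leq\Pi(\sigma_s)$ into the permutation inequality $\beta_s\leq\sigma_s$. Because $S_{NC}(\gamma^{(s)})$ is a lattice by \cref{prop:lattice-structure2}, the two conditions $\beta_s\leq\alpha_s$ and $\beta_s\leq\sigma_s$ are jointly equivalent to $\beta_s\leq\alpha_s\wedge\sigma_s$ (computed in this lattice, which coincides with the meet in $\mathcal{P}(p)$).

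Once the equality $\varphi_{\underline{\alpha}}(\underline{x})=\varphi_{\underline{\alpha}\wedge\underline{\sigma}}(\underline{x})$ is established, the factorization is immediate. From $\alpha_s\wedge\sigma_s\leq\sigma_s$ we obtain $\Pi(\alpha_s\wedge\sigma_s)\leq\Pi(\sigma_s)=\ker f$, so every cycle of $\alpha_s\wedge\sigma_s$ is contained in a single block of $\ker f$. Hence
\[
\alpha_s\wedge\sigma_s=\bigsqcup_{i\in f([p])}(\alpha_s\wedge\sigma_s)\big|_{f^{-1}(i)},
\]
and the multiplicativity property of \cref{def:tensor-ncps} delivers the product over $i\in f([p])$. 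For the ``in particular'' assertion, I specialize to $\alpha_s\equiv\gamma_p$, so that $\varphi_{\underline{\alpha}}(\underline{x})=\varphi(x_1\cdots x_p)$ by the consistency axiom, and choose $\sigma_s$ to be the unique permutation in $S_{NC}(\gamma_p)$ with $\Pi(\sigma_s)=\ker f$ (furnished by \cref{prop:lattice-structure}, thanks to the hypothesis $\ker f\in NC(p)$). Then $\alpha_s\wedge\sigma_s=\sigma_s$, and the general statement collapses to $\varphi(x_1\cdots x_p)=\varphi_{\ker f}(\underline{x})=\prod_{i\in f([p])}\varphi_{f^{-1}(i)}((x_j)_{j\in f^{-1}(i)})$.

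The main subtle point I anticipate is purely combinatorial bookkeeping: making sure the meet $\underline{\alpha}\wedge\underline{\sigma}$ is unambiguously defined (which requires both $\alpha_s$ and $\sigma_s$ to sit inside a common lattice $S_{NC}(\gamma^{(s)})$, rather than in the non-lattice poset $(S_p,\leq)$) and that the set rewrite $\{\underline{\beta}\in S_{NC}(\underline{\alpha}):\Pi(\beta_s)\leq\ker f\}=\{\underline{\beta}\leq\underline{\alpha}\wedge\underline{\sigma}\}$ is a genuine equality of indexing sets. Everything else is a clean application of the framework built up in \cref{sec:preliminary-permutations,sec:tensor-free-cumulants}.
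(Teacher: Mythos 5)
Your proposal is correct and follows essentially the same route as the paper's proof: both reduce the index set in \cref{prop-tenfreemoment} to $S_{NC}(\underline{\alpha})\cap S_{NC}(\underline{\sigma})=S_{NC}(\underline{\alpha}\wedge\underline{\sigma})$ via the order isomorphism between geodesic permutations under a full cycle and non-crossing partitions, then invoke the moment--cumulant formula in reverse and multiplicativity. Your write-up merely makes explicit two steps the paper leaves implicit (that $\beta_s\leq\alpha_s\leq\gamma^{(s)}$ places $\beta_s$ in the lattice $S_{NC}(\gamma^{(s)})$, and that $\Pi(\alpha_s\wedge\sigma_s)\leq\ker f$ justifies the final factorization), which is fine.
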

\begin{proof}
Under the assumption, the condition $\bigvee_{\mathcal{P}} \Pi(\beta_s)\leq \ker f$ in \cref{eq-TensorFreeMoment} is equivalent to $\underline{\beta}\in S_{NC}(\underline{\sigma})$. The condition $\underline{\alpha},\underline{\sigma}\in S_{NC}(\underline{\gamma})$ further implies $S_{NC}(\underline{\alpha})\cap S_{NC}(\underline{\sigma})=S_{NC}(\underline{\alpha}\wedge \underline{\sigma})$, so we have
\begin{align*}
    \varphi_{\underline{\alpha}}(x_1,\ldots, x_p)     &= \sum_{\underline{\beta}\in S_{NC}(\underline{\alpha}\wedge \underline{\sigma})} \kappa_{\beta}(x_1,\ldots, x_p) = \varphi_{\underline{\alpha}\wedge \underline{\sigma}} (x_1,\ldots, x_p) = \prod_{i\in f([p])}\varphi_{\underline{\alpha}\wedge \underline{\sigma}|_{f^{-1}(i)}}((x_j)_{j\in f^{-1}(i)}).
\end{align*}
The last assertion follows by taking $\underline{\alpha}=\underline{\gamma_p}$.
\end{proof}

One of the nice properties of usual, non-tensor, free cumulants is the additivity: if $x_1,\ldots, x_k$ are freely independent, then
\begin{equation} \label{eq-CumulantsAdd}
    \tilde{\kappa}_p(x_1+\cdots + x_k)=\tilde{\kappa}_p(x_1)+\cdots + \tilde{\kappa}_p(x_k),\qquad \forall \, p\geq 1,
\end{equation}
see \cite[Proposition 12.3]{nica2006lectures}. We have an analogous property for tensor freely independent elements; {see \cite[Proposition 4.12]{KMW24}, \cite[Section 3]{BB24}, and \cite[Proposition 4.9]{collins2024free} for the same property in slightly different settings.}

\begin{proposition} \label{prop-TensorCumulantAdditivity}
If $x_1,\ldots, x_k$ are tensor freely independent elements of a tensor probability space, and if $\underline{\alpha}\in S_p^r$ is \emph{irreducible}, then
    $$\kappa_{\underline{\alpha}}(x_1+\cdots + x_k)=\sum_{i=1}^k \kappa_{\underline{\alpha}}(x_i).$$
\end{proposition}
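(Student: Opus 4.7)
The plan is to combine multilinearity of $\kappa_{\underline{\alpha}}$ with the vanishing of mixed tensor free cumulants guaranteed by the tensor free independence hypothesis. This is the direct tensor analogue of the proof of additivity of usual free cumulants (\cref{eq-CumulantsAdd}).

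First, I would use the multilinearity of $\kappa_{\underline{\alpha}}$ (recall the convention $\kappa_{\underline{\alpha}}(x) = \kappa_{\underline{\alpha}}(x,\ldots, x)$) to expand
$$\kappa_{\underline{\alpha}}(x_1 + \cdots + x_k) = \sum_{f : [p] \to [k]} \kappa_{\underline{\alpha}}\bigl(x_{f(1)}, x_{f(2)}, \ldots, x_{f(p)}\bigr).$$

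Next, I would apply the tensor free independence of $x_1, \ldots, x_k$. By hypothesis $\underline{\alpha}$ is irreducible, i.e.\ $\bigvee_{\mathcal{P}} \Pi(\alpha_s) = 1_p$. For any \emph{non-constant} function $f: [p] \to [k]$ we have $\ker f \neq 1_p$, and therefore $\bigvee_{\mathcal{P}} \Pi(\alpha_s) = 1_p \nleq \ker f$. By the equivalent reformulation of \cref{def:tensor-freeness}, this forces $\kappa_{\underline{\alpha}}(x_{f(1)}, \ldots, x_{f(p)}) = 0$. Only the $k$ constant functions $f \equiv i$, $i \in [k]$, survive, and they contribute exactly $\kappa_{\underline{\alpha}}(x_i, \ldots, x_i) = \kappa_{\underline{\alpha}}(x_i)$, yielding the claimed identity.

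There is no real obstacle: the result follows immediately once one unwinds the definitions. The only point worth emphasizing is that the irreducibility assumption on $\underline{\alpha}$ plays precisely the role needed so that every term coming from a non-constant $f$ qualifies as a ``mixed'' tensor cumulant in the sense of \cref{def:tensor-freeness} and hence vanishes. Note that, in contrast to the classical free setting where one usually states additivity for the full-cycle cumulants $\tilde\kappa_p = \tilde\kappa_{\gamma_p}$ (for which irreducibility is automatic), in the tensor setting the irreducibility hypothesis must be imposed by hand, since for reducible $\underline{\alpha}$ the multiplicativity from \cref{rk:tensor-free-cumulants-permutation-invariance} would produce cross terms across the connected components of $\bigvee_{\mathcal{P}} \Pi(\alpha_s)$ and additivity would fail in general.
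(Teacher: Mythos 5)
Your proof is correct and follows exactly the same route as the paper's: multilinear expansion of $\kappa_{\underline{\alpha}}(x_1+\cdots+x_k)$ over functions $f:[p]\to[k]$, followed by the vanishing of the mixed cumulants for non-constant $f$ via irreducibility of $\underline{\alpha}$. Your additional remark on why irreducibility is needed is accurate but not required for the argument.
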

\begin{proof}
    The result follows from the multilinearity and the vanishing of mixed tensor free cumulants properties: 
    \begin{align*}
        \kappa_{\underline{\alpha}}(x_1+\cdots + x_k)&=\sum_{f:[p] \to [k]} \kappa_{\underline{\alpha}}(x_{f(1)}, \ldots, x_{f(p)}) = \sum_{\substack{f:[p] \to [k]\\f \text{ constant}}} \kappa_{\underline{\alpha}}(x_{f(1)}, \ldots, x_{f(p)}) =\sum_{i=1}^k \kappa_{\underline{\alpha}}(x_i).
    \end{align*}
\end{proof}

The following gives the analogue of \cite[Theorem 11.12]{nica2006lectures} in which the behavior of the free cumulants under \emph{grouping} is described.

\begin{proposition} \label{prop-TensorCumulantProd}
Consider $p$ random variables $x_1, \ldots, x_p \in \A$ from a tensor probability space. Group them as follows: for $q\leq p$ and $0=i(0)<\cdots <i(q)=p$ define
$$X_j := x_{i(j-1)+1}\cdots x_{i(j)} \qquad \text{ for } j \in [q].$$
For $\underline{\alpha}\in S_q^r$, define $\underline{\hat{0}}$ and $\underline{\hat{\alpha}}\in S_p^r$ by 
\begin{align*}
    \hat{0}_s\equiv \hat{0} &:=(1\,\cdots \,i(1))\cdots (i(j-1)+1\,\cdots \,i(j)) \cdots (i(q-1)+1\,\cdots \,p),\\
    \hat{\alpha}_s &:=\hat{c}_1\cdots \hat{c}_l,
\end{align*}
where $\alpha_s=c_1\cdots c_l$ is the cycle decomposition of $\alpha_s$ and if $c_k=(j_1\;\cdots\; j_n)$, then
    $$\hat{c}_k:=(i(j_1-1)+1\;\cdots\; i(j_1)\;\;i(j_2-1)+1\;\cdots\; i(j_2)\;\cdots\; i(j_n-1)+1\;\cdots\; i(j_n)). $$
Then
    $$\kappa_{\underline{\alpha}}(X_1,\ldots, X_q)=\sum_{\substack{\underline{\beta} \in S_{NC}(\underline{\hat{\alpha}}) \\ \beta_s\vee \hat{0}=\hat{\alpha}_s \; \forall s}}\kappa_{\underline{\beta}}(x_1,\ldots, x_p).$$
\end{proposition}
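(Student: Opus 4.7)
The plan is to prove this identity through M\"obius inversion, extending the strategy of the proof of \cite[Theorem 11.12]{nica2006lectures} to the tensor setting. The essential ingredients are: the tensor cumulant-moment formula \eqref{eq:tensor-free-cumulant-moment}, a \emph{grouping identity} $\varphi_{\underline{\sigma}}(X_1, \ldots, X_q) = \varphi_{\underline{\hat\sigma}}(x_1, \ldots, x_p)$ valid for every $\underline \sigma \in (S_q)^r$, and a structural description of the fibers of the join-with-$\underline{\hat 0}$ map on the lattice $S_{NC}(\underline{\hat\alpha})$.

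First, I would establish the grouping identity $\varphi_{\underline{\sigma}}(X_1, \ldots, X_q) = \varphi_{\underline{\hat\sigma}}(x_1, \ldots, x_p)$ for every $\underline{\sigma} \in (S_q)^r$. This can be obtained by iterating the substitution axiom of \cref{def:tensor-ncps}, since consecutive elements $i, i+1$ lying in the same block of $\hat 0$ satisfy $\hat\sigma_s(i)=i+1$ for all $s \in [r]$; alternatively, one verifies the identity directly by the multiplicativity axiom together with the definition of $\varphi_{\underline\sigma}$ as a product over cycles: each inflated cycle $\hat c_k$ of $\hat\sigma_s$ evaluated on the $x_i$'s reproduces the corresponding cycle $c_k$ of $\sigma_s$ evaluated on the grouped variables $X_j = x_{i(j-1)+1}\cdots x_{i(j)}$.

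Second, I would record the structural fact that the inflation map $\sigma \mapsto \hat\sigma$ is an order-preserving bijection from $S_{NC}(\alpha)$ onto the upper set $\{\tau \in S_{NC}(\hat\alpha) : \tau \geq \hat 0\}$. This follows from the classical isomorphism $S_{NC}(\gamma_p) \cong NC(p)$ recalled in \cref{prop:lattice-structure,prop:lattice-structure2}, applied cycle-by-cycle to the cycles of $\hat\alpha$. Applied componentwise, it yields, for every $\underline\beta \in S_{NC}(\underline{\hat\alpha})$, a unique $\underline{\sigma}^\ast \in S_{NC}(\underline\alpha)$ satisfying $\widehat{\underline{\sigma}^\ast} = \underline\beta \vee \underline{\hat 0}$, where the join is taken componentwise in the lattices $S_{NC}(\hat\alpha_s)$.

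Third, combining these ingredients with the tensor cumulant-moment formula \eqref{eq:tensor-free-cumulant-moment}, I would compute
\begin{align*}
\kappa_{\underline\alpha}(X_1, \ldots, X_q)
&= \sum_{\underline\sigma \in S_{NC}(\underline\alpha)} \varphi_{\underline\sigma}(X_1, \ldots, X_q)\, \Mob(\underline\sigma^{-1}\underline\alpha) \\
&= \sum_{\underline\sigma \in S_{NC}(\underline\alpha)} \Mob(\underline\sigma^{-1}\underline\alpha) \sum_{\underline\beta \in S_{NC}(\underline{\hat\sigma})} \kappa_{\underline\beta}(x_1, \ldots, x_p) \\
&= \sum_{\underline\beta \in S_{NC}(\underline{\hat\alpha})} \kappa_{\underline\beta}(x_1, \ldots, x_p) \sum_{\substack{\underline\sigma \in S_{NC}(\underline\alpha)\\ \underline{\hat\sigma} \geq \underline\beta}} \Mob(\underline\sigma^{-1}\underline\alpha).
\end{align*}
Since $\underline{\hat\sigma} \geq \underline{\hat 0}$ automatically, the condition $\underline{\hat\sigma} \geq \underline\beta$ is equivalent to $\underline{\hat\sigma} \geq \underline\beta \vee \underline{\hat 0} = \widehat{\underline{\sigma}^\ast}$, i.e.~(by order-preservation of the inflation map) to $\underline\sigma \geq \underline{\sigma}^\ast$. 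The inner sum then factors over $s \in [r]$ as a product of M\"obius-inversion sums on each lattice $S_{NC}(\alpha_s)$, each equal to $\mathbf 1_{\sigma^\ast_s = \alpha_s}$, so that the whole inner sum collapses to $\mathbf 1_{\underline\beta \vee \underline{\hat 0} = \underline{\hat\alpha}}$, yielding the claim.

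The main obstacle I anticipate is verifying the structural lemma in the second paragraph, and in particular the surjectivity of the inflation map onto the upper set $\{\tau \in S_{NC}(\hat\alpha) : \tau \geq \hat 0\}$. While natural, this requires checking that a geodesic permutation $\tau \in S_{NC}(\hat\alpha)$ with $\tau \geq \hat 0$ cannot split any block of $\hat 0$ across its cycles and must therefore descend to a well-defined permutation on the $q$ quotient indices whose associated cycles respect the cyclic order inherited from $\alpha$.
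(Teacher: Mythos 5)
Your proposal is correct and follows essentially the same route as the paper's proof: the grouping identity $\varphi_{\underline{\sigma}}(X_1,\ldots,X_q)=\varphi_{\underline{\hat\sigma}}(x_1,\ldots,x_p)$ via the substitution axiom, the lattice isomorphism $S_{NC}(\alpha)\cong\{\beta:\hat 0\leq\beta\leq\hat\alpha\}$ from \cite[Remark 11.11]{nica2006lectures}, and a M\"obius-inversion collapse identifying the surviving terms as those $\underline\beta$ with $\beta_s\vee\hat 0=\hat\alpha_s$. The only difference is presentational (you expand moments into cumulants before collapsing the M\"obius sum, while the paper reindexes the sum over the interval $[\underline{\hat 0},\underline{\hat\alpha}]$ first), so no further comment is needed.
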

\begin{proof}
According to \cite[Remarks 11.11]{nica2006lectures}, the map $\sigma\in S_q \mapsto \hat{\sigma}\in S_p$ defines a lattice isomorphism between $(S_{NC}(\alpha),\leq)$ and $(\{\beta\in S_p: \hat{0}\leq \beta\leq \hat{\alpha}\}, \leq)$ for all $\alpha\in S_q$. Therefore, for any $\underline{\alpha}\in S_q^r$, we have by M\"{o}bius inversion,
\begin{align*}
\kappa_{\underline{\alpha}}(X_1,\ldots, X_q)&=\sum_{\underline{\sigma}\,\leq \,\underline{\alpha}} \varphi_{\underline{\sigma}}(X_1,\ldots, X_q) \Mob(\underline{\sigma}^{-1}\underline{\alpha})\\
&=\sum_{\underline{\hat{0}}\,\leq\, \underline{\beta}\,\leq\, \underline{\hat{\alpha}}} \varphi_{\underline{\beta}}(x_1,\ldots, x_q) \Mob(\underline{\beta}^{-1} \underline{\hat{\alpha}})\\
&=\sum_{\substack{\underline{\beta} \in S_{NC}(\underline{\hat{\alpha}}) \\ \beta_s\vee \hat{0}=\hat{\alpha}_s \; \forall s}}\kappa_{\underline{\beta}}(x_1,\ldots, x_p).
\end{align*}
Note that we have used the substitution property \cref{def:tensor-ncps}-\cref{def:tensor-ncps-substitution} in the second equality above.
\end{proof}

We shall now prove that the unit element $1$ in a tensor probability space is tensor freely independent from the whole algebra $\A$; this property is the generalization of \cite[Lemma 5.17]{nica2006lectures} to the tensor case. Recall from \cref{rk:unitality} that the tensor moments of $1 \in \A$ are all 1; its tensor free cumulants can be easily computed: 
$$\kappa_{\underline{\alpha}}(1)=\prod_{s=1}^r\mathds 1_{\alpha_s = \id_p} = \mathds 1_{\underline{\alpha} = \underline{\id_p}}.$$

\begin{proposition} \label{prop-1-TensorFree}
Let $(\A, \phi, (\phi_{\underline{\alpha}}))$ be a tensor probability space, and consider $p\geq 2$ random variables $x_1,\ldots, x_p\in \A$ such that $x_j=1$ for some $j\in [p]$. Then for every $\underline{\alpha}\in S_p^r$ which connects $j$ with other elements, i.e.~there exists $s \in [r]$ such that $\alpha_s(j) \neq j$, we have $\kappa_{\underline{\alpha}}(x_1,\ldots, x_p)=0$. In particular, $1$ is tensor freely independent from the whole algebra $\A$.  
\end{proposition}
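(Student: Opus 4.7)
The plan is to establish, for every $\underline{\beta} \in S_p^r$, the key identity
\begin{equation*}
    \sum_{\substack{\underline{\alpha} \leq \underline{\beta} \\ \exists s:\, \alpha_s(j) \neq j}} \kappa_{\underline{\alpha}}(x_1,\ldots,x_p) = 0,
\end{equation*}
from which the desired vanishing of $\kappa_{\underline{\alpha}}(x_1,\ldots,x_p)$ for every $\underline{\alpha}$ connecting $j$ will follow by M\"obius inversion on the upward-closed subposet $T := \{\underline{\alpha} \in S_p^r : \exists s,\, \alpha_s(j) \neq j\}$. Equivalently, one inducts upward through $T$, starting from its minimal elements---those $\underline{\alpha}$ with exactly one coordinate equal to a transposition $(j\,k)$ and the remaining coordinates equal to $\id_p$, for which the identity immediately reads $\kappa_{\underline{\alpha}}(x_1,\ldots,x_p)=0$---and propagates the vanishing to every $\underline{\alpha} \in T$.

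To establish the identity, I split the tensor moment-cumulant formula $\varphi_{\underline{\beta}}(x_1,\ldots,x_p) = \sum_{\underline{\alpha} \leq \underline{\beta}} \kappa_{\underline{\alpha}}(x_1,\ldots,x_p)$ according to whether $\underline{\alpha}$ fixes $j$ on every level or not, and aim to show that the ``fixing $j$'' part alone equals $\varphi_{\underline{\beta}}(x_1,\ldots,x_p)$. For $\underline{\alpha}$ with $\alpha_s(j) = j$ for all $s$, the decomposition $\underline{\alpha} = \underline{\alpha}' \sqcup \underline{\id_{\{j\}}}$ combined with multiplicativity of tensor free cumulants (\cref{rk:tensor-free-cumulants-permutation-invariance}) yields $\kappa_{\underline{\alpha}}(x_1,\ldots,x_p) = \kappa_{\underline{\alpha}'}(\tilde x)$, using $\kappa_{\underline{\id_{\{j\}}}}(1) = \varphi(1) = 1$; here $\underline{\alpha}'$ denotes the erasure of $j$ from $\underline{\alpha}$ and $\tilde x := (x_1, \ldots, \widehat{x_j}, \ldots, x_p)$. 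Meanwhile, the unitality property of \cref{def:tensor-ncps} gives $\varphi_{\underline{\beta}}(x_1,\ldots,x_p) = \varphi_{\underline{\beta}'}(\tilde x) = \sum_{\underline{\gamma} \in S_{NC}(\underline{\beta}')} \kappa_{\underline{\gamma}}(\tilde x)$, so the identity reduces to establishing the bijection
\begin{equation*}
    \{\underline{\alpha} \in S_{NC}(\underline{\beta}) : \alpha_s(j) = j\ \forall s\}\ \longleftrightarrow\ S_{NC}(\underline{\beta}'),\qquad \underline{\alpha} \longmapsto \underline{\alpha}'.
\end{equation*}

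Using \cref{prop:lattice-structure2}, this bijection decouples across cycles and reduces to the following per-cycle assertion: non-crossing partitions of a cycle of $\beta_s$ having $\{j\}$ as a singleton block are in bijection with non-crossing partitions of the shorter cycle of $\beta_s'$ obtained by deleting $j$. This is a standard fact from the combinatorics of non-crossing partitions (trivial when $\{j\}$ is already a fixed point of $\beta_s$, and requiring only a brief check of cyclic order when $j$ lies inside a longer cycle). This completes the proof of the main claim. For the ``in particular'' statement on tensor freeness of $1$ from $\A$, multilinearity reduces the question to showing $\kappa_{\underline{\alpha}}(y_1,\ldots,y_p) = 0$ when $\underline{\alpha}$ is irreducible and some $y_j = 1$; but irreducibility means $\bigvee_s \Pi(\alpha_s) = 1_p$, which forces some $\alpha_s(j) \neq j$ (otherwise $\{j\}$ would remain a singleton of the join), so the main claim applies. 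The principal technical obstacle is the bijection, which requires careful tracking of cycle structures and cyclic order under deletion of a point.
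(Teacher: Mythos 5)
Your proof is correct and follows essentially the same route as the paper's: both arguments combine the unitality axiom with the tensor moment--cumulant inversion and an induction over the poset $S_p^r$, the key point in each being that the geodesic tuples below $\underline{\beta}$ which fix $j$ on every level are exactly the geodesic tuples for the erasure $\underline{\beta}'\sqcup\underline{(j)}$. The only difference is presentational: you isolate and verify this erasure bijection explicitly, whereas the paper uses it implicitly in the step $\sum_{\underline{\beta}\lneq\underline{\alpha}}\kappa_{\underline{\beta}}=\sum_{\underline{\beta}\leq\underline{\alpha}'\sqcup\underline{(p)}}\kappa_{\underline{\beta}}$.
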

\begin{proof}
By the permutation invariance of $\kappa_{\underline{\alpha}}$ (see \cref{rk:tensor-free-cumulants-permutation-invariance}), we may assume that $x_p=1$. We shall show the statement by induction on the number $m=\sum_{s=1}^r |\alpha_s|$. 
If $m=1$, then the condition says that $\alpha_s=\id_p$ for all but one permutation $\alpha_{s_0}$ which is a transposition of the form $(j\,p)$. Therefore, the unitality property from \cref{def:tensor-ncps}-\cref{def:tensor-ncps-unitality} implies that
\begin{align*}
    \kappa_{\underline{\alpha}}(x_1,\ldots, x_{p-1},1)&=\varphi_{\underline{\alpha}}(x_1,\ldots, x_{p-1},1)-\varphi_{\underline{\id_p}}(x_1,\ldots, x_{p-1},1)\\
    &=\varphi_{\underline{\id_{p-1}}}(x_1,\ldots, x_{p-1})-\varphi_{\underline{\id_{p-1}}}(x_1,\ldots, x_{p-1})=0.
\end{align*}
Now if $m\geq 2$, then by the induction hypothesis, we have $\kappa_{\underline{\beta}}(x_1,\ldots, x_{p-1},1)=0$ for all $\underline{\beta}\lneq \underline{\alpha}$ such that $\underline{\beta}$ connects $p$ with some other elements. Therefore, the tensor moment-free cumulant relation \cref{eq:tensor-moment-free-cumulant} implies that
\begin{align*}
\varphi_{\underline{\alpha}}(x_1,\ldots, x_{p-1},1)&=\sum_{\underline{\beta}\,\leq \, \underline{\alpha}}\kappa_{\underline{\beta}}(x_1,\ldots, x_{p-1},1)\\
&=\kappa_{\underline{\alpha}}(x_1,\ldots, x_{p-1},1)+\sum_{{\underline{\beta}\,\lneq \, \underline{\alpha}}}\kappa_{\underline{\beta}}(x_1,\ldots, x_{p-1},1)\\
&=\kappa_{\underline{\alpha}}(x_1,\ldots, x_{p-1},1)+\sum_{\underline{\beta}\,\leq \, \underline{\alpha}'\sqcup \underline{(p)}}\kappa_{\underline{\beta}}(x_1,\ldots, x_{p-1},1)\\
&=\kappa_{\underline{\alpha}}(x_1,\ldots, x_{p-1},1)+\varphi_{\underline{\alpha}'}(x_1,\ldots, x_{p-1})
\end{align*}
where $\underline{\alpha}'\in S_{p-1}^r$ is obtained by erasing $p$ in every permutation $\alpha_s$. By unitality, we have $\varphi_{\underline{\alpha}}(x_1,\ldots, x_{p-1},1)=\varphi_{\underline{\alpha}'}(x_1,\ldots, x_{p-1})$, and therefore, $\kappa_{\underline{\alpha}}(x_1,\ldots, x_{p-1},1)=0$.

The final claim follows from the vanishing of mixed tensor free cumulants characterization of tensor freeness. 
\end{proof}

The next result shows that in order to show the tensor freeness of the algebras generated by some family of subsets, one can check the vanishing on mixed tensor free cumulants only on elements of the given subsets. 

\begin{theorem} \label{thm-TensorFreeSubsets}
Let $(\A, \phi, (\phi_{\underline{\alpha}}))$ be an $r$-partite tensor probability space, and consider a family of subsets $(\W_i)_{i \in [L]} \subseteq \A$. Then, the following statements are equivalent:
\begin{enumerate}
    \item $(\W_i)_{i \in [L]}$ are tensor freely independent (in the sense that the algebras they generate are tensor freely independent, see \cref{def:tensor-freeness});
    \item The vanishing property of mixed tensor free cumulants holds for elements of the $\W_i$'s: $\kappa_{\underline{\alpha}}(x_1,\ldots, x_p)=0$ whenever $\underline{\alpha}\in S_p^r$ is irreducible, and $x_j\in \W_{f(j)}$ for a non-constant function $f:[p] \to [L]$.
\end{enumerate}
{In particular, elements $x_1,\ldots, x_L\in\A$ are tensor freely independent if and only if 
    $$\kappa_{\underline{\alpha}}(x_{f(1)},\ldots, x_{f(p)})=0$$
whenever $\underline{\alpha}\in (S_p)^r$ is irreducible and $f:[p]\to [L]$ is a non-constant function.}
\end{theorem}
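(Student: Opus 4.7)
The implication $(1) \Rightarrow (2)$ is immediate since $\W_i \subseteq \A_i$. For the converse, my plan is to promote the vanishing hypothesis from generators to the full generated algebras via two reductions: first, by the multilinearity of $\kappa_{\underline \alpha}$, it suffices to verify vanishing when each argument is either the unit $1 \in \A$ or a non-trivial monomial in $\W_{f(j)}$; second, the unit case is handled directly by \cref{prop-1-TensorFree}. Indeed, since $\underline \alpha$ is irreducible and $q \geq 2$, every index $j \in [q]$ satisfies $\alpha_s(j) \neq j$ for at least one $s$, so \cref{prop-1-TensorFree} forces any cumulant featuring a unit entry to vanish.

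For the monomial case, write $X_j = x_{i(j-1)+1} \cdots x_{i(j)}$ with every factor $x_k \in \W_{f(j)}$ for $k \in (i(j-1), i(j)]$, and apply the cumulants-of-products formula from \cref{prop-TensorCumulantProd}:
$$\kappa_{\underline \alpha}(X_1, \ldots, X_q) = \sum_{\substack{\underline \beta \in S_{NC}(\underline{\hat \alpha}) \\ \beta_s \vee \hat 0 = \hat \alpha_s \, \forall s}} \kappa_{\underline \beta}(x_1, \ldots, x_p).$$
Introduce the lifted coloring $g: [p] \to [L]$ by $g(k) := f(j)$ for $k \in (i(j-1), i(j)]$, so that $\Pi(\hat 0) \leq \ker g$ and $g$ is non-constant iff $f$ is. The multiplicativity of tensor free cumulants (see \cref{rk:tensor-free-cumulants-permutation-invariance}) factorizes $\kappa_{\underline \beta}$ over the irreducible components $B_1, \ldots, B_m$ of $\underline \beta$ (the blocks of ${\bigvee}_{\mathcal P} \Pi(\beta_s)$), each factor being an irreducible cumulant of elements from some $\W_l$'s. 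If any $B_i$ is not $g$-constant, then $\underline \beta\restrict{B_i}$ is irreducible with mixed colors, and hypothesis (2) immediately kills the corresponding factor, and hence the entire $\kappa_{\underline \beta}$.

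It thus remains to show that, for every $\underline \beta$ in the sum, ${\bigvee}_s \Pi(\beta_s) \not\leq \ker g$. Suppose otherwise; combined with $\Pi(\hat 0) \leq \ker g$, this would give $\Pi(\beta_s) \vee_{\mathcal P} \Pi(\hat 0) \leq \ker g$ for every $s$. The key combinatorial lemma is that, because $\Pi(\hat 0)$ restricted to any cycle $c$ of $\hat \alpha_s$ consists of consecutive intervals in the natural order on $c$, the $\mathcal P$-join $\Pi(\beta_s) \vee_{\mathcal P} \Pi(\hat 0)$ is automatically non-crossing and therefore coincides cycle-by-cycle with the $NC$-join imposed by the constraint $\beta_s \vee \hat 0 = \hat \alpha_s$; hence $\Pi(\beta_s) \vee_{\mathcal P} \Pi(\hat 0) = \Pi(\hat \alpha_s)$. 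Taking joins in $s$ then yields ${\bigvee}_s \Pi(\hat \alpha_s) \leq \ker g$, but irreducibility of $\underline \alpha$ forces ${\bigvee}_s \Pi(\hat \alpha_s) = 1_p$, so $\ker g = 1_p$ and $f$ is constant, a contradiction. The main obstacle is precisely this lemma on the coincidence of joins, since in general the $\mathcal P$- and $NC$-joins on $NC(p)$ differ (cf.~the example after \cref{prop:lattice-structure2}); leveraging the interval structure of $\hat 0$ is essential to close the argument.
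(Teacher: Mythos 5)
Your proof is correct and follows essentially the same route as the paper's: reduce to monomials by multilinearity, dispose of unit entries via \cref{prop-1-TensorFree}, apply the product formula of \cref{prop-TensorCumulantProd}, and show that the constraint $\beta_s \vee \hat{0} = \hat{\alpha}_s$ together with irreducibility of $\underline{\alpha}$ forces some irreducible component of $\underline{\beta}$ to be mixed, so that hypothesis (2) kills every term. The one place you go beyond the paper is in isolating the lemma that the $\mathcal{P}$-join of a non-crossing partition with an interval partition is itself non-crossing (so the $NC$- and $\mathcal{P}$-joins coincide on each cycle of $\hat{\alpha}_s$); the paper asserts the resulting connectivity of differently colored groups directly, and the lemma you invoke is the standard fact underlying cumulants-of-products formulas (cf.~the proof of \cite[Theorem 11.12]{nica2006lectures}), so no genuine gap remains.
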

\begin{proof}
We may assume that $1\in \W_i$ for each $i\in [L]$ since $1$ is tensor free from $\A$ (\cref{prop-1-TensorFree}), and $\W_i$ and $\W_i\cup \{1\}$ generate the same unital subalgebra.

The direction (1) $\Rightarrow$ (2) is clear by definition. For the converse, assume the condition (2), and let us show that the same property holds for all elements of $\A_i$, the unital algebra generated by $\W_i$. By multilinearity of tensor free cumulants, we may take the elements of $\A_i$ from products of elements of $\W_i$. Therefore, if we set $0=i(0)<\cdots <i(q)=p$, $X_j=x_{i(j-1)+1}\cdots x_{i(j)}$, and $x_{i(j-1)+1},\ldots, x_{i(j)}\in \W_{f(j)}$ for all $j\in [q]$, then we have
\begin{equation} \label{eq-SubsetTensorFree}
    \kappa_{\underline{\alpha}}(X_1,\ldots, X_q)=\sum_{\forall s \in [r] \, : \, \beta_s\vee \hat{0}=\hat{\alpha}_s}\kappa_{\underline{\beta}}(x_1,\ldots, x_p)
\end{equation}
for $\underline{\alpha}\in S_q^r$, where $\hat{0}$ and $\hat{\alpha}_s$ are defined as in \cref{prop-TensorCumulantProd}. Now suppose that $\underline{\alpha}$ is irreducible and a permutation $\alpha_{s_0}$ is such that $\alpha_{s_0}(j) = j'$ with $f(j)\neq f(j')$. Then the conditions $\beta_{s_0}\vee \hat{0}=\hat{\alpha}_{s_0}$ forces that $\beta_{s_0}$ connects two elements $i,i'\in [p]$ such that $x_i\in \W_{f(j)}$ and $x_{i'}\in \W_{f(j')}$. Therefore, the condition (2) implies that $\kappa_{\underline{\beta}}(x_1,\ldots, x_p)=0$ for every $\underline{\beta}$, and hence
$\kappa_{\underline{\alpha}}(X_1,\ldots, X_q)=0$, which shows (1).
\end{proof}

{
Combining \cref{thm-TensorFreeSubsets} with \cref{prop-tenfreemoment} directly gives the following tensor freeness criterion, which will be very useful for showing the asymptotic tensor freeness of random matrices later.

\begin{corollary} \label{cor-TensorFreeSubsetMoment}
Subsets $\W_1,\ldots, \W_L$ of an $r$-partite tensor probability space are tensor freely independent if and only if, for every function $f:[p]\to [L]$ and $x_j\in \W_{f(j)}$ ($j \in [p]$), we have:
\begin{equation} \label{eq-TensorFreeSubsetMoment}
\varphi_{\underline{\alpha}}(x_1,\ldots, x_p) =\sum_{\substack{\underline{\beta}\in S_{NC}(\underline{\alpha}) \\ \bigvee_{\mathcal{P}}\Pi(\beta_s)\leq  \ker f}}\prod_{i\in f([p])} \kappa_{\underline{\beta}|_{f^{-1}(i)}}((x_j)_{j\in f^{-1}(i)}), \quad p \geq 1,\;\; \underline{\alpha}\in S_p^r.
\end{equation}
\end{corollary}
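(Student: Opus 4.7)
The plan is to obtain \cref{cor-TensorFreeSubsetMoment} as a direct corollary of \cref{thm-TensorFreeSubsets} combined with \cref{prop-tenfreemoment}, by showing that the moment factorization formula \cref{eq-TensorFreeSubsetMoment} on elements of the subsets $\W_i$ is equivalent to the vanishing of mixed tensor free cumulants on those same elements.

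For the forward direction ($\Rightarrow$), assume $\W_1, \ldots, \W_L$ are tensor freely independent, so the generated unital algebras $\A_i$ are tensor free. Given $f : [p] \to [L]$ and $x_j \in \W_{f(j)} \subseteq \A_{f(j)}$, apply \cref{prop-tenfreemoment} directly to obtain
$$\varphi_{\underline{\alpha}}(x_1,\ldots, x_p) = \sum_{\substack{\underline{\beta}\in S_{NC}(\underline{\alpha}) \\ \bigvee_{\mathcal{P}}\Pi(\beta_s)\leq  \ker f}}\prod_{i\in f([p])} \kappa_{\underline{\beta}|_{f^{-1}(i)}}\bigl((x_j)_{j\in f^{-1}(i)}\bigr),$$
which is exactly \cref{eq-TensorFreeSubsetMoment}.

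For the reverse direction ($\Leftarrow$), assume \cref{eq-TensorFreeSubsetMoment} holds for all $f$ and $x_j \in \W_{f(j)}$. By \cref{thm-TensorFreeSubsets}, it suffices to show that mixed tensor free cumulants on elements of the $\W_i$'s vanish, i.e.~$\kappa_{\underline{\alpha}}(x_1, \ldots, x_p) = 0$ for every irreducible $\underline{\alpha}$ and every non-constant $f : [p] \to [L]$ with $x_j \in \W_{f(j)}$. To see this, I invert the relation via M\"obius inversion on each $S_{NC}(\underline{\alpha})$. Comparing \cref{eq-TensorFreeSubsetMoment} with the unrestricted tensor moment-cumulant formula \cref{eq:tensor-moment-free-cumulant}
$$\varphi_{\underline{\alpha}}(x_1,\ldots, x_p) = \sum_{\underline{\beta} \in S_{NC}(\underline{\alpha})} \kappa_{\underline{\beta}}(x_1,\ldots, x_p),$$
the hypothesis says that the sum on the right-hand side of \cref{eq:tensor-moment-free-cumulant} agrees with its restriction to those $\underline{\beta}$ satisfying $\bigvee_{\mathcal{P}} \Pi(\beta_s) \leq \ker f$. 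Using the factorization of the tensor free cumulants over the independent blocks of $\ker f$ in the restricted sum (valid because such $\underline{\beta}$ split as disjoint unions over the blocks, by \cref{prop:lattice-structure2} and the multiplicativity of $\kappa$ noted in \cref{rk:tensor-free-cumulants-permutation-invariance}) and applying the same identity to the factors on the right-hand side of \cref{eq-TensorFreeSubsetMoment}, induction on $p$ (equivalently, M\"obius inversion applied to the difference) forces $\kappa_{\underline{\beta}}(x_1, \ldots, x_p) = 0$ whenever $\bigvee_{\mathcal{P}} \Pi(\beta_s) \not\leq \ker f$. In particular, for irreducible $\underline{\alpha}$ one has $\bigvee_{\mathcal{P}} \Pi(\alpha_s) = 1_p \not\leq \ker f$ (since $f$ is non-constant), so $\kappa_{\underline{\alpha}}(x_1, \ldots, x_p) = 0$.

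The only subtle point is ensuring that the M\"obius inversion goes through for arbitrary $\underline{\alpha} \in S_p^r$, not just for $\underline{\alpha} = \underline{\gamma_p}$. This is handled by noting that \cref{eq-TensorFreeSubsetMoment} is assumed to hold for \emph{every} $\underline{\alpha}$, so the identity can be inverted simultaneously over the lattice $S_{NC}(\underline{\alpha})$ for each $\underline{\alpha}$ separately, yielding the cumulant vanishing. I expect this bookkeeping step to be the only mildly technical part; everything else is a direct translation between \cref{prop-tenfreemoment} and \cref{thm-TensorFreeSubsets}. The final ``in particular'' clause is immediate by taking $\W_i = \{x_i\}$.
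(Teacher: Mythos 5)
Your proof is correct and takes essentially the same route as the paper, which gives no separate argument and simply states that the corollary follows by combining \cref{thm-TensorFreeSubsets} with \cref{prop-tenfreemoment}: your forward direction is exactly \cref{prop-tenfreemoment}, and your reverse direction correctly extracts the vanishing of mixed cumulants on elements of the $\W_i$'s before invoking \cref{thm-TensorFreeSubsets}. The only cosmetic point is that the induction in the reverse direction runs over the poset $S_{NC}(\underline{\alpha})$ (e.g.\ over $\sum_s|\beta_s|$ for fixed $p$), not over $p$ itself, but your parenthetical appeal to M\"obius inversion on each lattice $S_{NC}(\underline{\alpha})$ already supplies the right mechanism.
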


}

\subsection{Moment conditions for tensor freeness}

Recall that the usual notion of freeness was originally defined without using free cumulants; namely, every alternating product of centered elements remains centered. Since tensor freeness extends freeness and even coincides with it in the case $r=1$, it is natural to ask whether similar (and possibly equivalent) conditions can be established for tensor freeness. One straightforward approach is to further expand each $\kappa_{\underline{\beta}}$ in \cref{eq-TensorFreeMoment}, using \cref{eq:tensor-free-cumulant-moment}, to obtain that
\begin{align*}
\varphi_{\underline{\alpha}}(x_1,\ldots, x_p) &= \sum_{\substack{\underline{\beta}\in S_{NC}(\underline{\alpha}) \\ \bigvee_{\mathcal{P}}\Pi(\beta_s)\leq  \ker f}} \sum_{\underline{\alpha}'\in S_{NC}(\underline{\beta})}\Mob({\underline{\alpha}'}^{-1}\underline{\beta})\, \varphi_{\underline{\alpha}'}(x_1,\ldots, x_p)\\
&= \sum_{\substack{\underline{\alpha}'\in S_{NC}(\underline{\alpha}) \\ \bigvee_{\mathcal{P}}\Pi(\alpha_s')\leq  \ker f}} \bigg( \sum_{\substack{\underline{\alpha}'\leq \underline{\beta} \leq \underline{\alpha} \\ \bigvee_{\mathcal{P}}\Pi(\beta_s)\leq  \ker f}} \Mob({\underline{\alpha}'}^{-1}\underline{\beta}) \bigg) \varphi_{\underline{\alpha}'}(x_1,\ldots, x_p)
\end{align*}
for $x_j\in \A_{f(j)}$ ($j\in [p]$) where $(\A_i)_{i\in [L]}$ are tensor free subalgebras. From this, one can conclude that:
\begin{center}
    $\varphi_{\underline{\alpha}'}(x_1,\ldots, x_p)=0$ for all $\underline{\alpha}'\in S_{NC}(\underline{\alpha})$ such that $\bigvee_{\mathcal{P}}\Pi(\alpha_s')\leq \ker f$ $\implies \varphi_{\underline{\alpha}}(x_1,\ldots, x_p)=0$.
\end{center}

\medskip

In the following, we improve the above and give necessary moment conditions for tensor freeness, inspired by \cite[Theorem 5]{BB24}. First, for a permutation $\alpha\in S_p$ and a partition $\pi\in \mathcal{P}(p)$, let us define $\alpha^{\pi}$ by the maximal permutation $\beta\in (S_{NC}(\alpha),\leq)$ satisfying the following two conditions:
\begin{enumerate}
    \item  $\Pi(\beta)\leq \pi$,

    \item every cycle $c$ of $\beta$ is \textit{cyclic} on a cycle of $\alpha$: if $c=(i_1\,\cdots\,i_n)$, then there is a cycle $c'$ of $\alpha$ which is of the form $c'=(i_1\,\cdots \,i_m)$ for $m\geq n$.
\end{enumerate}
Intuitively, for each cycle of $\alpha$, we can group all (cyclically) consecutive elements that belong to the same block of 
$\pi$ to form the cycles  of $\alpha^{\pi}$. For example, if $\alpha=(1\,7\,3)(2\,5\,4\,6)\in S_7$ and $\pi=\{\{1,7\},\{2,4,5\},\{3,6\}\} \in \mathcal{P}(7)$, then
\begin{align}
    \alpha^{\pi} &=(1\,7)(3)(2\,5\,4)(6), \label{eq-AlphaOverPi} \\
    {\gamma_7}^{\pi} &=(7\,1\,2\,3\,4\,5\,6)^{\pi}=(7\,1)(2)(3)(4\,5)(6) \nonumber.    
\end{align}

\begin{proposition} \label{prop-TensorFreeVanishingMoment}
Let $\A_1,\ldots, \A_L$ be tensor freely independent subalgebras of $\A$. For $\underline{\alpha}\in (S_p)^r$ and a function $f:[p]\to [L]$, let us define the set
    $$\mathcal{S}_{\underline{\alpha},f}:=\{\underline{\beta}\in (S_p)^r: \underline{{\alpha}}^{\ker f}\leq \underline{\beta}\leq \underline{\alpha}\;\; \text{ and } \;\; \bigvee \Pi(\beta_s)\leq \ker f\} \subseteq S_{NC}(\underline{\alpha}).$$
Note that $\underline{{\alpha}}^{\ker f}=(\alpha_1^{\ker f},\ldots, \alpha_r^{\ker f})\in \mathcal{S}_{\underline{\alpha},f}$. Then we have
\begin{equation} \label{eq-VanishingMoment}
    \varphi_{\underline{\alpha}}(x_1,\ldots, x_p)=0 \text{ whenever $x_j\in \A_{f(j)}$ for $j\in [p]$ and $\varphi_{\underline{\beta}}(x_1,\ldots, x_p)=0$ for all } \underline{\beta}\in \mathcal{S}_{\underline{\alpha},f}
\end{equation}
\end{proposition}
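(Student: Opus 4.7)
The plan is to show that, under the tensor freeness hypothesis, the moment functional $\varphi_{\underline{\alpha}}$ evaluated on $(x_1,\ldots,x_p)$ collapses to a single other moment functional, namely $\varphi_{\underline{\alpha}^{\ker f}}$, which sits inside $\mathcal{S}_{\underline{\alpha},f}$ and therefore vanishes by hypothesis. In other words, I will establish the identity
$$\varphi_{\underline{\alpha}}(x_1,\ldots, x_p)=\varphi_{\underline{\alpha}^{\ker f}}(x_1,\ldots, x_p)$$
whenever $x_j\in \A_{f(j)}$ and $\A_1,\ldots,\A_L$ are tensor free. Since $\underline{\alpha}^{\ker f}\leq \underline{\alpha}^{\ker f}\leq \underline{\alpha}$ and $\bigvee_{\mathcal{P}}\Pi(\alpha_s^{\ker f})\leq \ker f$ by construction, we have $\underline{\alpha}^{\ker f}\in \mathcal{S}_{\underline{\alpha},f}$, so the conclusion then follows immediately from the stated vanishing assumption.

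To prove the identity, I first invoke \cref{prop-tenfreemoment} (the moment characterization of tensor freeness), which expresses
$$\varphi_{\underline{\alpha}}(x_1,\ldots, x_p)=\sum_{\substack{\underline{\beta}\in S_{NC}(\underline{\alpha})\\ \bigvee_{\mathcal{P}}\Pi(\beta_s)\leq \ker f}}\kappa_{\underline{\beta}}(x_1,\ldots, x_p).$$
The key combinatorial step, which I expect to be the main point of the argument, is to identify the index set of this sum with $S_{NC}(\underline{\alpha}^{\ker f})$. I would argue this component by component: for each $s\in [r]$, the collection $\{\beta_s\in S_{NC}(\alpha_s):\Pi(\beta_s)\leq \ker f\}$ is downward closed with respect to $\leq$, since if $\gamma\leq \beta_s$ then each cycle of $\gamma$ is contained in a cycle of $\beta_s$, hence $\Pi(\gamma)\leq \Pi(\beta_s)\leq \ker f$. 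Being downward closed in the lattice $S_{NC}(\alpha_s)$ and having $\alpha_s^{\ker f}$ as its maximum (by the very definition of $\alpha_s^{\ker f}$), this set coincides with $S_{NC}(\alpha_s^{\ker f})$. Taking the Cartesian product over $s\in [r]$ gives
$$\{\underline{\beta}\in S_{NC}(\underline{\alpha}):\bigvee{}_{\mathcal{P}}\Pi(\beta_s)\leq \ker f\}=S_{NC}(\underline{\alpha}^{\ker f}).$$

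With this identification in hand, the tensor moment-cumulant formula \cref{eq:tensor-moment-free-cumulant} applied directly to $\underline{\alpha}^{\ker f}$ gives
$$\sum_{\underline{\beta}\in S_{NC}(\underline{\alpha}^{\ker f})}\kappa_{\underline{\beta}}(x_1,\ldots, x_p)=\varphi_{\underline{\alpha}^{\ker f}}(x_1,\ldots, x_p),$$
which closes the argument. The only subtlety is to verify condition (2) in the definition of $\alpha_s^{\ker f}$ (that every cycle is cyclic on a cycle of $\alpha_s$) is automatic for $\beta_s\leq \alpha_s$ using the characterization of geodesic permutations from \cref{prop:lattice-structure,prop:lattice-structure2}, so it does not impose an extra restriction on the index set. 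Once this is in place, everything reduces to the downward-closure argument and one application each of \cref{prop-tenfreemoment} and the tensor moment-cumulant formula.
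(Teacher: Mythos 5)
There is a genuine gap, and it sits exactly at the step you call ``the main point of the argument.'' The set $\{\beta_s\in S_{NC}(\alpha_s):\Pi(\beta_s)\leq \ker f\}$ is indeed downward closed, but a downward-closed subset of a lattice need not be a principal down-set, and this one is generally not: it can have several distinct maximal elements, and $\alpha_s^{\ker f}$ is \emph{not} its maximum. The permutation $\alpha_s^{\ker f}$ is maximal only among those $\beta_s$ satisfying condition (2) of its definition (cycles consisting of cyclically \emph{consecutive} elements of cycles of $\alpha_s$), and that condition is not automatic for $\beta_s\leq\alpha_s$, contrary to what you assert. Concretely, take $\alpha_s=\gamma_4$ and $\ker f=\{\{1,3\},\{2,4\}\}$ (the word $x_1y_1x_2y_2$ with $x_i\in\A_1$, $y_i\in\A_2$). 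Then $\gamma_4^{\ker f}=\id_4$, yet $(1\,3)(2)(4)\in S_{NC}(\gamma_4)$ satisfies $\Pi((1\,3)(2)(4))\leq\ker f$ while $(1\,3)(2)(4)\not\leq\id_4$; its cycle $(1\,3)$ is not an interval of $(1\,2\,3\,4)$. This is precisely the situation of \cref{ex-VanishingMoment}(1), where $\mathcal{S}_{\underline{\gamma_4},f}$ has nine elements, not one. Consequently your claimed identity $\varphi_{\underline{\alpha}}=\varphi_{\underline{\alpha}^{\ker f}}$ is false: for tensor free bipartite elements,
$$\varphi_{\gamma_4,\gamma_4}(x_1,y_1,x_2,y_2)=\sum_{\underline{\beta}\in\{\id_4,(1\,3)(2)(4),(2\,4)(1)(3)\}^2}\kappa_{\underline{\beta}}(x_1,y_1,x_2,y_2)$$
contains cross terms such as $\kappa_{\substack{(1\,2)\\(1)(2)}}(x_1,x_2)\,\kappa_{\substack{(1)(2)\\(1\,2)}}(y_1,y_2)$ that do not appear in $\varphi_{\underline{\id_4}}(x_1,y_1,x_2,y_2)=\varphi(x_1)\varphi(y_1)\varphi(x_2)\varphi(y_2)$.

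The fact that the hypothesis of the proposition demands vanishing on \emph{all} of $\mathcal{S}_{\underline{\alpha},f}$, not just at $\underline{\alpha}^{\ker f}$, is itself a signal that the sum cannot collapse to a single moment. The repair is the route the paper takes: the index set $\{\underline{\beta}\in S_{NC}(\underline{\alpha}):\bigvee_{\mathcal{P}}\Pi(\beta_s)\leq\ker f\}$ is the \emph{union} $\bigcup_i S_{NC}(\underline{\beta}^{(i)})$ over the elements $\underline{\beta}^{(i)}$ of $\mathcal{S}_{\underline{\alpha},f}$; one then applies inclusion--exclusion, uses that $\mathcal{S}_{\underline{\alpha},f}$ is closed under $\wedge$ together with $\bigcap_{i\in I}S_{NC}(\underline{\beta}^{(i)})=S_{NC}(\bigwedge_{i\in I}\underline{\beta}^{(i)})$, and resums each intersection via \cref{eq:tensor-moment-free-cumulant} to express $\varphi_{\underline{\alpha}}$ as an alternating sum of moments $\varphi_{\underline{\beta}}$ with $\underline{\beta}\in\mathcal{S}_{\underline{\alpha},f}$, all of which vanish by hypothesis. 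Your first application of \cref{prop-tenfreemoment} is fine; everything after that needs to be replaced by this inclusion--exclusion argument.
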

\begin{proof}

Let us enumerate $\mathcal{S}_{\underline{\alpha},f}=\{\underline{\beta}^{(1)}, \underline{\beta}^{(2)}, \ldots, \underline{\beta}^{(n)}\}$. Then we can observe two crucial properties of $S_{\underline{\alpha},f}$ as follows: 
\begin{enumerate}
    \item The set $\mathcal{S}_{\underline{\alpha},f}$ is closed under the operation $\wedge$. Moreover, $\underline{\beta}^{(1)}:=\underline{\alpha}^{\ker f}$ is minimal in $\mathcal{S}_{\underline{\alpha},f}$.
    
    \item We have: $\underline{\beta}\in S_{NC}(\underline{\alpha})$ and $\bigvee_{\mathcal{P}}\Pi(\beta_s)\leq \ker f$ if and only if $\underline{\beta} \in S_{NC}(\underline{\beta}^{(i)})$ for some $i=1,\ldots, n$.
\end{enumerate}
Now let us assume that $\varphi_{\underline{\beta}^{(i)}}(x_1,\ldots, x_p)=0$ for all $i=1, \ldots, n$. Then we have
\begin{align*}
    \varphi_{\underline{\alpha}}(x_1,\ldots, x_p) &= \sum_{\substack{\underline{\beta}\in S_{NC}(\underline{\alpha}) \\ \bigvee_{\mathcal{P}}\Pi(\beta_s)\leq  \ker f}} \kappa_{\underline{\beta}}(x_1,\ldots, x_p) \\
    &= \sum_{\underline{\beta}\in S_{NC}(\underline{\alpha})} \kappa_{\underline{\beta}}(x_1,\ldots, x_p) \cdot \mathds{1}_{\bigcup_{i=1}^{n} S_{NC}(\underline{\beta}^{(i)})}(\underline{\beta})\\
    &= \sum_{\underline{\beta}\in S_{NC}(\underline{\alpha})} \kappa_{\underline{\beta}}(x_1,\ldots, x_p) \sum_{l=1}^n (-1)^{l-1} \sum_{I\subseteq [n],\; {\rm Card}(I)=l} \mathds{1}_{\bigcap_{i\in I} S_{NC}(\underline{\beta}^{(i)})}(\underline{\beta})\\
    &= \sum_{l=1}^n (-1)^{l-1} \sum_{I\subseteq [n],\; {\rm Card}(I)=l} \sum_{\underline{\beta}\in \bigcap_{i\in I} S_{NC}(\underline{\beta}^{(i)})} \kappa_{\underline{\beta}}(x_1,\ldots, x_p) \\
    &= \sum_{l=1}^n (-1)^{l-1} \sum_{I\subseteq [n],\; {\rm Card}(I)=l} \varphi_{\bigwedge_{i\in I}\underline{\beta}^{(i)}}(x_1,\ldots, x_p).
\end{align*}
Here we started from \cref{eq-TensorFreeMoment} and used the property (2) of $\mathcal{S}_{\underline{\alpha},f}$ in the second equality. In the third equality, we applied the \textit{inclusion–exclusion principle}
    $$ \mathds{1}_{\bigcup_{i=1}^n A_i}=\sum_{l=1}^n (-1)^{l-1}\sum_{I\subseteq [n],\,{\rm Card}(I)=l} \mathds{1}_{\bigcap_{i\in I}A_i} $$
for any finite sets $A_1,\ldots, A_n$, which is equivalent to $\prod_{i=1}^n \big(\mathds{1}_{\bigcup_j A_j}-\mathds{1}_{A_i}\big)=0$. Since $\bigcap_{i\in I}S_{NC}(\underline{\beta}^{(i)})=S_{NC}(\bigwedge_{i\in I}\underline{\beta}^{(i)})$ and $\bigwedge_{i\in I}\underline{\beta}^{(i)}\in \mathcal{S}_{\underline{\alpha},f}$ (property (1)), the last equality shows that $\varphi_{\underline{\alpha}}(x_1,\ldots, x_p)=0$.
\end{proof}

\begin{example} \label{ex-VanishingMoment}
Let us discuss several examples where \cref{prop-TensorFreeVanishingMoment} can be applied.
\begin{enumerate}
    \item If $\{x_1,x_2\}$ and $\{y_1,y_2\}$ are tensor free elements in a \textit{bipartite} tensor probability space (i.e. $r=2$), then we have 
        $$\varphi(x_1y_1x_2y_2)=\varphi_{\gamma_4,\gamma_4}(x_1,y_1,x_2,y_2)=0$$
    whenever $ \varphi(x_1)=\varphi(y_2)=0$ and $\varphi_{\substack{(1\,2)\\(1)(2)}}(x_1,x_2)=\varphi_{\substack{(1\,2)\\(1)(2)}}(y_1,y_2)=0$. Indeed, the latter condition implies $\varphi_{\beta_1,\beta_2}(x_1,y_1,x_2,y_2)=0$ whenever $\beta_1,\beta_2\in \{\id_4,(1\,3)(2)(4),(2\,4)(1)(3)\}$ (note that $\underline{\gamma_4}^{\ker f}=\underline{\id_4}$ and hence $\mathcal{S}_{\underline{\gamma_4},f}=\{\id_4,(13)(2)(4),(24)(1)(3)\}^2$ in this case). 

    \item In the case $r=1$, tensor freeness coincides with usual freeness, and we obtain that for freely independent subalgebras $(\A_i)_{i\in [L]}$ and  $x_j\in \A_{f(j)}$,
    \begin{center}
        $\varphi(x_1\cdots x_p)=0$ whenever $\varphi_{\pi}(x_1,\ldots, x_p)=0$ for all $\pi\in NC(p)$ with $\Pi(\gamma_p^{\ker f})\leq \pi\leq \ker f.$
    \end{center}
    In particular, if $x_j$'s are alternating and centered, i.e., $f(j)\neq f(j+1)$ and $\varphi(x_j)=0$ for all $j$, then $\varphi_{\pi}(x_1,\ldots, x_p)=0$ for every non-crossing partition $\pi$ with $\pi\leq \ker f$ since such $\pi$ contains a singleton as noted in \cite[Theorem 11.16]{nica2006lectures}.
\end{enumerate}
\end{example}

{We present below a simple corollary of \cref{prop-TensorFreeVanishingMoment}, see also \cite[Lemma 3.4]{LSY23} or \cite[Lemma 4.1]{LSY24} for a similar result.} 
\begin{corollary}\label{cor:vanishing-centered-tensor-moments}
    Consider tensor freely independent subalgebras $(\A_i)_{i \in [L]}$ inside some tensor probability space $(\A, \phi, (\phi_{\underline{\alpha}}))$, and \emph{centered} elements $x_1, \ldots, x_p \in \A$, with $x_j \in \A_{f(j)}$ for some function $f:[p] \to I$. Then, for all $\underline{\alpha} \in S_p^r$ such that
    \begin{equation*}
        \begin{rcases*}
            \forall s \in [r] \quad &$\beta_s \in S_{NC}(\alpha_s)$\\
            \forall s \in [r] \quad &$\Pi(\beta_s) \leq_P \ker f$        
        \end{rcases*}
     \implies \bigvee_{s \in [r]}^P \Pi(\beta_s) \text{ has a singleton},
    \end{equation*}
    the corresponding tensor moment is null:
    $$\phi_{\underline{\alpha}}(x_1, \ldots, x_p) = 0.$$
In particular, we have $\phi_{\underline{\alpha}}(x_1, \ldots, x_p) = 0$ whenever there exists $k\in [p]$ such that $f(k)\neq f(k')$ for all $k'\neq k$.
\end{corollary}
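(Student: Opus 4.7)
The plan is to apply \cref{prop-TensorFreeVanishingMoment} directly. It suffices to show that $\varphi_{\underline{\beta}}(x_1,\ldots,x_p) = 0$ for every $\underline{\beta} \in \mathcal{S}_{\underline{\alpha},f}$, and then \cref{prop-TensorFreeVanishingMoment} immediately gives $\varphi_{\underline{\alpha}}(x_1,\ldots,x_p)=0$.

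To that end, fix $\underline{\beta} \in \mathcal{S}_{\underline{\alpha},f}$. By the definition of $\mathcal{S}_{\underline{\alpha},f}$, we have $\beta_s \in S_{NC}(\alpha_s)$ and $\Pi(\beta_s) \leq \ker f$ for every $s \in [r]$. The assumption on $\underline{\alpha}$ then forces $\bigvee_{s} \Pi(\beta_s)$ to contain some singleton $\{k\}$. Since the join of the partitions $\Pi(\beta_s)$ dominates each of them, this means that $\{k\}$ is a block of every $\Pi(\beta_s)$, i.e.~$\beta_s(k) = k$ for all $s \in [r]$. I would then invoke the stronger multiplicativity property observed in the proof of \cref{prop-TensorMomentIndep}: with $\pi := \{\{k\},\, [p]\setminus\{k\}\}$, one has $\pi \geq \Pi(\beta_s)$ for all $s$, hence
$$\varphi_{\underline{\beta}}(x_1,\ldots,x_p) = \varphi(x_k)\cdot \varphi_{\underline{\beta}|_{[p]\setminus\{k\}}}\big((x_j)_{j \neq k}\big) = 0,$$
using the centering assumption $\varphi(x_k) = 0$. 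This completes the main claim.

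For the ``in particular'' part, suppose there exists $k \in [p]$ with $f(k) \neq f(k')$ for all $k' \neq k$; equivalently, $\{k\}$ is a singleton block of $\ker f$. For any $\underline{\beta}$ with $\Pi(\beta_s) \leq \ker f$, the cycle of $\beta_s$ containing $k$ is a subset of the block of $\ker f$ containing $k$, which is $\{k\}$ itself. Hence $\{k\}$ is a block of each $\Pi(\beta_s)$, and thus of $\bigvee_s \Pi(\beta_s)$. This verifies the hypothesis of the corollary for the given $\underline{\alpha}$, and the conclusion follows.

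The argument is essentially a direct bookkeeping exercise on top of \cref{prop-TensorFreeVanishingMoment}; the only subtle point is recognizing that a singleton in the join of partitions must in fact be a singleton in each of them, which allows the multiplicativity property to isolate the centered factor $\varphi(x_k)$. No genuine obstacle is expected.
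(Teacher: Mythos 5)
Your proposal is correct and follows essentially the same route as the paper: apply \cref{prop-TensorFreeVanishingMoment} after observing that every $\underline{\beta}\in\mathcal{S}_{\underline{\alpha},f}$ has a common singleton, which by multiplicativity isolates a factor $\varphi(x_k)=0$. You merely spell out the details the paper leaves implicit (a singleton of the join is a singleton of each $\Pi(\beta_s)$, and the verification of the ``in particular'' clause), all of which are accurate.
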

\begin{proof}
    In the setting of \cref{prop-TensorFreeVanishingMoment}, note that any $\underline{\beta} \in S_{\underline{\alpha},f}$ has a common singleton hence $\phi_{\underline{\beta}}(x_1, \ldots, x_p) = 0$ because the elements are centered. One can thus apply \cref{prop-TensorFreeVanishingMoment} and the conclusion follows.
\end{proof}

Let us consider the converse of the statement above: is it possible to prove tensor freeness starting from the vanishing tensor moment relations given in \cref{prop-TensorFreeVanishingMoment} and \cref{cor:vanishing-centered-tensor-moments}? We show next that the answer to this question is negative. 

First, consider two sub-algebras $\A_1, \A_2$ such that all the tensor moments as in the result above are vanishing. Let $x_{1,2} \in \A_1$ and $y_{1,2} \in \A_2$ be some \emph{centered} elements. We would like to check whether the mixed tensor free cumulant
$$\kappa_{\substack{(1)(2)(3)(4)\\(1243)}}(x_1,y_1,x_2,y_2)$$
is vanishing, using only the conditions in \cref{cor:vanishing-centered-tensor-moments}. Write
$$\kappa_{\substack{(1)(2)(3)(4)\\(1243)}}(x_1,y_1,x_2,y_2) = \sum_{\beta_2 \leq (1243)} \phi_{\substack{(1)(2)(3)(4)\\\beta_2}}(x_1,y_1,x_2,y_2) \Mob(\beta_2, (1243)).$$
Let us consider the conditions for a general term in the sum above to be vanishing according to \cref{cor:vanishing-centered-tensor-moments}:
$$\sigma_2 \leq \beta_2 \text{ and } \Pi(\sigma_2) \leq \ker f = \big\{\{1,3\},\{2,4\}\big\} \implies \sigma_2 \text{ has a fixed point}.$$
Note that the partition $\ker f$, with the ordering given by $(1243)$ is non-crossing, so 
$$\sigma_2 \leq \beta_2 \text{ and } \Pi(\sigma_2) \leq \ker f = \big\{\{1,3\},\{2,4\}\big\} \iff \sigma_2 \leq \beta_2 \wedge_{NC((1243))} \ker f.$$
Hence, all the terms in the sum above vanish, except for the ones such that $\beta_2 \wedge_{NC((1243))} \ker f$ does not have a singleton. In turn, this is equivalent to 
$$\beta_2 \wedge_{NC((1243))} \ker f = \ker f \iff \beta_2 \in \{(13)(24), (1243)\}.$$
In conclusion, 
\begin{align*}
    \kappa_{\substack{(1)(2)(3)(4)\\(1243)}}(x_1,y_1,x_2,y_2) &= \phi_{\substack{(1)(2)(3)(4)\\(1243)}}(x_1,y_1,x_2,y_2) + \phi_{\substack{(1)(2)(3)(4)\\(13)(24)}}(x_1,y_1,x_2,y_2)
    \cdot (-1)\\
    &= \phi_{\substack{(1)(2)(3)(4)\\(1243)}}(x_1,y_1,x_2,y_2) - \phi_{\substack{(1)(2)\\(12)}}(x_1,x_2)\phi_{\substack{(1)(2)\\(12)}}(y_1,y_2),
\end{align*}
none of which vanishes as per \cref{cor:vanishing-centered-tensor-moments}. In conclusion, the notion of tensor freeness defined via vanishing mixed tensor free cumulants implies the following relation: 
\begin{equation} \label{eq-ExVanishingMoment}
    \phi_{\substack{(1)(2)(3)(4)\\(1243)}}(x_1,y_1,x_2,y_2) = \phi_{\substack{(1)(2)\\(12)}}(x_1,x_2)\phi_{\substack{(1)(2)\\(12)}}(y_1,y_2)
\end{equation}
which is not captured by the vanishing of centered tensor moments in \cref{cor:vanishing-centered-tensor-moments}. 

Note however that the example above can be captured by the (more general) conditions in \cref{prop-TensorFreeVanishingMoment}. Indeed, if $\underline{\alpha}=(\id_4,(1243))$ and $\ker f=\{\{1,3\},\{2,4\}\}$, then $\underline{\alpha}^{\ker f}=(\id_4, (13)(24))$ and $\mathcal{S}_{\underline{\alpha},f}=\{\underline{\alpha}^{\ker f}\}$, so \cref{eq-VanishingMoment} says that
\begin{center}
    If $x_1,x_2\in \A_1$, $y_1,y_2\in \A_2$, and $\phi_{\substack{(1)(2)\\(12)}}(x_1,x_2)\phi_{\substack{(1)(2)\\(12)}}(y_1,y_2)=0$, then $\phi_{\substack{(1)(2)(3)(4)\\(1243)}}(x_1,y_1,x_2,y_2)=0$.
\end{center}
Now for any $x_1,x_2\in \A_1$ and $y_1,y_2\in \A_2$, we can choose $\lambda_1,\lambda_2\in \Comp$ such that
    $$\phi_{\substack{(1)(2)\\(12)}}(x_1-\lambda_1 1_\A,x_2-\lambda_2 1_\A)=\phi_{\substack{(1)(2)\\(12)}}(x_1,x_2)-\lambda_1\varphi(x_2)-\lambda_2\varphi(x_1)+\lambda_1\lambda_2=0.$$
This implies that $\phi_{\substack{(1)(2)(3)(4)\\(1243)}}(x_1-\lambda_1 1_\A,y_1,x_2-\lambda_2 1_\A,y_2)=0$, and hence we obtain \cref{eq-ExVanishingMoment}. In particular, we have
    $$\kappa_{\id_4,(1243)}(x_1,y_1,x_2,y_2)=\kappa_{\id_4,(1243)}(x_1-\varphi(x_1),y_1-\varphi(y_1),x_2-\varphi(x_2),y_2-\varphi(y_2))=0.$$

Finally, let us argue that the conditions in \cref{prop-TensorFreeVanishingMoment} may still be insufficient to fully capture tensor freeness in the general case.

\begin{lemma} \label{lem-BipartiteMoment}
Let $\A_1$ and $\A_2$ be unital subalgebras of a bipartite tensor probability space $(\A,\varphi,(\varphi_{\alpha_1,\alpha_2}))$ such that 
\begin{enumerate}
    \item up to order $p\leq 3$, every mixed tensor free cumulants $(\kappa_{\alpha_1,\alpha_2})_{\alpha_{1,2}\in S_p}$ between $\A_1$ and $\A_2$ vanish; 

    \item $\kappa_{\alpha_1,\alpha_2}(x_1,y_1,x_2,y_2)=0$ whenever $x_1,x_2\in \A_1$, $y_1,y_2\in \A_2$, and $\alpha_1,\alpha_2\in S_{NC}(\gamma_4)$ such that $\Pi(\alpha_1)\vee \Pi(\alpha_2)\not\leq \{\{1,3\},\{2,4\}\}$ and $\underline{\alpha}\neq (\gamma_4,\gamma_4)$.
\end{enumerate}
Then for all $x_1,x_2\in \A_1$ and $y_1,y_2\in \A_2$, we have
\begin{align*} 
    \kappa_{\gamma_4,\gamma_4}(x_1,y_1,x_2,y_2) &=\varphi({x}_1 {y}_1x_2y_2)-\phi_{\substack{(1)(2)\\(12)}}({x}_1,x_2)\phi_{\substack{(12)\\(1)(2)}}({y}_1,y_2) - \phi_{\substack{(12)\\(1)(2)}}({x}_1,x_2)\phi_{\substack{(1)(2)\\(12)}}({y}_1,y_2) \\
    & -\varphi(x_1) \varphi(x_2) \kappa_{\substack{(12)\\(12)}}(y_1,y_2) - \varphi(y_1) \varphi(y_2) \kappa_{\substack{(12)\\(12)}}(x_1,x_2)+\varphi(x_1)\varphi(x_2) \varphi(y_1) \varphi(y_2).
\end{align*}
\end{lemma}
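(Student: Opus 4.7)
The plan is to apply the tensor moment--cumulant formula \eqref{eq:tensor-moment-free-cumulant} to $\varphi(x_1y_1x_2y_2) = \varphi_{\gamma_4,\gamma_4}(x_1,y_1,x_2,y_2)$, use hypothesis~(2) to discard most of the $|S_{NC}(\gamma_4)|^2=196$ tensor free cumulants appearing in that expansion, and then solve the resulting identity for the single unknown $\kappa_{\gamma_4,\gamma_4}(x_1,y_1,x_2,y_2)$.

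First I would identify the surviving pairs $(\beta_1,\beta_2)\in S_{NC}(\gamma_4)^2$. The indexing function $f:[4]\to[2]$ with $f(1)=f(3)=1$ and $f(2)=f(4)=2$ has $\ker f=\{\{1,3\},\{2,4\}\}$, and by the isomorphism $S_{NC}(\gamma_4)\cong NC(4)$ of \cref{prop:lattice-structure} the only $\beta\in S_{NC}(\gamma_4)$ with $\Pi(\beta)\leq\ker f$ are $\id_4$, $(1\,3)$, and $(2\,4)$ (the partition $\ker f$ itself is crossing and so does not occur). Hypothesis~(2) therefore forces $\kappa_{\beta_1,\beta_2}(x_1,y_1,x_2,y_2)=0$ for every pair other than $(\gamma_4,\gamma_4)$ and those in $\{\id_4,(1\,3),(2\,4)\}^2$, leaving $10$ contributions in total.

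Next I would evaluate each of the nine surviving contributions distinct from $(\gamma_4,\gamma_4)$. For each such pair every block of $\Pi(\beta_1)\vee\Pi(\beta_2)$ lies entirely inside $\{1,3\}$ (containing $x_1,x_2$) or entirely inside $\{2,4\}$ (containing $y_1,y_2$), so the multiplicativity of tensor free cumulants (\cref{rk:tensor-free-cumulants-permutation-invariance} combined with the factorization of \cref{prop-TensorMomentIndep}) factors $\kappa_{\beta_1,\beta_2}(x_1,y_1,x_2,y_2)$ as a product of an $x$-cumulant of size $\leq 2$ and a $y$-cumulant of size $\leq 2$. The cumulant--moment formula \eqref{eq:tensor-free-cumulant-moment} at $p=2$ then rewrites the off-diagonal factors as
\begin{equation*}
\kappa_{(12),(1)(2)}(u_1,u_2) = \varphi_{(12),(1)(2)}(u_1,u_2) - \varphi(u_1)\varphi(u_2),\qquad \kappa_{(1)(2),(12)}(u_1,u_2) = \varphi_{(1)(2),(12)}(u_1,u_2) - \varphi(u_1)\varphi(u_2),
\end{equation*}
while the diagonal factors $\kappa_{(12),(12)}(x_1,x_2)$ and $\kappa_{(12),(12)}(y_1,y_2)$ are kept as is, since they appear in the target formula.

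Finally I would sum the nine contributions. Each intermediate cross-product such as $\varphi_{(12),(1)(2)}(x_1,x_2)\varphi(y_1)\varphi(y_2)$ appears with opposite signs in exactly two of the nine pairs (for example, in $((1\,3),\id_4)$ and in $((1\,3),(2\,4))$, and symmetrically), so after cancellation only the five terms listed on the right-hand side of the claim survive. Rearranging $\varphi(x_1y_1x_2y_2)=\kappa_{\gamma_4,\gamma_4}(x_1,y_1,x_2,y_2)+(\text{nine other terms})$ for $\kappa_{\gamma_4,\gamma_4}$ then yields exactly the stated identity. The main (essentially only) difficulty is the careful bookkeeping of this last cancellation, but it is direct once the permutations and factorizations above are in place; hypothesis~(1) is in fact not needed for this derivation, but is natural to assume alongside~(2) in the broader context where the lemma is applied.
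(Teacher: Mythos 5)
Your proof is correct, and it reaches the identity by a genuinely different (and in one respect cleaner) route than the paper. The paper first replaces $x_1,y_1$ by their centered versions $\ring{x}_1,\ring{y}_1$ (using that $1_\A$ is tensor free from everything, \cref{prop-1-TensorFree}), which kills seven of your nine ``diagonal'' contributions and leaves only the two cross terms $\kappa_{(13)(2)(4),(1)(3)(24)}$ and $\kappa_{(1)(3)(24),(13)(2)(4)}$; the price is that it must then re-expand $\varphi(\ring{x}_1\ring{y}_1x_2y_2)$ in terms of moments of the original variables, and that step is exactly where hypothesis~(1) enters. You instead work with the uncentred variables throughout, keep all nine pairs $(\beta_1,\beta_2)\in\{\id_4,(1\,3),(2\,4)\}^2$, factor each one by multiplicativity into a product of an $x$-cumulant and a $y$-cumulant of order $\leq 2$, and collapse the sum. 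I checked the bookkeeping: writing $A_0=\varphi(x_1)\varphi(x_2)$, $A_1=\kappa_{(12),(1)(2)}(x_1,x_2)$, $A_2=\kappa_{(1)(2),(12)}(x_1,x_2)$, $A_{12}=\kappa_{(12),(12)}(x_1,x_2)$ and similarly for $B$, the nine terms sum to $(A_0+A_2)(B_0+B_1)+(A_0+A_1)(B_0+B_2)+A_0B_{12}+A_{12}B_0-A_0B_0$, which is exactly the right-hand side of the claim once the factors $A_0+A_2=\varphi_{(1)(2),(12)}(x_1,x_2)$ etc.\ are recognized. Your observation that hypothesis~(1) is never used is therefore accurate and is a genuine (small) strengthening of the statement; it is a by-product of avoiding the centering step rather than of any extra work.
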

\begin{proof}
Let $\ring{a}:=a-\varphi(a)1_\A$ for $a\in \A$. Since $1_\A$ is tensor free from $\A_1$ and $\A_2$, the condition (2) implies that
\begin{align*}
    \kappa_{\gamma_4,\gamma_4}(x_1,y_1,x_2,y_2) &= \kappa_{\gamma_4,\gamma_4}(\ring{x}_1,\ring{y}_1,{x}_2,{y}_2)\\
    &= \varphi(\ring{x}_1\ring{y}_1 {x}_2 {y}_2)-\sum_{\substack{\alpha_1,\alpha_2\in S_{NC}(\gamma_4)\\ (\alpha_1,\alpha_2)\neq (\gamma_4,\gamma_4)}} \kappa_{\alpha_1,\alpha_2}(\ring{x}_1, \ring{y}_1, {x}_2, {y}_2)\\
    &= \varphi(\ring{x}_1\ring{y}_1 {x}_2 {y}_2)-\kappa_{\substack{(13)(2)(4)\\ (1)(3)(24)}}(\ring{x}_1, \ring{y}_1, {x}_2, {y}_2)-\kappa_{\substack{(1)(3)(24)\\ (13)(2)(4)}}(\ring{x}_1, \ring{y}_1, {x}_2, {y}_2)\\
    &= \varphi(\ring{x}_1\ring{y}_1 {x}_2 {y}_2)-\kappa_{\substack{(12)\\ (1)(2)}}(\ring{x}_1,{x}_2) \kappa_{\substack{(1)(2)\\ (12)}}(\ring{y}_1,{y}_2)- \kappa_{\substack{(1)(2)\\ (12)}}(\ring{x}_1,{x}_2) \kappa_{\substack{(12)\\ (1)(2)}}(\ring{y}_1,{y}_2)\\
    &= \varphi(\ring{x}_1\ring{y}_1 {x}_2 {y}_2)-\varphi_{\substack{(12)\\ (1)(2)}}(\ring{x}_1,{x}_2) \varphi_{\substack{(1)(2)\\ (12)}}(\ring{y}_1,{y}_2)- \varphi_{\substack{(1)(2)\\ (12)}}(\ring{x}_1,{x}_2) \varphi_{\substack{(12)\\ (1)(2)}}(\ring{y}_1,{y}_2).
\end{align*}
On the other hand, the condition (1) implies that
\begin{align*}
    \varphi(\ring{x}_1 \ring{y}_1 x_2 y_2) &=\varphi(x_1y_1x_2y_2)-\varphi(x_1)\varphi(y_1x_2y_2)-\varphi(y_1)\varphi(x_1x_2y_2)+\varphi(x_1)\varphi(y_1)\varphi(x_2 y_2)\\
    &= \varphi(x_1y_1x_2y_2)-\varphi(x_1) \varphi(x_2) \varphi(y_1y_2)-\varphi(y_1)\varphi(y_2)\varphi(x_1x_2)+\varphi(x_1)\varphi(y_1)\varphi(x_2)\varphi(y_2).
\end{align*}

Now combining with the observation that $\varphi_{\underline{\alpha}}(\ring{a},b)=\varphi_{\underline{\alpha}}(a,b)-\varphi(a)\varphi(b)$ for all $a,b\in \A$ and $\underline{\alpha}\in (S_2)^2$, we have
\begin{align*}
    \kappa_{\gamma_4,\gamma_4}(x_1,y_1,x_2,y_2) =&\, \varphi({x}_1 {y}_1x_2y_2)-\phi_{\substack{(1)(2)\\(12)}}({x}_1,x_2)\phi_{\substack{(12)\\(1)(2)}}({y}_1,y_2) - \phi_{\substack{(12)\\(1)(2)}}({x}_1,x_2)\phi_{\substack{(1)(2)\\(12)}}({y}_1,y_2) \\
    & -\varphi(x_1) \varphi(x_2) \big( \varphi(y_1y_2) - \phi_{\substack{(12)\\(1)(2)}}({y}_1,y_2) - \phi_{\substack{(1)(2)\\(12)}}({y}_1,y_2) \big)\\
    & - \varphi(y_1) \varphi(y_2) \big( \varphi(x_1x_2) - \phi_{\substack{(12)\\(1)(2)}}({x}_1,x_2) - \phi_{\substack{(1)(2)\\(12)}}({x}_1,x_2) \big) \\
    & - \varphi(x_1)\varphi(x_2)\varphi(y_1)\varphi(y_2) \\
    =&\, \varphi({x}_1 {y}_1x_2y_2)-\phi_{\substack{(1)(2)\\(12)}}({x}_1,x_2)\phi_{\substack{(12)\\(1)(2)}}({y}_1,y_2) - \phi_{\substack{(12)\\(1)(2)}}({x}_1,x_2)\phi_{\substack{(1)(2)\\(12)}}({y}_1,y_2) \\
    & -\varphi(x_1) \varphi(x_2) \kappa_{\substack{(12)\\(12)}}(y_1,y_2) - \varphi(y_1) \varphi(y_2) \kappa_{\substack{(12)\\(12)}}(x_1,x_2)+\varphi(x_1)\varphi(x_2) \varphi(y_1) \varphi(y_2).
\end{align*}
\end{proof}

\begin{remark}
Let $\A_1$ and $\A_2$ be unital subalgebras of a bipartite tensor probability space $(\A,\varphi,(\varphi_{\alpha_1,\alpha_2}))$.
\begin{enumerate}
    \item Both conditions (1) and (2) in \cref{lem-BipartiteMoment} are satisfied if all the tensor moment conditions \cref{eq-VanishingMoment} hold for $\A_1$ and $\A_2$. For example, if $x\in \A_1$ and $y\in \A_2$, then we have
    \begin{align*}
        \kappa_{\substack{(12)\\(1)(2)}}(x,y &)=\kappa_{\substack{(12)\\(1)(2)}}(x-\varphi(x)1_\A,y)=\varphi_{\substack{(12)\\(1)(2)}}(x-\varphi(x)1_\A,y)-\varphi_{\substack{(1)(2)\\(1)(2)}}(x-\varphi(x)1_\A,y)=0
    \end{align*}
    from \cref{eq-VanishingMoment}. Similarly, we have $\kappa_{\substack{(1)(2)\\(12)}}(x,y)=0$ and
        $$\kappa_{\substack{(12)\\(12)}}(x,y)=\kappa_{\substack{(12)\\(12)}}(x-\varphi(x)1_A,y)=\varphi_{\substack{(12)\\(12)}}(x-\varphi(x)1_A,y)-\kappa_{\substack{(12)\\(1)(2)}}(x,y)-\kappa_{\substack{(1)(2)\\(12)}}(x,y)=0.$$
    One can check the remaining conditions using inductive arguments, which is left to the reader.

    \item Inducing $\kappa_{\gamma_4,\gamma_4}(x,y,x,y)=0$, for all $x\in \A_1$ and $y\in A_2$, from \cref{eq-VanishingMoment}, may require more assumptions on $\A_1$ and $\A_2$ (or on $\A$). For example, we may set $x_i=x-\lambda_i 1_\A$ and $y_i=y- \mu_i 1_\A$, for $i=1,2$ and $\lambda_{1,2},\mu_{1,2}\in \Comp$, and apply \cref{lem-BipartiteMoment} to obtain that
    \begin{align*}
        \kappa_{\gamma_4,\gamma_4}(x,y,x,y) &=\varphi({x}_1 {y}_1x_2y_2)-\phi_{\substack{(1)(2)\\(12)}}({x}_1,x_2)\phi_{\substack{(12)\\(1)(2)}}({y}_1,y_2) - \phi_{\substack{(12)\\(1)(2)}}({x}_1,x_2)\phi_{\substack{(1)(2)\\(12)}}({y}_1,y_2) \\
        & -\varphi(x_1) \varphi(x_2) \kappa_{\substack{(12)\\(12)}}(y) - \varphi(y_1) \varphi(y_2) \kappa_{\substack{(12)\\(12)}}(x)+\varphi(x_1)\varphi(x_2) \varphi(y_1) \varphi(y_2).
    \end{align*}
    In particular, one has $\kappa_{\gamma_4,\gamma_4}(x,y,x,y)=0$ if $ \varphi(x_1)=\varphi(y_2)=0$ and $\varphi_{\substack{(1\,2)\\(1)(2)}}(x_1,x_2)=\varphi_{\substack{(1\,2)\\(1)(2)}}(y_1,y_2)=0$, by \cref{ex-VanishingMoment} (1). However, this approach is not valid: one should take $\lambda_1=\varphi(x)$ in order to get $\varphi(x_1)=0$, but in that case
        $$\varphi_{\substack{(12)\\(1)(2)}}(x_1,x_2)=\varphi_{\substack{(12)\\(1)(2)}}(x)-\varphi(x)^2$$
    regardless of $\lambda_2\in \Comp$. This means that we cannot always choose $\lambda_1,\lambda_2$ such that $\varphi(x_1)=0$ and $\varphi_{\substack{(12)\\(1)(2)}}(x_1,x_2)=0$. In conclusion, in this specific situation, one cannot derive the vanishing of the mixed cumulant $\kappa_{\gamma_4,\gamma_4}(x,y,x,y)$ from the moment conditions of \cref{prop-TensorFreeVanishingMoment}, using the method from \cref{lem-BipartiteMoment}.
\end{enumerate}
\end{remark}

\bigskip

\subsection{Classical independence from tensor freeness}

As a final remark, let us point out that the classical notion of independence can be captured by tensor freeness in a very natural way. Classical independence is also known as tensor independence in the literature, in connection to the tensor product of classical probability spaces. In this work, to avoid confusion with our notions of tensor free independence, we use the term ``classical independence''. Recall that unital subalgebras $(\A_i)_{i\in I}$ of a noncommutative probability space $(\A,\varphi)$  are called \emph{classically independent} if the subalgebras commute with each other and
    $$\varphi(a_1\cdots a_p)=\varphi(a_1)\cdots \varphi(a_p)$$
whenever $a_1,\ldots, a_p$ are taken from mutually distinct algebras $\A_{i_1}, \ldots, \A_{i_p}$, respectively. Note that the classical independence precisely describes the probabilistic independence of random variables in the $*$-probability space $(L^{\infty-}(\mathbb{P}),\E)$.

\begin{proposition} \label{prop-TensorIndep}
Let $(\A,\varphi)$ be a noncommutative probability space with a tracial state $\varphi$ and $\A_1,\ldots, \A_L$ be classically independent subalgebras of $\A$. Define $\tilde{\A}_i$ as an embedding of $\A_i$ into $i$-th tensor component of ${\A}^{\otimes L}$, i.e.,
    $$\tilde{\A}_i:=1_{\A}\otimes \cdots \otimes \A_i\otimes \cdots \otimes 1_{\A}.$$
Then $(\tilde{\A}_i)_{i\in [L]}$ are tensor free subalgebras of the $L$-partite tensor probability space $\big(\A^{\otimes L},\varphi^{\otimes L}, (\bigotimes_{s=1}^L \varphi_{\alpha_s})\big)$.
{
Furthermore, $(\tilde{\A}_i)_{i\in [L]}$ has the same joint distribution with $(\A_i)_{i\in [L]}$ in the following sense:
    $$\text{For } {x}_j\in {\A}_{i_{j}} \text{ with $j\in [p]$},  \quad \phi^{\otimes L}(\tilde{x}_1, \ldots, \tilde{x}_p) = \phi(x_1, \ldots, x_p),$$
where each $\tilde{x}_j$ denotes the embedding of $x_j$ according to the identification $\A_{i_j}\cong \tilde{A}_{i_j}$.
}

\end{proposition}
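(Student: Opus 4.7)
My plan is to deduce the tensor freeness part from \cref{prop:tensor-free-tensor-product} via a careful choice of the constituent probability spaces and subalgebras. Specifically, I take the $L$-partite tensor product construction with each factor equal to the ambient space itself: set $(\A^{(s)}, \varphi^{(s)}) := (\A, \varphi)$ for every $s \in [L]$. Inside each copy, I define the ``diagonal'' choice of subalgebras
$$\A^{(s)}_i := \begin{cases} \A_s & \text{ if } i=s \\ \mathbb{C} \cdot 1_\A & \text{ if } i \neq s. \end{cases}$$
For each fixed $s$, the family $\A^{(s)}_1, \ldots, \A^{(s)}_L$ consists of $\A_s$ together with $L-1$ copies of $\mathbb{C} \cdot 1_\A$; these are trivially freely independent in $(\A, \varphi)$, since $\mathbb{C} \cdot 1_\A$ is free from every subalgebra. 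The tensor product recovers exactly the embedded subalgebras: $\bigotimes_{s=1}^L \A^{(s)}_i = 1_\A \otimes \cdots \otimes \A_i \otimes \cdots \otimes 1_\A = \tilde \A_i$. Hence \cref{prop:tensor-free-tensor-product} directly yields the tensor free independence of $\tilde \A_1, \ldots, \tilde \A_L$ inside $(\A^{\otimes L}, \varphi^{\otimes L}, (\bigotimes_{s=1}^L \varphi_{\alpha_s}))$.

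For the identification of joint distributions, I would argue by a direct computation. Given $x_j \in \A_{i_j}$ with embeddings $\tilde x_j = 1_\A \otimes \cdots \otimes x_j \otimes \cdots \otimes 1_\A$ (with $x_j$ in the $i_j$-th slot), the product in the tensor algebra factorizes slotwise:
$$\tilde x_1 \tilde x_2 \cdots \tilde x_p = \bigotimes_{s=1}^L \Biggl( \vec{\prod}_{j \in [p] : i_j = s} x_j \Biggr),$$
where the arrow indicates that the product is taken in the order inherited from $[p]$. Applying $\varphi^{\otimes L}$ produces
$$\varphi^{\otimes L}(\tilde x_1 \tilde x_2 \cdots \tilde x_p) = \prod_{s=1}^L \varphi \Biggl( \vec{\prod}_{j : i_j = s} x_j \Biggr).$$
On the other hand, classical independence of the $\A_i$'s implies that elements belonging to distinct $\A_{i_j}$'s commute, so that the original product $x_1 x_2 \cdots x_p$ can be reorganized (without changing the value) into $\prod_{s=1}^L \vec{\prod}_{j : i_j = s} x_j$. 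The factorization property characterizing classical independence then gives
$$\varphi(x_1 x_2 \cdots x_p) = \prod_{s=1}^L \varphi \Biggl( \vec{\prod}_{j : i_j = s} x_j \Biggr),$$
matching the previous expression.

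There is no serious obstacle here: once the correct assignment $\A^{(s)}_i$ is identified, the tensor freeness is an immediate consequence of \cref{prop:tensor-free-tensor-product}, and the joint-moment identity reduces to the slotwise factorization of the tensor product combined with the factorizing and commuting properties that classical independence supplies on the original algebras.
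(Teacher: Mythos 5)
Your proposal is correct and follows essentially the same route as the paper, which simply invokes \cref{prop:tensor-free-tensor-product} for the tensor freeness and declares the joint-distribution identity clear by definition; you have merely made explicit the diagonal choice $\A^{(s)}_i = \A_s$ for $i=s$ and $\mathbb{C}\cdot 1_\A$ otherwise, and the slotwise factorization plus the commuting/factorizing properties of classical independence. Both verifications are valid as written.
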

\begin{proof}
Tensor freeness of $(\tilde{\A}_i)_{i\in [L]}$ follows from \cref{prop:tensor-free-tensor-product}, and the remaining assertions are also clear by definition.
\end{proof}

By combining with several properties of tensor free cumulants, we can recover the free cumulants of the classically independent variables derived in \cite[Theorem 1.2]{BD14}.

\begin{corollary}
Let $x_1,\ldots, x_L$ be a family of classically independent elements in a noncommutative probability space $(\A,\varphi)$. Then one has for $p\geq 1$,
    $$\tilde{\kappa}_p(x_1\cdots x_L)=\sum_{\substack{\pi_1,\ldots, \pi_L\in NC(p),\\ \bigvee_{NC}\pi_i=1_p}}\prod_{i=1}^L \tilde{\kappa}_{\pi_i}(x_i).$$
\end{corollary}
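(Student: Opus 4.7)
The plan is to reduce this classical-independence statement to the tensor-free setting, where the tensor free cumulants factorize trivially on simple tensors. Starting from the classically independent family $x_1, \ldots, x_L \in (\A, \varphi)$, I would invoke \cref{prop-TensorIndep} to embed each $x_i$ as $\tilde x_i := 1_{\A} \otimes \cdots \otimes x_i \otimes \cdots \otimes 1_{\A} \in \A^{\otimes L}$, obtaining the tensor free family $(\tilde{\A}_i)_{i \in [L]}$ inside the $L$-partite tensor probability space $\bigl( \A^{\otimes L}, \varphi^{\otimes L}, (\bigotimes_s \varphi_{\alpha_s}) \bigr)$, with distributions preserved.

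Next, I would exploit the fact that the $x_i$'s pairwise commute inside $\A$ (classical independence of subalgebras implies commutation through the trace, but here the assumption of a classically independent family is usually stated with the commutation built in). Hence $y := x_1 \cdots x_L \in \A$ and $\tilde y := \tilde x_1 \cdots \tilde x_L = x_1 \otimes \cdots \otimes x_L \in \A^{\otimes L}$ have the same $*$-distribution, and in particular the same (non-tensor) free cumulants: $\tilde \kappa_p(y) = \tilde \kappa_p(\tilde y)$.

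The third step is the core of the argument: apply \cref{prop-cumulant-from-tensorcumulant} to $\tilde y$ with $\alpha = \gamma_p$, the full cycle. This gives
$$\tilde \kappa_p(\tilde y) = \tilde \kappa_{\gamma_p}(\tilde y, \ldots, \tilde y) = \sum_{\substack{\underline{\beta} \in S_{NC}(\gamma_p)^L \\ \beta_1 \vee \cdots \vee \beta_L = \gamma_p}} \kappa_{\underline{\beta}}(\tilde y, \ldots, \tilde y).$$
Since $\tilde y$ is a simple tensor and the tensor probability structure on $\A^{\otimes L}$ is the product one from \cref{ex:tensor-product-of-ncps}, \cref{lem-TensorCumulantProd} (formula \eqref{eq-TensorFreeCumulantsFact}) yields the factorization $\kappa_{\underline{\beta}}(\tilde y, \ldots, \tilde y) = \prod_{i=1}^L \tilde \kappa_{\beta_i}(x_i)$.

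Finally, I would translate the indexing set via the canonical lattice isomorphism $S_{NC}(\gamma_p) \cong NC(p)$ of \cref{prop:lattice-structure}, under which $\gamma_p$ corresponds to $1_p$ and the join on $S_{NC}(\gamma_p)$ corresponds to $\vee_{NC}$ on $NC(p)$. Rewriting the sum in terms of non-crossing partitions $\pi_i = \Pi(\beta_i)$ with the constraint $\pi_1 \vee_{NC} \cdots \vee_{NC} \pi_L = 1_p$ yields exactly the claimed identity. No serious obstacle arises: everything is an assembly of already-established facts, with the only subtle point being the explicit verification that tensor freeness of $(\tilde{\A}_i)$ combined with the tensor product structure reproduces precisely the classical-independence formula via \cref{prop-cumulant-from-tensorcumulant,lem-TensorCumulantProd}.
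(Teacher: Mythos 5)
Your proposal is correct and follows essentially the same route as the paper: embed via \cref{prop-TensorIndep}, apply \cref{prop-cumulant-from-tensorcumulant} with $\alpha=\gamma_p$, factorize via \cref{lem-TensorCumulantProd}, and translate through $S_{NC}(\gamma_p)\cong NC(p)$. The only point the paper makes explicit that you omit is that one may assume $\varphi$ is tracial (by restricting to the commutative algebra generated by the $x_i$), which is needed to invoke \cref{prop-TensorIndep}.
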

\begin{proof}
We may assume that $\varphi$ is tracial by replacing $\A$ with the unital algebra generated by $x_1,\ldots, x_L$ if necessary. As in \cref{prop-TensorIndep}, let $\tilde{x}_i:=1_{\A}\otimes \cdots \otimes x_i \otimes \cdots \otimes 1_{\A}$ be the embedding of $x_i$ into $i$-th tensor component of $\A^{\otimes L}$. Then $(\tilde{x}_i)_{i\in [L]}$ is a tensor free family in $\big(\A^{\otimes L},\varphi^{\otimes L}, (\bigotimes_{s=1}^r \varphi_{\alpha_s})\big)$ and has the same joint distribution with $(x_i)_{i\in [L]}$. Therefore, \cref{prop-cumulant-from-tensorcumulant,lem-TensorCumulantProd} imply that
\begin{align*}
    \kappa_p(x_1 \cdots x_L)&= \kappa_p (\tilde{x}_1 \cdots \tilde{x}_L)
    =\sum_{\substack{\underline{\alpha}\in S_{NC}(\gamma_p)^r,\\ \bigvee_{NC}\alpha_i=\gamma_p}}\kappa_{\underline{\alpha}}(x_1\otimes \cdots \otimes x_L)
    =\sum_{\substack{\pi_1,\ldots, \pi_L\in NC(p),\\ \bigvee_{NC}\pi_i=1_p}}\prod_{i=1}^L \tilde{\kappa}_{\pi_i}(x_i),
\end{align*}
where we used the correspondence $S_{NC}(\gamma_p)\cong NC(p)$ in the last equality.
\end{proof}

\section{Asymptotic tensor freeness of globally invariant random matrices} \label{sec-UItensorfree}

In the next three sections, we present a wide range of examples of random matrices exhibiting asymptotic behaviors that can be captured by the notion of \emph{tensor free independence} introduced in \cref{sec:tensor-free-independence}. Let us keep the same notations from \cref{sec-TensorNCPS}: we take $r$ sequences $d_{s,N}$ ($s=1,\ldots, r$) of positive integers which increase to $\infty$ as $N\to \infty$, and let $D_N:=d_{1,N}\cdots d_{r,N}$. Recall that a family $\W_N$ of $D_N\times D_N$ random matrices \textit{converges in tensor distribution} (in expectation) if for every $p\geq 1$ and every $p$-tuple of permutations $\underline{\alpha}\in (S_p)^r$, the limit 
    $$\lim_{N\to \infty} \E\big[\tr_{\underline{\alpha}}(X_{1,N},\ldots, X_{p,N})\big]=\lim_{N\to \infty} \frac{1}{d_{1,N}^{\#\alpha_1}\cdots d_{r,N}^{\#\alpha_s}} \E\big[\Tr_{\underline{\alpha}}(X_{1,N},\ldots, X_{p,N})\big]$$
exists for $X_{1,N},\ldots, X_{p,N}\in \W_N$. We say that families $\W_N^{(1)},\ldots, \W_N^{(L)}$ of $D_N\times D_N$ random matrices are \textit{asymptotically tensor freely independent} if there exists tensor free families $\W_1,\ldots, \W_L$ in an $r$-partite tensor probability space $(\A,\varphi,(\varphi_{\underline{\alpha}}))$ such that $\W_N^{(1)},\ldots, \W_N^{(L)}\to \W_1,\ldots, \W_L$ jointly in tensor distribution. Note that in this case, the family $\bigcup_{i=1}^L \W_N^{(i)}$ necessarily satisfies the tensor factorization property \cref{eq-condition-TensorFact} as in the discussion before \cref{lem-TensorLimitRealization}.

\medskip

Prior to our general results (\cref{thm-locui-tensorfree,thm-LocOI-tensorfree,thm-indepTranspose}) on LUI and LOI matrices, in this section, we first consider simpler models of random matrices: multipartite random matrices which are \textit{globally} unitary / orthogonal invariant. We show that within these classes, convergence in distribution is equivalent to convergence in \textit{tensor} distribution and asymptotic freeness is equivalent to asymptotic \textit{tensor} freeness. As a corollary, independent UI / OI random matrices automatically shows asymptotic tensor free independence, thanks to \cref{thm-UIasympfree,thm-OIAsympFree}. We remark that the similar analogue has been shown in {\cite[Theorem 1.1]{CDM24}}: for UI matrices, convergence in $*$-distribution is equivalent to convergence in \textit{traffic} distribution, and asymptotic $*$-freeness is equivalent to asymptotic \textit{traffic} independence.

\medskip

From now, we frequently drop out $N$ and simply write $X_1,\ldots, X_p\in \W_N$ (or $\W$) and $D=d_1\cdots d_r$, etc. Let us begin by providing the formula of joint tensor distribution between UI / OI random matrices, using the Weingarten calculi.

\begin{lemma} \label{lem-UIOITensorMoments}
Let $\W$ be a family of $D\times D$ random matrices, and let $X_1,\ldots, X_p\in \W$.
\begin{enumerate}
    \item If $\W$ is UI, then for $\underline{\alpha}\in (S_p)^r$,
    \begin{equation} \label{eq-UITensorInv}
        \E\left[\Tr_{\underline{\alpha}}(X_1,\ldots, X_p)\right]=\sum_{\sigma,\tau\in S_p} \E\left[\Tr_{\tau}(X_1,\ldots, X_p)\right] \left(\prod_{s=1}^r d_s^{\#(\sigma^{-1}\alpha_s)}\right) \Wg_{D}^{(U)}(\tau^{-1}\sigma)
    \end{equation}

    \item If $\W$ is OI, then for $\underline{\alpha}\in (S_p)^r$,
    \begin{equation} \label{eq-OITensorInv}
        \E\left[\Tr_{\underline{\alpha}}(X_1,\ldots, X_p)\right]=\sum_{\pi,\rho\in \mathcal{P}_2(\pm p)} \E\left[\Tr_{\rho\vee\delta}(X_1,\ldots, X_p)\right] \left(\prod_{s=1}^r d_s^{\#(\pi \vee \alpha_s\delta\alpha_s^{-1})}\right) \Wg_{D}^{(O)}(\rho,\pi)
    \end{equation}
\end{enumerate}
\end{lemma}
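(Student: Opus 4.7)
The plan is to exploit the global unitary, resp.~orthogonal, invariance of $\W$ to insert a Haar-distributed matrix inside the expectation and then integrate it out using the Weingarten calculus from \cref{thm:Weingarten,thm-GraphWeingarten}. Both parts will follow by carefully tracking how the Kronecker deltas in the definition \eqref{eq:def-trace-invariant} of $\Tr_{\underline{\alpha}}$ interact with the permutations, resp.~pairings, produced by the Weingarten expansion. The whole argument can equivalently be read diagrammatically: place a conjugating box $U$ (resp.~$O$) on each $X_k$ leg of the tensor diagram for $\Tr_{\underline{\alpha}}$ and remove them via the graphical rules of \cref{thm-GraphWeingarten}.

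For part (1), by UI and independence, $\E[\Tr_{\underline{\alpha}}(X_1,\ldots,X_p)] = \E_\W \E_U[\Tr_{\underline{\alpha}}(UX_1U^*,\ldots,UX_pU^*)]$, with $U \in \mathcal U_D$ Haar-distributed and independent of $\W$. Expanding each entry $(UX_kU^*)_{\underline{i}^{(k)},\underline{j}^{(k)}}$ into a sum over auxiliary multi-indices $\underline{a}^{(k)},\underline{b}^{(k)} \in [d_1] \times \cdots \times [d_r]$ and applying \eqref{eq:UWg} to the $2p$ factors of $U,\overline U$ produces a sum over $(\sigma,\tau) \in S_p^2$ with Weingarten weight $\Wg_D^{(U)}(\tau^{-1}\sigma)$ and index constraints $\underline{j}^{(k)} = \underline{i}^{(\sigma(k))}$ and $\underline{b}^{(k)} = \underline{a}^{(\tau(k))}$. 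Substituting the first of these into the $\Tr_{\underline{\alpha}}$-constraint $i_s^{(\alpha_s(k))} = j_s^{(k)}$ forces $i_s$ to be constant on the cycles of $\alpha_s\sigma^{-1}$ for each $s \in [r]$, so the $\underline{i}$-sum contributes $\prod_{s=1}^r d_s^{\#(\alpha_s\sigma^{-1})} = \prod_{s=1}^r d_s^{\#(\sigma^{-1}\alpha_s)}$. The residual $\underline{a}$-sum is exactly $\Tr_\tau(X_1,\ldots,X_p)$ by \eqref{eq-SingleTraceInv}, giving the announced formula after taking $\E_\W$.

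For part (2), the argument runs in parallel using the OI assumption to write $\E[\Tr_{\underline{\alpha}}(X_1,\ldots,X_p)] = \E[\Tr_{\underline{\alpha}}(OX_1O^\top,\ldots,OX_pO^\top)]$ for $O \in \mathcal O_D$ Haar and independent of $\W$. Each matrix now contributes two $O$-boxes, for $2p$ total, which we label by $\ell \in [\pm p]$: $+k$ for the left $O$ attached to the outer index $\underline{i}^{(k)}$ and inner index $\underline{a}^{(k)}$, and $-k$ for the right $O$ (coming from the $O^\top$) attached to $\underline{j}^{(k)},\underline{b}^{(k)}$. The orthogonal Weingarten formula \eqref{eq:OWg} then yields a sum over $(\pi,\rho) \in \mathcal P_2(\pm p)^2$ weighted by $\Wg_D^{(O)}(\pi,\rho)$. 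On the outer legs, the $\Tr_{\underline{\alpha}}$-constraint $i_s^{(\alpha_s(k))} = j_s^{(k)}$ identifies precisely the indices paired by $\alpha_s\delta\alpha_s^{-1}$ on $[\pm p]$; combining this pairing with the Weingarten pairing $\pi$ forces the $s$-th coordinate of the outer multi-index to be constant on the blocks of the joined partition $\pi \vee \alpha_s\delta\alpha_s^{-1}$, producing the factor $\prod_{s=1}^r d_s^{\#(\pi \vee \alpha_s\delta\alpha_s^{-1})}$. The inner-index sum under the pairing $\rho$ is, directly from the definition \eqref{eq-OITraceInv}, exactly $\Tr_{\rho \vee \delta}(X_1,\ldots,X_p)$, which finishes the derivation.

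The only genuine difficulty is the bookkeeping in case (2): one must align the $\pm p$-labeling convention, the orientation of the Weingarten pairings on the outer vs.~inner legs, and the defining formula for $\Tr_{\pi\vee\delta}$, and verify that combining the trace-invariant deltas with the Weingarten deltas yields precisely the partition join $\pi \vee \alpha_s\delta\alpha_s^{-1}$ on each tensor slot $s$. Adopting the graphical formulation of \cref{thm-GraphWeingarten} makes this considerably cleaner than an index-by-index derivation.
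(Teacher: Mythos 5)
Your proposal is correct and follows essentially the same route as the paper: both insert a Haar unitary (resp.\ orthogonal) matrix via the invariance of $\W$, condition on $\W$, and integrate it out with the Weingarten formula of \cref{thm:Weingarten,thm-GraphWeingarten}, with $\sigma$ (resp.\ $\pi$) producing the loop factors $\prod_s d_s^{\#(\sigma^{-1}\alpha_s)}$ (resp.\ $\prod_s d_s^{\#(\pi\vee\alpha_s\delta\alpha_s^{-1})}$) and $\tau$ (resp.\ $\rho$) reconstituting $\Tr_\tau$ (resp.\ $\Tr_{\rho\vee\delta}$). The only difference is presentational: you carry out the index bookkeeping explicitly where the paper invokes the graphical removal rules and a figure, and your accounting of the deltas is accurate in both cases.
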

\begin{proof}
(1) Let $U$ be a $D\times D$ Haar random unitary matrix independent from $\W$. Then the unitary invariance of $\W$ says that the joint probability distribution of $(UX_1 U^*,\ldots, UX_p U^*)$ is equal to $(X_1,\ldots, X_p)$. In particular, we have
    $$\E[\Tr_{\underline{\alpha}}(X_1,\ldots,X_p)]= \E[\Tr_{\underline{\alpha}}(U X_1 U^*,\ldots,U X_p U^*)]=\E_{\W}\Big[\E_U\big[\Tr_{\underline{\alpha}}(U X_1 U^*,\ldots,U X_p U^*)\big| \W\big]\Big].$$
We now apply the (graphical) unitary Weingarten calculus \cref{thm:Weingarten} to the inner expectation to obtain (see \cref{fig:Wg-unitarily-invariant}): almost surely,
    $$\E_U\big[\Tr_{\underline{\alpha}}(U X_1 U^*,\ldots,U X_p U^*)\big| \W\big]=\sum_{\sigma,\tau\in S_p} \Tr_{\tau}(X_1,\ldots, X_p)\left(\prod_{s=1}^r d_s^{\#(\sigma^{-1}\alpha_s)}\right) \Wg_{D}^{(U)}(\tau^{-1}\sigma).$$

\begin{figure}[!htb]
    \centering
    \includegraphics[width=0.85 \linewidth]{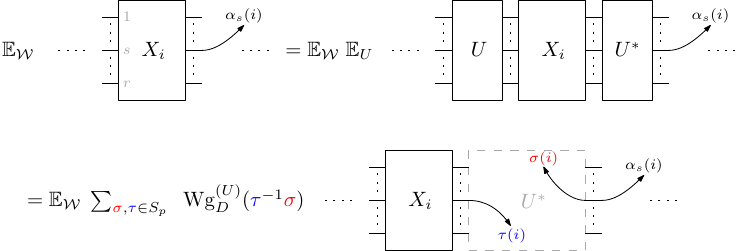}
    \caption{Using the unitary invariance of the family $\mathcal W$, one can replace each variable $X_i$ by $UX_iU^*$. Performing the Weingarten integration with respect to $U$ results in a sum over diagrams indexed by two permutations.}
    \label{fig:Wg-unitarily-invariant}
\end{figure}
    
Therefore, taking the expectation $\E_\W$ gives \cref{eq-UITensorInv}.

(2) The proof is analogous to (1), the only difference residing in the application of the graphical orthogonal Weingarten formula from \cref{thm-GraphWeingarten} instead of the unitary one, see \cref{fig:Wg-orthogonal-invariant}.

\begin{figure}[!htb]
    \centering
    \includegraphics[width=0.95 \linewidth]{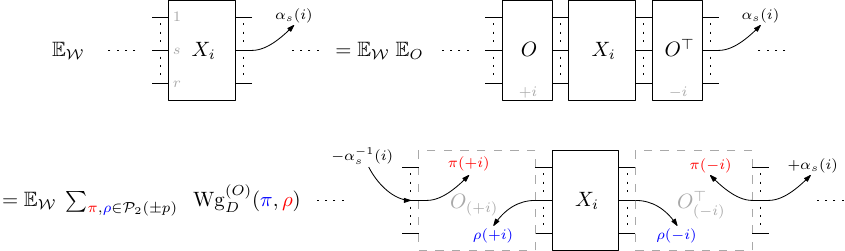}
    \caption{Using the orthogonal invariance of the family $\mathcal W$, one can replace each variable $X_i$ by $OX_iO^\top$. Performing the Weingarten integration with respect to $O$ results in a sum over diagrams indexed by two pairings that act on $2p$ points, $[\pm p]$.}
    \label{fig:Wg-orthogonal-invariant}
\end{figure}

\end{proof}

Let us begin first to investigate the asymptotic tensor distribution limit of UI random matrices.

\begin{proposition} \label{prop-ui-tensormoment}
Let $\mathcal{W}_N$ be a family of $D_N\times D_N$ UI matrices satisfying the factorization property (\cref{eq-condition-Fact}), and suppose $\W_N$ converges in distribution as $N\to \infty$. Then $\W_N$ satisfies the tensor factorization property (\cref{eq-condition-TensorFact}) and converges in tensor distribution. Furthermore, if $\W$ is the corresponding tensor distribution limit, then for $\underline{\alpha}\in (S_p)^r$ and $x_1,\ldots, x_p\in \mathcal{W}$, we have

\begin{align}
    \varphi_{\underline{\alpha}}(x_1,\ldots, x_p)&=\sum_{\sigma\in \bigcap_{s=1}^r S_{NC}(\alpha_s)} \tilde{\kappa}_{\sigma}(x_1,\ldots, x_p), \label{eq-ui-tensormoment}\\
    \kappa_{\underline{\alpha}}(x_1,\ldots, x_p)&=\begin{cases}
        \tilde{\kappa}_{\sigma}(x_1,\ldots, x_p) & \text{if $\alpha_s\equiv \sigma$ for some $\sigma\in S_p$,} \\
        0 & \text{otherwise.} \label{eq-ui-tensorcumulant}
    \end{cases}
\end{align}
\end{proposition}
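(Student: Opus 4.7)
My plan is to analyze the unitary Weingarten formula \cref{eq-UITensorInv} asymptotically as $N\to\infty$. First I will divide both sides of \cref{eq-UITensorInv} by $\prod_s d_s^{\#\alpha_s}$ to obtain normalized trace invariants, and write $\E[\Tr_\tau(X_1,\ldots,X_p)] = D^{\#\tau} \E[\tr_\tau(X_1,\ldots,X_p)]$. The factorization property and the distribution-convergence hypothesis will yield $\E[\tr_\tau(X_1,\ldots,X_p)] = \varphi_\tau(x_1,\ldots,x_p) + o(1)$, where $\varphi_\tau$ is multiplicative over the cycles of $\tau$. Substituting the Weingarten asymptotics \cref{eq-WeinAsymp1}, each summand $(\sigma,\tau)\in S_p\times S_p$ will carry a prefactor $\prod_{s=1}^r d_s^{E_s}$ with exponent
\begin{equation*}
E_s \;=\; |\alpha_s| - |\sigma^{-1}\alpha_s| - |\tau| - |\tau^{-1}\sigma|.
\end{equation*}
Combining the triangle inequalities $|\alpha_s| \leq |\sigma|+|\sigma^{-1}\alpha_s|$ and $|\sigma|\leq |\tau|+|\tau^{-1}\sigma|$ yields $E_s \leq 0$, with all equalities holding simultaneously if and only if $\sigma \leq \alpha_s$ for every $s\in [r]$ and $\tau \leq \sigma$.

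Consequently, the only surviving contributions as $N \to \infty$ will come from pairs with $\sigma\in \bigcap_{s=1}^r S_{NC}(\alpha_s)$ and $\tau \in S_{NC}(\sigma)$, each contributing $\Mob(\tau^{-1}\sigma)\,\varphi_\tau(x_1,\ldots,x_p)$. Summing $\tau$ over $S_{NC}(\sigma)$ and applying the inverted free moment-cumulant formula \cref{eq-FreeMomentCumulant2} will collapse this to $\tilde{\kappa}_\sigma(x_1,\ldots,x_p)$, establishing \cref{eq-ui-tensormoment}. The same analysis applied to $\underline{\alpha}\sqcup\underline{\beta}$ automatically gives the tensor factorization property: a surviving $\sigma \in \bigcap_s S_{NC}(\alpha_s\sqcup\beta_s)$ decomposes uniquely as $\sigma_1\sqcup\sigma_2$ with $\sigma_1 \in \bigcap_s S_{NC}(\alpha_s)$ and $\sigma_2\in \bigcap_s S_{NC}(\beta_s)$, so the free-cumulant multiplicativity $\tilde{\kappa}_{\sigma_1\sqcup\sigma_2} = \tilde{\kappa}_{\sigma_1}\tilde{\kappa}_{\sigma_2}$ factorizes the limit. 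This verifies the axioms of \cref{def:tensor-ncps} on the limit algebra, and \cref{lem-TensorLimitRealization} then identifies the limit as a proper $r$-partite tensor probability space.

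Finally, \cref{eq-ui-tensorcumulant} will follow by a direct Möbius inversion: substituting \cref{eq-ui-tensormoment} into \cref{eq:tensor-free-cumulant-moment} and swapping the order of summation gives
\begin{equation*}
\kappa_{\underline{\alpha}}(x_1,\ldots,x_p) \;=\; \sum_{\sigma} \tilde{\kappa}_\sigma(x_1,\ldots,x_p) \prod_{s=1}^r \Bigl(\sum_{\sigma\leq \beta_s \leq \alpha_s} \Mob(\beta_s^{-1}\alpha_s)\Bigr) \;=\; \sum_\sigma \tilde{\kappa}_\sigma(x_1,\ldots,x_p) \prod_{s=1}^r \delta_{\sigma, \alpha_s},
\end{equation*}
which is nonzero only when all the $\alpha_s$ coincide with a common $\sigma$. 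The main obstacle I foresee is the simultaneous control of the $r$ independent exponents $E_s$: because the dimensions $d_{s,N}$ may grow at incommensurate rates, one cannot collapse them into a single exponent in $D$, and each $E_s$ must independently saturate both triangle inequalities for the corresponding $(\sigma,\tau)$-pair to contribute. This double saturation is the combinatorial heart of the argument and is precisely what singles out the intersection $\bigcap_s S_{NC}(\alpha_s)$ rather than any individual $S_{NC}(\alpha_s)$.
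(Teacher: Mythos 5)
Your proposal is correct and follows essentially the same route as the paper's proof: expand via the Weingarten formula of \cref{lem-UIOITensorMoments}, extract the exponent $E_s=-(|\tau|+|\tau^{-1}\sigma|+|\sigma^{-1}\alpha_s|-|\alpha_s|)\leq 0$ for each tensor factor, note that only terms with $\tau\leq\sigma\leq\alpha_s$ for all $s$ survive (each $E_s$ must vanish separately, which is exactly how the paper handles possibly incommensurate $d_{s,N}$), and collapse the $\tau$-sum via \cref{eq-FreeMomentCumulant2}. Your explicit M\"obius inversion for \cref{eq-ui-tensorcumulant} and the disjoint-decomposition argument for the tensor factorization property are just unpacked versions of the paper's appeal to uniqueness of the moment-cumulant inversion and to multiplicativity of $\tilde\kappa_\sigma$, so there is no substantive difference.
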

\begin{proof}
Let $X_j=X_{j,N}\in \W_N$ ($j=1,\ldots, p$), and let $(x_1,\ldots,x_p)\in (\A,\varphi)$ be the corresponding joint distribution limit of $(X_1,\ldots, X_p)$. Then for $\underline{\alpha}\in (S_p)^r$ we can apply \cref{eq-UITensorInv} and the estimation of the unitary Weingarten function (\cref{eq-WeinAsymp1}) to have
\begin{align*}
    \E[\tr_{\underline{\alpha}}(X_1,\ldots, X_p)]
    &= \sum_{\sigma,\tau\in S_p} \E\left[\tr_{\tau}(X_1,\ldots, X_p)\right]  \left(\prod_{s=1}^r d_s^{\#\tau-\#\alpha_s}\,d_s^{\#(\sigma^{-1}\alpha_s)}\right) D_N^{-p-|\tau^{-1}\sigma|}(\Mob(\tau^{-1}\sigma)+o(1))\\
    &= \sum_{\sigma,\tau\in S_p} \E\left[\tr_{\tau}(X_1,\ldots, X_p)\right] \left(\prod_{s=1}^r d_s^{-(|\tau|+|\tau^{-1}\sigma|+|\sigma^{-1}\alpha_s|-|\alpha_s|)}\right) (\Mob(\tau^{-1}\sigma)+o(1))
\end{align*}
(recall that $D_N=d_1\cdots d_r$). Note that for each $s\in [r]$, the inequality $|\tau|+|\tau^{-1}\sigma|+|\sigma^{-1}\alpha_s|-|\alpha_s|\geq 0$ always holds, and it attains 0 if and only if $\tau\leq \sigma\leq \alpha_s$. Furthermore, the factorization property implies that $\E\left[\tr_{\tau}(X_1,\ldots, X_p)\right]\to \varphi_{\tau}(x_1,\ldots, x_p)$ for arbitary $\tau\in S_p$, so we have
\begin{align*}
    \lim_{N\to \infty}\E\big[\tr_{\underline{\alpha}}(X_1,\ldots, X_p)\big]&=\sum_{\sigma,\tau\in S_p}\left(\prod_{s=1}^r\mathds{1}_{\tau \leq \sigma \leq \alpha_s}\right) \varphi_{\tau}(x_1,\ldots, x_p)\Mob(\tau^{-1}\sigma) \\
    &= \sum_{\sigma\in \bigcap_{s=1}^r S_{NC}(\alpha_s)} \sum_{\tau\leq \sigma}\varphi_{\tau}(x_1,\ldots, x_p)\Mob(\tau^{-1}\sigma)\\
    &=\sum_{\sigma\in \bigcap_{s=1}^r S_{NC}(\alpha_s)} \tilde{\kappa}_{\sigma}(x_1,\ldots, x_p).
\end{align*}
Now if we define the family of complex numbers $(\kappa^{(0)}_{\underline{\alpha}})_{\underline{\alpha}\in (S_p)^r}$ by the same as in \cref{eq-ui-tensorcumulant}, then the above computation is equivalent to
    $$\lim_{N\to \infty}\E[\tr_{\underline{\alpha}}(X_1,\ldots, X_p)]= \sum_{\underline{\sigma} \in S_{NC}(\underline{\alpha})} \kappa^{(0)}_{\underline{\sigma}}(x_1,\ldots, x_p) \text{ for all } \underline{\alpha}\in (S_p)^r.$$
This suggests that $\W_N$ converges in tensor distribution, and if we call $(x_1,\ldots, x_p)$ again the corresponding tensor distribution limit of $(X_1,\ldots, X_p)$, then $\kappa_{\underline{\alpha}}(x_1,\ldots, x_p)=\kappa^{(0)}_{\underline{\alpha}}(x_1,\ldots, x_p)$ by M\"{o}bius inversion. Finally, the tensor factorization property follows from the multiplicativity of $\tilde{\kappa}_{\sigma}$ and the discussion above \cref{lem-TensorLimitRealization}.
\end{proof}

\begin{theorem} \label{thm-ui-tensorfree}
Let $\W_1,\ldots, \W_L$ be families of $D_N\times D_N$ random matrix ensembles such that $\bigcup_{i=1}^L \W_N^{(i)}$ is UI and satisfies the factorization property \cref{eq-condition-Fact}. Then $(\W_N^{(i)})_{i\in [L]}$ are asymptotically free if and only if asymptotically tensor free.
\end{theorem}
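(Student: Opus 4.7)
The key observation is that, under the hypotheses (UI and factorization), \cref{prop-ui-tensormoment} already tells us exactly what the joint tensor distribution limit looks like in terms of the ordinary distribution limit. In particular, convergence in distribution of $\bigcup_i \W_N^{(i)}$ is equivalent to convergence in tensor distribution of the same family, and the resulting tensor free cumulants of the limit satisfy \cref{eq-ui-tensorcumulant}: $\kappa_{\underline{\alpha}} = \tilde{\kappa}_\sigma$ when all $\alpha_s \equiv \sigma$, and $\kappa_{\underline{\alpha}} = 0$ otherwise. Thus I would first invoke \cref{prop-ui-tensormoment} to pass freely between the two notions of convergence and set up limiting families $\W^{(i)} = \{x_j^{(i)}\}$ in some $r$-partite tensor probability space $(\A, \varphi, (\varphi_{\underline{\alpha}}))$.

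For the direction asymptotic freeness $\Rightarrow$ asymptotic tensor freeness, I would use \cref{thm-TensorFreeSubsets}: it suffices to show that every mixed tensor free cumulant of elements drawn from the $\W^{(i)}$'s vanishes for irreducible $\underline{\alpha}$. Fix such an irreducible $\underline{\alpha} \in (S_p)^r$ and a non-constant $f:[p]\to [L]$, together with $x_j \in \W^{(f(j))}$. If $\underline{\alpha}$ is not of the form $\alpha_s \equiv \sigma$, then \cref{eq-ui-tensorcumulant} immediately gives $\kappa_{\underline{\alpha}}(x_1,\ldots,x_p)=0$. Otherwise $\alpha_s \equiv \sigma$ for some $\sigma \in S_p$, and irreducibility of $\underline{\alpha}$ forces $\Pi(\sigma) = 1_p$, i.e.\ $\sigma$ is a single $p$-cycle. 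In that case $\kappa_{\underline{\alpha}}(\underline{x}) = \tilde{\kappa}_\sigma(\underline{x}) = \tilde{\kappa}_p(x_{\sigma^{-1}(1)},\ldots,x_{\sigma^{-1}(p)})$ (up to cyclic rotation), which vanishes by asymptotic freeness of the $\W^{(i)}$'s and the mixed-cumulant characterization \cref{thm-FreeSubsets}.

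For the converse, I would start from \cref{prop-cumulant-from-tensorcumulant}, which expresses ordinary free cumulants as sums of tensor free cumulants:
\begin{equation*}
    \tilde{\kappa}_\alpha(x_1,\ldots,x_p) = \sum_{\substack{\underline{\beta} \in S_{NC}(\alpha)^r\\ \beta_1 \vee \cdots \vee \beta_r = \alpha}} \kappa_{\underline{\beta}}(x_1,\ldots,x_p).
\end{equation*}
Taking $\alpha = \gamma_p$ (or any single cycle) and $x_j \in \W^{(f(j))}$ for non-constant $f$, every $\underline{\beta}$ contributing to the sum satisfies $\bigvee_{\mathcal P} \Pi(\beta_s) = \Pi(\gamma_p) = 1_p$, hence is irreducible. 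By hypothesis (asymptotic tensor freeness) each such mixed $\kappa_{\underline{\beta}}$ vanishes, so $\tilde{\kappa}_p$ vanishes on every non-constant $f$-configuration of elements from the $\W^{(i)}$. Applying \cref{thm-FreeSubsets} one more time yields asymptotic free independence of the families.

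The argument is almost entirely bookkeeping once \cref{prop-ui-tensormoment} and \cref{prop-cumulant-from-tensorcumulant} are in hand; there is no real obstacle. The only point requiring a moment of care is identifying irreducibility of $\underline{\alpha}$ in the diagonal case $\alpha_s \equiv \sigma$ with $\sigma$ being a single cycle, so that the connected mixed free cumulant $\tilde{\kappa}_\sigma$ is what actually needs to vanish, matching exactly the content of Voiculescu/Speicher's characterization of free independence.
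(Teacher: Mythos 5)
Your forward direction is correct and follows the paper's proof essentially verbatim: \cref{prop-ui-tensormoment} gives the tensor limit, \cref{eq-ui-tensorcumulant} kills the non-diagonal tuples, and irreducibility of a diagonal tuple forces the underlying permutation to be a full cycle, so the remaining cumulant is a connected mixed free cumulant, which vanishes by \cref{thm-FreeSubsets}.

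The converse, however, contains a genuine gap. You claim that every $\underline{\beta}$ contributing to the sum in \cref{prop-cumulant-from-tensorcumulant} with $\beta_1\vee\cdots\vee\beta_r=\gamma_p$ satisfies $\bigvee_{\mathcal{P}}\Pi(\beta_s)=1_p$ and is therefore irreducible. This is false: the join in \cref{prop-cumulant-from-tensorcumulant} is the join of the lattice $S_{NC}(\gamma_p)\cong NC(p)$, and $\vee_{NC}$ is in general strictly larger than $\vee_{\mathcal{P}}$. For instance, with $p=4$ and $r=2$, the tuple $\underline{\beta}=\big((1\,3)(2)(4),\,(1)(3)(2\,4)\big)$ satisfies $\beta_1\vee_{NC}\beta_2=\gamma_4$ but $\Pi(\beta_1)\vee_{\mathcal{P}}\Pi(\beta_2)=\{\{1,3\},\{2,4\}\}\neq 1_4$, so it is not irreducible, and tensor freeness says nothing about $\kappa_{\underline{\beta}}(x_1,y_1,x_2,y_2)=\kappa_{\substack{(12)\\(1)(2)}}(x_1,x_2)\,\kappa_{\substack{(1)(2)\\(12)}}(y_1,y_2)$. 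Indeed, \cref{ex-TensorFreeNonFree} is built on precisely these surviving terms: tensor freeness does \emph{not} imply freeness in a general tensor probability space, so any argument for the converse that does not use the UI hypothesis must fail. The repair is immediate with the tools you already put on the table: by \cref{eq-ui-tensorcumulant}, all non-diagonal $\kappa_{\underline{\beta}}$ vanish in the UI limit, and the only diagonal tuple with join $\gamma_p$ is $\underline{\gamma_p}$ itself, so $\tilde{\kappa}_p(x_1,\ldots,x_p)=\kappa_{\underline{\gamma_p}}(x_1,\ldots,x_p)$, which is irreducible and mixed and hence vanishes by tensor freeness. This one-line identity is exactly how the paper argues the converse.
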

\begin{proof}
Suppose $(\W_N^{(i)})_{i\in [L]}$ are asymptotically free. Then by \cref{prop-ui-tensormoment}, the family $\bigcup_{i=1}^L \W_N^{(i)}$ converges in tensor distribution. Let us denote by $\widetilde{W}=\bigcup_{i=1}^L \W^{(i)}$ the corresponding tensor limit, where $\W_N^{(i)} \xrightarrow{\otimes\text{-distr}} \W^{(i)}$. If $x_j\in \W^{(f(j))}$ where $f(j)\neq f(k)$ for some $j$ and $k$, and if $\underline{\alpha}\in (S_p)^r$ is irreducible, then by \cref{eq-ui-tensorcumulant}, $\kappa_{\underline{\alpha}}(x_1,\ldots, x_p)=0$ unless $\alpha_s\equiv \sigma$ for some $\sigma\in S_p$. However, if $\underline{\alpha}=(\sigma,\ldots, \sigma)$, then the irreducibility of $\underline{\alpha}$ implies that $\sigma$ is a full cycle, and the freeness between $\W^{(f(j))}$ and $\W^{(f(k))}$ again implies $\kappa_{\underline{\alpha}}(x_1,\ldots, x_p)=\tilde{\kappa}_{\sigma}(x_1,\ldots, x_p)=0$. This shows the tensor freeness of $(\W^{(i)})_{i\in [L]}$.

Conversely, assume the asymptotic tensor freeness of $(\W_N^{(i)})_{i\in [L]}$ as $N\to\infty$ so that $\bigcup_{i=1}^L \W_N^{(i)} \xrightarrow{\otimes\text{-distr}} \widetilde{W}=\bigcup_{i=1}^L\W^{(i)}$ and $(\W^{(i)})_{i\in [L]}$ are tensor free. Then the definition guarantees that $\bigcup_{i=1}^L \W_N^{(i)} \to \widetilde{W}$ in (usual) distribution. Now if $x_j\in \W^{(f(j))}$ where $f(j)\neq f(k)$ for some $j$ and $k$, then \cref{eq-ui-tensorcumulant} again implies that
    $$\tilde{\kappa}_p(x_1,\ldots, x_p)=\kappa_{\gamma_p,\ldots, \gamma_p}(x_1,\ldots, x_p)=0.$$
Therefore, the families $(\W^{(i)})_{i\in [L]}$ are free.
\end{proof}

Note that we do not assume the independence property between the families in \cref{thm-ui-tensorfree}. Since independent UI random matrices exhibit the asymptotic freeness (\cref{thm-UIasympfree}), \cref{thm-ui-tensorfree} combined with \cref{prop-IndepUI} gives the following asymptotic tensor freeness of UI random matrices.

\begin{corollary} \label{cor-UITensorFree}
Let $(\W_N^{(i)})_{i\in [L]}$ be families of independent $D_N\times D_N$ UI random matrix ensembles. If each family has first order limit, i.e. satisfying the factorization property \cref{eq-condition-1} and convergent in distribution, then $(\W_N^{(i)})_{i\in [L]}$ are asymptotically tensor free.
\end{corollary}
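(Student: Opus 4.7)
The plan is to assemble \cref{cor-UITensorFree} directly from three previously established results in the excerpt: Voiculescu's asymptotic freeness theorem (\cref{thm-UIasympfree}), the invariance-preservation lemma for independent UI families (\cref{prop-IndepUI}), and the equivalence between asymptotic freeness and asymptotic tensor freeness for globally UI families (\cref{thm-ui-tensorfree}). No new combinatorial or analytical input is required; the work consists entirely in verifying that the hypotheses of \cref{thm-ui-tensorfree} are met by $\bigcup_{i=1}^L \W_N^{(i)}$.

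First, I would invoke \cref{prop-IndepUI} to conclude that the union $\bigcup_{i=1}^L \W_N^{(i)}$ is itself UI. This uses only the independence of the $L$ families together with the UI assumption on each. Next, I would apply \cref{thm-UIasympfree} to the family $(\W_N^{(i)})_{i\in[L]}$: its hypotheses (each family UI, each family admitting a first order limit distribution) are precisely those of the corollary. The conclusion yields two pieces of information simultaneously: (a) the families are asymptotically freely independent, and (b) the union $\bigcup_{i=1}^L \W_N^{(i)}$ converges jointly in distribution and satisfies the factorization property \cref{eq-condition-Fact}.

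With both the UI property and the factorization property now established for the union, the hypotheses of \cref{thm-ui-tensorfree} are satisfied. That theorem states that, under exactly these conditions, asymptotic freeness and asymptotic tensor freeness are equivalent notions for the $L$ families. Combined with the asymptotic freeness obtained in the previous step, this immediately yields asymptotic tensor free independence of $(\W_N^{(i)})_{i\in[L]}$, completing the proof.

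There is no real obstacle here; the corollary is a direct consequence of the chain \cref{thm-UIasympfree} $\to$ \cref{prop-IndepUI} $\to$ \cref{thm-ui-tensorfree}. The only point worth mentioning explicitly is that \cref{thm-ui-tensorfree} requires the \emph{union} to be UI and satisfy the factorization property, rather than each family individually; \cref{prop-IndepUI} takes care of the first, while the second is a byproduct of \cref{thm-UIasympfree} itself. Note that convergence in tensor distribution of the union (which is implicit in the statement of asymptotic tensor freeness) is simultaneously supplied by \cref{prop-ui-tensormoment}, so no separate verification of this point is needed.
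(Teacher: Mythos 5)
Your proof is correct and follows exactly the chain the paper intends: \cref{prop-IndepUI} to get unitary invariance of the union, \cref{thm-UIasympfree} to get asymptotic freeness together with the factorization property of the union, and \cref{thm-ui-tensorfree} to upgrade asymptotic freeness to asymptotic tensor freeness. This is precisely the argument sketched in the paper immediately before the corollary, so there is nothing to add.
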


In the later section, we obtain much stronger results by considering partial transposes of UI random matrices, see \cref{thm-indepTranspose}.

We now derive the OI versions of the above results. The arguments are largely similar, except that we use the asymptotic estimate from orthogonal Weingarten calculus (\cref{eq-WeinAsymp2}) and the inequalities involving pair partitions (\cref{lem-pairings}).

\begin{proposition} \label{prop-oi-tensormoment}
Let $\W_N$ be a family of $D_N\times D_N$ OI random matrices.
\begin{enumerate}
    \item Suppose $\W_N$ satisfies the factorization property \cref{eq-condition-Fact} and $\W_N\cup \W_N^{\top}$ satisfies the bounded moments condition \cref{eq-condition-Bdd}.  Then $\W_N$ converges in distribution if and only if $\W_N$ converges in tensor distribution. Furthermore, if $\W$ is the corresponding tensor distribution limit, then the same formulas \cref{eq-ui-tensormoment,eq-ui-tensorcumulant} hold for $\underline{\alpha}\in (S_p)^r$ and $x_1,\ldots, x_p\in \W$.

    \item Suppose $\W_N \cup \W_N^{\top}$ satisfies the factorization property. Then $\W_N \cup \W_N^{\top}$ converges in distribution if and only if $\W_N \cup \W_N^{\top}$ converges in tensor distribution. Furthermore, if $\widetilde{\W}$ is the corresponding tensor limit distribution, then the same formulas \cref{eq-ui-tensormoment,eq-ui-tensorcumulant} hold for $\underline{\alpha}\in (S_p)^r$ and $x_1,\ldots, x_p\in \widetilde{W}$.
\end{enumerate}
\end{proposition}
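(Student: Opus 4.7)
The proof of both parts follows the same strategy as \cref{prop-ui-tensormoment}, but with unitary Weingarten calculus replaced by its orthogonal counterpart. Starting from \cref{eq-OITensorInv}, normalizing by $\prod_s d_s^{\#\alpha_s}$, and substituting the asymptotic \cref{eq-WeinAsymp2} together with the identity $\#(\pi\vee\rho) = p - |\pi\rho|/2$ from \cref{lem-PairingSup}, the total exponent of each $d_s$ in a given $(\pi,\rho)$-summand simplifies to
\[
|\alpha_s| - \tfrac{1}{2}\!\left(|\rho\delta| + |\rho\pi| + |\pi\alpha_s\delta\alpha_s^{-1}|\right).
\]
The triangle inequality in $(S_{\pm p}, d)$ along the path $\delta \to \rho \to \pi \to \alpha_s\delta\alpha_s^{-1}$, whose endpoints are at distance $2|\alpha_s|$, shows that this quantity is $\leq 0$, with equality iff $\rho$ and $\pi$ both lie on a geodesic from $\delta$ to $\alpha_s\delta\alpha_s^{-1}$.

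For a summand to contribute in the limit, the exponent must vanish for \emph{every} $s\in[r]$. By \cref{lem-pairings}(2), this simultaneous geodesic condition on $\pi$ forces the decomposition $\pi = \sigma\delta\sigma^{-1}$ (trivial $\epsilon$, by the uniqueness in \cref{lem-PairingEmb}) with $\sigma \in \bigcap_{s=1}^r S_{NC}(\alpha_s)$; applying \cref{lem-pairings}(2) once more to $\rho\delta \in S_{NC}(\pi\delta)$ identifies $\rho = \tau\delta\tau^{-1}$ for a unique $\tau \in S_{NC}(\sigma)$. For such configurations one has $\Tr_{\rho\vee\delta}(X_1,\ldots,X_p) = \Tr_\tau(X_1,\ldots,X_p)$ by \cref{eq-OITraceInvTransp} (no transposes arise) and $\Mob(\rho\vee\pi) = \Mob(\tau^{-1}\sigma)$ by \cref{eq-MobPairing}.

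All remaining summands have strictly negative exponent in at least one $d_s$, while the corresponding normalized trace invariants $\tr_{\rho\vee\delta}(X_1,\ldots,X_p) = \tr_\sigma(X_1^{f(1)},\ldots,X_p^{f(p)})$ are uniformly $O(1)$: in part (1) by the bounded moments hypothesis on $\W_N \cup \W_N^\top$, and in part (2) by the factorization property applied to $\W_N \cup \W_N^\top$ combined with its convergence in distribution. Hence the subleading contributions vanish in the limit. The factorization property (on $\W_N$ in part (1), on $\W_N \cup \W_N^\top$ in part (2)) together with convergence in distribution yields $\lim_N \E[\tr_\tau(X_1,\ldots,X_p)] = \varphi_\tau(x_1,\ldots,x_p)$, and collecting leading terms gives
\[
\lim_N \E\!\left[\tr_{\underline{\alpha}}(X_1,\ldots,X_p)\right] = \sum_{\sigma \in \bigcap_{s=1}^r S_{NC}(\alpha_s)}\, \sum_{\tau \in S_{NC}(\sigma)} \varphi_\tau(x_1,\ldots,x_p)\,\Mob(\tau^{-1}\sigma) = \sum_{\sigma \in \bigcap_{s=1}^r S_{NC}(\alpha_s)} \tilde\kappa_\sigma(x_1,\ldots,x_p),
\]
which is precisely \cref{eq-ui-tensormoment}. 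The cumulant formula \cref{eq-ui-tensorcumulant} then follows by M\"obius inversion on the lattice $S_{NC}(\underline{\alpha})$, and the tensor factorization property is inherited from the multiplicativity of $\tilde\kappa_\sigma$ under disjoint union, exactly as in the last sentence of the proof of \cref{prop-ui-tensormoment}. The reverse implication in each ``if and only if'' statement is immediate by specializing to $\underline{\alpha} = (\gamma_p,\ldots,\gamma_p)$, where tensor distribution reduces to ordinary distribution.

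The main obstacle is the bookkeeping in the orthogonal Weingarten expansion and the correct identification of the dominant $(\pi,\rho)$ configurations via the geodesic analysis in $S_{\pm p}$ afforded by \cref{lem-pairings}. The additional subtlety absent in the unitary case is that off-geodesic pairings carry non-trivial $\epsilon$-decorations and thus produce subleading trace invariants involving the transposes $X_j^\top$; this is precisely what forces the bounded moments hypothesis on $\W_N \cup \W_N^\top$ (as opposed to only on $\W_N$) in part (1).
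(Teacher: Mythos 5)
Your proposal is correct and follows essentially the same route as the paper's proof: expand via the orthogonal Weingarten formula \cref{eq-OITensorInv}, use \cref{eq-WeinAsymp2} and \cref{lem-PairingSup} to express the $d_s$-exponent as a geodesic defect in $S_{\pm p}$, identify the surviving terms as $\pi=\sigma\delta\sigma^{-1}$, $\rho=\tau\delta\tau^{-1}$ with $\tau\leq\sigma\leq\alpha_s$ via \cref{lem-pairings}, and invoke the bounded moments property of $\W_N\cup\W_N^\top$ to kill the subleading terms carrying nontrivial $\eps$-decorations. Your exponent formula matches the paper's up to conjugation by $\delta$, and your observation that part (2) reduces to part (1) applied to $\widetilde\W_N=\W_N\cup\W_N^\top$ is exactly how the paper handles it.
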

\begin{proof}
We may proof only the assertion (1); the assertion (2) follows from (1) for families $\widetilde{W}_N=\W_N\cup \W_N^{\top}$. For (1), we repeat the arguments in \cref{prop-ui-tensormoment} using \cref{eq-OITensorInv}. Then we have by \cref{lem-PairingSup,lem-pairings},
\small
\begin{align*}
    &\E[\tr_{\underline{\alpha}}(X_1,\ldots, X_p)]\\
    &= \sum_{\pi,\rho\in \mathcal{P}_2(\pm p)} \E\left[\tr_{\rho\vee\delta}(X_1,\ldots, X_p)\right]  \left(\prod_{s=1}^r d_s^{\#(\rho\vee\delta)-\#\alpha_s}\,d_s^{\#(\pi\vee \alpha_s\delta\alpha^{-1})}\right) D_N^{-p-|\rho\pi|/2}(\Mob(\rho\vee\pi)+o(1))\\
    &= \sum_{\pi,\rho\in \mathcal{P}_2(\pm p)} \E\left[\tr_{\rho\vee\delta}(X_1,\ldots, X_p)\right] \left(\prod_{s=1}^r d_s^{- \frac{1}{2} \big(|\rho\delta|+|(\rho\delta)^{-1}(\pi\delta)|+|(\pi\delta)^{-1}(\alpha_s\delta\alpha_s^{-1}\delta)|-|\alpha_s\delta\alpha_s^{-1}\delta|\big)}\right) (\Mob(\rho\vee\pi)+o(1))
\end{align*}
\normalsize
for $X_1,\ldots, X_p\in \W_N$ and $\underline{\alpha}\in (S_p)^r$. Note that the bounded moments property and \cref{eq-OITraceInvTransp} says that $\sup_N\big|\E[\tr_{\rho\vee\delta}(X_1,\ldots, X_p)]\big|<\infty$ for every $\rho\in \mathcal{P}_2(\pm p)$. Furthermore, the exponents of $d_s$ is always non-positive and attains zero if and only if $\rho\delta \leq \pi\delta \leq \alpha_s\delta\alpha_s^{-1}\delta$ for all $s$. In this case, we can uniquely take $\tau,\sigma\in S_p$ such that $\pi=\sigma\delta\sigma^{-1}$, $\rho=\tau\delta\tau^{-1}$, $\sigma\in \bigcap_{s=1}^r S_{NC}(\alpha_s)$, and $\tau\in S_{NC}(\sigma)$ by \cref{lem-pairings} (2) and (3).
Consequently, applying \cref{eq-OITraceInvTransp,eq-MobPairing} gives that.
\begin{align*}
    \lim_{N\to \infty}\E[\tr_{\underline{\alpha}}(X_1,\ldots, X_p)]
    &= \sum_{\tau,\sigma\in S_p}\left(\prod_{s=1}^r\mathds{1}_{\tau \leq \sigma \leq \alpha_s}\right) \varphi_{\tau}(x_1,\ldots, x_p)\Mob(\tau^{-1}\sigma) \\
    &=\sum_{\sigma\in \bigcap_{s=1}^r S_{NC}(\alpha_s)} \tilde{\kappa}_{\sigma}(x_1,\ldots, x_p).
\end{align*}
as in the proof of \cref{prop-ui-tensormoment}.
\end{proof}

Note that the bounded moments property of $\W_N\cup \W_N^{\top}$ in \cref{prop-oi-tensormoment} is essential: if $X_d=2^d (E_{12}\otimes I_d)\in \M{d}^{\otimes 2}$, and if $O$ is a $d^2\times d^2$ Haar orthogonal matrix, then $OX_dO^{\top}$ satisfies the factorization property and converges in distribution to $0$ as $d\to \infty$. However, \cref{eq-OITensorInv,eq-OWg-dim2} and the facts $\Tr(X_d)=\Tr(X_d^2)=0$ imply that
\begin{align*}
    \E\big[\tr_{\substack{(1\,2)\\{(1)(2)}}}(O X_d O^{\top})\big]&=d^{-3}\sum_{\pi, \rho \in \mathcal{P}_2(\pm 2)} \Tr_{\rho\vee \delta}(X_d) d^{\#(\pi\vee\delta)+\#(\pi\vee\gamma_2\delta\gamma_2)}\Wg_{d^2}^{(O)}(\pi,\rho)\\
    &= d^{-3} \Tr(X_d X_d^{\top})\Big(2 d^3 \cdot \frac{-1}{d^2(d^2+2)(d^2-1)}+ d^2\cdot \frac{d^2+1}{d^2(d^2+2)(d^2-1)}\Big)\\
    &=2^{2d}d^{-2} \cdot \frac{d-1}{(d+1)(d^2+2)},
\end{align*}
which is not convergent as $d\to \infty$.

Now repeating the same reasoning, we obtain the analogues of \cref{thm-ui-tensorfree,cor-UITensorFree}.

\begin{theorem} \label{thm-oi-tensorfree}
Let $(\W_N^{(i)})_{i\in [L]}$ be families of $D_N\times D_N$ random matrices such that the family $\widetilde{\W}_N=\bigcup_{i\in [L]} \W_N^{(i)}$ is OI and  satisfies the factorization property \cref{eq-condition-Fact}. If $\widetilde{\W}_N\cup(\widetilde{\W}_N)^{\top}$ further satisfies the bounded moments condition \cref{eq-condition-Bdd}, then $(\W_N^{(i)})_{i\in [L]}$ are asymptotically free if and only if asymptotically tensor free.
\end{theorem}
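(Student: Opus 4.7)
The plan is to mirror the proof of \cref{thm-ui-tensorfree} almost verbatim, using \cref{prop-oi-tensormoment} as the orthogonal counterpart of \cref{prop-ui-tensormoment}. The key observation is that both propositions yield the \emph{same} identifications \cref{eq-ui-tensormoment,eq-ui-tensorcumulant} between tensor free cumulants and usual free cumulants in the asymptotic limit; namely,
\[
\kappa_{\underline{\alpha}}(x_1,\ldots,x_p) \;=\; \begin{cases} \tilde{\kappa}_{\sigma}(x_1,\ldots,x_p) & \text{if } \alpha_s \equiv \sigma,\\ 0 & \text{otherwise}.\end{cases}
\]
Once this identity is in hand, tensor freeness and freeness can be compared on the level of cumulants, and the equivalence will follow by routine bookkeeping.

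First, I would check that the assumptions transfer: since $\widetilde{\W}_N = \bigcup_{i\in [L]} \W_N^{(i)}$ is OI, satisfies the factorization property, and $\widetilde{\W}_N \cup (\widetilde{\W}_N)^{\top}$ satisfies the bounded moments condition, \cref{prop-oi-tensormoment}(1) applies to the whole family $\widetilde{\W}_N$. In particular, convergence in distribution of $\widetilde{\W}_N$ is equivalent to convergence in tensor distribution, and any limit $\widetilde{\W}$ satisfies the cumulant identity above.

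For the direction ``asymptotically free $\Rightarrow$ asymptotically tensor free'', assume $(\W_N^{(i)})_{i\in [L]}$ converges and $(\W^{(i)})_{i\in [L]}$ are freely independent. By \cref{prop-oi-tensormoment}(1), the joint tensor distribution limit also exists. Take an irreducible $\underline{\alpha}\in (S_p)^r$ and elements $x_j\in \W^{(f(j))}$ with $f$ non-constant. By \cref{eq-ui-tensorcumulant}, $\kappa_{\underline{\alpha}}(x_1,\ldots,x_p)$ vanishes unless $\alpha_s \equiv \sigma$ for some $\sigma\in S_p$, in which case the irreducibility of $\underline{\alpha}$ forces $\sigma$ to be a full cycle. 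Then $\kappa_{\underline{\alpha}} = \tilde{\kappa}_{\sigma} = 0$ by the usual vanishing-of-mixed-free-cumulants characterization of freeness (\cref{thm-FreeSubsets}). Hence all mixed tensor free cumulants vanish and tensor freeness follows.

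For the converse direction, asymptotic tensor freeness trivially gives convergence in (usual) distribution and, for any mixed cumulant of full-cycle type applied to elements from different families, the identity $\tilde{\kappa}_p(x_1,\ldots,x_p) = \kappa_{\underline{\gamma_p}}(x_1,\ldots,x_p) = 0$ holds (the latter vanishes because $\underline{\gamma_p}$ is irreducible and the function $f$ is non-constant). This verifies the vanishing of mixed free cumulants, so the limits are freely independent. I do not anticipate any real obstacle: the orthogonal Weingarten asymptotics have already been absorbed into \cref{prop-oi-tensormoment}, and the only mild care needed is to verify that the bounded moments hypothesis on $\widetilde{\W}_N\cup (\widetilde{\W}_N)^\top$ is indeed what enables the application of that proposition to the unified family, as opposed to each family separately.
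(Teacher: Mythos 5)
Your proposal is correct and follows exactly the route the paper takes: the paper proves \cref{thm-oi-tensorfree} by "repeating the same reasoning" as \cref{thm-ui-tensorfree}, substituting \cref{prop-oi-tensormoment} for \cref{prop-ui-tensormoment}, which is precisely your plan. Your verification that the hypotheses on $\widetilde{\W}_N$ and $\widetilde{\W}_N\cup(\widetilde{\W}_N)^{\top}$ are exactly those needed to apply \cref{prop-oi-tensormoment}(1) to the unified family, and your two-directional cumulant argument, match the paper's intended argument.
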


\begin{corollary}
\label{cor-OITensorFree}
Let $(\W_N^{(i)})_{i\in [L]}$ be families of independent $D_N\times D_N$ OI random matrix ensembles where each family $\W_N^{(i)}$ satisfies the factorization property and convergent in distribution, and $\W_N^{(i)}\cup (\W_N^{(i)})^{\top}$ satisfies the boundedness condition \cref{eq-condition-Bdd}. Then $(\W_N^{(i)})_{i\in [L]}$ are asymptotically tensor free.
\end{corollary}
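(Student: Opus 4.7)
The plan is to combine the orthogonal analogues of the three ingredients used in the unitary case (Corollary 5.4). First, I would establish that the union $\widetilde{\W}_N := \bigcup_{i\in [L]} \W_N^{(i)}$ is OI, via the orthogonal version of Proposition 4.6, which the text notes can be derived in a straightforward manner: conjugating each family by a common Haar orthogonal matrix independent of all $\W_N^{(i)}$ leaves the joint law unchanged.

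Second, I would invoke \cref{thm-OIAsympFree}(1), whose hypotheses (OI, factorization, convergence in distribution for each family, plus the bounded moments condition on each $\W_N^{(i)} \cup (\W_N^{(i)})^{\top}$) are precisely those assumed in the corollary. This produces both the asymptotic freeness of the $L$ families and the factorization property for $\widetilde{\W}_N$.

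Third, to apply \cref{thm-oi-tensorfree} and upgrade asymptotic freeness to asymptotic tensor freeness, it remains to verify the bounded moments property for $\widetilde{\W}_N \cup (\widetilde{\W}_N)^{\top}$. My approach is to induct on $L$: fixing a trace invariant with entries from the union, I would condition on all but one family (say $\W_N^{(1)}$), use the OI invariance of $\W_N^{(1)}$ to write its matrices as orthogonal conjugates by an auxiliary Haar orthogonal $O$, and expand via the graphical orthogonal Weingarten formula from \cref{thm-GraphWeingarten}. Every removal diagram produces a product of (i) an orthogonal Weingarten factor, $O(1)$ in $N$ by \cref{eq-WeinAsymp2}, (ii) a trace invariant involving only the single family $\W_N^{(1)} \cup (\W_N^{(1)})^{\top}$ (uniformly bounded by hypothesis, possibly after using \cref{eq-OITraceInvTransp} to convert pair-partition invariants into ordinary ones with transpositions), and (iii) a trace invariant involving only the remaining $L-1$ families and their transposes (uniformly bounded by the induction hypothesis). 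Summing a finite number of such bounded terms yields the required uniform bound.

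With all three ingredients in place, \cref{thm-oi-tensorfree} concludes the proof. The main obstacle is clearly the third step: carefully verifying that the Weingarten expansion after integrating one family produces only manageable quantities, both in terms of uniform boundedness and in terms of correctly tracking the transpositions imposed on the remaining matrices by the removal operation. In particular, one must check that the identification in \cref{eq-OITraceInvTransp} never creates factors that take us outside the family $\bigcup_i (\W_N^{(i)} \cup (\W_N^{(i)})^{\top})$, which is exactly why the transposed families need to be included in the hypothesis from the start.
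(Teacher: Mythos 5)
Your proposal is correct and follows the same route the paper intends (the paper gives no explicit proof, saying only that one repeats the reasoning of \cref{cor-UITensorFree}: the OI analogue of \cref{prop-IndepUI} makes the union OI, \cref{thm-OIAsympFree}(1) yields asymptotic freeness plus the factorization property of $\widetilde{\W}_N$, and \cref{thm-oi-tensorfree} upgrades freeness to tensor freeness). Your third step moreover identifies and explicitly fills the one hypothesis the paper leaves unverified, namely the bounded moments property of $\widetilde{\W}_N\cup(\widetilde{\W}_N)^{\top}$, and your inductive orthogonal-Weingarten argument for it is sound: it is the $r=1$, single-family-integrated specialization of the loop-counting estimates carried out in the proof of \cref{thm-LocOI-tensorfree}, with \cref{eq-OITraceInvTransp} guaranteeing, as you note, that the induced invariants stay inside $\bigcup_i\big(\W_N^{(i)}\cup(\W_N^{(i)})^{\top}\big)$.
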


\begin{remark} \label{rmk-UITensorAlmostSure}
One can obtain the almost sure convergence of tensor moments under stronger assumptions. For example, suppose $d_{s,N}\equiv N$ and $\W_N$ is a UI family of $N^r\times N^r$ random matrices which converges in distribution and  satisfies the stronger factorization property
    $$\E[\tr_{\sigma \sqcup \tau}(X_1,\ldots, X_p,Y_1,\ldots, Y_q)]=\E[\tr_{\sigma}(X_1,\ldots, X_p)]\E[\tr_{\tau}(Y_1,\ldots, Y_q)]+O(N^{-2})$$
for $\sigma\in S_p$, $\tau\in S_q$, and  $X_1,\ldots, X_p,Y_1,\ldots, Y_q\in \W_N$,
which is the case when $\W_N$ satisfies the bounded cumulants property \cref{eq-condition-BCP}. Then one has the stronger tensor factorization property:
\begin{equation} \label{eq-StrTensorFact}
    \E[\tr_{\underline{\alpha}\sqcup \underline{\beta}}(X_1,\ldots, X_p,Y_1,\ldots, Y_q)]=\E[\tr_{\underline{\alpha}}(X_1,\ldots, X_p)]\E[\tr_{\underline{\beta}}(Y_1,\ldots, Y_q)]+O(N^{-2})
\end{equation}
for $\underline{\alpha}\in (S_p)^r$ and $\underline{\beta}\in (S_q)^r$, by slightly modifying the proof of \cref{prop-ui-tensormoment}. In particular, if $\W_N$ is closed under adjoint $*$, then almost surely, every tensor moment $\tr_{\underline{\alpha}}(\underline{X})$ converges to the same limit $\lim_N \E[\tr_{\underline{\alpha}}(\underline{X})]$ as $N\to \infty$. The same arguments apply to OI random matrices in \cref{prop-oi-tensormoment}.
\end{remark}

\section{Asymptotic tensor freeness of independent locally invariant random matrices} \label{sec-LocUITensorFree}

In this section, we extend the freeness theorems of UI and OI matrices (\cref{thm-UIasympfree,thm-OIAsympFree}) to obtain asymptotic tensor freeness of independent families of local-unitary / orthogonal invariant random matrices.
As in \cref{sec-UItensorfree}, we take $D=D_N:=d_{1,N}\cdots d_{r,N}$ and consider the regime $d_s=d_{s,N}\to \infty$ for all $s=1,\ldots, r$ as $N\to \infty$. Furthermore, let us denote by $\Wg^{(U)}(\underline{\tau},\underline{\sigma}):=\prod_{s=1}^r \Wg_{d_s}^{(U)}(\tau_s^{-1}\sigma_s)$ for $\underline{\sigma},\underline{\tau}\in S_p^r$ and $\Wg^{(O)}(\underline{\rho},\underline{\pi}):=\prod_{s=1}^r \Wg_{d_s}^{(O)}(\rho_s,\pi_s)$ for $\underline{\pi},\underline{\rho}\in \mathcal{P}_2(\pm p)^r$ for simplicity.

\begin{lemma} \label{lem-LocInvTensorMoments}
Let $\W^{(1)},\ldots, \W^{(L)}$ be independent families of $D\times D$ random matrices, and suppose $X_j\in \W^{(f(j))}$ for a function $f:[p]\to [L]$.
\begin{enumerate}
    \item If each $\W^{(i)}$ is LUI, then for $\underline{\alpha} \in (S_p)^r$, we have
    \small
    \begin{align} 
        &\E[\Tr_{\underline{\alpha}}(X_1,\ldots, X_p)] \nonumber \\ 
        &=\sum_{\substack{\underline{\sigma}^{(i)}, \underline{\tau}^{(i)} \in S(f^{-1}(i))^r\\\forall \, i\in f([p])}} \E\bigg[\prod_{i\in f([p])}\Tr_{\underline{\tau}^{(i)}}((X_j)_{j\in f^{-1}(i)})\bigg] \bigg( \prod_{s=1}^r d_s^{\#(\alpha_s^{-1}(\sqcup_i \sigma_s^{(i)}))} \bigg)\prod_{i\in f([p])} {\rm Wg}^{(U)}({\underline{\tau}^{(i)}},\underline{\sigma}^{(i)}). \label{eq-LocUITensorMoments}
    \end{align}
    \normalsize

    \item If each $\W^{(i)}$ is LOI, then for $\underline{\alpha} \in (S_p)^r$, we have
    \small
    \begin{align} 
        &\E[\Tr_{\underline{\alpha}}(X_1,\ldots, X_p)] \nonumber \\ 
        &=\sum_{\substack{\underline{\pi}^{(i)}, \underline{\rho}^{(i)} \in \mathcal{P}_2(\pm f^{-1}(i))^r\\\forall \, i\in f([p])}} \E\bigg[\prod_{i\in f([p])}\Tr_{\underline{\rho}\vee\delta^{(i)}}((X_j)_{j\in f^{-1}(i)})\bigg] \bigg(\prod_{s=1}^r d_s^{\#(\alpha_s\delta\alpha_s^{-1}\vee (\sqcup_i \pi_s^{(i)}))}\bigg)\prod_{i\in f([p])} {\rm Wg}^{(O)}({\underline{\rho}^{(i)}}, \underline{\pi}^{(i)}), \label{eq-LocOITensorMoments}
    \end{align}
    \normalsize
    where $\pm f^{-1}(i):=f^{-1}(i)\sqcup \{-j:j\in f^{-1}(i)\}$, $\delta^{(i)}:=\delta\big|_{\pm f^{-1}(i)}=\prod_{j\in f^{-1}(i)}(j, -j)$.
\end{enumerate}
\end{lemma}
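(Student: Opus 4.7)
The plan is to apply the graphical Weingarten calculus independently on each family $\W^{(i)}$ and each tensor level $s \in [r]$, exploiting the independence between families together with the local invariance within each one. This is a direct generalization of the single-family argument in \cref{lem-UIOITensorMoments} to the multi-family, multi-level setting.

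For part (1), I would first introduce auxiliary independent Haar-distributed unitaries $U_s^{(i)} \in \mathcal{U}_{d_s}$ for $s \in [r]$, $i \in [L]$, all jointly independent and independent of the matrix families. Setting $U^{(i)} := \bigotimes_{s=1}^r U_s^{(i)}$, by LUI of each $\W^{(i)}$ and independence across $i$, the joint distribution of $(X_j)_{j \in [p]}$ is unchanged after conjugating each $X_j$ by $U^{(f(j))}$. I then condition on the matrices and integrate each $U_s^{(i)}$ separately using \cref{thm-GraphWeingarten}. For each pair $(s,i)$, the $|f^{-1}(i)|$ copies of $U_s^{(i)}$ sitting on the output-$s$ legs of the $X_j$'s with $f(j)=i$, together with the $|f^{-1}(i)|$ copies of $(U_s^{(i)})^*$ on the corresponding input-$s$ legs, are replaced by a sum over $(\sigma_s^{(i)}, \tau_s^{(i)}) \in S(f^{-1}(i))^2$ with weight $\Wg_{d_s}^{(U)}((\tau_s^{(i)})^{-1} \sigma_s^{(i)})$. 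The resulting diagram splits naturally: the wires prescribed by $\tau_s^{(i)}$, acting within $f^{-1}(i)$, link the matrix legs, and after summing the matrix indices and invoking multiplicativity of the tensor trace invariant, produce the factor $\prod_{i \in f([p])} \Tr_{\underline{\tau}^{(i)}}((X_j)_{j \in f^{-1}(i)})$. On the other side, the contraction prescribed by $\underline{\alpha}$ forces the remaining free indices at level $s$ to be constant on the orbits of $\alpha_s^{-1} \sigma_s$, with $\sigma_s := \bigsqcup_i \sigma_s^{(i)}$, yielding the factor $d_s^{\#(\alpha_s^{-1} \sigma_s)}$. Gathering everything gives \cref{eq-LocUITensorMoments}.

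For part (2), the argument is structurally identical, with Haar unitaries replaced by independent Haar orthogonals $O_s^{(i)} \in \mathcal{O}_{d_s}$ and the graphical orthogonal Weingarten formula (\cref{thm-GraphWeingarten}) applied in place of the unitary one. Each pair $(s,i)$ now contributes a sum over pairings $(\pi_s^{(i)}, \rho_s^{(i)}) \in \mathcal{P}_2(\pm f^{-1}(i))^2$ with weight $\Wg_{d_s}^{(O)}(\rho_s^{(i)}, \pi_s^{(i)})$. Writing $\pi_s := \bigsqcup_i \pi_s^{(i)}$ and $\rho_s := \bigsqcup_i \rho_s^{(i)}$, the $\rho$-wiring (together with the tautological pairing $\delta$ linking input and output of each $X_k$) yields $\prod_{i \in f([p])} \Tr_{\underline{\rho}^{(i)} \vee \delta^{(i)}}((X_j)_{j \in f^{-1}(i)})$ by multiplicativity, while the contraction prescribed by $\underline{\alpha}$ is encoded by the pairing $\alpha_s \delta \alpha_s^{-1}$ (which sends $k \mapsto -\alpha_s^{-1}(k)$), contributing $d_s^{\#(\alpha_s \delta \alpha_s^{-1} \vee \pi_s)}$ free indices at level $s$. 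This produces \cref{eq-LocOITensorMoments}.

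The only real technical care required is the bookkeeping of how the Weingarten-removal wirings on different $(s,i)$ combine with the global contractions given by $\underline{\alpha}$; however this is purely graphical and introduces no new idea beyond \cref{lem-UIOITensorMoments}. Crucially, the fact that each $\sigma_s^{(i)}, \tau_s^{(i)}$ (resp.~$\pi_s^{(i)}, \rho_s^{(i)}$) acts only on $f^{-1}(i)$ (resp.~$\pm f^{-1}(i)$) is exactly what allows the tensor trace invariant obtained after Weingarten integration to factor multiplicatively along the partition $\ker f$, which is what the right-hand sides of \cref{eq-LocUITensorMoments,eq-LocOITensorMoments} record.
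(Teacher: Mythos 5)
Your proposal is correct and follows essentially the same route as the paper: insert independent Haar local unitaries (resp.\ orthogonals) $U_s^{(i)}$ via the local invariance of each family and their mutual independence, condition on the matrices, and apply the graphical Weingarten calculus separately for each pair $(s,i)$, with the $\tau$-wirings (resp.\ $\rho$-wirings) reconstituting the per-family trace invariants and the $\sigma$-wirings (resp.\ $\pi$-wirings) combining with $\underline{\alpha}$ to produce the loop factors $d_s^{\#(\alpha_s^{-1}(\sqcup_i\sigma_s^{(i)}))}$ and $d_s^{\#(\alpha_s\delta\alpha_s^{-1}\vee(\sqcup_i\pi_s^{(i)}))}$. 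The only cosmetic difference is that the paper first reduces to the case where $f$ is surjective; your bookkeeping of the loop counts is otherwise a faithful (and slightly more explicit) rendering of the same argument.
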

\begin{proof}
We may assume that $f$ is surjective, i.e.~$f([p])=[L]$, by discarding unused variables. Let $\left(U_i=\bigotimes_{s=1}^r U_s^{(i)}\right)_{i\in [L]}$ be an independent family of the Haar random local unitaries, i.e.~$\{U_s^{(i)}\}_{(i,s) \in [L] \times [r]}$ are independent, Haar-distributed random unitary matrices. Since $(X_1,\ldots, X_p)$ has the same joint probability law as $(U_{f(1)}X_1 U_{f(1)}^*,\ldots, U_{f(p)} X_p U_{f(p)}^*)$, we have
\begin{align*}
    \E[\Tr_{\underline{\alpha}}(X_1,\ldots, X_p)]&= \E[\Tr_{\underline{\alpha}}(U_{f(1)}X_1U_{f(1)}^*, \ldots, U_{f(p)}X_p U_{f(p)}^*)]\\
    &=\E_{\bigcup_i \W^{(i)}} \Big[\E_{U_1,\ldots, U_L}\big[\Tr_{\underline{\alpha}}(U_{f(1)}X_1U_{f(1)}^*, \ldots, U_{f(p)}X_p U_{f(p)}^*)\big| {\bigcup}_{i} \W^{(i)}\big] \Big].
\end{align*}
Now as in the proof of \cref{eq-UITensorInv}, we can apply unitary Weingarten calculi for $(U_s^{(i)})_{s\in [r],i\in [L]}$ to obtain
\begin{align*} 
    &\E_{U_1,\ldots, U_L}\big[\Tr_{\underline{\alpha}}(U_{f(1)}X_1U_{f(1)}^*, \ldots, U_{f(p)}X_p U_{f(p)}^*)\big| {\bigcup}_{i} \W^{(i)}\big] \\ 
    &=\sum_{\substack{\underline{\sigma}^{(i)}, \underline{\tau}^{(i)} \in S(f^{-1}(i))^r\\\forall \, i\in [L]}} \prod_{i=1}^L \Tr_{\underline{\tau}^{(i)}}((X_j)_{j\in f^{-1}(i)}) \bigg( \prod_{s=1}^r d_s^{\#(\alpha_s^{-1}(\sqcup_i \sigma_s^{(i)}))} \bigg)\prod_{i=1}^L {\rm Wg}^{(U)}({\underline{\tau}^{(i)}},\underline{\sigma}^{(i)}),
\end{align*}
see \cref{fig:Wg-local-unitarily-invariant}.
Therefore, \cref{eq-LocUITensorMoments} is obtained. The formula \cref{eq-LocOITensorMoments} follows verbatim.
\end{proof}

\begin{figure}[!htb]
    \centering
    \includegraphics[width=0.9\linewidth]{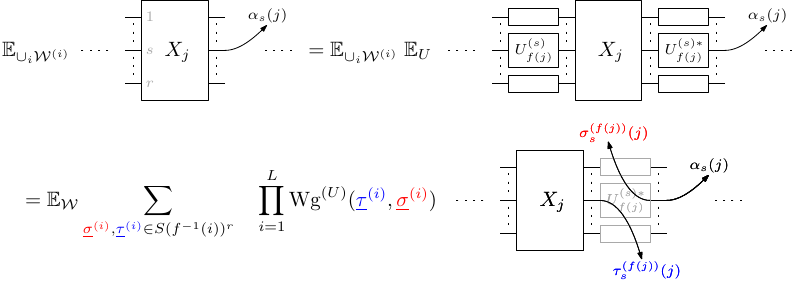}
    \caption{Using the local unitary invariance of the families $\mathcal W^{(i)}$, one can replace each variable $X_j \in \mathcal W^{(f(j))}$ by $U_{f(j)} X_j U_{f(j)}^*$, where the $U_i$'s are tensor products of independent Haar-distributed random unitary matrices that are also independent from the $X$'s. Performing the Weingarten integration with respect to all the Haar unitary matrices results in a sum over diagrams indexed by several tuples of permutations.}
    \label{fig:Wg-local-unitarily-invariant}
\end{figure}

The main theorem of this section is following.

\begin{theorem} \label{thm-locui-tensorfree}
Let $\W_N^{(1)},\ldots ,\W_N^{(L)}$ be independent families of $D_N\times D_N$ random matrices such that:
\begin{enumerate}
    \item Each family, except possibly one, is LUI.

    \item Each family {converges in tensor distribution}, i.e., the limit
        $$\lim_{N\to \infty} \E\left[\tr_{\underline{\alpha}}(X_1^{(i)},\ldots, X_p^{(i)})\right]=\lim_{N\to \infty} \frac{1}{d_1^{\#\alpha_1}\cdots d_r^{\#\alpha_r}}\E\left[\Tr_{\underline{\alpha}}(X_1^{(i)},\ldots, X_p^{(i)})\right]$$
    exists for all $i=1,\ldots, L$, $X_j^{(i)}\in \W_N^{(i)}$, and $\underline{\alpha}\in (S_p)^r$.

    \item Each family satisfies the tensor factorization property \cref{eq-condition-TensorFact}.
\end{enumerate}
Then $\W_N^{(1)},\ldots \W_N^{(L)}$ are {asymptotically tensor freely independent} as $N\to \infty$.
In particular, the family $\bigcup_{i=1}^L \W_N^{(i)}$ satisfies the tensor factorization property \cref{eq-condition-TensorFact}.
\end{theorem}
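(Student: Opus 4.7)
The plan is to establish asymptotic tensor freeness by verifying the joint moment formula \cref{eq-TensorFreeSubsetMoment} of \cref{cor-TensorFreeSubsetMoment}, i.e.\ to show that for every $\underline{\alpha}\in (S_p)^r$ and $x_j\in \W^{(f(j))}$ with $f:[p]\to [L]$,
\begin{equation*}
    \lim_{N\to\infty}\E\bigl[\tr_{\underline{\alpha}}(X_1,\ldots,X_p)\bigr] \;=\; \sum_{\substack{\underline{\beta}\in S_{NC}(\underline{\alpha})\\ \bigvee_{\mathcal P}\Pi(\beta_s)\leq \ker f}} \prod_{i\in f([p])}\kappa_{\underline{\beta}|_{f^{-1}(i)}}\bigl((x_j)_{j\in f^{-1}(i)}\bigr),
\end{equation*}
where each $\underline{\beta}|_{f^{-1}(i)}$ makes sense because the join condition forces $\Pi(\beta_s)\leq \ker f$ for every $s\in [r]$. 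Once this identity is established, \cref{cor-TensorFreeSubsetMoment} immediately yields asymptotic tensor freeness, and the tensor factorization property for $\bigcup_i \W_N^{(i)}$ follows from the multiplicativity of the tensor free cumulants.

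The starting point is the local-unitary Weingarten expansion \cref{eq-LocUITensorMoments}. In the case where one family, say $\W_N^{(L)}$, is not LUI, I would first condition on $\W_N^{(L)}$, apply the LUI Weingarten formula to the remaining families over indices $j$ with $f(j)\neq L$, and then use the tensor factorization property of $\W_N^{(L)}$ combined with the hypothesis that each family converges in tensor distribution; this reduces the argument to essentially the same bookkeeping as in the fully LUI case. After normalising by $D_N^{\#\alpha_1\cdots\#\alpha_r}$, the exponent of $d_s$ attached to a term indexed by tuples $(\underline{\sigma}^{(i)},\underline{\tau}^{(i)})$ becomes, after applying the Weingarten asymptotic \cref{eq-WeinAsymp1} to each factor and using $\#\sigma+|\sigma|=p$,
\begin{equation*}
    -\Bigl[\sum_{i}|\tau_s^{(i)}| + \sum_i |(\tau_s^{(i)})^{-1}\sigma_s^{(i)}|\Bigr] + |\alpha_s| - \Bigl|\alpha_s^{-1}\,{\textstyle\bigsqcup_i}\sigma_s^{(i)}\Bigr|.
\end{equation*}
By two successive applications of the triangle inequality for $|\cdot|$ on $S_p$, namely $|\tau_s^{(i)}|+|(\tau_s^{(i)})^{-1}\sigma_s^{(i)}|\geq |\sigma_s^{(i)}|$ and $\sum_i|\sigma_s^{(i)}| + |\alpha_s^{-1}\bigsqcup_i\sigma_s^{(i)}|\geq |\alpha_s|$, this exponent is nonpositive, and it vanishes precisely when $\underline{\tau}^{(i)}\leq \underline{\sigma}^{(i)}$ for every $i$ and $\bigsqcup_i \sigma_s^{(i)}\leq \alpha_s$ for every $s$.

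The surviving terms therefore contribute, using the leading coefficient $\prod_i\Mob\bigl((\underline{\tau}^{(i)})^{-1}\underline{\sigma}^{(i)}\bigr)$ from \cref{eq-WeinAsymp1} together with the tensor factorization property applied inside the expectation,
\begin{equation*}
    \sum_{\substack{\underline{\sigma}^{(i)}\\ \bigsqcup_i \sigma_s^{(i)}\leq \alpha_s\,\forall s}}\prod_{i\in f([p])}\sum_{\underline{\tau}^{(i)}\leq \underline{\sigma}^{(i)}}\varphi_{\underline{\tau}^{(i)}}\bigl((x_j)_{j\in f^{-1}(i)}\bigr)\,\Mob\bigl((\underline{\tau}^{(i)})^{-1}\underline{\sigma}^{(i)}\bigr),
\end{equation*}
and the inner sum collapses to $\kappa_{\underline{\sigma}^{(i)}}((x_j)_{j\in f^{-1}(i)})$ by the tensor cumulant–moment inversion \cref{eq:tensor-free-cumulant-moment}. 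Setting $\beta_s:=\bigsqcup_i \sigma_s^{(i)}$ gives a bijection between admissible tuples $\underline{\sigma}^{(i)}$ and tuples $\underline{\beta}\in S_{NC}(\underline{\alpha})$ with $\Pi(\beta_s)\leq \ker f$ (equivalently $\bigvee_{\mathcal P}\Pi(\beta_s)\leq \ker f$), which is exactly the index set appearing in \cref{eq-TensorFreeSubsetMoment}. The main obstacle is the careful component-wise analysis of the two triangle inequalities above and the verification of the equality cases, together with the slightly delicate reduction step that absorbs the possibly non-invariant family via conditioning and the tensor factorization property; once these are handled, the combinatorial identification of the surviving terms with the tensor free moment–cumulant expansion is automatic.
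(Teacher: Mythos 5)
Your proposal is correct and follows essentially the same route as the paper: expand via the local-unitary Weingarten formula \cref{eq-LocUITensorMoments}, bound the $d_s$-exponents by the two triangle inequalities whose equality cases are exactly $\underline{\tau}^{(i)}\leq\underline{\sigma}^{(i)}$ and $\bigsqcup_i\sigma_s^{(i)}\leq\alpha_s$, use independence plus the tensor factorization property to factor and control the coefficients, collapse the inner sum to tensor free cumulants via \cref{eq:tensor-free-cumulant-moment}, and conclude with \cref{cor-TensorFreeSubsetMoment}. The only (cosmetic) difference is the reduction for the possibly non-LUI family: the paper conjugates the whole union by a fresh independent Haar local unitary (using the LU-invariance of $\tr_{\underline{\alpha}}$ and the LUI analogue of \cref{prop-IndepUI}) so that all families become LUI, which is cleaner than your conditioning argument, but both work.
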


\begin{proof}
We may assume that every family $\W_N^{(i)}$ is LUI. Indeed, $\bigcup_{i}\W_N^{(i)}$ has the same joint tensor distribution with $\bigcup_{i}(U\W_N^{(i)}U^*)$ where $U=\bigotimes_{s=1}^r U_s$ is a Haar random local-unitary matrix independent from $\bigcup_{i}\W_N^{(i)}$, thanks to the local-unitary invariance of $\tr_{\underline{\alpha}}$'s and the local-unitary analogue of \cref{prop-IndepUI}. For $\underline{\alpha}\in (S_p)^r$, $f:[p]\to [L]$, and $X_j\in \W_N^{(f(j))}$ for $j\in [p]$, suppose that $(X_j)_{j\in f^{-1}(i)}\to (x_j)_{j\in f^{-1}(i)}$ in tensor distribution for each $i\in f([p])$. Then we can apply \cref{eq-LocUITensorMoments} and the estimate of M\"{o}bius function (\cref{eq-WeinAsymp1}) to have
\small
\begin{align*}
    &\E[\tr_{\underline{\alpha}}(X_1,\ldots, X_p)]\\
    &= \sum_{\sigma_s^{(i)}, \tau_s^{(i)}\in S(f^{-1}(i))} \E\bigg[\prod_{i\in f([p])}\tr_{\underline{\tau}^{(i)}}((X_j)_{j\in f^{-1}(i)})\bigg] \prod_{s=1}^r \big(d_s^{\#(\alpha_s^{-1}(\sqcup_i \sigma_s^{(i)}))+\#(\sqcup_i \tau_s^{(i)})-\#\alpha_s}\big)\\
    &\qquad\qquad\qquad\qquad\qquad\qquad\qquad \times \prod_{i\in f([p])} \Big(\prod_{s=1}^r d_s^{-|f^{-1}(i)|-|{\tau_s^{(i)}}^{-1}\sigma_s^{(i)}|} \Big) \Big(\Mob\big({{\underline{\tau}^{(i)}}}^{-1}\underline{\sigma}^{(i)}\big)+ o(1)\Big)\\
    &= \sum_{\sigma_s^{(i)}, \tau_s^{(i)}\in S(f^{-1}(i))} \bigg(\prod_{i\in f([p])}\E\left[\tr_{\underline{\tau}^{(i)}}((X_j)_{j\in f^{-1}(i)})\right]\bigg) \\
    &\qquad\qquad\qquad\qquad \times \prod_{s=1}^r d_s^{-(|\alpha_s^{-1}(\sqcup_i \sigma_s^{(i)})|+|\sqcup_i ({\tau_s^{(i)}}^{-1}\sigma_s^{(i)})|+|\sqcup_i \tau_s^{(i)}|-|\alpha_s|)} \bigg(\prod_{i\in f([p])} \Mob\big({{\underline{\tau}^{(i)}}}^{-1}\underline{\sigma}^{(i)}\big)+o(1) \bigg)\\
    & \to \sum_{\substack{\sigma_s^{(i)}\in S(f^{-1}(i)) \\ \sqcup_i \underline{\sigma}^{(i)}\in S_{NC}(\underline{\alpha})}} \prod_{i} \sum_{\underline{\tau}^{(i)}\in S_{NC}(\underline{\sigma}^{(i)})}  \varphi_{\underline{\tau}^{(i)}}((x_j)_{j\in f^{-1}(i)}) \Mob\big({{\underline{\tau}^{(i)}}}^{-1}\underline{\sigma}^{(i)}\big) \\
    &= \sum_{\substack{\sigma_s^{(i)}\in S(f^{-1}(i)) \\ \sqcup_i \underline{\sigma}^{(i)}\in S_{NC}(\underline{\alpha})}} \prod_i \kappa_{\underline{\sigma}^{(i)}}((x_j)_{f^{-1}(i)})=\sum_{\substack{\underline{\beta}\in S_{NC}(\underline{\alpha}) \\ \bigvee_{s=1}^r\Pi(\beta_s)\leq {\ker}f}}\prod_{i\in f([p])} \kappa_{\underline{\beta}\big|_{f^{-1}(i)}}((x_j)_{j\in f^{-1}(i)}),
\end{align*}
\normalsize
Here we need the independence of $\W_N^{(i)}$'s and their factorization property to guarantee that
    $$\E\bigg[\prod_{i\in f([p])}\tr_{\underline{\tau}^{(i)}}((X_j)_{j\in f^{-1}(i)})\bigg]=\prod_{i\in f([p])}\E\left[\tr_{\underline{\tau}^{(i)}}((X_j)_{j\in f^{-1}(i)})\right]\to \prod_{i}\varphi_{\underline{\tau}^{(i)}}((x_j)_{j\in f^{-1}(i)})$$
for arbitrary $(\underline{\tau}^{(i)})_i$ (note that each $\underline{\tau}^{(i)}$ does not connect any two matrices from different families), and thus all the terms, except for the case $\sqcup_i\,\underline{\tau}^{(i)} \leq \sqcup_i\, \underline{\sigma}^{(i)} \leq  \underline{\alpha}$, converge to $0$ as $N\to \infty$. Moreover, we can identify $\beta_s=\sqcup_i \sigma_s^{(i)}$ in the last equality. This induces the conclusion by \cref{cor-TensorFreeSubsetMoment}.
\end{proof}

\begin{remark}
\cref{thm-locui-tensorfree} indeed generalizes \cref{thm-UIasympfree} since, for unitary invariant random matrices, the asymptotic freeness becomes the same notion with asymptotic tensor freeness (\cref{thm-ui-tensorfree}).
\end{remark}

If $U=\bigotimes_{s=1}^r U_s$ is a Haar random local-unitary matrix, then it is straightforward to show that $\{U,U^*\}$ converges in tensor distribution to $\{u_{\otimes}, u_{\otimes}^*\}$, where $u_{\otimes}=u^{\otimes r}$ is a tensor Haar unitary described in \cref{ex-TensorHaarUnitaries}. Since $\{U,U^*\}$ is LUI, \cref{thm-locui-tensorfree} directly gives the following tensor freeness result.

\begin{corollary} \label{cor-TensorHaarTensorfree}
Suppose $\W_N$ is a family of $D_N\times D_N$ random matrices which satisfies the tensor factorization property \cref{eq-condition-TensorFact} and converges in tensor distribution. Let $U_1,\ldots, U_L$ are independent $D_N\times D_N$ Haar random local-unitaries independent from $\W_N$. Then $\W_N, \{U_1,U_1^*\}, \ldots, \{U_L,U_L^*\}$ jointly converges in tensor distribution to tensor free families $\W,\{u_{\otimes}^{(1)},{u_{\otimes}^{(1)\,*}}\}, \ldots, \{u_{\otimes}^{(L)},{u_{\otimes}^{(L)\,*}}\}$ where $\W_N \xrightarrow{\otimes\text{-distr}} \W$ and each $u_{\otimes}^{(i)}$ is a tensor product Haar unitary element.
\end{corollary}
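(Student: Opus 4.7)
The plan is to verify that the hypotheses of \cref{thm-locui-tensorfree} are satisfied by the families $\W_N, \{U_1, U_1^*\}, \ldots, \{U_L, U_L^*\}$. These families are independent by assumption; moreover, $\W_N$ already has the required convergence and factorization properties. Hence the task reduces to establishing, for each $i \in [L]$, the three facts: (a) $\{U_i, U_i^*\}$ is LUI, (b) $\{U_i, U_i^*\}$ converges in tensor distribution to $\{u_\otimes^{(i)}, u_\otimes^{(i)*}\}$, and (c) $\{U_i, U_i^*\}$ satisfies the tensor factorization property \cref{eq-condition-TensorFact}.

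For (a), writing $U_i = \bigotimes_{s=1}^r U_i^{(s)}$ with $U_i^{(s)} \in \mathcal U_{d_s}$ independent and Haar-distributed, and given any local-unitary $V = \bigotimes_s V_s$, the left-right invariance of each Haar measure shows that $V U_i V^* = \bigotimes_s V_s U_i^{(s)} V_s^*$ is equal in distribution to $U_i$, and similarly for $V U_i^* V^*$. For (b), I would compute the tensor moment
\[
\E\!\left[\tr_{\underline{\alpha}}\big(U_i^{\varepsilon(1)}, \ldots, U_i^{\varepsilon(p)}\big)\right]
= \prod_{s=1}^r \E\!\left[\tr_{\alpha_s}\big((U_i^{(s)})^{\varepsilon(1)}, \ldots, (U_i^{(s)})^{\varepsilon(p)}\big)\right],
\]
the factorization following from the tensor-product structure of the unnormalized trace invariant and the independence across $s$. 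Each factor is a classical moment of a single Haar unitary of size $d_s$, which, by the standard unitary Weingarten estimates used in \cref{prop-ui-tensormoment}, converges as $d_s \to \infty$ to $\prod_{c \in \alpha_s} \mathds 1_{\sum_{j \in c} \varepsilon(j)=0}$. The limiting product matches exactly the tensor distribution of the tensor Haar unitary element $u_\otimes^{(i)}$ described in \cref{ex-TensorHaarUnitaries}.

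For (c), the same tensor-product factorization yields, for $\underline{\alpha} \in S_p^r$, $\underline{\beta} \in S_q^r$, and words $X_1,\ldots, X_p, Y_1, \ldots, Y_q$ in $U_i, U_i^*$,
\[
\E\!\left[\tr_{\underline{\alpha} \sqcup \underline{\beta}}(\underline{X}, \underline{Y})\right]
= \prod_{s=1}^r \E\!\left[\tr_{\alpha_s \sqcup \beta_s}\big((X^{(s)}_j), (Y^{(s)}_k)\big)\right],
\]
where $X_j^{(s)}, Y_k^{(s)}$ are the $s$-th tensor factors (monomials in $U_i^{(s)}, U_i^{(s)*}$). Since each classical factor admits asymptotic factorization (Haar unitaries satisfy the bounded cumulants property \cref{eq-condition-BCP'}), and each individual factor remains bounded in $d_s$, the product factorizes asymptotically as well, yielding \cref{eq-condition-TensorFact}.

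The only subtle step will be (c): one must ensure that the small errors ($O(d_s^{-2})$ from each classical factor) propagate correctly through the product despite the absence of a uniform bound on the number of factors; this is handled by controlling each factor uniformly by its limit plus a vanishing term and expanding. With (a)--(c) in hand, \cref{thm-locui-tensorfree} applies directly to $\W_N, \{U_1, U_1^*\}, \ldots, \{U_L, U_L^*\}$, producing the joint convergence in tensor distribution to $\W, \{u_\otimes^{(1)}, u_\otimes^{(1)*}\}, \ldots, \{u_\otimes^{(L)}, u_\otimes^{(L)*}\}$ with these families tensor freely independent.
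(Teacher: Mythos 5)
Your proposal is correct and follows essentially the same route as the paper, which simply observes that $\{U_i,U_i^*\}$ is LUI and converges in tensor distribution to $\{u_\otimes^{(i)},u_\otimes^{(i)*}\}$ and then invokes \cref{thm-locui-tensorfree}; you merely spell out the verification of the hypotheses in more detail. The ``subtle step'' you flag in (c) is in fact harmless: the product over $s$ has exactly $r$ factors with $r$ fixed, and each factor is bounded, so the $o(1)$ errors from the classical factorization property of each Haar unitary combine trivially.
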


\begin{remark} \label{rmk-LocUIAlmostSure}
Let $d_{s,N}\equiv N$ and suppose that $\W_N=(X_{j,N})_{j\in J}$ is closed under the adjoint $X\mapsto X^*$ and  $\W_N\xrightarrow{\otimes\text{-distr}} \W=(x_j)_{j\in J}$ \emph{almost surely}, i.e., for every $\underline{\alpha}\in (S_p)^r$ and $j_1,\ldots, j_p\in J$, we have
    $$\tr_{\underline{\alpha}}(X_{j_1,N},\ldots, X_{j_p,N})\to \varphi_{\underline{\alpha}}(x_{j_1},\ldots, x_{j_p})$$
almost surely as $N\to \infty$. Note that here we impose no conditions on expectation values. Then almost surely as $N\to \infty$,
    $$\W_N, \{U_1,U_1^*\}, \ldots, \{U_L,U_L^*\} \xrightarrow{\otimes\text{-distr}} \W,\{u_{\otimes}^{(1)},{u_{\otimes}^{(1)\,*}}\}, \ldots, \{u_{\otimes}^{(L)},{u_{\otimes}^{(L)\,*}}\}.$$
Indeed, we may assume that $\W_N$ is deterministic after conditioning on the event $\{\om:\W_N(\om)\xrightarrow{\otimes\text{-distr}}\W\}$ (we refer to \cite[Sec 3]{Mal12} for the formal argument). Then the proof of \cref{thm-locui-tensorfree} shows that the family $\widetilde{\W}_N= \W_N\cup \Big(\bigcup_{i=1}^L \{U_i,U_i^*\}\Big)$ satisfies the condition \cref{eq-StrTensorFact}, thanks to the bounded cumulants property of Haar random unitary matrices. Therefore, every joint tensor moment of $\widetilde{\W}_N$ converges almost surely.

\end{remark}

In order to obtain the LOI analogue of \cref{thm-locui-tensorfree}, we further need conditions on partial transposes of random matrices. For $D=d_1\cdots d_r$ and a $D\times D$ matrix $X\in \M{d_1}\otimes\cdots \otimes \M{d_r}$, let us denote its \textit{partial transposes} by
\begin{equation} \label{eq-PartialTransp}
    X^{\underline{t}}:=(\Ec_{t_1}\otimes \cdots \otimes \Ec_{t_r})(X)
\end{equation}
for $\underline{t}=(t_1,\ldots, t_r)\in \{\pm 1\}^r$, where $\Ec_1=\id$ and $\Ec_{-1}=\top$.  For example, we have $X^{(1,\ldots, 1)}=X$ and $X^{(-1,\ldots, -1)}=X^{\top}$. Note that we abuse the notation by using the same symbols $\id$ and  $\top$ to denote the identity and the transpose maps, respectively, on the matrix algebras even when they are of different sizes. Moreover, for a family $\W$ of $D\times D$ matrices, let us denote by $\W^{\underline{t}}:=\{X^{\underline{t}}:X\in \W\}$ for $\underline{t}\in \{\pm 1\}^r$, and let us simply denote by $\W^{\top}:=\W^{(-1,\ldots, -1)}$ as before. 

\begin{theorem} \label{thm-LocOI-tensorfree}
Let $\W_N^{(1)},\ldots ,\W_N^{(L)}$ be independent families of $D_N\times D_N$ random matrices such that each family (except possibly one) is LOI.

\begin{enumerate}
    \item If each family $\W_N^{(i)}$ satisfies the tensor factorization property \cref{eq-condition-TensorFact}, converges in tensor distribution, and if the family of its partial transposes $\bigcup_{\underline{t}\in \{\pm 1\}^r}(\W_N^{(i)})^{\underline{t}}$ 
    satisfies the bounded tensor moment condition \cref{eq-condition-TensorBdd},
    then $\W_N^{(1)},\ldots \W_N^{(L)}$ are asymptotically tensor free as $N\to \infty$.
    
    \item If each family $\bigcup_{\underline{t}\in \{\pm 1\}^r}(\W_N^{(i)})^{\underline{t}}$ satisfies the tensor factorization property \cref{eq-condition-TensorFact} and {converges in tensor distribution}, then $\bigcup_{\underline{t}\in \{\pm 1\}^r}(\W_N^{(1)})^{\underline{t}},\ldots , \bigcup_{\underline{t}\in \{\pm 1\}^r}(\W_N^{(L)})^{\underline{t}}$ are asymptotically tensor free as $N\to \infty$.
\end{enumerate}

\end{theorem}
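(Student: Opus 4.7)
The plan is to mimic the strategy of \cref{thm-locui-tensorfree}, replacing the graphical unitary Weingarten formula by its orthogonal counterpart and using the pairing machinery from \cref{lem-pairings}. First, as in the LUI case, one reduces to the situation where \emph{every} family $\W_N^{(i)}$ is LOI by conjugating by an independent family of Haar local-orthogonal matrices (the LOI analogue of \cref{prop-IndepUI}); the tensor trace invariants $\tr_{\underline{\alpha}}$ are invariant under such conjugation, so this does not change the joint tensor moments. Given a function $f:[p]\to [L]$ and $X_j\in \W_N^{(f(j))}$, one then applies formula \cref{eq-LocOITensorMoments} together with the asymptotic estimate \cref{eq-WeinAsymp2} of the orthogonal Weingarten function.

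The key combinatorial step is to isolate the dominant contribution as $N\to\infty$. Writing $\Wg^{(O)}(\underline{\rho}^{(i)},\underline{\pi}^{(i)})$ via \cref{eq-WeinAsymp2}, the total power of each $d_s$ becomes, up to $|\alpha_s|$,
\[
-\tfrac{1}{2}\sum_{i\in f([p])}\bigl(|\rho_s^{(i)}\delta^{(i)}|+|(\rho_s^{(i)}\delta^{(i)})^{-1}(\pi_s^{(i)}\delta^{(i)})|\bigr)-\tfrac{1}{2}\bigl|(\bigsqcup_i \pi_s^{(i)}\delta^{(i)})^{-1}\alpha_s\delta\alpha_s^{-1}\delta\bigr|,
\]
which is nonpositive and vanishes exactly when $\bigsqcup_i \rho_s^{(i)}\delta^{(i)}\leq \bigsqcup_i \pi_s^{(i)}\delta^{(i)}\leq \alpha_s\delta\alpha_s^{-1}\delta$. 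Applying \cref{lem-pairings}(2)-(3) block by block, each $\pi_s^{(i)}$ admits a unique representation $\pi_s^{(i)}=\eps_s^{(i)}\sigma_s^{(i)}\delta^{(i)}(\sigma_s^{(i)})^{-1}\eps_s^{(i)}$ with $\sigma_s^{(i)}\in S_{NC}(\alpha_s|_{f^{-1}(i)})$, and similarly $\rho_s^{(i)}=\eps_s^{(i)}\tau_s^{(i)}\delta^{(i)}(\tau_s^{(i)})^{-1}\eps_s^{(i)}$ with $\tau_s^{(i)}\in S_{NC}(\sigma_s^{(i)})$. In particular, $\bigsqcup_i\underline{\sigma}^{(i)}\in S_{NC}(\underline{\alpha})$. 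Using \cref{eq-OITraceInvTransp} one rewrites each tensor trace invariant over pairings as a tensor trace invariant over permutations applied to \emph{partial transposes} $X_j^{\underline{f}(j)}$ encoded by the signs $\eps_s^{(i)}$; the corresponding M\"obius factor reads $\Mob((\underline{\tau}^{(i)})^{-1}\underline{\sigma}^{(i)})$ by \cref{eq-MobPairing}.

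For part (1), the hypothesis that $\bigcup_{\underline{t}}(\W_N^{(i)})^{\underline{t}}$ satisfies the bounded tensor moment condition ensures that the expectations $\E[\tr_{\bigsqcup_i\underline{\tau}^{(i)}\vee\delta}(\underline{X})]$ remain bounded, so all subdominant terms vanish in the limit. For the geodesic terms, independence of the $\W_N^{(i)}$ combined with the tensor factorization property \cref{eq-condition-TensorFact} and convergence in tensor distribution of each family identifies the limit as
\[
\sum_{\substack{\underline{\sigma}^{(i)}\in S(f^{-1}(i))^r\\ \bigsqcup_i\underline{\sigma}^{(i)}\in S_{NC}(\underline{\alpha})}} \prod_{i\in f([p])} \sum_{\underline{\tau}^{(i)}\in S_{NC}(\underline{\sigma}^{(i)})}\varphi_{\underline{\tau}^{(i)}}\bigl((x_j)_{j\in f^{-1}(i)}\bigr)\,\Mob\bigl((\underline{\tau}^{(i)})^{-1}\underline{\sigma}^{(i)}\bigr),
\]
where the key point is that in the geodesic regime, every $\eps_s^{(i)}$ must be trivial (otherwise the corresponding $\tau$-block would mix $[p]$ and $[-p]$ indices across cycles of $\alpha_s\delta\alpha_s^{-1}\delta$ in a non-geodesic way), so no partial transpose actually appears in the limit. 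Recognizing the inner M\"obius sum as the tensor free cumulant $\kappa_{\underline{\sigma}^{(i)}}$ and reindexing $\underline{\beta}=\bigsqcup_i\underline{\sigma}^{(i)}$ gives exactly formula \cref{eq-TensorFreeSubsetMoment}, so tensor freeness follows from \cref{cor-TensorFreeSubsetMoment}.

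For part (2), since each $\bigcup_{\underline{t}}(\W_N^{(i)})^{\underline{t}}$ is assumed to converge in tensor distribution and satisfy the tensor factorization property, the bounded tensor moment condition is automatic and the geodesic restriction on $\eps$ is no longer needed: every term survives and assembles, as above, into the moment-cumulant expansion \cref{eq-TensorFreeSubsetMoment} applied to the enlarged families. The main obstacle is carefully bookkeeping the signs $\eps_s^{(i)}$ arising from \cref{lem-pairings} and checking that in part (1) they must be trivial on the geodesics, which is exactly where the asymmetry with the orthogonal analogue of Voiculescu's theorem (\cref{thm-OIAsympFree}) manifests and explains why the hypothesis on partial transposes differs between the two parts.
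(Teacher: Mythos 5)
Your proposal follows essentially the same route as the paper: reduce to the all-LOI case, expand via the orthogonal Weingarten formula \cref{eq-LocOITensorMoments} with the estimate \cref{eq-WeinAsymp2}, identify the dominant terms as the geodesic configurations $\rho_s\delta\leq\pi_s\delta\leq\alpha_s\delta\alpha_s^{-1}\delta$ via \cref{lem-pairings}, observe that on the geodesic the pairings are of the form $\sigma\delta\sigma^{-1}$ (trivial $\eps$, so no partial transposes survive), use the bounded tensor moment hypothesis to kill the subleading terms, and conclude with \cref{cor-TensorFreeSubsetMoment}. The only imprecision is in part (2): the non-geodesic terms still vanish there (the Weingarten asymptotics are unchanged), so it is not that ``every term survives''; the clean statement, which is what the paper does, is that part (2) is simply part (1) applied to the enlarged families $\bigcup_{\underline{t}}(\W_N^{(i)})^{\underline{t}}$, which are LOI, coincide with their own sets of partial transposes, and satisfy the bounded tensor moment condition automatically from convergence in tensor distribution.
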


\begin{proof}
Similarly to \cref{prop-oi-tensormoment}, we may prove only the assertion (1). Moreover, similarly as in \cref{thm-locui-tensorfree}, we may assume that every family is LOI. Then we can apply \cref{eq-LocOITensorMoments} and the estimate of orthogonal Weingarten function (\cref{eq-WeinAsymp2}) to have
\begin{align*}
    &\E[\tr_{\underline{\alpha}}(X_1,\ldots, X_p)]\\
    &= \sum_{\pi_s^{(i)}, \rho_s^{(i)} \in \mathcal{P}_2(\pm f^{-1}(i))} \bigg(\prod_{i\in f([p])}\E\left[\tr_{\underline{\rho}\vee\delta^{(i)}}((X_j)_{j\in f^{-1}(i)})\right]\bigg)\, \Big( \prod_{i\in f([p])} \Mob\big({{\underline{\rho}^{(i)}}}\vee \underline{\pi}^{(i)}\big)+ o(1)\Big) \\
    &\qquad\qquad\qquad\qquad \times \prod_{s=1}^r \Big(d_s^{\#(\delta\vee(\sqcup_i \rho_s^{(i)}))-\#\alpha_s}\, d_s^{\#(\alpha_s\delta\alpha_s^{-1}\vee (\sqcup_i \pi_s^{(i)}))}\, \prod_{i\in f([p])}  d_s^{-|f^{-1}(i)|-|{\rho_s^{(i)}} \pi_s^{(i)}|/2} \Big).
\end{align*}
\normalsize
After simply writing $\pi_s:=\sqcup_i \pi_s^{(i)}$ and $\rho_s:=\sqcup_i \rho_s^{(i)}$, and applying \cref{lem-PairingSup,lem-pairings}, the whole exponent of $d_s$ becomes
\begin{align*}
    & \#(\delta \vee \rho_s)-\#\alpha + \#(\alpha_s\delta\alpha_s^{-1}\vee \pi_s)-p-|\rho_s\pi_s|/2\\
    &=\frac{1}{2}(2p-|\rho_s\delta|)-(p-|\alpha_s|)+\frac{1}{2}(2p-|\alpha_s\delta\alpha_s^{-1}\delta)\cdot (\pi_s\delta)^{-1}|-p-\frac{1}{2}|(\pi_s\delta)\cdot (\rho_s\delta)^{-1}|\\
    &=-\frac{1}{2}\Big(|(\alpha_s\delta\alpha_s^{-1}\delta)\cdot (\pi_s\delta)^{-1}|+ |(\pi_s\delta)\cdot (\rho_s\delta)^{-1}|+ |\rho_s\delta|-|\alpha_s\delta\alpha_s^{-1}\delta| \Big) .
\end{align*}
In particular, the above is always non-positive and attains zero if and only if there exist $\sigma_s=\bigsqcup_{i}\sigma_s^{(i)}, \tau_s=\bigsqcup_{i}\tau_s^{(i)} \in S_p$ such that $\pi_s\delta=\sigma_s\delta\sigma_s^{-1}\delta$, $\rho_s\delta=\tau_s\delta\tau_s^{-1}\delta$, and $\tau_s \leq \sigma_s \leq \alpha_s$, by \cref{lem-pairings}. In this case, we also have $\tr_{\underline{\rho}\vee\delta^{(i)}}=\tr_{\underline{\tau}^{(i)}}$ and $\Mob(\rho_s\vee \pi_s)=\Mob(\tau_s^{-1}\sigma_s)$. Consequently, we have
\begin{align*}
    \lim_{N\to \infty} \E \left[\tr_{\underline{\alpha}}(X_1,\ldots, X_p)\right]&=\sum_{\substack{\sigma_s^{(i)}\in S(f^{-1}(i)) \\ \sqcup_i \sigma_s^{(i)} \leq \alpha_s}} \prod_{i} \sum_{\tau_s^{(i)}\in S_{NC}(\sigma_s^{(i)})}  \varphi_{\underline{\tau}^{(i)}}((x_j)_{j\in f^{-1}(i)}) \Mob\big({{\underline{\tau}^{(i)}}}^{-1}\underline{\sigma}^{(i)}\big)\\
    &= \sum_{\substack{\sigma_s^{(i)}\in S(f^{-1}(i)) \\ \sqcup_i \sigma_s^{(i)} \leq \alpha_s}} \prod_i \kappa_{\underline{\sigma}^{(i)}}((x_j)_{f^{-1}(i)}),
\end{align*}
similarly as in the proof of \cref{thm-locui-tensorfree}. Note that we need the boundedness condition for whole faimily $\bigcup_{\underline{t}}(\W_N^{(i)})^{\underline{t}}$ to guarantee that all the terms in the sum except for the case
    $${\rho_s\delta} \leq {\pi_s \delta} \leq {\alpha_s \delta\alpha_s^{-1}\delta} \quad \forall\, s\in [r]$$
converges to $0$ as $N\to \infty$.
\end{proof}

In the following section, we will show that local-unitary invariant random matrices having limit tensor distribution automatically have a joint limit tensor distribution between all partial transposes (\cref{thm-LocUITranspose}). Therefore, we can say that \cref{thm-LocOI-tensorfree} generalizes \cref{thm-locui-tensorfree}.

Let $O=\bigotimes_{s=1}^r O_s$ be a Haar random local-orthogonal matrix. Since $\{O_s,O_s^{\top}\}\to \{u,u^*\}=\{u,u^{-1}\}$ jointly in distribution for each $s\in [r]$ where $u$ is a Haar unitary element, one has that $\{O^{\underline{t}}\}_{\underline{t}\in \{\pm 1\}^r}\to \{u_{\otimes}^{\underline{t}}\}_{\underline{t}\in \{\pm 1\}^r}$ in tensor distribution, where $u_{\otimes}^{\underline{t}}:=u^{t_1}\otimes \cdots \otimes u^{t_r}$ (we also refer to \cref{prop-LuiTranspose} for tensor distribution of each $u^{\underline{t}}_{\otimes}$). Similarly as in \cref{cor-TensorHaarTensorfree,rmk-LocUIAlmostSure}, we obtain the following tensor freeness result for Haar random local-orthogonal matrices.

\begin{corollary}
Suppose $\W_N$ is a family of $D_N\times D_N$ random matrices which satisfies the tensor factorization property \cref{eq-condition-TensorFact} and converges in tensor distribution, and suppose that $\bigcup_{\underline{t}\in \{\pm 1\}^r}\W_N^{\underline{t}}$ satisfies the bounded tensor moments property \cref{eq-condition-TensorBdd}. Then the $(L+1)$ families
\begin{equation} \label{eq-TensorHaarOrthogonal}
    \W_N, \{O_1^{\underline{t}}:\underline{t}\in \{\pm 1\}^r\}, \ldots, \{O_L^{\underline{t}}:\underline{t}\in \{\pm 1\}^r\}
\end{equation}
are asymptotically tensor free as $N\to\infty$, where $O_1,\ldots, O_L$ are independent Haar random local-orthogonal matrices. Furthermore, if $\W_N$ converges in tensor distribution almost surely, then the same convergence in tensor distribution of \cref{eq-TensorHaarOrthogonal} holds almost surely.
\end{corollary}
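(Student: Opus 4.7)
The plan is to apply \cref{thm-LocOI-tensorfree} part (1) to the $(L+1)$ families, in close analogy with how \cref{cor-TensorHaarTensorfree} follows from \cref{thm-locui-tensorfree}. The first family $\W_N$ satisfies the required hypotheses by direct assumption: tensor factorization, convergence in tensor distribution, and bounded tensor moments on the set of its partial transposes. It remains to verify the corresponding conditions for each Haar local-orthogonal family $\W_N^{(i)} := \{O_i^{\underline{t}} : \underline{t} \in \{\pm 1\}^r\}$ and to check that the families are independent; this last point is clear since the $O_i$'s are independent of $\W_N$ and of each other by assumption.

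Writing $O_i = \bigotimes_{s=1}^r O_s^{(i)}$ as a tensor product of independent Haar orthogonals $O_s^{(i)} \in \mathcal{O}_{d_s}$, one checks that $\W_N^{(i)}$ is LOI by combining Haar invariance on each factor with the identity $V_s O_s^{(i)\top} V_s^\top = (V_s^\top O_s^{(i)} V_s)^\top$. Partial transposition only permutes the family, so $\bigcup_{\underline{t}}(\W_N^{(i)})^{\underline{t}} = \W_N^{(i)}$ and the bounded tensor moments condition is trivial from $\|O_i^{\underline{t}}\|_\infty = 1$. Convergence in tensor distribution follows from the multiplicativity of the tensor trace invariant on simple-tensor matrices,
\begin{equation*}
\tr_{\underline{\alpha}}\bigl(O_i^{\underline{t}^{(1)}}, \ldots, O_i^{\underline{t}^{(p)}}\bigr) = \prod_{s=1}^r \tr_{\alpha_s}\bigl(\Ec_{t_s^{(1)}}(O_s^{(i)}), \ldots, \Ec_{t_s^{(p)}}(O_s^{(i)})\bigr),
\end{equation*}
together with $O_s^{(i)\top} = (O_s^{(i)})^{-1}$ and the convergence of $\{O_s^{(i)}, O_s^{(i)\top}\}$ in $*$-distribution to $\{u_s^{(i)}, u_s^{(i)*}\}$ for a Haar unitary $u_s^{(i)}$. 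The limit is then identified with the tensor moments of a tensor Haar unitary via \cref{ex-TensorHaarUnitaries}. Finally, the tensor factorization property for $\W_N^{(i)}$ follows from factor-wise application of the bounded cumulants property of Haar orthogonals (the LOI counterpart of \cref{rmk-UITensorAlmostSure}). With every hypothesis verified, \cref{thm-LocOI-tensorfree} part (1) yields the asymptotic tensor freeness of the $(L+1)$ families.

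For the almost sure statement, the plan is to upgrade the tensor factorization to the sharper decay rate analogous to \cref{eq-StrTensorFact}. Under the almost sure convergence of $\W_N$ in tensor distribution, one conditions on the corresponding almost sure event and treats $\W_N$ as deterministic, following the template of \cref{rmk-LocUIAlmostSure}; the bounded cumulants property of Haar orthogonals then yields $O(d_{\min}^{-2})$ decay for mixed cumulants of joint trace invariants, and a Borel--Cantelli argument concludes. The main technical obstacle is propagating the $O(d_{\min}^{-2})$ variance bound through mixed tensor moments that simultaneously involve $\W_N$ and the Haar local-orthogonals, which requires a careful refinement of the Weingarten estimate underlying \cref{thm-LocOI-tensorfree}; the deterministic reduction on $\W_N$ is precisely what keeps this refinement tractable, since all remaining randomness is concentrated in the well-controlled Haar factors.
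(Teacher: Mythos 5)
Your proposal is correct and follows essentially the same route as the paper, which establishes the convergence $\{O^{\underline{t}}\}_{\underline{t}} \to \{u_\otimes^{\underline{t}}\}_{\underline{t}}$ in tensor distribution factor-by-factor and then invokes \cref{thm-LocOI-tensorfree} together with the arguments of \cref{cor-TensorHaarTensorfree} and \cref{rmk-LocUIAlmostSure}; your verification of the LOI, factorization, and bounded tensor moments hypotheses for the Haar local-orthogonal families, and the conditioning argument for the almost sure statement, are exactly the details the paper leaves implicit.
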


\section{Asymptotic tensor freeness of non-independent random matrices} \label{sec-TensorFreeNonIndep}

One natural question from \cref{thm-locui-tensorfree} is whether it is possible to find examples of random matrices that are \textit{not independent} but still exhibit asymptotic tensor free independence. Actually, we can give an affirmative answer using a rather ``trivial example'': if $X_N$ is a $N\times N$ random matrix which satisfies the factorization property and converges in distribution to $x\in (\A,\varphi)$, then we have
    $$X_N\otimes I_N,\; I_N\otimes X_N \xrightarrow{\otimes\text{-distr}} x\otimes 1_{\A},\; 1_{\A}\otimes x\in \big(\A^{\otimes 2},\varphi^{\otimes 2}, (\varphi_{\alpha_1}\otimes \varphi_{\alpha_2})\big),$$
where $x\otimes 1_{\A}$ and $1_{\A}\otimes x$ are tensor freely independent (and also classically independent) by \cref{prop:tensor-free-tensor-product}.

In this section, we additionally provide ``non-trivial'' examples in two aspects where asymptotic freeness has been observed in literature. In \cref{sec-PTTensorfree}, we show that \emph{partial transposes} of random matrices become tensor free independent in the limit of large dimensions, and in \cref{sec-EmbeddingTensorFree}, we show the asymptotic tensor freeness of \emph{tensor embeddings} of random matrices.

\subsection{Partial transposes of random matrices} \label{sec-PTTensorfree}

The asymptotic behavior of random matrices and their (partial) transposes has been extensively studied over the past decade. In particular, much of the research has focused on \textit{Wishart matrices}, as they naturally model random quantum states \cite{nechita2007asymptotics} and because their partial transposes have a close connection to entanglement in quantum information theory \cite{Per96,HHH96}. The first studies in this direction were conducted by Aubrun \cite{Aub12} and Banica and Nechita \cite{BN13}, who examined the limit distributions of the partial transposes of balanced and unbalanced Wishart matrices, respectively. On the other hand, \cite{MP16} showed that a UI random matrix and its transpose exhibit asymptotic free independence, providing one of the first examples of asymptotic freeness without (probabilistic) independence. This result was further extended in \cite{MP19,MP24}, which showed that for any bipartite UI random matrix $X$, all its partial transposes
    $$X, \quad X^{(1,-1)}=(\id\otimes \top)(X),\quad X^{(-1,1)}=(\top\otimes \id)(X), \quad X^{(-1,-1)}=X^{\top}$$
are asymptotically free (recall \cref{eq-PartialTransp} for our notation on partial transposes). More recently, \cite{PY24} established that any (multipartite) $N^r\times N^r$ Wishart matrix $W$, all its partial transposes $(W^{\underline{t}})_{\underline{t}\in \{\pm 1\}^r}$ are asymptotically free. 

In this section, we give the complete description on the first order (tensor) limit behaviors of (local-) UI / OI random matrices and their partial transposes, benefiting from the ideas of tensor trace invariants and combinatorics of pairings introduced in \cref{sec:preliminary-permutations}. Our results extend all the aforementioned findings and offer further generalizations of the results in \cref{sec-UItensorfree,sec-LocUITensorFree}.

\medskip

Let us begin with the (marginal) tensor distribution limit of partial transposes of random matrices. {We display in \cref{fig:trace-invariant-PT} a trace invariant of a family of three matrices $X_1, X_2,X_3$, where the first matrix is partially transposed on the first tensor index, while the last one is partially transposed on the second tensor index.}

\begin{figure}[htb!]
    \centering
    \includegraphics[width=0.5\linewidth]{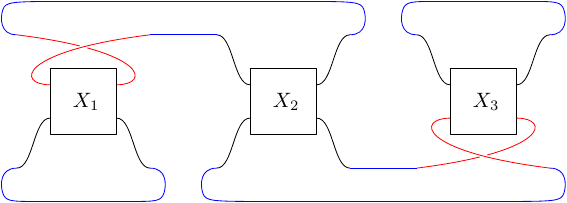}
    \caption{The trace invariant $\Tr_{\textcolor{blue}{(12)(3),(1)(23)}}(X_1^{(\textcolor{red}{-1},1)}, X_2, X_3^{(1,\textcolor{red}{-1})})$ corresponding to the permutations $\textcolor{blue}{\alpha_1 = (12)(3)}$ and $\textcolor{blue}{\alpha_2 = (1)(23)}$, where the matrix $X_1$ is partially transposed on the \textcolor{red}{first index} and the matrix $X_3$ is partially transposed on the \textcolor{red}{second index}.}
    \label{fig:trace-invariant-PT}
\end{figure}

\begin{proposition} \label{prop-LuiTranspose}
Suppose $\W_N$ is a family of $D_N\times D_N$ random matrices which has the tensor factorization property \cref{eq-condition-TensorFact} and converges in tensor distribution to $\W$. Then for every $\underline{t} \in \{\pm 1\}^r$, the family $\W_N^{\underline{t}}$ converges in tensor distribution to $\W^{\underline{t}}:=\{x^{\underline{t}}: x\in \W\}$. Here the tensor distribution of $\W^{\underline{t}}$ is defined as
    $$\varphi_{\underline{\alpha}}(x_1^{\underline{t}}, \ldots, x_p^{\underline{t}}):=\varphi_{\underline{\alpha}^t}(x_1,\ldots, x_p), \quad \underline{\alpha}\in (S_p)^r,$$
where $\underline{\alpha}^t:=(\alpha_1^{t_1},\ldots \alpha_s^{t_s})$. Moreover, we have the corresponding tensor free cumulants
\begin{equation} \label{eq-TranspTensorCumulants}
    \kappa_{\underline{\alpha}}(x_1^{\underline{t}},\ldots x_p^{\underline{t}})=\kappa_{\underline{\alpha}^t}(x_1,\ldots, x_p),\quad \underline{\alpha}\in (S_p)^r.
\end{equation}
\end{proposition}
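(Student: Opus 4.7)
The plan is to reduce everything to the single combinatorial identity
$$\Tr_{\underline{\alpha}}(X_1^{\underline{t}}, \ldots, X_p^{\underline{t}}) = \Tr_{\underline{\alpha}^t}(X_1, \ldots, X_p)$$
at the level of finite-dimensional trace invariants, and then pass to the limit and to tensor free cumulants via the moment-cumulant inversion.

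First, I would verify the above identity directly from the defining formula \eqref{eq:def-trace-invariant}. For each $s\in[r]$ with $t_s=-1$, performing the partial transposition on the $s$-th tensor factor of each $X_k$ swaps the role of the ``$i_s$'' and ``$j_s$'' indices in the sum. Relabelling $i_s^{(k)}\leftrightarrow j_s^{(k)}$ for every $k\in[p]$ leaves the product of matrix entries unchanged but replaces the constraint $\delta_{i_s^{(\alpha_s(k))},j_s^{(k)}}$ by $\delta_{j_s^{(\alpha_s(k))},i_s^{(k)}} = \delta_{i_s^{(k)},j_s^{(\alpha_s(k))}}$; after substituting $m=\alpha_s(k)$, this is exactly the constraint imposed by the permutation $\alpha_s^{-1}$. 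The levels with $t_s=+1$ are untouched. Since $\#\alpha_s=\#\alpha_s^{-1}$, the same identity holds for the normalized trace invariants $\tr_{\underline{\alpha}}$.

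Once this identity is established, convergence in tensor distribution of $\W_N$ gives, for any $\underline{\alpha}\in (S_p)^r$ and $X_1,\ldots,X_p\in\W_N$,
$$\E\bigl[\tr_{\underline{\alpha}}(X_1^{\underline{t}},\ldots,X_p^{\underline{t}})\bigr]=\E\bigl[\tr_{\underline{\alpha}^t}(X_1,\ldots,X_p)\bigr]\xrightarrow[N\to\infty]{}\varphi_{\underline{\alpha}^t}(x_1,\ldots,x_p),$$
so the formula $\varphi_{\underline{\alpha}}(x_1^{\underline{t}},\ldots,x_p^{\underline{t}}):=\varphi_{\underline{\alpha}^t}(x_1,\ldots,x_p)$ is a bona fide tensor distribution limit of $\W_N^{\underline{t}}$. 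The tensor factorization property carries over immediately since the identity respects the disjoint union of permutations: $(\underline{\alpha}\sqcup\underline{\beta})^t=\underline{\alpha}^t\sqcup\underline{\beta}^t$.

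For the tensor free cumulants, I would apply the cumulant-moment formula \eqref{eq:tensor-free-cumulant-moment}:
$$\kappa_{\underline{\alpha}}(x_1^{\underline{t}},\ldots,x_p^{\underline{t}})=\sum_{\underline{\beta}\in S_{NC}(\underline{\alpha})}\varphi_{\underline{\beta}^t}(x_1,\ldots,x_p)\,\Mob(\underline{\beta}^{-1}\underline{\alpha})$$
and then change variables $\underline{\gamma}:=\underline{\beta}^t$. The key observation is that $\sigma\mapsto \sigma^{-1}$ preserves the length $|\cdot|$ on $S_p$, hence the partial order $\leq$; moreover, $\alpha_s^{-1}\beta_s$ and $\alpha_s\beta_s^{-1}=(\beta_s\alpha_s^{-1})^{-1}$ are conjugate to each other through $\alpha_s$ (or its inverse), so they lie in the same conjugacy class and in particular have the same M\"obius value. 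It follows that $\underline{\beta}\mapsto\underline{\beta}^t$ is an order-preserving bijection $S_{NC}(\underline{\alpha})\to S_{NC}(\underline{\alpha}^t)$ with $\Mob(\underline{\beta}^{-1}\underline{\alpha})=\Mob((\underline{\beta}^t)^{-1}\underline{\alpha}^t)$, which gives \eqref{eq-TranspTensorCumulants} after relabelling. The only genuinely combinatorial point to be careful about is this conjugacy argument for the M\"obius function under the $t$-involution; everything else is a straightforward diagrammatic/bookkeeping check.
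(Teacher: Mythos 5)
Your proposal is correct and follows essentially the same route as the paper: the identity $\tr_{\underline{\alpha}}(X_1^{\underline{t}},\ldots,X_p^{\underline{t}})=\tr_{\underline{\alpha}^t}(X_1,\ldots,X_p)$ (which the paper treats as an immediate observation and you verify in more detail from \cref{eq:def-trace-invariant}) gives the moment convergence, and the cumulant formula then follows by M\"obius inversion using exactly the two facts the paper invokes, namely that $\sigma\mapsto\sigma^{-1}$ preserves the geodesic order and that $\Mob(\tau^{-1}\sigma)=\Mob(\tau\sigma^{-1})$ since these permutations are inverse-conjugates and hence share a cycle type. Your extra care on the index relabelling and on the conjugacy argument for the M\"obius function is sound and fills in precisely the steps the paper leaves to the reader.
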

\begin{proof}
The first assertion simply follows from the observation that, for $X_1,\ldots, X_p\in \W_N$ and $\underline{\alpha}\in (S_p)^r$, we have
    $$\E\left[\tr_{\underline{\alpha}}(X_1^{\underline{t}},\ldots, X_p^{\underline{t}})\right]=\E\left[\tr_{\underline{\alpha}^{t}}(X_1,\ldots, X_p)\right] \to \varphi_{\underline{\alpha}^{t}}(x_1,\ldots, x_p)=\varphi_{\underline{\alpha}}(x_1^{\underline{t}},\ldots, x_p^{\underline{t}})$$
as $N\to \infty$, where $(x_1,\ldots, x_p)$ is the joint tensor distribution limit of $(X_1,\ldots, X_p)$. For the tensor free cumulant, we apply \cref{eq:tensor-free-cumulant-moment} to obtain
\begin{align*}
    \kappa_{\underline{\alpha}}(x_1^{\underline{t}},\ldots, x_p^{\underline{t}}) &=\sum_{\underline{\beta}\,\leq\, \underline{\alpha}}\varphi_{\underline{\beta}}(x_1,\ldots, x_p) \Mob(\underline{\beta}^{-1}\underline{\alpha}) \\
    &=\sum_{\underline{\beta}^t\,\leq\, \underline{\alpha}^t}\varphi_{\underline{\beta}^t}(x_1,\ldots, x_p) \Mob((\underline{\beta}^t)^{-1}\underline{\alpha}^t)\\
    &=\sum_{\underline{\beta}\,\leq\, \underline{\alpha}^t}\varphi_{\underline{\beta}}(x_1,\ldots, x_p) \Mob((\underline{\beta})^{-1}\underline{\alpha}^t) =\kappa_{\underline{\alpha}^t}(x_1,\ldots, x_p).
\end{align*}
Note that we used the observations that $\tau\leq \sigma \iff \tau^{-1}\leq \sigma^{-1}$ and $\Mob(\tau^{-1}\sigma)=\Mob(\tau\sigma^{-1})$.
\end{proof}

The above proposition, combined with \cref{prop-ui-tensormoment,prop-cumulant-from-tensorcumulant}, gives a simple way to recover and generalize the limit distribution of partially transposed random matrices in \cite{Aub12, MP19, MP24}. Recall that a \textit{semicircular element} with mean $\lambda\in \mathbb{R}$ and variance $\sigma^2$ is a self-adjoint element $s_{\lambda,\sigma}$ in a $*$-probability space $(\A,\varphi)$ whose moments are described as
    $$\varphi(s_{\lambda,\sigma}^p)=\int_{\lambda-2\sigma}^{\lambda+2\sigma}t^p \frac{\sqrt{4\sigma^2-(t-\lambda)^2}}{2\pi\sigma}{\rm d}t,\quad p\geq 1.$$
Specifically, $s_{\lambda,\sigma}=\lambda1_{\A}+\sigma s_{0,1}$ where $s_{0,1}$ is a \textit{standard} semicircular element (i.e. having mean $0$ and variance $1$). Another way to characterize semicircular elements is to look at their (usual) free cumulants:
    $$\tilde{\kappa}_1(s_{\lambda,\sigma})=\varphi(s_{\lambda,\sigma})=\lambda, \qquad \tilde{\kappa}_2(s_{\lambda,\sigma})=\varphi(s_{\lambda,\sigma}^2)-\varphi(s_{\lambda,\sigma})^2=\sigma^2, \qquad \tilde{\kappa}_p(s_{\lambda,\sigma})=0 \;\;\forall\, p\geq 3.$$

\begin{corollary} \label{cor-UITranspose}
Suppose $\W_N$ is a family of $D_N\times D_N$ random matrices which is unitary invariant, converges in distribution and has the factorization property \cref{eq-condition-Fact}. Then for every $\underline{t} \in \{\pm 1\}^r$, the family $\W_N^{\underline{t}}$ converges in tensor distribution whose limit has tensor free cumulants
\begin{equation} \label{eq-UITranspCumulant}
    \kappa_{\underline{\alpha}}(x_1^{\underline{t}},\ldots, x_p^{\underline{t}})=\begin{cases}
        \tilde{\kappa}_{\sigma}(x_1,\ldots, x_p) & \text{if $\alpha_s^{t_s}\equiv \sigma$}, \\
        0 & \text{otherwise},
    \end{cases}
    \quad \underline{\alpha}\in (S_p)^r.
\end{equation}
If $\underline{t}\notin \{(1,\ldots, 1),(-1,\ldots, -1)\}$, then their free cumulants can be further described by
\begin{equation} \label{eq-UITranspCumulant2}
    \tilde{\kappa}_{p}(x_1^{\underline{t}},\ldots, x_p^{\underline{t}})=\begin{cases}
        \tilde{\kappa}_{1}(x_1)=\varphi(x_1) & \text{if $p=1$}, \\
        \tilde{\kappa}_{2}(x_1,x_2)=\varphi(x_1x_2)-\varphi(x_1)\varphi(x_2) & \text{if $p=2$}, \\
        0 & \text{otherwise},
    \end{cases}
\end{equation}

In particular, if $X_N$ is a unitary invariant $D_N\times D_N$ Hermitian random matrices satisfying \cref{eq-condition-Fact} and if $X_N$ converges in distribution to $x\in (\A,\varphi)$, then $X_N^{\underline{t}}\to s_{\kappa_1,\kappa_2}$ in distribution whenever $\underline{t}\notin \{(1,\ldots, 1),(-1,\ldots, -1)\}$, where $s_{\kappa_1,\kappa_2}$ is a semicircular element with mean $\kappa_1=\tilde{\kappa}_1(x)=\varphi(x)$ and variance $\kappa_2=\tilde{\kappa}_2(x)=\varphi(x^2)-\varphi(x)^2$.
\end{corollary}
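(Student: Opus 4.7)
The plan is to combine three earlier results in a direct way to obtain the tensor free cumulants, and then perform a short combinatorial reduction to extract the classical free cumulants. First I would invoke \cref{prop-ui-tensormoment}: under unitary invariance together with the factorization property and convergence in distribution, it upgrades convergence to tensor convergence and supplies the explicit formula \cref{eq-ui-tensorcumulant} for the tensor free cumulants of the limit, namely that $\kappa_{\underline{\alpha}}(x_1,\ldots,x_p)$ vanishes unless all entries of $\underline{\alpha}$ agree with a common permutation $\sigma$, in which case it equals the usual free cumulant $\tilde\kappa_\sigma(x_1,\ldots,x_p)$. Next I would apply the transformation rule \cref{eq-TranspTensorCumulants} from \cref{prop-LuiTranspose} to the family $\W_N^{\underline{t}}$, which has the tensor factorization property and converges in tensor distribution by that same proposition; this rule gives $\kappa_{\underline{\alpha}}(x_1^{\underline{t}},\ldots,x_p^{\underline{t}})=\kappa_{\underline{\alpha}^t}(x_1,\ldots,x_p)$. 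Composing the two formulas yields \cref{eq-UITranspCumulant} immediately.

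For the second formula \cref{eq-UITranspCumulant2}, I would use \cref{prop-cumulant-from-tensorcumulant} to expand
$$\tilde\kappa_p(x_1^{\underline{t}},\ldots,x_p^{\underline{t}}) = \sum_{\substack{\underline{\beta}\in S_{NC}(\gamma_p)^r\\ \beta_1\vee\cdots\vee\beta_r=\gamma_p}} \kappa_{\underline{\beta}}(x_1^{\underline{t}},\ldots,x_p^{\underline{t}}),$$
and then use \cref{eq-UITranspCumulant} to see that only tuples of the form $\beta_s=\sigma^{t_s}$ for a single $\sigma\in S_p$ contribute. Because $\underline{t}$ is assumed non-trivial (it contains entries of both signs), the condition $\beta_s\in S_{NC}(\gamma_p)$ must hold for both $\sigma$ and $\sigma^{-1}$ simultaneously.

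The key combinatorial step is to show that any $\sigma$ with $\sigma,\sigma^{-1}\in S_{NC}(\gamma_p)$ must be an involution. By \cref{prop:lattice-structure}, each cycle of $\sigma$ can be written as $(j_1\,j_2\,\cdots\,j_l)$ with $j_1<j_2<\cdots<j_l$; the corresponding cycle of $\sigma^{-1}$ is then $(j_l\,j_{l-1}\,\cdots\,j_1)$, whose cyclic rewriting starting from the smallest index $j_1$ reads $(j_1\,j_l\,j_{l-1}\,\cdots\,j_2)$. This is monotonically increasing only if $l\leq 2$, so $\sigma$ has cycles of length at most two. Then $\sigma^{t_s}=\sigma$ for every $s$, the join collapses to $\sigma$, and the constraint $\sigma=\gamma_p$ forces $p\leq 2$ (the full cycle $\gamma_p$ is an involution only in that range). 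For $p=1,2$ the unique surviving term gives $\tilde\kappa_p(x_1,\ldots,x_p)$, and for $p\geq 3$ the sum is empty, producing exactly \cref{eq-UITranspCumulant2}.

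Finally, the concluding semicircular statement follows because partial transposition preserves self-adjointness (since $(Y^\top)^*=\overline{Y}=Y^\top$ whenever $Y=Y^*$, applied factorwise), so the limiting element is self-adjoint, and a self-adjoint non-commutative random variable whose only nonvanishing free cumulants are of order one and two is semicircular with the corresponding mean and variance by the standard characterization. I do not anticipate any serious obstacle beyond the short combinatorial claim on non-crossing involutions above; the rest of the proof is a direct assembly of previously established results.
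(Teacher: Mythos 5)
Your proposal is correct. The first assertion is handled exactly as in the paper: compose \cref{eq-ui-tensorcumulant} from \cref{prop-ui-tensormoment} with the relabeling rule \cref{eq-TranspTensorCumulants} from \cref{prop-LuiTranspose}. For the second assertion you take a slightly different (but equivalent) route: the paper works at the level of moments, applying \cref{eq-ui-tensormoment} with $\alpha_s\equiv\gamma_p^{t_s}$ to get $\varphi(x_1^{\underline{t}}\cdots x_p^{\underline{t}})=\sum_{\sigma\in S_{NC}(\gamma_p)\cap S_{NC}(\gamma_p^{-1})}\tilde\kappa_\sigma(x_1,\ldots,x_p)$ and then invokes M\"obius inversion, whereas you work directly at the level of cumulants via \cref{prop-cumulant-from-tensorcumulant} together with the already-established \cref{eq-UITranspCumulant}, so no separate inversion step is needed. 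Both arguments hinge on the same combinatorial fact, namely that a permutation lying on the geodesics to both $\gamma_p$ and $\gamma_p^{-1}$ must be a non-crossing involution (the identification $S_{NC}(\gamma_p)\cap S_{NC}(\gamma_p^{-1})\cong NC_{1,2}(p)$); the paper cites \cite{MP19,NP24} for this, while you supply a short self-contained proof via the increasing-cycle characterization of \cref{prop:lattice-structure}, which is a nice bonus. Your verification that the join constraint then forces $\sigma=\gamma_p$, hence $p\le 2$, is correct, and the concluding semicircularity argument (partial transposition preserves Hermiticity, and a self-adjoint variable with vanishing free cumulants of order $\ge 3$ is semicircular) is the standard one the paper leaves implicit.
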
 
\begin{proof}
The first assertion is a direct consequence of \cref{prop-LuiTranspose,prop-ui-tensormoment}. For the second assertion, \cref{eq-ui-tensormoment} with $\alpha_s \equiv \gamma_p^{t_s}$ imply that
    $$\varphi(x_1^{\underline{t}}\cdots x_p^{\underline{t}}) = \varphi_{\underline{\gamma_p}^{t}}(x_1, \ldots, x_p)= \sum_{\sigma\in S_{NC}(\gamma_p)\cap S_{NC}(\gamma_p^{-1})} \tilde{\kappa}_{\sigma}(x_1,\ldots, x_p)=\sum_{\sigma\in NC_{1,2}(p)}\tilde{\kappa}_{\sigma}(x_1,\ldots, x_p).$$
Note that the condition $\underline{t}\notin \{(1,\ldots, 1),(-1,\ldots, -1)\}$ is needed in the second equality, and the correspondence $S_{NC}(\gamma_p)\cap S_{NC}(\gamma_p^{-1})\cong NC_{1,2}(p)$ is discussed in literature, e.g. \cite[Lemma 5.1]{MP19} and \cite[Appendix A.1]{NP24}. Therefore, the M\"{o}bius inversion induces \cref{eq-UITranspCumulant2}.
\end{proof}

\begin{remark}
By examining the proof of \cref{prop-ui-tensormoment},  we can show the stronger statement: Suppose $\W_N$ is a family of $D_N\times D_N$ random matrices which is unitary invariant and has the factorization property \cref{eq-condition-Fact} and the bounded moments property \cref{eq-condition-Bdd}. Suppose further that $\W_N$ has the convergent joint moment limits up to degree $2$, i.e., the limits
    $$\lim_{N\to \infty}\tr(X_N),\quad \lim_{N\to \infty}\tr(X_{N,1}X_{N,2})$$
exist for all $X_N,X_{N,1},X_{N,2}\in \W_N$. Then for every $\underline{t} \in \{\pm 1\}^r\setminus \{(1,\ldots, 1), (-1,\ldots, -1)\}$, the family $\W_N^{\underline{t}}$ converges in (non-tensor) distribution with the same limit as in \cref{cor-UITranspose}. This was also observed in \cite{MP24}, where the stronger condition of the bounded cumulant property (\cref{eq-condition-BCP}) was assumed for $\W_N$, leading to almost sure convergence.
\end{remark}

Now in order to investigate the \emph{joint} tensor limit distribution between all partial transposes, we associate a permutation $\eps=\eps_{f}\in \bigsqcup_{k=1}^p S(\{k,-k\})$ for a function $f:[p]\to \{\pm 1\}$ defined by 
    $$\eps(k):=f(|k|)\,k, \quad k\in [\pm p].$$
In particular, $\eps=\id_{\pm p}$ when $f\equiv 1$ and $\eps=\delta=(1\, -1)\cdots (p\, -p)$ when $f\equiv -1$. Note that the $\eps_{f}$ commute with each other and $\eps_{f}^{-1}=\eps_{f}$.

\begin{lemma} \label{lem-PartialTransExpectation}
Suppose $\{X_1,\ldots, X_p\}$ is a family of $D\times D$ random matrices which is LUI. Then for a {function} $\underline{f}=(f_1,\ldots, f_r):[p]\to \{\pm 1\}^r$ and $\underline{\alpha}\in (S_p)^r$, we have
\small
\begin{equation} \label{eq-LuiPartialTransExpectation}
    \E\left[\Tr_{\underline{\alpha}}(X_1^{\underline{f}(1)},\ldots, X_p^{\underline{f}(p)})\right]=\sum_{\underline{\sigma},\underline{\tau}\in (S_p)^r} \E \left[\Tr_{\underline{\tau}}(X_1,\ldots, X_p) \right]\prod_{s=1}^r \left(d_s^{\# (\eps_s\alpha_s\delta \alpha_s^{-1}\eps_s \vee \sigma_s \delta \sigma_s^{-1})}\right) \Wg^{(U)}(\underline{\tau},\underline{\sigma})
\end{equation}
\normalsize
where each $\eps_s=\eps_{f_s}\in S_{\pm p}$ is defined according to the function $f_s:[p]\to \{\pm 1\}$. Furthermore, if $\{X_1,\ldots, X_p\}$ is UI, then
\small
\begin{equation} \label{eq-UIPartialTransExpectation}
    \E\left[\Tr_{\underline{\alpha}}(X_1^{\underline{f}(1)},\ldots, X_p^{\underline{f}(p)})\right]=\sum_{{\sigma},{\tau}\in S_p} \E \left[\Tr_{{\tau}}(X_1,\ldots, X_p) \right] \left(\prod_{s=1}^r d_s^{\# (\eps_s\alpha_s\delta \alpha_s^{-1}\eps_s \vee \sigma \delta \sigma^{-1})}\right) \Wg_{D}^{(U)}(\tau^{-1}\sigma)
\end{equation}
\normalsize
\end{lemma}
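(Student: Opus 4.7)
The plan is to first recast the partial-transposed trace invariant on the left-hand side as a pairing-based tensor trace invariant, and then apply unitary Weingarten calculus per tensor factor (for LUI) or once globally (for UI).

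For the first step, I would apply the tensor analogue of \cref{eq-OITraceInvTransp} leg-by-leg to obtain the identity
$$\Tr_{\underline{\alpha}}(X_1^{\underline{f}(1)}, \ldots, X_p^{\underline{f}(p)}) = \Tr_{\underline{\pi} \vee \delta}(X_1, \ldots, X_p), \qquad \pi_s := \eps_s \alpha_s \delta \alpha_s^{-1} \eps_s \in \mathcal{P}_2(\pm p).$$
Diagrammatically, the partial transpose $\mathcal{E}_{f_s(j)}$ on leg $s$ swaps the input and output legs of $X_j^{(s)}$ whenever $f_s(j) = -1$, and in $[\pm p]$ notation this swap is implemented by conjugating the original pairing $\alpha_s \delta \alpha_s^{-1}$ (which represents the $\underline{\alpha}$-contraction on leg $s$) by $\eps_s$. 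Applying \cref{eq-OITraceInvTransp} independently on each tensor factor then yields the identity above, reducing the problem to evaluating $\E[\Tr_{\underline{\pi} \vee \delta}(X_1, \ldots, X_p)]$ for LUI (respectively UI) random matrices.

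For the second step, in the LUI case I would use local unitary invariance to substitute $X_j \to (\bigotimes_s U_s) X_j (\bigotimes_s U_s)^*$ and then apply the unitary Weingarten formula (\cref{thm:Weingarten}(1)) independently to each Haar-random $U_s$, producing for each leg $s$ a sum over $(\sigma_s, \tau_s) \in S_p \times S_p$ with weight $\Wg_{d_s}^{(U)}(\tau_s^{-1}\sigma_s)$. Reading off the resulting diagram: the $\tau_s$-contractions act on the internal sides of the $U_s, \bar U_s$ boxes (attached to the $X_j^{(s)}$ legs) and assemble across legs into $\Tr_{\underline{\tau}}(X_1, \ldots, X_p)$; the $\sigma_s$-contractions act on the external sides and, combined with the pairing $\pi_s$ from step one, form closed loops whose count gives the exponent of $d_s$. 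By \cref{lem-PairingSup}, this loop count on leg $s$ equals $\#(\pi_s \vee \sigma_s \delta \sigma_s^{-1})$, and substituting $\pi_s = \eps_s \alpha_s \delta \alpha_s^{-1} \eps_s$ yields \cref{eq-LuiPartialTransExpectation}. The UI case \cref{eq-UIPartialTransExpectation} follows by the same scheme but with a single global $U \in \mathcal{U}_D$: a single application of unitary Weingarten produces one pair $(\sigma, \tau)$ with weight $\Wg_D^{(U)}(\tau^{-1}\sigma)$, and the leg-by-leg loop count retains the same form with $\sigma_s$ uniformly replaced by $\sigma$.

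The main obstacle I anticipate is the careful diagrammatic accounting needed to verify the loop count formula in the second step. On each leg one must coordinate three $[\pm p]$-pairings---the partial-transpose twist $\eps_s$, the original contraction $\alpha_s \delta \alpha_s^{-1}$, and the Weingarten-generated contraction $\sigma_s \delta \sigma_s^{-1}$---and show that the number of closed loops equals the number of blocks of the joined pairing $\pi_s \vee \sigma_s \delta \sigma_s^{-1}$. This parallels the combinatorics underlying the proofs of \cref{lem-UIOITensorMoments,lem-LocInvTensorMoments} but introduces the extra layer of $\eps_s$-conjugation inherited from the partial transposes.
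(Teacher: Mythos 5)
Your proposal is correct and follows essentially the same route as the paper: distributional invariance to insert $U\cdot U^*$, the graphical unitary Weingarten formula, identification of the $\tau$-contribution with $\Tr_{\underline{\tau}}(X_1,\ldots,X_p)$, and loop counting via blocks of the join of two pairings on $[\pm p]$. The only difference is organizational: you absorb the partial transposes into the contraction pattern up front via the identity $\Tr_{\underline{\alpha}}(X_1^{\underline{f}(1)},\ldots,X_p^{\underline{f}(p)})=\Tr_{\underline{\pi}\vee\delta}(X_1,\ldots,X_p)$ with $\pi_s=\eps_s\alpha_s\delta\alpha_s^{-1}\eps_s$ (already recorded in the paper's discussion of local orthogonal trace invariants), whereas the paper keeps the transposes in the diagram and obtains the $\eps_s$-conjugation by swapping the labels $i\leftrightarrow -i$ at the transposed positions during the loop count; both yield the same exponent $\#(\eps_s\alpha_s\delta\alpha_s^{-1}\eps_s\vee\sigma_s\delta\sigma_s^{-1})$.
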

\begin{proof}
We only show \cref{eq-UIPartialTransExpectation}; the proof of \cref{eq-LuiPartialTransExpectation} is analogous. If $\W=\{X_1,\ldots, X_p\}$ is UI, then we can write
\begin{align*}
    \E\big[\Tr_{\underline{\alpha}}(X_1^{\underline{f}(1)},\ldots, X_p^{\underline{f}(p)})\big] &= \E \Big[\Tr_{\underline{\alpha}}\big((U X_1 U^*)^{\underline{f}(1)},\ldots, (U X_p U^*)^{\underline{f}(p)}\big)\Big] \\
    &=\E_{\W}\Big[\E_U\big[\Tr_{\underline{\alpha}}\big((U X_1 U^*)^{\underline{f}(1)},\ldots, (U X_p U^*)^{\underline{f}(p)}\big| \W\big]\Big],
\end{align*}
where $U$ is a Haar random unitary matrix of size $D$ independent from $\W$. In order to compute the integral $\E_U\big[\Tr_{\underline{\alpha}}\big((U X_1 U^*)^{\underline{f}(1)},\ldots, (U X_p U^*)^{\underline{f}(p)}\big| \W\big]$ using graphical Weingarten calculus, let us label $i$ and $-i$ for $U$ and $U^*$ boxes, respectively, corresponding to each $X_i$. The action of $\tau\in S_p$ from \cref{thm-GraphWeingarten} is the same as in \cref{eq-UITensorInv}: for each $s\in [r]$ and $i\in [p]$, we connect the $s$-th tensor component input of $X_i$ (label $-i$) with $s$-th tensor component output of $X_{\tau(i)}$ (label $\tau(i)$), so the total contribution becomes $\Tr_{\tau}(X_1,\ldots, X_p)$. 

On the other hand, for $\sigma\in S_p$ and $s\in [r]$, we obtain a loop (of dimension $d_s$) for each block of a partition  $\eps_s\alpha_s\delta \alpha_s^{-1}\eps_s \vee \sigma \delta \sigma^{-1}\in \mathcal{P}(\pm p)$. Indeed, we first connect the label $-i$ to $\sigma(i)$, for each $i\in [p]$, and induce the pairing $(-1,\sigma(1))\cdots (-p,\sigma(p))=\sigma\delta\sigma^{-1}$ which comprises the ``half'' of the loops. For the other half, we look at another pairing $\eps_s\alpha_s\delta\alpha_s^{-1}\eps_s$: starting from $\alpha_s\delta\alpha_s^{-1}$ which corresponds to the case without partial transpose, we swap the labels between $i$ and $-i$ for each $i\in [p]$ with $f_s(i)=-1$, inducing a new pairing $\eps_s\alpha_s\delta\alpha_s^{-1}\eps_s$ from the definition of $\eps_s=\eps_{f_s}$. We also refer to \cite[Theorem 3.7]{MP19} for a (non-graphical) formal argument for this.

We represent in \cref{fig:LUI-PT} the ``loop factor'' (i.e.~the factor corresponding to the product of powers of $d_s$ from \cref{eq-LuiPartialTransExpectation}) for the expansion of $\Tr_{(12)(3),(1)(23)}(X_1^{(-1,1)}, X_2, X_3^{(1,-1)})$ (see also \cref{fig:trace-invariant-PT}) corresponding to $\sigma=(1)(23)$.

\begin{figure}[htb]
    \centering
    \includegraphics[width=0.6\linewidth]{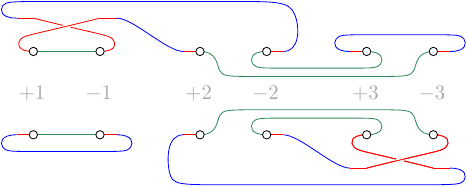}
    \caption{The ``loop factor'' from the Weingarten expansion of $\Tr_{\textcolor{blue}{(12)(3),(1)(23)}}(X_1^{\textcolor{red}{(-1,1)}}, X_2^{\textcolor{red}{(1,1)}}, X_3^{\textcolor{red}{(1,-1)}})$ corresponding to the permutation $\textcolor{seagreen}{\sigma=(1)(23)}$. The \textcolor{red}{partial transpositions $f_s$} and \textcolor{blue}{permutations $\alpha_s$} give raise to the pairings $\epsilon_1 \alpha_1\delta\alpha_1^{-1}\epsilon_1 = (1\, 2)(-1 \, -2)(3\, -3)$ and $\epsilon_2 \alpha_2\delta\alpha_2^{-1}\epsilon_2 = (1\, -1)(2 \, 3)(-2\, -3)$. while the \textcolor{seagreen}{permutation $\sigma$} gives raise to the pairing $\sigma\delta\sigma^{-1} = (1\, -1)(2\, -3)(3\, -2)$.}
    \label{fig:LUI-PT}
\end{figure}
\end{proof}

Recall from \cref{lem-pairings} that for $\eps\in \bigsqcup_{k=1}^p S(\{k,-k\})$, $\eps$ and $\delta$ commutes with each other, $(\eps\sigma\eps)(\delta(\eps\sigma\eps)^{-1}\delta)=\eps\sigma\delta\sigma^{-1}\delta\eps$, and $|(\eps\sigma\eps)(\delta(\eps\sigma\eps)^{-1}\delta)|=2|\sigma|$. The following lemma, proved in \cite[Lemma 4.3, Lemma 4.4, and Proposition 4.7]{MP19}, characterize the condition that the permutation $(\eps\sigma\eps)(\delta(\eps\sigma\eps)^{-1}\delta)$ lies on the geodesic between $\id_{\pm p}$ and $\alpha\delta\alpha^{-1}\delta$ for $\alpha\in S_p$ in the case $\eps=\eps_f$.

\begin{lemma} \label{lem-PairingGeodesic}
Let $\eps=\eps_{f}\in S_{\pm p}$ be the permutation associated to a function $f:[p]\to \{\pm 1\}$.
\begin{enumerate}
    \item Suppose $\sigma\in S_p$ satisfies $\Pi(\sigma)\leq \ker f=\{f^{-1}(1),f^{-1}(-1)\}$, i.e., $f$ is constant on the cycles of $\sigma$. Then there exists a unique permutation $\sigma_{f}\in S_p$ such that $\eps\sigma\delta\sigma^{-1}\delta\eps=\sigma_{f}\delta\sigma_{f}^{-1}\delta$. Specifically, if $\sigma=c_1\ldots, c_l$ is the cycle decomposition of $\sigma$, then $\sigma_{f}=c_1^{\lambda_1}\cdots c_l^{\lambda_l}$ where $\lambda_k\in \{\pm 1\}$ is the (constant) value of $f$ on the cycle $c_k$. 
    \item For $\sigma,\alpha\in S_p$, $\eps\sigma\delta\sigma^{-1}\delta\eps\in S_{NC}(\alpha\delta\alpha^{-1}\delta)$ if and only if $\Pi(\sigma)\leq \ker f$ and $\sigma_{f}\in S_{NC}(\alpha)$.
\end{enumerate}
\end{lemma}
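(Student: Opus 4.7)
My plan is to prove part (1) by a direct computation on $[\pm p]$, then deduce part (2) by combining part (1) with the characterizations from \cref{lem-pairings} and \cref{lem-PairingEmb}.

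For part (1), I will first observe that $\sigma\delta\sigma^{-1}\delta$, viewed in $S_{\pm p}$, stabilizes the decomposition $[p]\sqcup[-p]$: it acts as $\sigma$ on $[p]$ and as $(\delta\sigma\delta)^{-1}$ on $[-p]$. In particular, every cycle $c_k$ of $\sigma$ contributes two cycles to the decomposition of $\sigma\delta\sigma^{-1}\delta$: the cycle $c_k$ itself on its support in $[p]$, and its mirror $(\delta c_k\delta)^{-1}$ on the opposite support in $[-p]$. Since $\eps$ and $\delta$ commute (both being products of commuting transpositions $(k,-k)$), the conjugation by $\eps$ can be analyzed cycle by cycle. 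On a cycle where $f\equiv+1$, $\eps$ fixes the relevant supports in $[p]$ and $[-p]$ pointwise, leaving the cycle unchanged; on a cycle where $f\equiv-1$, $\eps$ exchanges these supports, which after reading the resulting cycle back inside $S_p\hookrightarrow S_{\pm p}$ amounts to replacing $c_k$ by $c_k^{-1}$. Assembling these contributions yields $\eps\sigma\delta\sigma^{-1}\delta\eps=\sigma_f\delta\sigma_f^{-1}\delta$, and uniqueness of $\sigma_f$ is then a direct consequence of the injectivity of $\tau\mapsto \tau\delta\tau^{-1}$ provided by \cref{lem-PairingEmb}.

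For part (2), I will use again that $\eps$ and $\delta$ commute to rewrite
\[
\eps\sigma\delta\sigma^{-1}\delta\eps=\pi\delta, \qquad \pi:=\eps\sigma\delta\sigma^{-1}\eps.
\]
By \cref{lem-pairings}(2), the condition $\pi\delta\in S_{NC}(\alpha\delta\alpha^{-1}\delta)$ is equivalent to $\pi\in\mathcal{P}_2^\delta(\pm p)$ together with $\sigma'\in S_{NC}(\alpha)$, where $\sigma'$ is the unique permutation in $S_p$ satisfying $\pi=\sigma'\delta{\sigma'}^{-1}$ (cf.~\cref{lem-PairingEmb}). A direct sign-tracking computation for $k\in[p]$ shows that $\pi(k)\in[-p]$ holds for every such $k$ if and only if $f\circ\sigma=f$, i.e.~$f$ is constant on every cycle of $\sigma$, equivalently $\Pi(\sigma)\leq\ker f$. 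Under this condition, part (1) identifies the inverse image $\sigma'$ as precisely $\sigma_f$, so the remaining requirement $\sigma'\in S_{NC}(\alpha)$ becomes $\sigma_f\in S_{NC}(\alpha)$, yielding the desired equivalence.

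The main technical hurdle lies in the bookkeeping of part (1): one must carefully verify that conjugation by $\eps$ on a cycle $c_k$ where $f\equiv-1$ indeed produces the inverse cycle $c_k^{-1}$ when read back in $S_p$, rather than a mere relabelling. I expect to handle this by evaluating $\eps\sigma\delta\sigma^{-1}\delta\eps$ on representative elements of cycles and matching both sides on $[p]$ and $[-p]$ separately. Once this computation is in place, part (2) reduces to an essentially formal combination of part (1), \cref{lem-pairings}(2), and \cref{lem-PairingEmb}.
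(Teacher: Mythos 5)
Your proof is correct. Note that the paper itself does not prove this lemma: it defers entirely to \cite[Lemmas 4.3, 4.4 and Proposition 4.7]{MP19}, so your self-contained derivation from the paper's own preliminaries (\cref{lem-pairings} and \cref{lem-PairingEmb}) is a genuine addition rather than a restatement. The key computation in part (1) checks out: using $\sigma\delta\sigma^{-1}\delta = \sigma \sqcup (\delta\sigma\delta)^{-1}$ and the fact that $\Pi(\sigma)\leq\ker f$ keeps each cycle's support inside a level set of $f$, one finds for $k\in[p]$ that $\eps\sigma\delta\sigma^{-1}\delta\eps(k)$ equals $\sigma(k)$ when $f(k)=1$ and $\sigma^{-1}(k)$ when $f(k)=-1$, with the mirror statement on $[-p]$; this is exactly $\sigma_f\sqcup(\delta\sigma_f\delta)^{-1}$, and uniqueness follows from the injectivity of $\tau\mapsto\tau\delta\tau^{-1}$ in \cref{lem-PairingEmb}. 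For part (2), your sign-tracking claim can be made fully explicit: $\pi=\eps\sigma\delta\sigma^{-1}\eps$ consists of the pairs $\{f(\sigma(j))\sigma(j),\,-f(j)j\}$ for $j\in[p]$, and each such pair meets both $[p]$ and $[-p]$ precisely when $f(\sigma(j))=f(j)$ for all $j$, i.e.\ $\Pi(\sigma)\leq\ker f$; combined with \cref{lem-pairings}(2), the uniqueness clause of \cref{lem-PairingEmb}, and part (1) identifying the preimage as $\sigma_f$, both directions of the equivalence follow exactly as you describe. No gaps.
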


Now we can state and prove the \textit{joint} asymptotic behavior between (local-) UI random matrices and their partial transposes.

\begin{theorem} \label{thm-LocUITranspose}
Let $\W_N$ be a family of $D_N\times D_N$ random matrices.
\begin{enumerate}
    \item If $\W_N$ is local-unitary invariant, has the tensor factorization property \cref{eq-condition-TensorFact}, and converges in tensor distribution, then the family of all partial transposes 
        $$\bigcup_{\underline{t}\in \{\pm 1 \}^r} \W_N^{\underline{t}}$$
    satisfies the tensor factorization property and has the joint tensor distribution limit as $N\to \infty$. Furthermore, the families $\W_N$ and $\W_N^{\top}$ are asymptotically tensor free.
    
    \item If $\W_N$ is unitary invariant, has the factorization property \cref{eq-condition-Fact}, and converges in distribution, then all the $2^r$ families $(\W_N^{\underline{t}})_{\underline{t}\in \{\pm 1 \}^r}$ are both asymptotically free and asymptotically tensor free as $N\to \infty$.
\end{enumerate}
\end{theorem}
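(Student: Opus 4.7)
The plan is to deduce both parts from the Weingarten expansions in \cref{lem-PartialTransExpectation}, followed by a careful asymptotic analysis using the geodesic lemmas of \cref{sec:preliminary-permutations}.

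For part (1), I would start with \cref{eq-LuiPartialTransExpectation} applied to $X_1,\ldots,X_p \in \W_N$ with partial transposes indexed by $\underline{f}:[p] \to \{\pm 1\}^r$, and substitute the Weingarten asymptotics from \cref{eq-WeinAsymp1}. Collecting powers of each $d_s$ (using $\tr_{\underline{\alpha}} = \prod_s d_s^{-\#\alpha_s}\Tr_{\underline{\alpha}}$ and $\Tr_{\underline\tau} = \prod_s d_s^{\#\tau_s}\tr_{\underline\tau}$) together with Lemma \ref{lem-PairingSup} to rewrite $\#(\pi\vee\rho) = p - \tfrac12 |\pi\rho|$, one obtains an exponent of $d_s$ that is a non-positive sum of pairwise lengths. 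The triangle inequality in $S_{\pm p}$ then forces the surviving (exponent-zero) terms to satisfy a geodesic condition in each level $s$, and by \cref{lem-PairingGeodesic} this is equivalent to $\Pi(\sigma_s)\leq \ker f_s$, $(\sigma_s)_{f_s}\in S_{NC}(\alpha_s)$, and $\tau_s\leq\sigma_s$. Applying the tensor factorization property of $\W_N$ inside each expectation and using the tensor moment–cumulant formula, one identifies the limit as a sum over $r$-tuples of non-crossing geodesic permutations, giving joint convergence in tensor distribution and tensor factorization.

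Still inside part (1), to show that $\W_N$ and $\W_N^{\top}$ are asymptotically tensor free I would specialize to $\underline{f}(j)\in\{(+1,\ldots,+1),(-1,\ldots,-1)\}$, so that $f_s=f$ is independent of $s$, and verify that the limit formula above coincides with the factorization formula \cref{eq-TensorFreeSubsetMoment} of \cref{cor-TensorFreeSubsetMoment}. The map $\sigma\mapsto\sigma_f$ described in \cref{lem-PairingGeodesic} restricted to permutations with $\Pi(\sigma)\leq\ker f$ induces a natural bijection which splits $\underline{\sigma}$ into one block of indices where the matrices are not transposed and another where they are all transposed; combined with \cref{eq-TranspTensorCumulants} this yields exactly the expected product of marginal tensor free cumulants.

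For part (2), the starting point is \cref{eq-UIPartialTransExpectation}, which differs from the LUI formula by using a \emph{single} permutation $\sigma\in S_p$ across all levels $s$. The same dimension count shows that surviving terms now require $\Pi(\sigma)\leq \ker f_s$ for \emph{every} $s$, hence $\Pi(\sigma)\leq \bigwedge_s \ker f_s = \ker \underline{f}$. To deduce asymptotic freeness of the $2^r$ families $(\W_N^{\underline{t}})_{\underline{t}\in\{\pm 1\}^r}$, I would verify the vanishing of mixed free cumulants criterion from \cref{thm-FreeSubsets}: for a non-constant $\underline{f}$ and $\underline{\alpha}=\underline{\gamma_p}$, the irreducibility of $\gamma_p$ forces $\sigma$ to be a full cycle, incompatible with $\Pi(\sigma)\leq \ker\underline{f}$, so the limit cumulant vanishes. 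For asymptotic \emph{tensor} freeness, since part (1) already delivers the joint tensor distribution limit of $\bigcup_{\underline{t}} \W_N^{\underline{t}}$, I would re-run the same vanishing-cumulants argument at the level of $\kappa_{\underline\alpha}$ using the single-permutation structure of \cref{eq-UIPartialTransExpectation} to show $\kappa_{\underline\alpha}$ vanishes on any mixed, irreducible $\underline{\alpha}$.

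The main technical obstacle is the accurate bookkeeping in Step~2: the exponent of $d_s$ is a subtle combination of lengths involving $\eps_s\alpha_s\delta\alpha_s^{-1}\eps_s$, $\sigma\delta\sigma^{-1}$, and $\delta$, and one must carefully exploit that $\eps_s$ commutes with $\delta$ to reduce to standard triangle-inequality arguments in $S_{\pm p}$ before invoking \cref{lem-PairingGeodesic}. Once that reduction is done, the remaining matching with the tensor free cumulant formula is essentially a repetition of the pattern used in the proofs of \cref{prop-ui-tensormoment} and \cref{thm-locui-tensorfree}.
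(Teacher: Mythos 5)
Your treatment of part (1) and of the convergence statement in part (2) follows the paper's proof essentially step for step: Weingarten expansion via \cref{lem-PartialTransExpectation}, the dimension count reduced through \cref{lem-PairingSup} and the commutation of $\eps_s$ with $\delta$, the geodesic characterization of \cref{lem-PairingGeodesic}, and the identification of the surviving terms with the tensor-freeness moment formula of \cref{cor-TensorFreeSubsetMoment} via the splitting $\underline{\sigma}=\underline{\sigma_+}\sqcup\underline{\sigma_-}$ and \cref{eq-TranspTensorCumulants}. That part is correct.

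There is, however, a genuine gap in your argument for the \emph{usual} asymptotic freeness of the $2^r$ families in part (2). The formula you obtain from \cref{eq-UIPartialTransExpectation} with $\underline{\alpha}=\underline{\gamma_p}$ is a \emph{moment}, namely $\lim_N\E[\tr(X_1^{\underline{f}(1)}\cdots X_p^{\underline{f}(p)})]=\sum_\sigma \kappa_\sigma(x_1,\ldots,x_p)$ summed over all $\sigma\in S_{NC}(\gamma_p)$ with $\Pi(\sigma)\leq\ker\underline{f}$ and $\sigma_{f_s}\in S_{NC}(\gamma_p)$ for all $s$; it is not a cumulant, and the mixed free cumulant $\tilde\kappa_p$ of the limit family is obtained only after a M\"obius inversion over all such moments. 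So the statement ``irreducibility of $\gamma_p$ forces $\sigma$ to be a full cycle, incompatible with $\Pi(\sigma)\leq\ker\underline{f}$, so the limit cumulant vanishes'' does not prove anything: the absence of full-cycle terms in a moment expansion does not imply the vanishing of the mixed free cumulant (compare \cref{ex-TensorFreeNonFree}, where the mixed cumulant $\tilde\kappa_4$ is nonzero precisely because of non-full-cycle, ``crossing'' contributions of the form $\kappa_{(13)(2)(4)}\cdot\kappa_{(1)(3)(24)}$). The missing ingredient is the step the paper carries out explicitly: decompose $\sigma=\bigsqcup_{\underline{t}}\sigma_{\underline{t}}$ over the blocks of $\ker\underline{f}$, observe that the condition $\bigsqcup_{\underline{t}}(\sigma_{\underline{t}})^{t_s}\in S_{NC}(\gamma_p)$ for all $s$ forces $\sigma_{\underline{t}}=(\sigma_{\underline{t}})^{-1}$ for every mixed $\underline{t}\notin\{(1,\ldots,1),(-1,\ldots,-1)\}$, hence $\sigma_{\underline{t}}$ has only cycles of length $1$ or $2$, and then match the resulting factors with the free cumulants of the partially transposed marginals given by \cref{cor-UITranspose} and \cref{eq-UITranspCumulant2} so that \cref{cor-freemoment} applies. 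This involution observation is where the actual content of the freeness claim lies; without it one cannot distinguish the UI situation (where freeness holds) from the merely LUI one (where it fails, e.g.\ $G_1\otimes G_2$ versus its transpose).
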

\begin{proof}
Let us denote by $X_j^{\underline{f}(j)}$ ($j=1,\ldots, p$) the $p$-elements from $\bigcup_{\underline{t}\in \{\pm 1\}^r}\W_N^{\underline{t}}$, where $\underline{f}=(f_1,\ldots,f_r):[p]\to \{\pm 1\}^r$ is a partial transpose function.

(1) Continuing from \cref{eq-LuiPartialTransExpectation} we have
\begin{align*}
    \E\left[\tr_{\underline{\alpha}}(X_1^{\underline{f}(1)},\ldots, X_p^{\underline{f}(p)})\right]&=\sum_{\underline{\sigma},\underline{\tau}\in (S_p)^r} \E \left[\tr_{\underline{\tau}}(X_1,\ldots, X_p) \right] \left(\Mob(\underline{\tau}^{-1} \underline{\sigma}) + o(1) \right)\\
    &\qquad\qquad \times \prod_{s=1}^r \left(d_s^{\#(\tau_s)-\#(\alpha_s)}\;d_s^{\# (\eps_s\alpha_s\delta \alpha_s^{-1}\eps_s \vee \sigma_s \delta \sigma_s^{-1})} \;d_s^{-p-|\tau_s^{-1}\sigma_s|})\right)
\end{align*}

In order to estimate the exponents of $d_s$'s, we first apply \cref{lem-PairingSup} and the relation $\eps_s=\delta\eps_s\delta$ to observe that
    $$\# (\eps_s\alpha_s\delta \alpha_s^{-1}\eps_s \vee \sigma_s \delta \sigma_s^{-1})=\frac{1}{2}\#(\eps_s \alpha_s \delta \alpha_s^{-1}\delta \eps_s \delta\sigma_s\delta\sigma_s^{-1})=\frac{1}{2} \#((\alpha_s \delta \alpha_s^{-1}\delta)\cdot (\eps_s \sigma_s\delta\sigma_s^{-1}\delta\eps_s)^{-1}).$$
Note that the permutation above is defined in $S_{\pm p}$ although $\sigma_s,\sigma_s\in S_p$. Therefore, the whole exponent of $d_s$ becomes
\begin{align*}
    &\#(\tau_s)-\#(\alpha_s) + \# (\eps_s\alpha_s\delta \alpha_s^{-1}\eps_s \vee \sigma_s \delta \sigma_s^{-1}) -p-|\tau_s^{-1}\sigma_s|\\
    &= (p-|\tau_s|)-(p-|\alpha_s|)+\frac{1}{2}(2p-|(\alpha_s \delta \alpha_s^{-1}\delta)\cdot (\eps_s \sigma_s\delta\sigma_s^{-1}\delta\eps_s)^{-1}|)-p-|\tau_s^{-1}\sigma_s|\\
    &=-\frac{1}{2}\left((|(\alpha_s \delta \alpha_s^{-1}\delta)\cdot (\eps_s \sigma_s\delta\sigma_s^{-1}\delta\eps_s)^{-1}|+|\eps_s \sigma_s\delta\sigma_s^{-1}\delta\eps_s|-|\alpha_s\delta\alpha_s^{-1}\delta|)\right) -(|\tau_s^{-1}\sigma_s|+|\tau_s|-|\sigma_s|)\\
\end{align*}
where the last equality comes from $|\alpha_s\delta\alpha_s^{-1}\delta|=2|\alpha_s|$ and $|\eps_s\sigma_s\delta\sigma_s^{-1}\delta\eps_s|=2|\sigma_s|$ as observed before. Therefore, the exponent above is non-positive for all $\sigma_s,\alpha_s$, and by \cref{lem-PairingGeodesic}, this attains $0$ if and only if $\Pi(\sigma_s)\leq \ker f_s$, $\sigma_{f_s}\in S_{NC}(\alpha_s)$, and $\tau_s\in S_{NC}(\sigma_s)$. Consequently, we have
\begin{align*}
    \lim_{N\to \infty} \E\left[\tr_{\underline{\alpha}}(X_1^{\underline{f}(1)},\ldots, X_p^{\underline{f}(p)})\right] &= \sum_{\substack{\underline{\sigma}\in (S_p)^r,\\ \forall\, s: \;\Pi(\sigma_s)\leq \ker f_s,\\ \;\quad\sigma_{f_s}\in S_{NC}(\alpha_s)}} \sum_{\underline{\tau}\, \leq\, \underline{\sigma}}\varphi_{\underline{\tau}}(x_1,\ldots, x_p)\, \Mob(\underline{\tau}^{-1}\underline{\sigma})\\
    &= \sum_{\substack{\underline{\sigma}\in (S_p)^r,\\ \forall\, s:\;\Pi(\sigma_s)\leq \ker f_s,\\ \;\quad\sigma_{f_s}\in S_{NC}(\alpha_s)}} \kappa_{\underline{\sigma}}(x_1,\ldots, x_p),
\end{align*}
where $(x_1,\ldots, x_p)$ is the joint tensor distribution limit of $(X_1,\ldots, X_p)$. This shows the convergence in tensor distribution of $\bigcup_{\underline{t}\in \{\pm 1\}^r}\W_N^{\underline{t}}$. Now if we take $f_1=f_2=\cdots f_r=f$ so that we consider the tensor distribution of $\W_N\cup \W_N^{\top}$, then
\begin{align*}
    \lim_{N\to \infty} \E\left[\tr_{\underline{\alpha}}(X_1^{f(1)},\ldots, X_p^{f(p)})\right] &= \sum_{\substack{\underline{\sigma}\in (S_p)^r,\\ \bigvee \Pi(\sigma_s)\leq \ker f,\\ (\underline{\sigma})_f \in S_{NC}(\underline{\alpha})}} \kappa_{\underline{\sigma}}(x_1,\ldots, x_p)\\
    &=\sum_{\substack{\underline{\sigma_{\pm}}\in S(f^{-1}(\pm 1))^r\\ \underline{\sigma_+}\sqcup (\underline{\sigma_-})^{-1}\in S_{NC}(\underline{\alpha})} }\kappa_{\underline{\sigma_+}\sqcup \underline{\sigma_{-}}}(x_1,\ldots, x_p),\\
    &=\sum_{\substack{\underline{\sigma_{\pm}}\in S(f^{-1}(\pm 1))^r\\ \underline{\sigma_+}\sqcup (\underline{\sigma_{-}})^{-1}\in S_{NC}(\underline{\alpha})}} \kappa_{\underline{\sigma_+}}((x_j)_{j\in f^{-1}(1)}) \; \kappa_{(\underline{\sigma_{-}})^{-1}}((x_j^{\top})_{j\in f^{-1}(-1)}).
\end{align*}
Therefore, we obtain the asymptotic tensor freeness of $\W_N$ and $\W_N^{\top}$ by \cref{cor-TensorFreeSubsetMoment,prop-LuiTranspose}.

(2) Continuing from \cref{eq-UIPartialTransExpectation}, we can analogously proceed and obtain that
\begin{align*}
    \lim_{N\to \infty} \E\left[\tr_{\underline{\alpha}}(X_1^{\underline{f}(1)},\ldots, X_p^{\underline{f}(p)})\right] &= \sum_{\substack{{\sigma}\in S_p, \\ \forall\, s:\;\Pi(\sigma)\leq \ker f_s,\\ \;\quad\sigma_{f_s}\in S_{NC}(\alpha_s)}} \kappa_{\sigma}(x_1,\ldots, x_p).
\end{align*}
The conditions for $\sigma$ above imply   that we can decompose $\sigma=\bigsqcup_{\underline{t}\in \{\pm 1\}^r}\sigma_{\underline{t}}$ for $\sigma_{\underline{t}}\in S(f^{-1}(\underline{t}))$ such that $\sigma_{f_s}=\bigsqcup_{\underline{t}\in \{\pm 1\}^r} ({\sigma_{\underline{t}}})^{t_s}\in S_{NC}(\alpha_s)$ for all $s\in [r]$. Thus, we can further write the above sum into
    $$\sum_{\substack{\sigma_{\underline{t}}\in S(\underline{f}^{-1}(\underline{t})) \\ \forall\, s:\,\bigsqcup_{\underline{t}\in \{\pm 1\}^r} ({\sigma_{\underline{t}}})^{t_s}\in S_{NC}(\alpha_s)}} \prod_{\underline{t}\in \{\pm 1\}^r}\kappa_{\sigma_{\underline{t}}}((x_j)_{j\in \underline{f}^{-1}(\underline{t})}).$$
Therefore, \cref{cor-TensorFreeSubsetMoment,eq-UITranspCumulant} imply that $(\W_N^{\underline{t}})_{\underline{t}\in \{\pm 1 \}^r}$ are asymptotically tensor free. Furthermore, setting $\alpha_s\equiv \gamma_p$ gives that
\begin{align*}
    \lim_{N\to \infty} \E\left[\tr (X_1^{\underline{f}(1)} \cdots X_p^{\underline{f}(p)})\right] &= \sum_{\substack{\sigma_{\underline{t}}\in S(\underline{f}^{-1}(\underline{t})) \\ \forall\, s:\,\bigsqcup_{\underline{t}\in \{\pm 1\}^r} ({\sigma_{\underline{t}}})^{t_s}\in S_{NC}(\gamma_p)}} \prod_{\underline{t}\in \{\pm 1\}^r}\kappa_{\sigma_{\underline{t}}}((x_j)_{j\in \underline{f}^{-1}(\underline{t})})
\end{align*}
Note that the condition implies $\sigma_{\underline{t}}=(\sigma_{\underline{t}})^{-1}$ whenever $\underline{t}\notin \{(1,\ldots, 1),(-1,\ldots, -1)\}$, which forces that $\sigma_{\underline{t}}$ has only cycles of size 1 or 2. Therefore, $(\W_N^{\underline{t}})_{\underline{t}\in \{\pm 1 \}^r}$ are asymptotically free by \cref{cor-freemoment,cor-UITranspose}.
\end{proof}

\begin{remark}
\begin{enumerate}
    \item An LUI random matrix is not necessarily asymptotically tensor free from its partial transposes. For example, if $X=G\otimes I_N$ where $G$ is a (normalized) $N\times N$ GUE matrix, then $X^{(1,-1)}=(\id\otimes \top)(X)=X$ is not asymptotically tensor free from $X$ as $N\to \infty$.

    \item Similarly, an LUI random matrix is not necessarily asymptotically free from its transpose. For example, if $X=G_1\otimes G_2$ where $G_1$ and $G_2$ are independent GUE matrices, then the pair $(X,X^{\top})$ converges jointly in {(standard)} distribution to $(x,y)=(s_1\otimes s_1,s_2\otimes s_2)$ where $s_1,s_2$ are free standard semicircular elements, which are not freely independent as discussed in \cref{ex-TensorFreeNonFree}{; $X$ and $X^\top$ are however asymptotically tensor freely independent as per the previous result.}.
\end{enumerate}

\end{remark}

We now present the following main result of this section, which encapsulates all our results so far and can be regarded as the most general statement on the behavior of the family of LUI random matrices. We remark that the similar statement for \textit{bipartite} (i.e. $r=2$) UI random matrix has been considered in \cite[Corollary 4.8]{MP24}.

\begin{theorem} \label{thm-indepTranspose}
Let $\W_N^{(1)},\ldots, \W_N^{(L)}$ be independent families of $D_N\times D_N$ random matrices.
\begin{enumerate}
    \item If each $\W_N^{(i)}$ is local-unitary invariant, has the tensor factorization property \cref{eq-condition-TensorFact}, and converges in tensor distribution, then the $L$ families of all partial transposes 
        $$\bigcup_{\underline{t}\in \{\pm 1 \}^r} \big(\W_N^{(1)}\big)^{\underline{t}}, \ldots, \bigcup_{\underline{t}\in \{\pm 1 \}^r} \big(\W_N^{(L)}\big)^{\underline{t}}$$
    are asymptotically tensor free as $N\to \infty$.
    
    \item If each $\W_N^{(i)}$ is unitary invariant, has the factorization property \cref{eq-condition-Fact}, and converges in distribution, then all the $2^r L$ families $\left(\big(\W_N^{(i)}\big)^{\underline{t}} \right)_{i\in [L],\,\underline{t}\in \{\pm 1 \}^r}$ are both asymptotically free and asymptotically tensor free as $N\to \infty$.
\end{enumerate}
\end{theorem}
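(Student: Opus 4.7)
The plan is to prove both parts by combining the Weingarten-based arguments underlying \cref{thm-locui-tensorfree} (independent LUI families, without partial transposes) and \cref{thm-LocUITranspose} (joint asymptotics of a single (L)UI family together with all its partial transposes). Neither result can be quoted as a pure black box, since the enlarged families $\bigcup_{\underline{t}\in \{\pm 1\}^r}(\W_N^{(i)})^{\underline{t}}$ are not themselves LUI (partial transposition fails to commute with conjugation by $\bigotimes_s U_s$, as the transpose swaps $U_s \leftrightarrow \bar U_s$). Accordingly, I will redo the underlying graphical Weingarten computation, simultaneously handling the independence structure (indexed by a function $f:[p]\to [L]$) and the partial-transpose structure (indexed by $\underline{f}:[p]\to \{\pm 1\}^r$).

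For part (1), the starting point is a hybrid of \cref{lem-PartialTransExpectation} and \cref{eq-LocUITensorMoments}. For $X_j \in \W_N^{(f(j))}$ and partial transposes $X_j^{\underline{f}(j)}$, the LUI of each individual family $\W_N^{(i)}$ lets me insert independent Haar local-unitaries $U_i = \bigotimes_s U_{i,s}$ and replace each $X_j$ by $U_{f(j)} X_j U_{f(j)}^*$ \emph{before} applying the partial transpositions. Integrating each $U_{i,s}$ via graphical Weingarten then yields a sum indexed by tuples $(\underline{\sigma}^{(i)}, \underline{\tau}^{(i)})_{i \in [L]}$ with $\underline{\sigma}^{(i)}, \underline{\tau}^{(i)} \in S(f^{-1}(i))^r$, the level $s$ contributing a Weingarten weight $\Wg_{d_s}^{(U)}((\tau_s^{(i)})^{-1}\sigma_s^{(i)})$ and a combinatorial weight $d_s^{\#\big(\epsilon_s \alpha_s \delta \alpha_s^{-1} \epsilon_s\, \vee\, \sigma_s\delta \sigma_s^{-1}\big)}$ with $\sigma_s := \bigsqcup_i \sigma_s^{(i)}$. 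Using the asymptotics \cref{eq-WeinAsymp1} and expanding the total $d_s$-exponent as in the proofs of the two parent theorems, the exponent factorizes over $s \in [r]$ into a sum of nonnegative geodesic defects whose joint vanishing forces (i) $\Pi(\sigma_s^{(i)}) \leq \ker\big(f_s\restrict{f^{-1}(i)}\big)$, (ii) the $f_s$-twist of $\sigma_s$ from \cref{lem-PairingGeodesic} lies in $S_{NC}(\alpha_s)$, and (iii) $\underline{\tau}^{(i)} \leq \underline{\sigma}^{(i)}$. M\"obius inversion over each $S_{NC}(\underline{\sigma}^{(i)})$ produces the tensor free cumulants $\kappa_{\underline{\sigma}^{(i)}}((x_j)_{j \in f^{-1}(i)})$, and then \cref{eq-TranspTensorCumulants} rewrites these in terms of the tensor free cumulants of the partial transposes $x_j^{\underline{f}(j)}$. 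The resulting moment--cumulant formula matches \cref{eq-TensorFreeSubsetMoment} exactly, so \cref{cor-TensorFreeSubsetMoment} concludes asymptotic tensor freeness of the $L$ enlarged families.

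For part (2), the stronger UI hypothesis makes each Weingarten integration involve a single unitary of size $D_N$ per family rather than $r$ local ones, and the index set shrinks from $S(f^{-1}(i))^r$ to $S(f^{-1}(i))$. Asymptotic tensor freeness of the $2^r L$ families then follows from the argument in part (1), specialized to this reduced index set. For the \emph{ordinary} asymptotic freeness, I specialize $\underline{\alpha} = \underline{\gamma_p}$: the constraint that the $f_s$-twist of $\sigma^{(i)}$ lies in $S_{NC}(\gamma_p)$ for every $s$ forces, exactly as in the proof of \cref{thm-LocUITranspose}(2), each $\sigma^{(i)}_{\underline{t}}$ to be an involution supported on fixed points and transpositions whenever $\underline{t} \notin \{(1,\ldots,1),(-1,\ldots,-1)\}$. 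The resulting formula matches \cref{cor-freemoment} for asymptotic freeness, with the nontrivial partial transposes limiting to semicircular elements as in \cref{cor-UITranspose}.

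The main obstacle is the bookkeeping in the combined Weingarten expansion: verifying that the total $d_s$-exponent splits cleanly into two additive nonnegative defects, one governing the independence structure (factorization across $f$) and one governing the partial-transpose twist (the $\epsilon$-pairings of \cref{lem-pairings}), with a clean combinatorial characterization of joint vanishing. Once this identity is established, the translation into the language of tensor free cumulants and the verification of \cref{cor-TensorFreeSubsetMoment} are essentially mechanical.
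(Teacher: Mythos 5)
Your proposal is correct, but it takes a genuinely different and much more laborious route than the paper, and its motivating premise is slightly off. You claim that neither \cref{thm-locui-tensorfree} nor \cref{thm-LocUITranspose} can be quoted as a black box because the enlarged families $\bigcup_{\underline{t}}(\W_N^{(i)})^{\underline{t}}$ are not LUI. That is true, but they \emph{are} local-orthogonal invariant: for orthogonal matrices $O^\top = O^*$, so partial transposition maps the conjugation action of $\bigotimes_s O_s$ into itself (this is exactly \cref{ex-UI}(6)), and the paper has the LOI black box ready for precisely this situation. The paper's proof of part (1) and of the tensor freeness in part (2) is therefore two lines: \cref{thm-LocUITranspose} shows each enlarged family converges in tensor distribution with the tensor factorization property, and \cref{thm-LocOI-tensorfree}(2) (independent LOI families whose unions of partial transposes converge in tensor distribution) immediately yields the asymptotic tensor freeness. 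For the ordinary freeness in part (2), the paper also avoids a direct moment computation: it first gets freeness between the $\underline{t}$-indexed unions $\big(\bigcup_i(\W_N^{(i)})^{\underline{t}}\big)_{\underline{t}}$ from \cref{thm-LocUITranspose}(2) applied to the UI family $\bigcup_i \W_N^{(i)}$ (using \cref{prop-IndepUI} and \cref{thm-UIasympfree}), and then uses associativity of free independence plus the vanishing of mixed cumulants from \cref{eq-TranspTensorCumulants} and \cref{eq-UITranspCumulant2} to handle freeness across $i$ within each fixed $\underline{t}$. Your single combined Weingarten expansion — independent Haar local unitaries indexed by $f:[p]\to[L]$ together with the $\epsilon$-twisted pairings indexed by $\underline{f}:[p]\to\{\pm1\}^r$ — does work: the exponent analysis and vanishing conditions you list are the correct merger of those in \cref{thm-locui-tensorfree} and \cref{thm-LocUITranspose}, and the reindexing via the twist $\sigma\mapsto\sigma_{f_s}$ of \cref{lem-PairingGeodesic} lets you match \cref{eq-TensorFreeSubsetMoment}. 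But you are essentially re-deriving, in the unitary-plus-$\epsilon$ language, the orthogonal Weingarten computation that the paper has already packaged once and for all in \cref{thm-LocOI-tensorfree}; recognizing the LOI symmetry of the enlarged families is what makes the modular argument possible and is the one idea your write-up misses.
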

\begin{proof}
(1) and the tensor freeness in (2) is a direct consequence of \cref{thm-LocOI-tensorfree,thm-LocUITranspose} since each family $\bigcup_{\underline{t}}(\W_N^{(i)})^{\underline{t}}$ is LOI (\cref{ex-UI}) and convergent in tensor distribution. For the freeness in (2), we first observe that the family $\bigcup_{i=1}^L \W_N^{(i)}$ is UI and has the first order limit by \cref{prop-IndepUI,thm-UIasympfree}. Therefore, \cref{thm-LocUITranspose} (2) implies that the families $\Big(\bigcup_{i=1}^L (\W_N^{(i)})^{\underline{t}}\Big)_{\underline{t}\in \{\pm 1\}^r}$ are asymptotically free. Now by the associativity of free independence, it is enough to show the asymptotic freeness between the families $\big((\W_N^{(i)})^{\underline{t}}\big)_{i=1}^L$ for each $\underline{t}\in \{\pm 1\}^r$. The case $\underline{t}=(1,\ldots, 1)$ is again from \cref{thm-UIasympfree}, and the case $\underline{t}=(-1,\ldots, -1)$ is from \cref{eq-TranspTensorCumulants} with $r=1$:
    $$\lim_{N\to \infty}\kappa_p(X_1^{\top},\ldots, X_p^{\top})=\lim_{N\to \infty}\kappa_p(X_p,\ldots, X_1)=0$$
whenever $X_j\in \W_N^{(f(j))}$ with a non-constant function $f$. Finally, we can argue similarly for the remaining cases $\underline{t}\notin\{(1,\ldots, 1),(-1,\ldots, -1)\}$ but using \cref{eq-UITranspCumulant2}.
\end{proof}

We now obtain analogue results for OI random matrices. First of all, \cref{cor-UITranspose} has a natural analogue for orthogonal invariance, with a slight modification of the proof using \cref{prop-oi-tensormoment}. 

\begin{proposition} \label{cor-OITranspose}
Suppose $\W_N$ is a family of $D_N\times D_N$ random matrices which is OI, converges in distribution and has the factorization property \cref{eq-condition-Fact}. If the family $\W_N\cup \W_N^{\top}$ satisfies the bounded moments condition \cref{eq-condition-Bdd}, then for every $\underline{t} \in \{\pm 1\}^r$, the family $\W_N^{\underline{t}}$ converges in tensor distribution whose limit has the same tensor free cumulants with \cref{eq-UITranspCumulant} and the same free cumulants with \cref{eq-UITranspCumulant2} for the cases $\underline{t} \notin \{(1,\ldots, 1),(-1,\ldots, -1)\}$.
\end{proposition}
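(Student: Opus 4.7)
The plan is to combine \cref{prop-oi-tensormoment} and \cref{prop-LuiTranspose} in exactly the same fashion as in the proof of the unitary analogue \cref{cor-UITranspose}; the bounded moments hypothesis on $\W_N\cup \W_N^\top$ is precisely what is needed to run the OI version of the structural result on invariant random matrices.

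First I would invoke \cref{prop-oi-tensormoment}~(1). Its hypotheses---orthogonal invariance, factorization property, convergence in distribution, and bounded moments on $\W_N\cup \W_N^\top$---are exactly those available. Its conclusion provides a tensor distribution limit $\W$ for $\W_N$, shows that $\W_N$ automatically satisfies the tensor factorization property \cref{eq-condition-TensorFact}, and furnishes the explicit expressions \cref{eq-ui-tensormoment} and \cref{eq-ui-tensorcumulant} for the tensor moments and tensor free cumulants of $\W$; in particular
\[
\kappa_{\underline{\alpha}}(x_1,\ldots,x_p) = \tilde{\kappa}_\sigma(x_1,\ldots,x_p) \text{ if } \alpha_s\equiv \sigma, \qquad \kappa_{\underline{\alpha}}(x_1,\ldots,x_p)=0 \text{ otherwise.}
\]

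Now that $\W_N$ is known to converge in tensor distribution and to satisfy the tensor factorization property, I would apply \cref{prop-LuiTranspose} to each partial transpose $\W_N^{\underline{t}}$, $\underline{t}\in \{\pm 1\}^r$. This yields $\W_N^{\underline{t}} \xrightarrow{\otimes\text{-distr}} \W^{\underline{t}}$ together with the transposition identity \cref{eq-TranspTensorCumulants},
\[
\kappa_{\underline{\alpha}}(x_1^{\underline{t}},\ldots,x_p^{\underline{t}}) = \kappa_{\underline{\alpha}^t}(x_1,\ldots,x_p),
\]
and substituting the explicit formula for $\kappa_{\underline{\alpha}^t}$ from the previous step immediately produces \cref{eq-UITranspCumulant}: the right-hand side equals $\tilde{\kappa}_\sigma(x_1,\ldots,x_p)$ whenever $\alpha_s^{t_s}\equiv \sigma$ and vanishes otherwise.

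Finally, to obtain the free cumulant formula \cref{eq-UITranspCumulant2} in the case $\underline{t}\notin\{(1,\ldots,1),(-1,\ldots,-1)\}$, I would copy the argument at the end of the proof of \cref{cor-UITranspose}: taking $\alpha_s\equiv \gamma_p$ in \cref{eq-ui-tensormoment} and using the well-known identification $S_{NC}(\gamma_p)\cap S_{NC}(\gamma_p^{-1})\cong NC_{1,2}(p)$ shows that only free cumulants of order $1$ and $2$ can contribute, whence M\"{o}bius inversion delivers \cref{eq-UITranspCumulant2}. No substantial obstacle is expected because all of the asymptotic analysis specific to orthogonal Weingarten calculus is already absorbed into \cref{prop-oi-tensormoment}; the remainder is a combinatorial unfolding identical to the UI case.
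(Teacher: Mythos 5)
Your proposal is correct and follows exactly the route the paper intends: the paper's own justification is precisely ``the proof of \cref{cor-UITranspose} with \cref{prop-ui-tensormoment} replaced by \cref{prop-oi-tensormoment}~(1)'', whose hypotheses (OI, factorization, convergence in distribution, bounded moments on $\W_N\cup\W_N^{\top}$) match the proposition's assumptions, after which \cref{prop-LuiTranspose} and the $S_{NC}(\gamma_p)\cap S_{NC}(\gamma_p^{-1})\cong NC_{1,2}(p)$ identification give \cref{eq-UITranspCumulant} and \cref{eq-UITranspCumulant2}. The only cosmetic slip is that the last step uses \cref{eq-ui-tensormoment} with $\alpha_s\equiv\gamma_p^{t_s}$ (not $\gamma_p$), which is where the intersection $S_{NC}(\gamma_p)\cap S_{NC}(\gamma_p^{-1})$ and the restriction $\underline{t}\notin\{(1,\ldots,1),(-1,\ldots,-1)\}$ come from.
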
 

The analogue of \cref{eq-UIPartialTransExpectation} is as follows; we leave the details to the reader.

\begin{lemma} \label{lem-OIPartialTransExpectation}
Suppose $\{X_1,\ldots, X_p\}$ is a OI family of $D\times D$ random matrices. Then for a {function} $\underline{f}=(f_1,\ldots, f_r):[p]\to \{\pm 1\}^r$ and $\underline{\alpha}\in (S_p)^r$, we have
\small
\begin{equation} \label{eq-OIPartialTransExpectation}
    \E\left[\Tr_{\underline{\alpha}}(X_1^{\underline{f}(1)},\ldots, X_p^{\underline{f}(p)})\right]=\sum_{\pi,\rho \in \mathcal{P}_2(\pm p)} \E \left[\Tr_{{\rho\vee \delta}}(X_1,\ldots, X_p) \right] \left(\prod_{s=1}^r d_s^{\# (\eps_s\alpha_s\delta \alpha_s^{-1}\eps_s \vee \pi)}\right) \Wg_{D}^{(O)}(\pi,\rho)
\end{equation}
\normalsize
where each $\eps_s=\eps_{f_s}\in S_{\pm p}$ is defined according to the function $f_s:[p]\to \{\pm 1\}$.
\end{lemma}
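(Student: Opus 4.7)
The plan is to mirror closely the argument used in the unitary case (\cref{lem-PartialTransExpectation}, formula \cref{eq-UIPartialTransExpectation}), but replacing the unitary graphical Weingarten formula by the orthogonal one from \cref{thm-GraphWeingarten}, exactly as was done in passing from \cref{eq-UITensorInv} to \cref{eq-OITensorInv} in \cref{lem-UIOITensorMoments}.

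First, by orthogonal invariance of the family $\mathcal{W}=\{X_1,\ldots,X_p\}$, for a Haar orthogonal matrix $O\in\mathcal{O}_D$ independent of $\mathcal{W}$:
$$\E\big[\Tr_{\underline{\alpha}}(X_1^{\underline{f}(1)},\ldots, X_p^{\underline{f}(p)})\big]=\E_{\mathcal{W}}\,\E_{O}\big[\Tr_{\underline{\alpha}}((OX_1O^{\top})^{\underline{f}(1)},\ldots,(OX_pO^{\top})^{\underline{f}(p)})\,\big|\,\mathcal{W}\big].$$
The inner expectation corresponds to a tensor diagram containing $2p$ Haar-orthogonal boxes, which I would label $\pm 1,\ldots,\pm p$ (the $+i$ box playing the role of the $O$ on the left of $X_i$ and the $-i$ box the role of $O^{\top}$ on the right). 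Applying the graphical orthogonal Weingarten formula expands this as a double sum over $(\pi,\rho)\in\mathcal{P}_2(\pm p)^2$, with a scalar factor $\Wg_D^{(O)}(\pi,\rho)$ and a diagram $\mathcal{D}^{(O)}_{\pi,\rho}$ obtained after the removal operation.

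The next step is the diagrammatic analysis of $\mathcal{D}^{(O)}_{\pi,\rho}$, which decomposes into two independent pieces. On the \emph{matrix side}, the inner-input decorations (paired by $\rho$) combine with the $\delta$-shaped contraction that runs inside the matrices $X_i$ from the left to the right $O$-box; since the partial transposes $\underline{f}(i)$ act on the outer legs of the $OX_iO^{\top}$ blocks and therefore leave the matrix-side wiring untouched, this contribution is simply the orthogonal trace invariant $\Tr_{\rho\vee\delta}(X_1,\ldots,X_p)$ as defined in \cref{eq-LocOITraceInv}, exactly as in \cref{eq-OITensorInv}. On the \emph{outer side}, at each tensor level $s\in[r]$, the wiring prescribed by $\alpha_s$ (encoded through the pairing $\alpha_s\delta\alpha_s^{-1}$ of $[\pm p]$) is modified by the partial transpositions: each $i$ with $f_s(i)=-1$ swaps the labels $i\leftrightarrow -i$ on level $s$, which is precisely the conjugation by $\eps_s=\eps_{f_s}$. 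This is the same outer wiring as in the UI case (see the explanation following \cref{eq-UIPartialTransExpectation} and \cref{fig:LUI-PT}) and yields the pairing $\eps_s\alpha_s\delta\alpha_s^{-1}\eps_s$. Pairing this with $\pi$ at level $s$ produces a number of loops equal to $\#(\eps_s\alpha_s\delta\alpha_s^{-1}\eps_s\vee\pi)$, each contributing a factor $d_s$.

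Putting the two pieces together gives
$$\mathcal{D}^{(O)}_{\pi,\rho}=\Tr_{\rho\vee\delta}(X_1,\ldots,X_p)\,\prod_{s=1}^{r}d_s^{\#(\eps_s\alpha_s\delta\alpha_s^{-1}\eps_s\vee\pi)},$$
and taking $\E_{\mathcal{W}}$ yields \cref{eq-OIPartialTransExpectation}. The only genuinely delicate step is the bookkeeping in the outer-side analysis, namely checking that the effect of a partial transposition on the diagram after the removal operation is exactly the conjugation of $\alpha_s\delta\alpha_s^{-1}$ by $\eps_s$; this is the orthogonal analogue of the $\eps_s$-insertion argument carried out for the unitary case and ultimately relies on the commutativity of $\eps_s$ and $\delta$ together with $\eps_s^2=\id$, already used in \cref{lem-pairings} and \cref{lem-PairingGeodesic}.
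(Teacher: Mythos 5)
Your proposal is correct and follows exactly the route the paper intends: the paper states this lemma without proof ("we leave the details to the reader"), pointing to the unitary analogue \cref{eq-UIPartialTransExpectation} and the orthogonal Weingarten expansion of \cref{eq-OITensorInv}, and your argument is precisely the combination of those two — orthogonal invariance, the graphical orthogonal Weingarten formula over pairings $(\pi,\rho)$, the identification of the $\rho$-side contribution as $\Tr_{\rho\vee\delta}$, and the $\eps_s$-conjugation bookkeeping for the loop count on each tensor level. The details you supply (in particular that the partial transposes only permute the outer legs, leaving the matrix-side wiring and hence $\Tr_{\rho\vee\delta}$ untouched) are the right ones.
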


\begin{theorem} \label{thm-OITranspose}
Suppose $\W_N$ is a family of $D_N\times D_N$ OI random matrices having factorization property \cref{eq-condition-Fact} and convergent in distribution. If the family $\W_N\cup \W_N^{\top}$ satisfies the boundedness property \cref{eq-condition-Bdd}, then 
the $2^{r-1}$ families of partial transposes 
    $$(\W_N^{\underline{t}})_{\underline{t}\in \{\pm 1 \}^r:t_r=1}$$
are both asymptotically free and tensor free.
\end{theorem}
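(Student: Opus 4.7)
The strategy adapts the argument of Theorem \ref{thm-LocUITranspose}(1) to the orthogonally invariant setting, replacing unitary Weingarten calculus by its orthogonal analogue. The starting point is Lemma \ref{lem-OIPartialTransExpectation}: for a partial transpose function $\underline{f}=(f_1,\ldots, f_r):[p]\to\{\pm 1\}^r$ with $f_r\equiv 1$, and any $\underline{\alpha}\in (S_p)^r$, one expands
$$
\E\!\left[\Tr_{\underline{\alpha}}(X_1^{\underline{f}(1)},\ldots, X_p^{\underline{f}(p)})\right]=\sum_{\pi,\rho\in\mathcal{P}_2(\pm p)} \E\!\left[\Tr_{\rho\vee\delta}(\underline X)\right] \prod_{s=1}^r d_s^{\#(\eps_s\alpha_s\delta\alpha_s^{-1}\eps_s\,\vee\,\pi)}\, \Wg_D^{(O)}(\pi,\rho),
$$
where each $\eps_s=\eps_{f_s}$, and inserts the asymptotic \eqref{eq-WeinAsymp2} of $\Wg_D^{(O)}$.

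The next step is the exponent analysis. Using $\#(\pi_1\vee\pi_2)=p-|\pi_1\pi_2|/2$ together with $|\eps_s\alpha_s\delta\alpha_s^{-1}\delta\eps_s|=2|\alpha_s|$ (Lemma \ref{lem-pairings}(1)), the exponent of $d_s$ after normalization by $d_s^{\#\alpha_s}$ becomes
$$
|\alpha_s|-\tfrac{1}{2}\bigl(|\rho\delta|+|\rho\pi|+|\pi\cdot\eps_s\alpha_s\delta\alpha_s^{-1}\eps_s|\bigr).
$$
Two successive applications of the triangle inequality in the metric space $(S_{\pm p},d)$ (via the intermediate points $\rho$ and $\pi$ on the path $\delta\to \eps_s\alpha_s\delta\alpha_s^{-1}\eps_s$) show that this quantity is non-positive, with equality iff $\rho\delta\le\pi\delta\le\eps_s\alpha_s\delta\alpha_s^{-1}\delta\eps_s$ for every $s\in[r]$. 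The boundedness hypothesis on $\W_N\cup\W_N^{\top}$, combined with the identity \eqref{eq-OITraceInvTransp} rewriting $\tr_{\rho\vee\delta}(\underline X)$ as $\tr_\tau$ of matrices from $\W_N\cup\W_N^{\top}$, ensures that the prefactors $\E[\tr_{\rho\vee\delta}(\underline X)]$ remain uniformly bounded, so only leading-order terms survive in the limit $N\to\infty$.

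The heart of the proof is then enumerating the pairs $(\pi,\rho)$ that saturate the leading order. The geodesic constraint $\pi\delta\le\eps_s\alpha_s\delta\alpha_s^{-1}\delta\eps_s$ for each $s$, combined with Lemma \ref{lem-pairings}(3) and Lemma \ref{lem-PairingGeodesic}, forces a decomposition $\pi=\eps_0\sigma\delta\sigma^{-1}\eps_0$ and $\rho=\eps_0\tau\delta\tau^{-1}\eps_0$ for a common $\eps_0$ determined by $\underline f$, with $\sigma\in S_p$ such that $\Pi(\sigma)\le\ker f_s$ and $\sigma_{f_s}\in S_{NC}(\alpha_s)$ for every $s$, and $\tau\in S_{NC}(\sigma)$. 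The constraint $f_r\equiv 1$ is essential at this stage: it pins down $\eps_0$ on the last tensor leg and prevents the pairs $\underline{t}$ and $-\underline{t}$ (which are identified under orthogonal conjugation up to transposition) from being mistakenly merged into a single family, which would break freeness. Summing $\Mob(\tau^{-1}\sigma)$-weighted contributions against the Möbius factor from $\Wg_D^{(O)}$ via \eqref{eq-MobPairing} yields
$$
\lim_{N\to\infty}\E\!\left[\tr_{\underline{\alpha}}(X_1^{\underline{f}(1)},\ldots, X_p^{\underline{f}(p)})\right]=\sum_{\substack{\underline{\sigma}^{(\underline t)}\in S(\underline f^{-1}(\underline t))^r \\ \bigsqcup_{\underline t}(\underline{\sigma}^{(\underline t)})_{\underline t}\in S_{NC}(\underline{\alpha})}}\ \prod_{\underline{t}\,:\,t_r=1}\kappa_{\underline{\sigma}^{(\underline t)}}\!\left((x_j^{\underline t})_{j\in\underline{f}^{-1}(\underline t)}\right),
$$
where the individual cumulants are those identified in Corollary \ref{cor-OITranspose}. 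Comparing with the moment-cumulant characterization of tensor freeness in Corollary \ref{cor-TensorFreeSubsetMoment} gives the desired asymptotic tensor freeness; specializing $\alpha_s\equiv\gamma_p$ and using \eqref{eq-UITranspCumulant2} with Corollary \ref{cor-freemoment} yields asymptotic freeness in the classical sense.

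The main obstacle is the combinatorial step of translating the simultaneous geodesic conditions on the pairing $\pi$ (one for each tensor leg $s\in[r]$) into a single consistent decomposition, and then verifying that the resulting sum factorizes across the $2^{r-1}$ partial transpose types indexed by $\underline{t}$ with $t_r=1$. The restriction $t_r=1$ is precisely what makes this factorization possible: it resolves the ambiguity between $\underline t$ and $-\underline t$ that arises intrinsically from orthogonal (rather than unitary) invariance, where conjugation by $O$ can absorb a global transposition.
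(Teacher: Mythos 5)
Your proposal follows essentially the same route as the paper's proof: the orthogonal Weingarten expansion of \cref{lem-OIPartialTransExpectation}, the length-function analysis yielding non-positivity of the exponents with equality exactly when $\rho\delta\le\pi\delta\le\eps_s\alpha_s\delta\alpha_s^{-1}\delta\eps_s$ for all $s$, the boundedness hypothesis to kill subleading terms, and the reduction of the resulting cumulant sum to the concluding factorization arguments of \cref{thm-LocUITranspose}(2). The one slight imprecision is that $f_r\equiv 1$ does not merely ``pin down'' a twist $\eps_0$ determined by $\underline f$: since $\eps_r=\id_{\pm p}$, the $s=r$ geodesic condition together with \cref{lem-pairings}(2) forces the twist to be trivial, i.e.\ $\pi=\sigma\delta\sigma^{-1}$ and $\rho=\tau\delta\tau^{-1}$ for genuine $\sigma,\tau\in S_p$ with $\tau\le\sigma\le\alpha_r$ --- which is exactly the mechanism your argument relies on.
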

\begin{proof}
Suppose $X_1,\ldots, X_p\in \W_N$ jointly converge in distribution to $x_1,\ldots, x_p$ as $N\to \infty$, and take a function $\underline{f}:[p]\to \{\pm 1\}^r$ with $f_r\equiv 1$. Similarly as in the proof of \cref{thm-LocUITranspose}, we can continue from \cref{eq-OIPartialTransExpectation} to obtain that
\begin{align*}
    \E\left[\tr_{\underline{\alpha}}(X_1^{\underline{f}(1)},\ldots, X_p^{\underline{f}(p)})\right]&=\sum_{\pi, \rho \in \mathcal{P}_2(\pm p)} \E \left[\tr_{\rho\vee \delta}(X_1,\ldots, X_p) \right] \left(\Mob(\rho\vee \pi) + o(1) \right)\\
    &\qquad\qquad \times \prod_{s=1}^r \left(d_s^{\#(\rho\vee\delta)-\#(\alpha_s)}\;d_s^{\# (\eps_s\alpha_s\delta \alpha_s^{-1}\eps_s \vee \pi)} \;d_s^{-p-|\rho\vee\pi|})\right),
\end{align*}
and the exponent of each $d_s$ becomes
\begin{align*}
    &\#(\rho\vee\delta)-\#(\alpha_s) + \# (\eps_s\alpha_s\delta \alpha_s^{-1}\eps_s \vee \pi) -p-|\rho\vee\pi|\\
    &=-\frac{1}{2}(|(\eps_s \alpha_s \delta \alpha_s^{-1}\delta\eps_s)\cdot (\pi\delta)^{-1}|+|\pi\delta|-2|\alpha_s|) -\frac{1}{2}(|(\pi\delta)\cdot(\rho\delta)^{-1}|+|\rho\delta|-|\pi\delta|)\\
\end{align*}
which is non-positive and attains zero if and only if $\rho\delta \leq \pi\delta \leq\eps_s\alpha_s\delta\alpha_s^{-1}\delta \eps_s$. Note that $\eps_r=\id_{\pm p}$ and the condition implies that there exist (unique) $\sigma,\tau\in S_p$ such that 
\begin{center}
    $\tau\leq \sigma\leq \alpha_r$, $\rho\delta=\tau\delta\tau^{-1}\delta$, and $\pi\delta=\sigma\delta\sigma^{-1}\delta$.
\end{center}
Therefore, we have
\small
\begin{align*}
    \lim_{N\to \infty} \E\left[\tr_{\underline{\alpha}}(X_1^{\underline{f}(1)},\ldots, X_p^{\underline{f}(p)})\right] &= \sum_{\sigma\in S_{NC}(\alpha_r)}\Big(\prod_{s=1}^{r-1}\mathds{1}_{\sigma\delta\sigma^{-1}\delta\leq \eps_s\alpha_s\delta\alpha_s^{-1}\delta\eps_s} \Big) \sum_{\tau\in S_{NC}(\sigma)}\varphi_{\tau}(x_1,\ldots, x_p)\, \Mob({\tau}^{-1}{\sigma})\\
    &= \sum_{\substack{{\sigma}\in S_{NC}(\alpha_r),\\ \forall\, s\in [r-1]:\;\Pi(\sigma)\leq \ker f_s,\\ \;\quad\sigma_{f_s}\in S_{NC}(\alpha_s)}} \kappa_{\sigma}(x_1,\ldots, x_p).
\end{align*}
\normalsize
Now we can repeat all the arguments from \cref{thm-LocUITranspose} (2) to conclude the asymptotic freeness and tensor freeness of the families $(\W_N^{\underline{t}})_{\underline{t}\in \{\pm 1 \}^r:t_r=1}$.
\end{proof}

\begin{remark}
\begin{enumerate}
    \item Under the conditions in \cref{thm-OITranspose}, we can actually show the asymptotic freeness and tensor freeness between any $2^{r-1}$ families of partial transposes $(\W_N^{\underline{t}})$ as long as $t_{s_0}\equiv 1$ or $t_{s_0}\equiv -1$ for some $s_0\in [r]$. On the other hand, we cannot conclude the freeness between $\W_N$ and $\W_N^{\top}$.  For example, if $X$ is a (normalized) $N^2\times N^2$ GOE matrix, then $X$ is neither asymptotically free nor tensor free from $X^{\top}=X$.

    \item We cannot expect any type of independence when the assumption of orthogonal invariance is weakened to \emph{local-orthogonal invariance}. For example, if $G$ is a (normalized) $N\times N$ GOE matrix, then $X=G\otimes I_d$ is asymptotically (tensor) free from neither of $X^{(1,-1)}=X^{(-1,1)}=X^{\top}=X$.

    \item If $S$ is a subset of $\{\pm 1\}^r$ having cardinality larger than $2^{r-1}$, then we can always find two elements $\underline{t}_1,\underline{t}_2\in S$ such that $\underline{t}_1=-\underline{t}_2$ by the pigeonhole principle. This implies that $2^{r-1}$ is the maximum cardinality of subsets $S\subseteq \{\pm 1\}^r$ with a property that $(X^{\underline{t}})_{\underline{t}\in S}$ are asymptotically (tensor) free whenever $X$ is an OI random matrix satisfying the conditions in \cref{thm-OITranspose}.
\end{enumerate}
\end{remark}

\subsection{Tensor embedding of random matrices} \label{sec-EmbeddingTensorFree}

We consider a set of $k$ bipartite matrices
    $$\{X_1, \ldots, X_k\} \subset \M{d_t} \otimes \M{d_b}$$
where $d_t$ and $d_b$ are the dimensions of the tensor factors on which these matrices act (corresponding to the ``top'' and ``bottom'' spaces). We shall embed these matrices in the larger space $\M{d_t}^{\otimes q} \otimes \M{d_b}^{\otimes r}$ according to a bipartite graph $G$ on $q+r$ vertices having $k$ edges, as follows. For $i \in [k]$, define the matrix $Y_i$ corresponding to the edge $(t(i), b(i))$ as
$$\M{d_t}^{\otimes q} \otimes \M{d_b}^{\otimes r} \ni Y_i:= X_i^{t(i),b(i)} \otimes I^{[q+r] \setminus \{t(i),b(i)\}},$$
where superscripts denote the spaces on which the operators act; see \cref{fig:XY-embeddings} for two examples. Since we require that the graph $G$ is bipartite, the functions defining the edges are such that
$$t : [k] \to [q] \qquad \text{ and } b:[k] \to q+[r].$$
In other words, the top spaces correspond to indices $1,2, \ldots, q$, while the bottom spaces correspond to the indices $q+1, q+2, \ldots, q+r$. 
\begin{figure}[htb]
    \centering
    \includegraphics[width=1\linewidth]{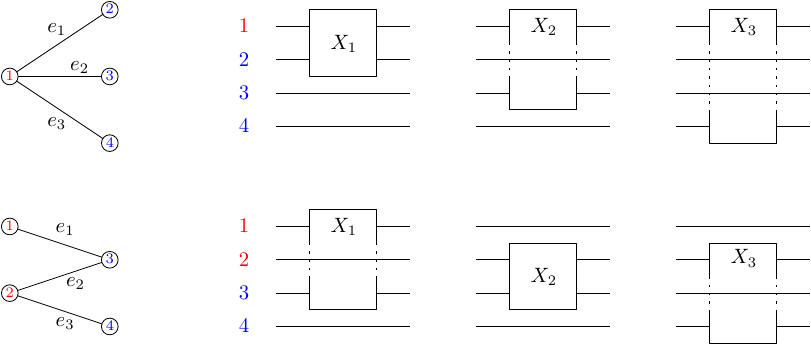}
    \caption{Two examples of embeddings of $k=3$ bipartite matrices $X_{1,2,3}$ into a larger tensor product of matrix algebras. On the top row, we have $q=1$ ``top'' space and $r=3$ ``bottom'' spaces. On the left we display the bipartite star graph, while on the right we draw the diagrams for the matrices $Y_{1,2,3}$ acting on the 4 tensor factors. A different bipartite graph and the corresponding matrices are considered in the bottom panel, with $q=r=2$.}
    \label{fig:XY-embeddings}
\end{figure}

We can now state the main result in this framework. 

\begin{theorem}\label{thm:embed-different-spaces}
    Consider a unitarily invariant family $\{X_1, \ldots, X_k\}$ of $d_td_b\times d_td_b$ random matrices with the property that each element $X_i$ of the family converges in distribution and satisfies the factorization property \cref{eq-condition-Fact} as $d_t, d_b \to \infty$. Given two functions $t : [k] \to [q]$ and $b:[k] \to q+[r]$, embed these matrices as above in $\M{d_t}^{\otimes q} \otimes \M{d_b}^{\otimes r}$, and denote the corresponding family as $Y_1, \ldots, Y_k$. Assume that the pair of functions $(t,b):[k] \to [q] \times (q+[r])$ is \emph{injective}, that is, no two matrices $Y_i, Y_j$ ($i \neq j$) act on the same spaces in the larger tensor product. Assume also that the family $X_1, \ldots, X_k$ satisfies the bounded moments property from \cref{eq-condition-Bdd}
    $$\forall p \geq 1, \, \forall \tau \in S_p, \, \forall f:[p] \to [k], \qquad \sup_{d_t,d_b \geq 1} \Big| \E [\tr_\tau(X_{f(1)},\ldots, X_{f(p)})] \Big| < \infty.$$ 
    Then, as $d_t, d_b \to \infty$, the family $(Y_1, \ldots, Y_k)$ is asymptotically tensor free. 
\end{theorem}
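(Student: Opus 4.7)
The plan is to verify the tensor freeness criterion from \cref{cor-TensorFreeSubsetMoment} by computing the limit of the mixed tensor moments $\E[\tr_{\underline{\alpha}}(Y_{f(1)}, \ldots, Y_{f(p)})]$ directly, and matching the result with the tensor free prescription. As a preliminary step, each $X_i$ is individually unitarily invariant, has the factorization property, and converges in distribution, so \cref{prop-ui-tensormoment} (applied to each $X_i$ alone as a bipartite matrix, with $r=2$) upgrades this to convergence in tensor distribution to some limit $x_i$. A direct inspection shows that, for a single embedded matrix, the identity factors on the inactive tensor positions cancel exactly the corresponding $d_s^{\#\alpha_s}$ in the normalization, so $\tr_{\underline{\alpha}}(Y_i, \ldots, Y_i) = \tr_{\alpha_{t(i)}, \alpha_{b(i)}}(X_i, \ldots, X_i)$. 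Calling $y_i$ the resulting tensor distribution limit, Möbius inversion then yields that the tensor free cumulants of $y_i$ are supported on tuples $\underline{\beta}$ with $\beta_s = \id_p$ for $s \notin \{t(i), b(i)\}$, and are given there by $\kappa_{\underline{\beta}}(y_i) = \kappa_{\beta_{t(i)}, \beta_{b(i)}}(x_i)$.

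For the joint moments, I would use the unitary invariance of the family to replace $(X_1, \ldots, X_k) \stackrel{d}{=} (UX_1U^*, \ldots, UX_kU^*)$ for $U$ Haar-distributed on $\mathcal U_{d_td_b}$, and then apply the graphical Weingarten calculus of \cref{thm-GraphWeingarten} to integrate $U$ out. In each $Y_{f(i)}$ the $U$ and $\bar U$ boxes are embedded at the positions $(t(f(i)), b(f(i)))$ of the ambient tensor space, so Weingarten integration produces a double sum over $(\sigma, \tau) \in S_p^2$ of the form
\begin{equation*}
\E[\tr_{\underline{\alpha}}(Y_{f(1)}, \ldots, Y_{f(p)})] \cdot \prod_{s=1}^{q+r} d_s^{\#\alpha_s} = \sum_{\sigma, \tau \in S_p} \mathcal{L}(\underline{\alpha}, \sigma; t, b, f) \cdot \E[\Tr_\tau(X_{f(1)}, \ldots, X_{f(p)})] \cdot \Wg^{(U)}_{d_td_b}(\tau^{-1}\sigma),
\end{equation*}
where the combinatorial weight $\mathcal L$ depends only on $\sigma$ (not $\tau$) and counts closed loops on the top (dimension $d_t$) and bottom (dimension $d_b$) layers after erasing the unitary boxes. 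On each tensor space $s$, the identity pass-throughs of inactive matrices (those with $s \notin \{t(f(i)), b(f(i))\}$) reduce the cycle structure of $\alpha_s$ to the active set $J_s := \{i : s \in \{t(f(i)), b(f(i))\}\}$, while the Weingarten removal can produce cross-layer contractions whenever $t(f(i)) \neq t(f(\sigma(i)))$ or $b(f(i)) \neq b(f(\sigma(i)))$.

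The asymptotic analysis then mirrors the arguments of \cref{prop-ui-tensormoment} and \cref{thm-locui-tensorfree}. Combining $\Wg^{(U)}_{d_td_b}(\tau^{-1}\sigma) = \Mob(\tau^{-1}\sigma)(d_td_b)^{-p-|\tau^{-1}\sigma|}(1+o(1))$ with $\E[\Tr_\tau(\cdot)] = (d_td_b)^{p-|\tau|}\E[\tr_\tau(\cdot)]$, the total power of $d_t$ and $d_b$ in each $(\sigma,\tau)$-term is a nonpositive sum of triangle-inequality deficits in the metric space $(S_p, |\cdot|)$. The bounded-moments hypothesis keeps $\E[\tr_\tau(\cdot)]$ uniformly bounded, so only $(\sigma, \tau)$ saturating every triangle inequality contribute in the limit. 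Saturation on the $X$-side forces $\tau \leq \sigma$, while saturation on each layer forces a non-crossing condition between $\sigma$ and the reduction of $\alpha_s$ to $J_s$. Crucially, the injectivity of $(t,b)$ implies that whenever $\sigma$ contains a cycle connecting two indices $i, j$ with $f(i) \neq f(j)$, at least one of $t(f(i)) \neq t(f(j))$ or $b(f(i)) \neq b(f(j))$ holds, so the corresponding Weingarten contraction is cross-layer on the offending layer, breaks the saturation, and contributes a strictly negative power of $d_t$ or $d_b$. Hence only $\sigma$ that decompose as $\sigma = \bigsqcup_{i \in f([p])} \sigma_i$ with $\sigma_i \in S(f^{-1}(i))$ survive. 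Summing the leading $(\sigma, \tau)$ contributions and invoking the moment-cumulant relation for each $x_i$ individually (so that the joint moment $\E[\Tr_\tau]$ factorizes in the limit across the blocks of $\ker f$) rewrites the limit exactly as the tensor free prescription of \cref{cor-TensorFreeSubsetMoment}.

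The main obstacle is the precise combinatorial analysis of the loop factor $\mathcal L$ and the sharp exploitation of the injectivity hypothesis. When two matrices $Y_i$ and $Y_j$ share a single tensor position (a configuration fully consistent with injectivity of $(t,b)$), the Weingarten contractions create non-local identifications whose dimensional cost must be compared layer-by-layer against the baseline $\prod_s d_s^{\#\alpha_s}$. Turning the heuristic ``each cross-block cycle of $\sigma$ costs at least one factor of $d_t^{-1}$ or $d_b^{-1}$'' into a rigorous combinatorial inequality, and identifying the equality case with the condition $\bigvee_s \Pi(\beta_s) \leq \ker f$ on the resulting permutation tuple, is the central technical step; once this is in place, the remainder of the argument follows the same scheme as the asymptotic tensor freeness results proved for locally invariant random matrices.
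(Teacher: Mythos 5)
Your overall strategy is the same as the paper's: conjugate each $X_i$ by a single Haar unitary acting on the tensor factors $(t(f(i)),b(f(i)))$, integrate via the graphical Weingarten formula, bound the exponents of $d_t$ and $d_b$ by triangle-inequality deficits, and show that only permutations $\sigma$ decomposing along $\ker f$ survive, after which \cref{cor-TensorFreeSubsetMoment} closes the argument. However, the step you yourself flag as "the central technical step" is genuinely missing, and it is precisely the content of the paper's proof: you never compute the loop factor $\mathcal L(\underline{\alpha},\sigma;t,b,f)$, and the claim that "each cross-block cycle of $\sigma$ costs at least one factor of $d_t^{-1}$ or $d_b^{-1}$" is asserted rather than derived. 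Without an explicit formula for $\mathcal L$ there is no way to verify non-positivity of the exponent, nor to characterize the equality case, and the heuristic is not obviously safe: when two matrices $Y_i,Y_j$ share one tensor leg, the Weingarten contraction mixes identity pass-throughs with $X$-wires on that layer, and a naive cycle count can be off.

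The paper resolves this by lifting everything to permutations of the product set $[p]\times[q]$ (and $[p]\times(q+[r])$): it sets $\tilde\alpha_t(i,s)=(\alpha_s(i),s)$ and defines $\sigma_{t,f}$ to fix $(i,s)$ when $t(f(i))\neq s$ and to send $(i,s)$ to $(\sigma(i),t(f(\sigma(i))))$ otherwise, so that the loop count is exactly $F_t=\#(\tilde\alpha_t^{-1}\sigma_{t,f})$. The decisive observation is $|\sigma_{t,f}|=|\sigma|$ (the non-fixed part of $\sigma_{t,f}$ is a disjoint copy of $\sigma$ scattered across slices), which gives
$$-|\tilde\alpha_t|\;{+}\;|\tilde\alpha_t|-|\sigma|-|\tilde\alpha_t^{-1}\sigma_{t,f}|\;\le\;|\sigma_{t,f}|-|\sigma|=0,$$
with equality iff $\id\to\tau\to\sigma$ and $\id\to\sigma_{t,f}\to\tilde\alpha_t$ are geodesics. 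Since $\tilde\alpha_t$ preserves each slice $[p]\times\{s\}$, the second geodesic condition confines every cycle of $\sigma_{t,f}$ to one slice, forcing $\sigma$ to decompose along $\ker(t\circ f)$; the bottom layer forces decomposition along $\ker(b\circ f)$, and injectivity of $(t,b)$ identifies the common refinement with $\ker f$. This is the rigorous inequality and equality-case analysis your proposal defers; as written, your argument establishes the shape of the proof but not the theorem.
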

\begin{proof}
    As before, we shall use the method of moments, compute the tensor free cumulants of the family $(Y_1, \ldots, Y_k)$, and conclude by showing that mixed tensor free cumulants vanish (see \cref{def:tensor-freeness}). 
    
    Since the family $(X_1, \ldots, X_k)$ is unitarily invariant, the following equality holds in distribution: 
    $$\forall U \in \mathcal U(d_td_b), \qquad (Y_1, \ldots, Y_k) \stackrel{(d)}{=} \Big( U_{t(1),b(1)} Y_1 U_{t(1),b(1)}^*, \ldots, U_{t(1),b(1)} Y_1 U_{t(1),b(1)}^* \Big),$$
    where $U_{x,y}$ denotes the unitary operator $U \otimes I_{d_t}^{\otimes (q-1)} \otimes I_{d_b}^{\otimes (r-1)}$, with the $U$ operator acting on the tensor factors $x \in [q]$ and $y \in q+[r]$.
    
    We start by applying the (unitary) Weingarten formula from \cref{thm:Weingarten} to write, for any permutation $(q+r)$-tuple $\underline{\alpha}$ and any word given by $f: [p] \to [k]$:
    \begin{align}\label{eq:moment-Yf}
        \E [\Tr_{\underline{\alpha}}(Y_f)] &= \E_{\underline{X}}\Big[\E_{U}\big[\Tr_{\underline{\alpha}}(U_{t\circ f, b\circ f}\, Y_f\, U_{t\circ f, b\circ f}^*) \, \big| \, \underline{X} \big] \Big] \nonumber \\
        &= \sum_{\sigma, \tau \in S_p} \E[\Tr_\tau(X_f)] d_t^{F_t(\sigma, \underline{\alpha},f)} d_b^{F_b(\sigma, \underline{\alpha},f)} \Wg^{(U)}_{d_td_b}(\tau^{-1}\sigma).
    \end{align}
    The sum in the formula above is indexed by a pair of permutations $\sigma, \tau$ in $S_p$ since in the trace $\Tr_{\underline{\alpha}}(Y_f)$ there are $p$ boxes corresponding to the Haar-distributed random unitary matrix $U \in \mathcal U(d_td_b)$ (and, respectively, $p$ $\bar U$ conjugate boxes). For each $i \in [p]$, the diagram around the element $Y_{f(i)}$ is depicted in \cref{fig:diagram-Y-f-i}. 

    \begin{figure}[!htb]
        \centering
        \includegraphics[scale=1]{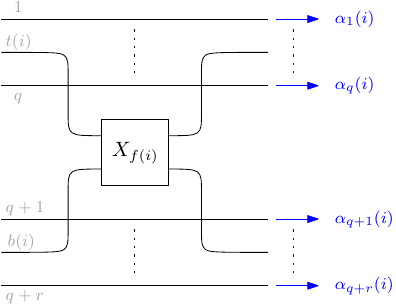}
        \caption{Diagram for $\Tr_{\underline{\alpha}}(Y_f)$ around the $i$-th group.}
        \label{fig:diagram-Y-f-i}
    \end{figure}

{
Using the boundedness property from the statement, we have 
$$\E\Tr_\tau(X_f) = (d_td_b)^{\#\tau} O(1).$$

Let us now consider the functions $F_t(\sigma, \underline{\alpha},f)$, resp.~$F_b(\sigma, \underline{\alpha},f)$, counting the number of loops of value $d_t$ (resp.~$d_b$) in \cref{eq:moment-Yf}. Tracking the loops of different sizes along the tensor legs, we claim that 
$$F_t(\sigma, \underline{\alpha},f) = \#(\tilde \alpha_t^{-1} \sigma_{t,f}) \quad \text{ and } \quad F_b(\sigma, \underline{\alpha},f) = \#(\tilde \alpha_b^{-1} \sigma_{b,f}),$$
where the permutations $\tilde \alpha_t, \sigma_{t,f}, \tilde \alpha_b, \sigma_{b,f}$ are defined as follows. First, let us focus on the ``top'' permutations, $\tilde \alpha_t, \sigma_{t,f} \in S([p] \times [q])$:
\begin{align*}
    \tilde \alpha_t(i,s) &= \big(\alpha_s(i), s \big)\\
    \sigma_{t,f}(i,s) &= \begin{cases}
        \big( \sigma(i), t \circ f(\sigma(i)) \big) &\quad \text{ if } t \circ f(i) = s\\
        (i,s) &\quad \text{ if } t \circ f(i) \neq s.
    \end{cases}
\end{align*}
Let us prove the claim and justify the definitions above; we leave the corresponding claims and definitions for the ``bottom'' permutations $\tilde \alpha_b, \sigma_{b,f} \in S([p] \times (q+[r]))$ to the reader. The permutation $\tilde \alpha_t$ encodes the wiring of the trace invariant $\Tr_{\underline{\alpha}}(\cdot)$: the $s$-th tensor factor in the $i$-th group is connected to the $s$-th tensor factor in the $\alpha_s(i)$-th group, see \cref{eq:def-trace-invariant}. The permutation $\sigma_{t,f}$ encodes the wiring corresponding to the (Weingarten) pairing of the matrices $Y_f$. There are two cases: 
\begin{itemize}
    \item If $t(f(i)) \neq s$, there is no matrix $X$ at the $s$-th tensor factor in the group $i$. Hence, there is no Weingarten wiring, and we use the identity matrix which means $\sigma_{t,f}(i,s) = (i,s)$.
    \item If $t(f(i)) = s$, the (top wire of the) matrix $X_{f(i)}$ is present at that position. The graphical Weingarten formula from \cref{thm-GraphWeingarten} implies that this wire is connected to the corresponding top wire in the group $\sigma(i)$; we have $\sigma_{t,f}(i,s) = ( \sigma(i), t \circ f(\sigma(i)) )$.
\end{itemize}

Combining all the observations above with \cref{eq:moment-Yf}, we obtain:
$$\E [\tr_{\underline{\alpha}}(Y_f)] = O(1)\sum_{\sigma, \tau \in S_p}d_t^{-\tilde \alpha_t + \#\tau + \#(\tilde \alpha_t^{-1} \sigma_{t,f}) - p - |\sigma^{-1}\tau|} d_b^{-\tilde \alpha_b + \#\tau + \#(\tilde \alpha_b^{-1} \sigma_{b,f}) - p - |\sigma^{-1}\tau|}.$$

Let us analyze first the exponent of $d_t \to \infty$: 
\begin{align*}
    -\tilde \alpha_t + \#\tau + \#(\tilde \alpha_t^{-1} \sigma_{t,f}) - p - |\sigma^{-1}\tau| &= -pq + |\tilde \alpha_t| + p-|\tau| + pq - |\tilde \alpha_t^{-1} \sigma_{t,f}| - p - |\sigma^{-1}\tau| \\
    &\leq |\tilde \alpha_t| - |\sigma| - |\tilde \alpha_t^{-1} \sigma_{t,f}| \\
    & \leq | \sigma_{t,f}|- |\sigma|\\
    & =0,
\end{align*}
where we have used the triangle inequality for the length function of permutations in the first two inequalities. The last equality follows from the definition of the permutation $\sigma_{t,f}$ above: $\sigma_{t,f}$ is a collection of fixed points, together with the permutation $\sigma$ embedded on different tensor factors given by the functions $t$ and $f$. 

In the chain of inequalities above, we have equality if and only if: 
\begin{itemize}
    \item first inequality: $\id \to \tau \to \sigma$ geodesic in $S_p$;
    \item second inequality: $\id \to \sigma_{t,f} \to \tilde \alpha_t$ geodesic in $S([p] \times [q])$.
\end{itemize}

From the second condition above and the definition of the permutation $\sigma_{t,f}$ we obtain that the terms that contribute asymptotically in \cref{eq:moment-Yf} correspond to permutations $\sigma$ having a decomposition 
$$\sigma = \bigsqcup_{s \in \Im(t \circ f)} \sigma \big|_{(t \circ f)^{-1}(s)}.$$
Using a similar reasoning for the tensor factors $s \in q+[r]$, we obtain a decomposition 
$$\sigma = \bigsqcup_{s \in \Im(b \circ f)} \sigma \big|_{(b \circ f)^{-1}(s)}.$$
We use now the hypothesis that the function pair $(t,b)$ is \emph{injective} to produce a joint decomposition
\begin{equation}\label{eq:sigma-decomp}
    \sigma = \bigsqcup_{j \in \Im(f)} \underbrace{\sigma \big|_{f^{-1}(j)}}_{=:\sigma_j}
\end{equation}
such that:
\begin{equation} \label{eq:sigma-decomp-properties}
    \begin{aligned}
        \forall s \in [q] \qquad \id \sqcup \bigsqcup_{j \in t^{-1}(s)} \sigma_j &\leq \alpha_s,\\
        \forall s \in q+[r] \qquad \id \sqcup \bigsqcup_{j \in b^{-1}(s)} \sigma_j &\leq \alpha_s.
    \end{aligned}
\end{equation}
From the geodesic condition $\id - \tau - \sigma$ we infer that $\tau$ admits a decomposition of the same type:
\begin{equation}\label{eq:tau-decomp}
\tau = \bigsqcup_{j \in \Im(f)} \tau_j \qquad \text{ with } \qquad \tau_j \leq \sigma_j \quad \forall j \in \Im(f).
\end{equation}
Hence we can rewrite \cref{eq:moment-Yf} as
\begin{align*}
    \E [\tr_{\underline{\alpha}}(Y_f)] &= \big( 1+o(1) \big)  \sum_{\substack{\sigma \text{ as in } \eqref{eq:sigma-decomp}, \eqref{eq:sigma-decomp-properties} \\ \tau \text{ as in } \eqref{eq:tau-decomp}}} \quad \prod_{j \in \Im(f)} \phi_{\tau_j}(x_j) \Mob(\sigma_j,\tau_j)\\
    &= \big( 1+o(1) \big)  \sum_{\sigma \text{ as in } \eqref{eq:sigma-decomp}, \eqref{eq:sigma-decomp-properties}} \quad \prod_{j \in \Im(f)} \kappa_{\sigma_j}(x_j),
\end{align*}
proving the claim: the random matrices $Y_1, \ldots, Y_k$ are tensor free, with marginal distributions $x_1, \ldots, x_k$.
}

\end{proof}
\begin{corollary}\label{cor:embeddings-free}
    With the same notation as in \cref{thm:embed-different-spaces}, with the additional assumption that the function $t$ or $b$ is \emph{constant}, the family $Y_1, \ldots, Y_k$ is asymptotically free (in the usual sense). Note that the condition that one of the embedding functions $t$ or $b$ is constant is necessary for the conclusion above to hold in full generality (since otherwise two distinct elements of the family trivially commute).
\end{corollary}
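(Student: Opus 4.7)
The plan is to upgrade the asymptotic tensor freeness of \cref{thm:embed-different-spaces} to asymptotic (usual) freeness by proving that every mixed free cumulant $\tilde{\kappa}_p(y_{f(1)},\ldots,y_{f(p)})$ vanishes for a non-constant function $f:[p]\to[k]$, where $y_j$ denotes the limit of $Y_j$ in the $(q+r)$-partite tensor probability space. By \cref{prop-cumulant-from-tensorcumulant} (applied with $\alpha=\gamma_p$), this free cumulant rewrites as a constrained sum
$$\tilde{\kappa}_p(y_{f(1)},\ldots,y_{f(p)}) = \sum_{\substack{\underline{\beta}\in S_{NC}(\gamma_p)^{q+r} \\ \beta_1\vee\cdots\vee\beta_{q+r} = \gamma_p}} \kappa_{\underline{\beta}}(y_{f(1)},\ldots,y_{f(p)}),$$
the join being taken in the lattice $S_{NC}(\gamma_p)\cong NC(p)$. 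Using the tensor freeness of $(y_j)_{j\in[k]}$ and \cref{cor-TensorFreeSubsetMoment}, each summand factorizes as $\prod_{j\in f([p])}\kappa_{\underline{\beta}|_{f^{-1}(j)}}(y_j)$ when $\bigvee_{\mathcal{P}}\Pi(\beta_s)\leq \ker f$, and vanishes otherwise.

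The next step is to exploit the structure of the marginal tensor cumulants. Since $Y_j=X_j^{t(j),b(j)}\otimes I^{\mathrm{rest}}$, the tensor moments of $y_j$ collapse to $\varphi_{\underline{\alpha}}(y_j)=\varphi_{(\alpha_{t(j)},\alpha_{b(j)})}(x_j)$, depending only on two of the $q+r$ permutations. A direct Möbius inversion, using the identity $\sum_{\alpha\leq\beta}\Mob(\alpha^{-1}\beta)=\mathds{1}_{\beta=\id_p}$ on each ``idle'' leg, shows that $\kappa_{\underline{\beta}}(y_j)=0$ unless $\beta_s=\id_p$ for all $s\notin\{t(j),b(j)\}$. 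Combined with the global unitary invariance of $x_j$ and the bipartite formula \eqref{eq-ui-tensorcumulant}, non-vanishing additionally requires $\beta_{t(j)}=\beta_{b(j)}$. Assuming $t\equiv 1$ is constant (the case of $b$ constant being symmetric), so that $b$ is injective, a contributing $\underline{\beta}$ must therefore satisfy: $\beta_s=\id_p$ for $s=2,\ldots,q$; $\beta_{b(j)}$ is supported on $f^{-1}(j)$ for each $j\in f([p])$; and $\beta_1|_{f^{-1}(j)}=\beta_{b(j)}|_{f^{-1}(j)}$.

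These constraints, together with $\Pi(\beta_1)\leq_{\mathcal{P}}\ker f$, imply $\Pi(\beta_{b(j)})\leq_{\mathcal{P}}\Pi(\beta_1)$; since both partitions are non-crossing, this is also an $NC$-refinement, yielding $\beta_s\leq\beta_1$ in $S_{NC}(\gamma_p)$ for every $s$. Hence the join collapses: $\beta_1\vee\cdots\vee\beta_{q+r}=\beta_1$, and the defining constraint $\beta_1\vee\cdots\vee\beta_{q+r}=\gamma_p$ forces $\beta_1=\gamma_p$---a full cycle whose unique block $[p]$ cannot lie in a proper block of $\ker f$, contradicting the non-constancy of $f$. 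Thus the summation set is empty and the free cumulant vanishes. The main subtlety to overcome is the well-known discrepancy between the $\mathcal{P}$-join that controls tensor freeness and the $NC$-join that controls usual freeness (recall that, in general, tensor freeness does not imply usual freeness, cf.~\cref{ex-TensorFreeNonFree}); the constancy of $t$ (or $b$) is precisely what forces the $NC$-join of the surviving $\underline{\beta}$ to collapse onto $\beta_1$, reconciling the two notions.
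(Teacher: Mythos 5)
Your argument is correct. The paper leaves \cref{cor:embeddings-free} without an explicit proof; the intended derivation is simply to specialize the final moment formula in the proof of \cref{thm:embed-different-spaces} to $\underline{\alpha}=\underline{\gamma_p}$: when $t\equiv 1$ the condition \eqref{eq:sigma-decomp-properties} for $s=1$ reads $\sigma=\bigsqcup_j\sigma_j\leq\gamma_p$, the conditions for the other legs become automatic, and one recognizes exactly the free moment formula of \cref{cor-freemoment} with marginals $\tilde\kappa_{\sigma_j}(x_j)$. You instead work entirely at the level of the limiting objects: you combine the conclusion of \cref{thm:embed-different-spaces} (tensor freeness) with an independent computation of the marginal tensor cumulants of $y_j$ (supported on tuples that are the identity on idle legs and equal on the legs $t(j),b(j)$, via M\"obius inversion and \cref{eq-ui-tensorcumulant}), and then feed this into \cref{prop-cumulant-from-tensorcumulant} to show that every mixed free cumulant vanishes. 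The key collapse — that for contributing $\underline{\beta}$ each $\beta_s$ is dominated by the permutation on the common leg, so the $NC$-join cannot reach $\gamma_p$ when $f$ is non-constant — is the same combinatorial fact that drives the paper's specialization, but your route has the merit of isolating exactly which structural property of the marginal tensor cumulants (one ``active'' leg shared by all matrices, the other legs pairwise disjoint and slaved to it) upgrades tensor freeness to usual freeness, without re-entering the Weingarten computation. One point worth making explicit if you write this up: the step $\Pi(\beta_{b(j)})\leq\Pi(\beta_1)\Rightarrow\beta_{b(j)}\leq\beta_1$ uses that both permutations lie in $S_{NC}(\gamma_p)$, where the geodesic order coincides with refinement of the associated non-crossing partitions (\cref{prop:lattice-structure}); and the unitary invariance of the single matrix $X_j$ needed for \cref{eq-ui-tensorcumulant} follows from the joint unitary invariance of the family assumed in \cref{thm:embed-different-spaces}.
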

\begin{remark}
    The boundedness condition in the result above is satisfied when
    the entire family $\{X_1, \ldots, X_k\}$ converges \textit{jointly} in distribution (that is, in the case where the family has a first order limit). In particular, this holds in either of the following cases:
    \begin{itemize}
        \item the random matrices $X_1, \ldots, X_k$ are \emph{independent};
        \item $X_1 = X_2 = \cdots = X_k$.
    \end{itemize}

    On the other hand, we may choose $X_1,\ldots, X_k$ \emph{arbitrarily correlated} when each $X_i$ is a Hermitian random matrix; we refer to \cref{eq-condition-5} and the corresponding discussion. This is remarkable because asymptotic (tensor) freeness is always obtained after the embedding, even when the initial random matrix model has arbitrary correlations.
\end{remark}

{
\begin{remark}
    \cref{thm:embed-different-spaces} generalizes \cite[Theorems H.6 and H.8]{Lan16} to more than two matrices and arbitrary unitarily invariant distributions. Indeed, the results in \cite{Lan16} correspond to the case of three tensor factors $q=1$, $r=2$, and two matrices $k=2$, with $t(1)=t(2) = 1$, $b(1) = 2$, $b(2) = 3$, and $X_1=X_2$ having either a GUE or a Wishart distribution. 
\end{remark}
}
\begin{remark}
    One can generalize the results above from a bipartite to a multipartite setting as follows. 
    Instead of embedding bipartite matrices in a tensor product of spaces as above, one could embed $p$-partite matrices $X_1, \ldots, X_k \in \M{d_1} \otimes \M{d_2} \otimes \cdots \otimes \M{d_p}$ inside a tensor product of spaces $\M{d_1}^{\otimes n_1} \otimes \cdots \otimes \M{d_p}^{\otimes n_p}$, where each matrix is embedded according to a $p$-partite hypergraph on the vertex set $[n_1] \times \cdots \times [n_p]$. The matrix $X_i$ will be embedded according to a hyperedge $e_i$ of this hypergraph, where $e_i = \{x_1^{(i)}, \ldots, x_p^{(i)}\}$ with $x_j^{(i)} \in [n_j]$. We leave the details to the reader. 
\end{remark}

\begin{question}
Can one say anything about the \emph{strong convergence} (i.e., convergence of operator norms) of these families? For example, suppose each $X_j$ is a (normalized) GUE matrix of size $d^2$ and $Y_j:=X_j^{(0j)}\otimes (I_d)^{\otimes [L]\setminus \{j\}}\in \M{d}^{\otimes(L+1)}$. It has recently been shown \cite{CY24,CGVH24} that $Y_1,\ldots, Y_L$ are asymptotically strongly free as $d\to \infty$ if $X_1,\ldots, X_L$ are \emph{independent}. On the other hand, \cite{Lan16} considers the case $X_1=X_2=\cdots=X_L$ and shows that as $d\to \infty$,
    $$\E[\|Y_1+\cdots +Y_L\|_{\infty}]\to 2\sqrt{L}= \|s_1+\cdots +s_L\|$$
where $\{s_1,\ldots, s_L\}$ is a \emph{free semicircular system} in a $C^*$-probability space. A natural question is whether the strong convergence $Y_1,\ldots, Y_L\to s_1,\ldots, s_L$ still holds in this case and, more generally, whether it extends to the case when the family $\{X_1,\ldots, X_L\}$ is unitarily invariant with reasonably nice conditions.
\end{question}

\section{Central limit theorems for tensor free variables}\label{sec:tensor-free-CLT}

The free analogue of central limit theorem by Voiculescu \cite{voiculescu1985symmetries, Voi86} states that semicircular elements, like the normal distributions in classical probability theory, exhibit universality in free probability theory.

\begin{theorem} [Free central limit theorem] \label{thm-FreeCLT}
Let $\{x_i\}_{i=1}^{\infty}$ be a family of centered, identically distributed, and free self-adjoint elements in a $*$-probability space $(\A,\varphi)$. Then $\frac{1}{\sqrt{N}}(x_1+ \cdots + x_N)$ converges in distribution to a semicircular element of mean zero and variance $\varphi(x_1^2)$.
\end{theorem}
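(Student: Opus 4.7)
The strategy is the standard combinatorial proof via free cumulants: show that all free cumulants of the normalized sum $S_N := N^{-1/2}(x_1+\cdots+x_N)$ converge to those of a semicircular element $s$ of mean $0$ and variance $\sigma^2 := \varphi(x_1^2)$, then deduce convergence of all moments via the free moment-cumulant formula \cref{eq-FreeMomentCumulant}. Recall that $s$ is characterized by
$$\tilde\kappa_p(s) = \sigma^2 \cdot \mathbf 1_{p=2},$$
so it is enough to prove $\tilde\kappa_p(S_N) \to \sigma^2 \mathbf 1_{p=2}$ for each fixed $p \geq 1$.

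The first step is to apply additivity of free cumulants on free variables (the classical $r=1$ case of \cref{prop-TensorCumulantAdditivity}, cf.~\cite[Proposition 12.3]{nica2006lectures}), which together with identical distribution gives
$$\tilde\kappa_p(x_1+\cdots+x_N) = \sum_{i=1}^N \tilde\kappa_p(x_i) = N\, \tilde\kappa_p(x_1).$$
Combining this with the $p$-homogeneity of $\tilde\kappa_p$ (immediate from multilinearity) yields the key identity
$$\tilde\kappa_p(S_N) = N^{1-p/2}\, \tilde\kappa_p(x_1).$$

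The second step is to examine the three cases as $N \to \infty$. For $p=1$, $\tilde\kappa_1(S_N) = \sqrt{N}\,\varphi(x_1) = 0$ by the centering hypothesis. For $p=2$, $\tilde\kappa_2(S_N) = \tilde\kappa_2(x_1) = \varphi(x_1^2) - \varphi(x_1)^2 = \sigma^2$. For $p \geq 3$, the exponent $1-p/2$ is strictly negative, so $\tilde\kappa_p(S_N) \to 0$. Thus all free cumulants of $S_N$ converge to those of $s$.

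Finally I would conclude by the moment-cumulant formula. For each fixed $p$, \cref{eq-FreeMomentCumulant} applied with $\sigma = \gamma_p$ gives
$$\varphi(S_N^p) = \sum_{\pi \in NC(p)} \tilde\kappa_\pi(S_N),$$
a \emph{finite} sum of products of $\tilde\kappa_q(S_N)$ with $q \leq p$, each of which converges by the previous step. Passing to the limit, only pair partitions $\pi \in NC_2(p)$ survive, recovering $\varphi(s^p)$ (which equals $\sigma^p \Cat_{p/2}$ when $p$ is even and $0$ when $p$ is odd). This is precisely convergence in distribution to a semicircular element of mean $0$ and variance $\sigma^2$. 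There is no substantive obstacle: everything is a direct application of the combinatorial framework recalled in \cref{sec:preliminary-FreeProb}; the only subtlety is ensuring that the finite-sum convergence in the last step is uniform in the (finitely many) summands, which is automatic since $p$ is fixed.
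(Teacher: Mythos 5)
Your proof is correct and follows exactly the paper's argument: additivity of free cumulants for free variables plus homogeneity gives $\tilde\kappa_p(S_N)=N^{1-p/2}\tilde\kappa_p(x_1)$, and the limit is read off from the cumulant characterization of the semicircular distribution. The paper states this in one display after \cref{eq-CumulantsAdd}; your additional remarks on the moment-cumulant finite-sum passage to the limit are a harmless elaboration of the same route.
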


The additivity of free cumulants (\cref{eq-CumulantsAdd}) gives a simple proof of \cref{thm-FreeCLT}: we have as $N\to \infty$,
    $$ \tilde{\kappa}_p\left(\frac{x_1+ \cdots + x_N}{\sqrt{N}}\right) = N^{-p/2}\cdot N\tilde{\kappa}_p(x_1) \to \begin{cases}
        \tilde{\kappa}_2(x_1)=\varphi(x_1^2) & \text{if $p=2$,}\\
        0 & \text{otherwise.}
    \end{cases}$$
Since semicircular elements are characterized by the property that $\tilde{\kappa}_p=0$ whenever $p\geq 3$, we obtain the announced convergence.

\medskip

In this section, we derive the tensor free version of the central limit theorem. To this end, let $(\A,\varphi,(\varphi_{\underline{\alpha}}))$ be an $r$-partite algebraic tensor probability space, and let $\{x_i\}_{i=1}^{\infty}\subset \A$ be a family of \textit{identically tensor distributed} and \textit{tensor free} elements. Then we are interested in the tensor distribution limit of the normalized sum
    $$\bar{x}_N:=\frac{1}{\sqrt{N}}(x_1+\cdots+x_N-N\varphi(x_1)).$$
Note that investigating the tensor distribution limit $\displaystyle \lim_{N\to \infty} \varphi_{\underline{\alpha}}(\bar{x}_N)$ requires expanding the sum into an exponential number of terms, which is generally very difficult to control. Instead, we can leverage the additivity of tensor free cumulants (\cref{prop-TensorCumulantAdditivity}) to obtain the following main theorem of this section.

\begin{theorem} [Tensor free central limit theorem] \label{thm-TensorCLT}

Let $\{x_i\}_{i=1}^{\infty}$ be a family of identically tensor distributed, and tensor free elements in an $r$-partite algebraic tensor probability space $(\A, \varphi, (\varphi_{\underline{\alpha}}))$. Then one has
    $$\bar{x}_N=\frac{x_1+ \cdots + x_N-N\varphi(x_1)}{\sqrt{N}}\to \sum_{\underline{\alpha}\in (S_2)^r\setminus \{\underline{\id_2}\}} \sqrt{\kappa_{\underline{\alpha}}(x_1)}\,s_{\underline{\alpha}} \;\;\text{ in tensor distribution as $N\to \infty$},$$
where $(s_{\underline{\alpha}})_{\underline{\alpha}\in (S_2)^r\setminus \{\underline{\id_2}\}}$ are tensor free family and each $s_{\underline{\alpha}}$ is a standard semicircular element having tensor free cumulants
\begin{equation} \label{eq-TensorSemicircular}
    \kappa_{\underline{\beta}}(s_{\underline{\alpha}})=\delta_{\underline{\beta},\underline{\alpha}},\quad \underline{\beta}\in \bigcup_{p\geq 1} (S_p)^r \text{ is irreducible}.
\end{equation}
In particular, the limit of $\bar{x}_N$ is universally governed by $2^r-1$ semicircular elements. Furthermore, the family $(s_{\underline{\alpha}})_{\underline{\alpha}\in (S_2)^r\setminus \{\underline{\id_2}\}}$ can always be constructed from the limit of random matrices (see \cref{lem-TensorLimitRealization,rmk-TensorCLT} below).
\end{theorem}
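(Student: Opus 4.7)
My plan is to compute every tensor free cumulant of $\bar{x}_N$ explicitly using the additivity under tensor freeness (\cref{prop-TensorCumulantAdditivity}) together with the scaling $\kappa_{\underline{\beta}}(\lambda z) = \lambda^p \kappa_{\underline{\beta}}(z)$ for $\underline{\beta} \in (S_p)^r$, pass to the limit $N\to\infty$ inside the finite sum provided by the tensor free moment-cumulant formula \eqref{eq:tensor-moment-free-cumulant} in order to obtain convergence in tensor distribution, and finally match the limiting tensor cumulants against those of the proposed semicircular combination. The key simplification offered by tensor freeness is that every mixed tensor free cumulant of distinct $x_i$'s vanishes, so the joint tensor cumulants of $\bar{x}_N$ collapse onto single-variable ones.

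First I would reduce to the centered case $\varphi(x_1)=0$; this is harmless since $\varphi(x_1)\cdot 1$ is tensor free from everything by \cref{prop-1-TensorFree}. For any irreducible $\underline{\alpha}\in (S_p)^r$, \cref{prop-TensorCumulantAdditivity} and the identical tensor distribution of the $x_i$'s give
\[
\kappa_{\underline{\alpha}}(\bar{x}_N)\;=\;N^{-p/2}\sum_{i=1}^N\kappa_{\underline{\alpha}}(x_i)\;=\;N^{1-p/2}\,\kappa_{\underline{\alpha}}(x_1),
\]
which vanishes in the limit for $p=1$ (centered) and for $p\geq 3$, and converges to $\kappa_{\underline{\alpha}}(x_1)$ when $p=2$ --- and then irreducibility forces $\underline{\alpha}\in(S_2)^r\setminus\{\underline{\id_2}\}$. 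For a reducible $\underline{\alpha}$, I would decompose it along the blocks of $\bigvee_s \Pi(\alpha_s)$ into irreducible components and factor $\kappa_{\underline{\alpha}}(\bar{x}_N)=\prod_j\kappa_{\underline{\alpha}^{(j)}}(\bar{x}_N)$ by the multiplicativity inherited from \cref{def:tensor-ncps}-\cref{def:tensor-ncps-multiplicativity} through M\"obius inversion (cf.\ \cref{rk:tensor-free-cumulants-permutation-invariance}). Since \eqref{eq:tensor-moment-free-cumulant} expresses $\varphi_{\underline{\alpha}}(\bar{x}_N)$ as a \emph{finite} sum of such cumulants indexed by $\underline{\beta}\leq\underline{\alpha}$, termwise convergence propagates to every tensor moment; this produces a limit $\bar{x}_\infty$ whose irreducible tensor free cumulants satisfy $\kappa_{\underline{\beta}}(\bar{x}_\infty)=\kappa_{\underline{\beta}}(x_1)$ for $\underline{\beta}\in(S_2)^r\setminus\{\underline{\id_2}\}$ and vanish on all other irreducible $\underline{\beta}$ of order $\geq 3$.

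To identify $\bar{x}_\infty$ with $y:=\sum_{\underline{\alpha}\in(S_2)^r\setminus\{\underline{\id_2}\}}\sqrt{\kappa_{\underline{\alpha}}(x_1)}\,s_{\underline{\alpha}}$, I would apply \cref{prop-TensorCumulantAdditivity} to $y$ (using the tensor freeness of the $s_{\underline{\alpha}}$'s), homogeneity of cumulants under scalar multiplication, and the defining relation \eqref{eq-TensorSemicircular} to obtain, for every irreducible $\underline{\beta}\in(S_p)^r$,
\[
\kappa_{\underline{\beta}}(y)\;=\;\sum_{\underline{\alpha}\in(S_2)^r\setminus\{\underline{\id_2}\}}\kappa_{\underline{\alpha}}(x_1)^{p/2}\,\kappa_{\underline{\beta}}(s_{\underline{\alpha}})\;=\;\sum_{\underline{\alpha}}\kappa_{\underline{\alpha}}(x_1)^{p/2}\,\delta_{\underline{\beta},\underline{\alpha}},
\]
which matches the limiting cumulants computed above on the nose, the hypothesis $\kappa_{\underline{\alpha}}(x_1)\geq 0$ ensuring that the square roots are well defined.

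The only subtle point --- more of a prerequisite than a difficulty in the convergence argument itself --- is the \emph{existence} of a tensor free family $(s_{\underline{\alpha}})$ realizing \eqref{eq-TensorSemicircular} inside some $r$-partite tensor probability space. I would handle this by exhibiting such a family as the tensor distribution limit of an independent system of tensor GUE random matrices, whose asymptotic tensor freeness is provided by the results of \cref{sec-LocUITensorFree}; this is precisely the construction alluded to in \cref{rmk-TensorCLT}, and once it is in place the matching of cumulants concludes the proof.
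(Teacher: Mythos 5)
Your proposal is correct and follows essentially the same route as the paper: additivity of tensor free cumulants (\cref{prop-TensorCumulantAdditivity}) plus tensor freeness from $1_\A$ (\cref{prop-1-TensorFree}) give $\kappa_{\underline{\alpha}}(\bar{x}_N)=N^{1-p/2}\kappa_{\underline{\alpha}}(x_1)$ for irreducible $\underline{\alpha}$, and matching the surviving $p=2$ cumulants against \eqref{eq-TensorSemicircular} identifies the limit. The only cosmetic difference is that you invoke $\kappa_{\underline{\alpha}}(x_1)\geq 0$; the theorem as stated does not assume this, since the square roots may be taken in $\Comp$ and the tensor distribution of $(s_{\underline{\alpha}})$ is insensitive to the choice of sign (cf.~\cref{rmk-TensorCLT}).
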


Before the proof, here are several comments to supplement our theorem.

\begin{remark} \label{rmk-TensorCLT}
\begin{enumerate}
    \item We can understand the family $\{s_{\underline{\alpha}}:\underline{\alpha}\in (S_2)^r\setminus \{\underline{\id_2}\}\}$ as a tensor distribution limit of embedded GUE matrices. Specifically, for each $\underline{\alpha}\in (S_2)^r\setminus \{\underline{\id_2}\}$, let us associate a set $I(\underline{\alpha}):=\{s\in [r]: \alpha_s=\gamma_2\}$, and define a \emph{tensor GUE matrix}
        $$X_{\underline{\alpha}}=X_{d,\underline{\alpha}}:=G_{d}^{I(\underline{\alpha})}\otimes I_d^{\otimes [r]\setminus I(\underline{\alpha})}\in \M{d}^{\otimes I(\underline{\alpha})}\otimes \M{d}^{\otimes [r]\setminus I(\underline{\alpha})}=\M{d}^{\otimes r},$$
    where $G_d^{I(\underline{\alpha})}$ is a normalized GUE matrix of size $d^{|I(\underline{\alpha})|}$ which is embedded into $\M{d}^{\otimes r}$ of the tensor components in $I(\underline{\alpha})$. Then Wigner's semicircle law and \cref{prop-ui-tensormoment} imply that, as $d\to \infty$, $X_{\underline{\alpha}}$ converges in tensor distribution to a semicircular element $s_{\underline{\alpha}}$, with mean $0$ and variance $1$, whose tensor free cumulant satisfies the relation \cref{eq-TensorSemicircular}. Now if we take independent family $(G_d^{I(\underline{\alpha})})$ of GUE matrices and define the corresponding embeddings $(X_{\underline{\alpha}})$, then it converges in tensor distribution to a tensor free semicircular family $(s_{\underline{\alpha}})$, as $d\to \infty$, by \cref{thm-locui-tensorfree}. 
    
    \item The (non-tensor) limit behavior of the family $(X_{\underline{\alpha}})$ has been described via \emph{$\eps$-free independence}; we refer to \cite{Mlo04,SW16} for the precise definition. In our setting, $\eps$ is the adjacency matrix of the finite simple graph $\Gamma=(V,E)$ with
    \begin{center}
        $V=(S_2)^r\setminus \{\underline{\id_2}\}$ and $E=\big\{\{\underline{\alpha},\underline{\beta}\}: I(\underline{\alpha})\cap I(\underline{\beta})=\varnothing \big\}.$
    \end{center}
    Then \cite{CC21} shows that the tensor GUE models $(X_{\underline{\alpha}})_{\underline{\alpha}\in V}$ are asymptotically $\eps$-free independent. In other words, $(s_{\underline{\alpha}})$ are $\eps$-free semicircular family with properties that, $s_{\underline{\alpha}}$ and $s_{\underline{\beta}}$ are classically independent if $I(\underline{\alpha})\cap I(\underline{\beta})=\varnothing$ (i.e., $\eps_{\underline{\alpha},\underline{\beta}}=1$) and freely independent otherwise. Consequently, \cref{thm-TensorCLT} implies that the $\eps$-free semicircular family $(s_{\underline{\alpha}})_{\underline{\alpha}\in V}$ exhibit the universality for tensor free independence.
    
    \item The choice of the signs of coefficients $\sqrt{\kappa_{\underline{\alpha}}(x_1)}\in \Comp$ can be arbitrary since $(s_{\underline{\alpha}})$ has the same tensor distribution with $(\pm s_{\underline{\alpha}})$, regardless of the signs. On the other hand, we have $\kappa_{\underline{\alpha}}(x_1)\geq 0$ for all $\underline{\alpha}\in (S_2)^r\setminus \{\underline{\id_2}\}$ whenever $x_1$ is a self-adjoint element in an $r$-partite space $(\B^{\otimes r},\tau^{\otimes r},  (\bigotimes \tau_{\alpha_s}))$ (see \cref{prop-TensorFreeNonng} below), so we may naturally choose $\sqrt{\kappa_{\underline{\alpha}}(x_1)}\geq 0$ in this case. This fact seems non-trivial even for matrices, e.g., for every bipartite Hermitian matrix $X_{12}\in \M{d}^{\otimes 2}$,
        $$\tilde{\kappa}_{\gamma_2,\gamma_2}(X_{12},X_{12})=\frac{1}{d^2}\Tr(X_{12}^2)-\frac{1}{d^3}\Tr(X_1^2)-\frac{1}{d^3}\Tr(X_2^2)+\frac{1}{d^4}(\Tr X_{12})^2\geq 0,$$
    where $X_1=(\id_d\otimes \Tr_d)(X_{12})$ and $X_2=(\Tr_d\otimes \id_d)(X_{12})$. 
\end{enumerate}
\end{remark}

\begin{proposition} \label{prop-TensorFreeNonng}
For each $s=1,\ldots, r$, let $(\B^{(s)},\tau^{(s)})$ be a $*$-probability space with a tracial state $\tau^{(s)}$, and consider the $r$-partite tensor probability space
    $$(\A,\varphi,(\varphi_{\underline{\alpha}}))=\Big(\bigotimes_{s=1}^r \B^{(s)}, \bigotimes_{s=1}^r\tau^{(s)}, \big(\bigotimes_{s=1}^r \tau_{\alpha_s}^{(s)}\big)\Big).$$
Then for every $x\in \A$ and $\underline{\alpha}\in (S_2)^r$, we have $\kappa_{\underline{\alpha}}(x^*,x)\geq 0$.
\end{proposition}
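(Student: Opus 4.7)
The plan is to reduce the statement to an application of the Schur product theorem on Gram matrices arising in the GNS construction of each $\tau^{(s)}$.

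First, since $\A = \bigotimes_{s=1}^r \B^{(s)}$ is an algebraic tensor product, I can write
$$x = \sum_{i \in F} \bigotimes_{s=1}^r x_i^{(s)}$$
for some finite index set $F$ and elements $x_i^{(s)} \in \B^{(s)}$. Using \cref{lem-TensorCumulantProd} together with the bilinearity of tensor free cumulants, I obtain
$$\kappa_{\underline{\alpha}}(x^*, x) = \sum_{i,j \in F} \prod_{s=1}^r \tilde{\kappa}^{(s)}_{\alpha_s}\bigl( (x_i^{(s)})^*,\, x_j^{(s)} \bigr).$$

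Next I compute the two possible factors explicitly. For $\alpha_s = \id_2$, one has
$$\tilde{\kappa}^{(s)}_{\id_2}\bigl( (x_i^{(s)})^*,\, x_j^{(s)} \bigr) = \tau^{(s)}\bigl((x_i^{(s)})^*\bigr)\,\tau^{(s)}\bigl(x_j^{(s)}\bigr) = \overline{\tau^{(s)}(x_i^{(s)})}\,\tau^{(s)}(x_j^{(s)}),$$
whose matrix $B^{(s)} = (B^{(s)}_{ij})_{i,j \in F}$ is a rank-one positive semidefinite matrix. For $\alpha_s = \gamma_2$, introducing the centered elements $y_i^{(s)} := x_i^{(s)} - \tau^{(s)}(x_i^{(s)})\,1_{\B^{(s)}}$, a direct calculation gives
$$\tilde{\kappa}^{(s)}_{\gamma_2}\bigl( (x_i^{(s)})^*,\, x_j^{(s)} \bigr) = \tau^{(s)}\bigl((x_i^{(s)})^* x_j^{(s)}\bigr) - \overline{\tau^{(s)}(x_i^{(s)})}\,\tau^{(s)}(x_j^{(s)}) = \tau^{(s)}\bigl((y_i^{(s)})^* y_j^{(s)}\bigr),$$
which is the Gram matrix $A^{(s)} = (A^{(s)}_{ij})_{i,j \in F}$ of the family $(y_i^{(s)})_{i \in F}$ with respect to the positive sesquilinear form $(a,b) \mapsto \tau^{(s)}(a^* b)$; since $\tau^{(s)}$ is a state, this matrix is positive semidefinite.

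Finally, partitioning $[r] = I \sqcup J$ with $I = \{s : \alpha_s = \gamma_2\}$ and $J = \{s : \alpha_s = \id_2\}$, I can rewrite
$$\kappa_{\underline{\alpha}}(x^*, x) = \sum_{i,j \in F} M_{ij}, \qquad M_{ij} := \prod_{s \in I} A^{(s)}_{ij} \prod_{s \in J} B^{(s)}_{ij}.$$
By the Schur product theorem, the entrywise (Hadamard) product of positive semidefinite matrices is again positive semidefinite, hence $M$ is PSD. Denoting by $\mathbf{1} \in \Comp^F$ the all-ones vector, the sum of entries equals $\langle \mathbf{1}, M \mathbf{1} \rangle \geq 0$, which concludes the proof.

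I do not anticipate a real obstacle here: the main point is just to recognize each of the two types of $2$-point free cumulants $\tilde\kappa^{(s)}_{\alpha_s}$ as a Gram-type PSD matrix, after which the Schur product theorem immediately yields the claim. The only mild subtlety is remembering that positivity of $\tau^{(s)}$ (as a state) is what guarantees positive semidefiniteness of $A^{(s)}$, rather than traciality; traciality plays no role in this argument.
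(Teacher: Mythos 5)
Your proof is correct and follows essentially the same route as the paper's: decompose $x$ into simple tensors, factorize the tensor free cumulant via \cref{lem-TensorCumulantProd}, recognize $\kappa_{\underline{\alpha}}(x^*,x)$ as $\langle \mathbf{1}, (\bigodot_s A^{(s)})\mathbf{1}\rangle$, and invoke the Schur product theorem after checking each factor matrix is positive semidefinite. The only cosmetic difference is that you exhibit the $\gamma_2$-factor directly as a Gram matrix of centered elements, whereas the paper verifies positivity of the quadratic form via the Cauchy--Schwarz inequality for $\tau^{(s)}$ — these are the same fact, and your closing remark that positivity (not traciality) of $\tau^{(s)}$ is what matters is accurate.
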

\begin{proof}
Let us consider a general tensor $x= \sum_{i=1}^k \bigotimes_{s=1}^r a_i^{(s)}$. Then, using \cref{eq-TensorFreeCumulantsFact}, we have, for $\underline{\alpha}\in (S_2)^r$,
\begin{align*}
    \kappa_{\underline{\alpha}}(x^*,x)=\sum_{i,j}\prod_{s=1}^r\tilde{\kappa}_{\alpha_s}(a_i^{(s)*},a_j^{(s)})=\la \mathbf 1, (A_1\odot \cdots \odot A_r) \cdot \mathbf 1\ra,
\end{align*}
where $\mathbf 1=(1,\ldots, 1)^{\top}\in \mathbb{C}^k$ is the all-ones vector, $A_s:=\big(\tilde{\kappa}_{\alpha_s}(a_i^{(s)*},a_j^{(s)})\big)_{1\leq i,j\leq k}\in \M{k}$ for each $s\in [r]$, and $\odot$ denotes the Hadamard product. Since the Hadamard product preserves the positive semi-definite property of matrices, it suffices to show that each matrix $A_s$ is positive semi-definite. If $\alpha_s=\gamma_2$, then for every vector $w=(w_1,\ldots, w_k)^{\top}\in \mathbb{C}^k$,
    $$\la w,A_s w\ra=\sum_{i,j=1}^k \overline{w_i}w_j \tilde \kappa_{\gamma_2}(a_i^{(s)*},a_j^{(s)})
    =\tilde\kappa_{\gamma_2}(c^*,c) = \tau^{(s)}(c^*c)-\tau^{(s)}(c^*)\tau^{(s)}(c)\geq 0,$$
where $c:=\sum_{i=1}^k w_ia_i^{(s)}\in \B^{(s)}$, and we used the formula $\tilde{\kappa}_{\gamma_2}(a,b)=\tau^{(s)}(ab)-\tau^{(s)}(a)\tau^{(s)}(b)$ and the Cauchy-Schwarz inequality
    $$|\tau^{(s)}(a^*b)|^2\leq \tau^{(s)}(a^*a)\tau^{(s)}(b^*b),\quad a,b\in \B^{(s)}$$
to prove that the second cumulant (i.e.~the variance) is non-negative. 
The case $\alpha_s=\id_2$ follows in a similar manner: $\tilde\kappa_{\id_2}(c^*,c) = |\tau^{(s)}(c)|^2 \geq 0$.
\end{proof}

\begin{proof}[\textbf{Proof of \cref{thm-TensorCLT}}]
By \cref{prop-TensorCumulantAdditivity}, one has
    $$\kappa_{\underline{\alpha}}(\bar{x}_N)=N^{-p/2}\kappa_{\underline{\alpha}}\Big(\sum_{i=1}^N (x_i-\varphi(x_i))\Big)=N^{1-p/2}\kappa_{\underline{\alpha}}(x_1-\varphi(x_1))$$
since $\{x_i-\varphi(x_i)\}_{i=1}^{\infty}$ are identically tensor distributed and tensor free. The above clearly becomes $0$ if $p=1$. If $p \geq 2$, $\kappa_{\underline{\alpha}}(x_1-\varphi(x_1))=\kappa_{\underline{\alpha}}(x_1)$ since $x_1$ and $1_\A$ are tensor free (\cref{prop-1-TensorFree}). Therefore, by taking the limit $N\to \infty$, we have for every irreducible $\underline{\alpha}\in (S_p)^r$,
\begin{equation} \label{eq-TensorCLT1}
    \lim_{N\to\infty}\kappa_{\underline{\alpha}}\left(\bar{x}_N\right) = \begin{cases}
        \kappa_{\underline{\alpha}}(x_1) & \text{if $p=2$,}\\
        0 & \text{otherwise.}
    \end{cases}
\end{equation}
The convergence $\bar{x}_N \xrightarrow{\text{$\otimes$-distr}} \sum_{\underline{\alpha}\in (S_2)^r\setminus \{\underline{\id_2}\}} \sqrt{\kappa_{\underline{\alpha}}(x_1)}\,s_{\underline{\alpha}}$ is now clear from the cumulants \cref{eq-TensorSemicircular} and the additivity property of $\kappa_{\underline{\alpha}}$, since \cref{eq-TensorCLT1} can be rephrased as
    $$\lim_N \kappa_{\underline{\beta}}(\bar{x}_N)=\sum_{\underline{\alpha}\in (S_2)^r\setminus \{\underline{\id_2}\}} \kappa_{\underline{\alpha}}(x_1) \delta_{\underline{\beta},\underline{\alpha}}=\kappa_{\underline{\beta}}\Big( \sum_{\underline{\alpha}} \sqrt{\kappa_{\underline{\alpha}}(x_1)} s_{\underline{\alpha}}\Big).$$
\end{proof}

Note that the case $r=1$ recovers the original free central limit theorem (\cref{thm-FreeCLT}). On the other hand, since \cref{thm-TensorCLT} implies that $\bar{x}_N$ converges in distribution, one can naturally ask that if we can find the probability distribution of the limit, i.e., a probability measure $\mu\in {\rm Prob}(\mathbb{R})$ such that
    $$\lim_{N\to \infty}\varphi(\bar{x}_N^p)=\int_{\mathbb{R}} t^p{\rm d}\mu(t), \quad p\geq 1,$$
when an $r$-partite structure $(\A,\varphi, (\varphi_{\underline{\alpha}}))$ has a $*$-probability space structure and $x_i\in \A$ are self-adjoint. 

First of all, the connection between free cumulants and tensor free cumulants (\cref{prop-cumulant-from-tensorcumulant})  gives the following central limit theorem, in the usual sense, for tensor free elements.

\begin{corollary} \label{cor-TensorCLT2}
Let $\{x_i\}$ be as in \cref{thm-TensorCLT}. Then we have
\begin{equation} \label{eq-TensorCLT2}
    \tilde{\kappa}_p\left(\frac{x_1+ \cdots + x_N- N\varphi(x_1)}{\sqrt{N}}\right)\to \begin{cases}
        \sum_{\underline{\alpha}} \kappa_{\underline{\alpha}}(x_1) & \text{if $p$ is even,}\\
        0 & \text{if $p$ is odd,}
    \end{cases}
\end{equation}
where the sum in the first case is taken over {$\alpha_1,\ldots, \alpha_r\in NC_{1,2}(p)$} such that
\begin{center}
    $\alpha_1\vee_{NC}\cdots \vee_{NC} \alpha_r=1_{p}$ and $\alpha_1\vee_{\mathcal{P}}\cdots \vee_{\mathcal{P}} \alpha_r \in \mathcal{P}_2(p)$.
\end{center}
\begin{proof}
By combining \cref{thm-TensorCLT} with \cref{prop-cumulant-from-tensorcumulant}, we have
\begin{align*}
    \lim_{N\to \infty} \tilde{\kappa}_p(\bar{x}_N) = \sum_{\substack{\underline{\alpha}\in (S_{NC}(\gamma_p))^r \\ \alpha_1\vee\cdots\vee \alpha_r=\gamma_p}} \lim_{N\to \infty}\kappa_{\underline{\alpha}}(\bar{x}_N)=\sum_{\underline{\alpha}}\kappa_{\underline{\alpha}}(x_1),
\end{align*}
where the last sum is taken over $\underline{\alpha}\in (S_{NC}(\gamma_p))^r$ satisfying 
\begin{center}
    $\alpha_1\vee\cdots\vee\alpha_r=\gamma_p$ and every irreducible component of $\underline{\alpha}$ is in the set $(S_2)^r\setminus\{\underline{\id_2}\}$. 
\end{center}
Note that after the identification $S_{NC}(\gamma_p)\cong NC(p)$, these conditions are equivalent to that $\alpha_s\in NC_{1,2}(p)$ for each $s\in [r]$, $\bigvee_{NC}\alpha_s=1_p$, and $\bigvee_{\mathcal{P}}\alpha_s\in \mathcal{P}_2(p)$. In particular, such $\underline{\alpha}$ exists if and only if $p$ is even, and $\kappa_{\underline{\alpha}}(x_1-\varphi(x_1))=\kappa_{\underline{\alpha}}(x_1)$ for that cases since $x_1$ is free from $1_{\A}$ and $\kappa_{\underline{\alpha}}(1_\A)=0$.
\end{proof}
\end{corollary}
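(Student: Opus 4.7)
The plan is to deduce this corollary from the tensor free CLT (\cref{thm-TensorCLT}) via the relation between usual and tensor free cumulants from \cref{prop-cumulant-from-tensorcumulant}. Applying that formula with $\alpha = \gamma_p$ to $\bar x_N$ gives
\begin{equation*}
    \tilde\kappa_p(\bar x_N) = \sum_{\substack{\underline{\beta} \in S_{NC}(\gamma_p)^r \\ \beta_1 \vee \cdots \vee \beta_r = \gamma_p}} \kappa_{\underline{\beta}}(\bar x_N),
\end{equation*}
so the task reduces to computing the limit of each summand and picking out the non-vanishing ones.

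For this, I would use the multiplicativity of tensor free cumulants (inherited from that of tensor moments, as noted in \cref{rk:tensor-free-cumulants-permutation-invariance}) to decompose $\underline{\beta}$ along its irreducible components. Writing $\bigvee_{s=1}^r \Pi(\beta_s) = \{B_1, \ldots, B_k\}$ with $|B_i| = p_i$, we obtain
\begin{equation*}
    \kappa_{\underline{\beta}}(\bar x_N) = \prod_{i=1}^k \kappa_{\underline{\beta}|_{B_i}}(\bar x_N),
\end{equation*}
where each $\underline{\beta}|_{B_i}$ is irreducible. The asymptotics of each factor are then given directly by the calculation in the proof of \cref{thm-TensorCLT}: by centering, $\kappa_{\underline{\id_1}}(\bar x_N) = \varphi(\bar x_N) = 0$ when $p_i = 1$; by the scaling $N^{1-p_i/2}$ appearing in \cref{eq-TensorCLT1}, the factor vanishes when $p_i \geq 3$; and it converges to $\kappa_{\underline{\beta}|_{B_i}}(x_1)$ when $p_i = 2$. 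Consequently, only those $\underline{\beta}$ whose irreducible components are all of size exactly $2$ contribute in the limit, which in particular forces $p$ to be even.

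The last step is to rephrase the surviving condition using the order isomorphism $S_{NC}(\gamma_p) \cong NC(p)$ from \cref{prop:lattice-structure}. Requiring every irreducible component of $\underline{\beta}$ to have size $2$ means, on the one hand, that each $\beta_s$ consists only of fixed points and transpositions, i.e. $\alpha_s := \Pi(\beta_s) \in NC_{1,2}(p)$, and, on the other hand, that $\bigvee_{\mathcal P} \alpha_s \in \mathcal P_2(p)$; the constraint $\beta_1 \vee_{NC} \cdots \vee_{NC} \beta_r = \gamma_p$ becomes $\bigvee_{NC} \alpha_s = 1_p$. I do not expect a substantive obstacle here; the main subtlety is the careful bookkeeping between the two joins $\vee_{NC}$ and $\vee_{\mathcal P}$ (which, as the paper emphasizes earlier, do not coincide in general), making sure that the combinatorial description matches the one stated in the corollary.
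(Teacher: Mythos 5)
Your proposal is correct and follows essentially the same route as the paper: both apply \cref{prop-cumulant-from-tensorcumulant} with $\alpha=\gamma_p$, then use the limiting tensor free cumulants from the proof of \cref{thm-TensorCLT} (equivalently, your component-wise scaling argument via multiplicativity) to see that only tuples whose irreducible components all have size $2$ survive, and finally translate through $S_{NC}(\gamma_p)\cong NC(p)$. Your explicit decomposition into irreducible blocks just spells out what the paper compresses into the invocation of \cref{eq-TensorCLT1}.
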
 

While \cref{cor-TensorCLT2} gives the combinatorial expression of the free cumulants for the limit distribution of $\bar{x}_N$, it still seems to be challenging to find the probability distribution explicitly. 

\medskip

For the remaining of this section, let us assume that $\kappa_{\underline{\alpha}}(x_1)\geq 0$ for all $\underline{\alpha}\in (S_2)^r\setminus \{\underline{\id_2}\}$. Then \cref{thm-TensorCLT} implies that 
    $$\bar{x}_N \to \bar{x}_{\infty}:=\sum_{\underline{\alpha}\in (S_2)^r\setminus \{\underline{\id_2}\}} \sqrt{\kappa_{\underline{\alpha}}(x_1)}\,s_{\underline{\alpha}}$$
in distribution, so the problem reduces to finding the probability measure $\mu_{{\infty}}\in {\rm Prob}(\mathbb{R})$ having the $p$-th moments $\varphi(\bar{x}_{\infty}^p)=\int t^p {\rm d}\mu_{{\infty}}(t)$. We first consider the bipartite case $r=2$. Then as in the discussion in \cref{rmk-TensorCLT}, the three tensor free variables $s_{\gamma_2,\id_2}$, $s_{\id_2,\gamma_2}$, and $s_{\gamma_2,\gamma_2}$ can be constructed from the limit $d\to\infty$ of three independent tensor GUE matrices
    $$G_d^{(1)}\otimes I_d, \;\; I_d\otimes G_d^{(2)}, \;\; G_{d}^{(12)}\in \M{d}^{\otimes 2}$$
This implies that $s_{\gamma_2,\id_2}$ and $s_{\id_2,\gamma_2}$ are \textit{classically  independent}, and $\{s_{\gamma_2,\id_2},s_{\id_2,\gamma_2}\}$ are \textit{free} from $s_{\gamma_2,\gamma_2}$ (\cref{thm-UIasympfree}). Consequently, this gives the explicit description of the limit distribution of $\bar{x}_N$. For this, let us denote by $D_t[\mu]$ the push-forward measure of $\mu\in {\rm Prob}(\mathbb{R})$ with respect to the dilation map $x\mapsto tx$, and $\mu_{SC}\in {\rm Prob}(\mathbb{R})$ by the standard semicircle law
    $${\rm d}\mu_{SC}(t)=\frac{1}{2\pi}\sqrt{4-t^2}\,\mathbb{1}_{[-2,2]}(t)\,{\rm d}t.$$

\begin{theorem} (Central limit theorem for bipartite tensor free variables) \label{thm-CLTBipartite}
Let $\{x_i\}_{i=1}^{\infty}$ be a family of identically tensor distributed, and tensor free self-adjoint elements in a bipartite algebraic tensor probability space $(\A, \varphi, (\varphi_{\underline{\alpha}}))$ where $(\A, \varphi)$ is a $*$-probability space and $\kappa_{\underline{\alpha}}(x_1)\geq 0$ for all $\underline{\alpha}\in (S_2)^2\setminus \{\underline{\id_2}\}=\{(\gamma_2,\id_2),(\id_2,\gamma_2),(\gamma_2,\gamma_2)\}$. Then the normalized sum $\bar{x}_N=\frac{1}{\sqrt{N}}(x_1+\cdots x_N-N \varphi(x_1))$ converges in distribution to the probability distribution
\begin{equation}\label{eq:mu-S-infty}
    \mu_{{\infty}}=\big(D_{\sqrt{\kappa_{\gamma_2,\id_2}(x_1)}}[\mu_{SC}] * D_{\sqrt{\kappa_{\id_2,\gamma_2}(x_1)}}[\mu_{SC}]\big)\boxplus D_{\sqrt{\kappa_{\gamma_2,\gamma_2}(x_1)}}[\mu_{SC}],
\end{equation}
where $*$ denotes the \emph{classical convolution} and $\boxplus$ denotes the \emph{(additive) free convolution}.
\end{theorem}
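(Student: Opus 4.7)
The plan is to combine \cref{thm-TensorCLT} with the explicit random matrix realization of the tensor free semicircular family described in \cref{rmk-TensorCLT}, together with Voiculescu's asymptotic freeness theorem. By \cref{thm-TensorCLT}, $\bar{x}_N$ converges in tensor distribution (and hence in usual distribution, which corresponds to the tensor moment $\underline{\alpha} = \underline{\gamma_p}$) to
$$\bar{x}_\infty := \sqrt{\kappa_{\gamma_2,\id_2}(x_1)}\, s_{\gamma_2,\id_2} + \sqrt{\kappa_{\id_2,\gamma_2}(x_1)}\, s_{\id_2,\gamma_2} + \sqrt{\kappa_{\gamma_2,\gamma_2}(x_1)}\, s_{\gamma_2,\gamma_2}.$$
It therefore suffices to identify the (scalar) distribution of $\bar{x}_\infty$, which amounts to understanding the (classical and free) independence relations among the three tensor free semicirculars.

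To this end, I will realize the tensor free semicircular family as in \cref{rmk-TensorCLT}: let $G_d^{(1)}, G_d^{(2)}$ be independent normalized GUE matrices of size $d$, and $G_d^{(12)}$ an independent normalized GUE matrix of size $d^2$, and set
$$X_d^{(1)} := G_d^{(1)} \otimes I_d, \quad X_d^{(2)} := I_d \otimes G_d^{(2)}, \quad X_d^{(12)} := G_d^{(12)} \in \M{d}\otimes \M{d}.$$
These three matrices are probabilistically independent. Moreover, $X_d^{(1)}$ and $X_d^{(2)}$ commute and, being built from independent GUEs, are classically independent in $(\M{d^2}(L^{\infty-}),\E\circ\tr)$. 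By \cref{thm-locui-tensorfree} (or \cref{rmk-TensorCLT}), the family $(X_d^{(1)}, X_d^{(2)}, X_d^{(12)})$ converges in joint tensor distribution as $d \to \infty$ to the tensor free semicircular family $(s_{\gamma_2,\id_2}, s_{\id_2,\gamma_2}, s_{\gamma_2,\gamma_2})$ characterized by \cref{eq-TensorSemicircular}. In particular, the commutation and classical independence of $X_d^{(1)}$ and $X_d^{(2)}$ pass to the limit, so $s_{\gamma_2,\id_2}$ and $s_{\id_2,\gamma_2}$ are commuting, classically independent standard semicircular elements. Consequently, the distribution of $\sqrt{\kappa_{\gamma_2,\id_2}(x_1)}\, s_{\gamma_2,\id_2} + \sqrt{\kappa_{\id_2,\gamma_2}(x_1)}\, s_{\id_2,\gamma_2}$ is the classical convolution
$$D_{\sqrt{\kappa_{\gamma_2,\id_2}(x_1)}}[\mu_{SC}] * D_{\sqrt{\kappa_{\id_2,\gamma_2}(x_1)}}[\mu_{SC}].$$

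Next, I will show that $s_{\gamma_2,\gamma_2}$ is freely independent (in the usual sense) from the pair $\{s_{\gamma_2,\id_2}, s_{\id_2,\gamma_2}\}$. For this, apply Voiculescu's asymptotic freeness theorem (\cref{thm-UIasympfree}) to the two independent families $\{X_d^{(1)}, X_d^{(2)}\}$ and $\{X_d^{(12)}\}$ of $d^2 \times d^2$ random matrices: the latter family is unitarily invariant on $\M{d^2}$, both families have first order limits (by Wigner's semicircle law and the bounded cumulants property of GUE matrices), so they are asymptotically free. Taking the limit $d\to\infty$ gives that $s_{\gamma_2,\gamma_2}$ is free from $\{s_{\gamma_2,\id_2}, s_{\id_2,\gamma_2}\}$. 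Combining this with the previous paragraph, the distribution of $\bar{x}_\infty$ is the free convolution
$$\big(D_{\sqrt{\kappa_{\gamma_2,\id_2}(x_1)}}[\mu_{SC}] * D_{\sqrt{\id_2,\gamma_2}(x_1)}[\mu_{SC}]\big) \boxplus D_{\sqrt{\kappa_{\gamma_2,\gamma_2}(x_1)}}[\mu_{SC}],$$
which is exactly \eqref{eq:mu-S-infty}.

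The main (modest) technical point is the inheritance of classical independence between $s_{\gamma_2,\id_2}$ and $s_{\id_2,\gamma_2}$ in the scalar limit. This is essentially automatic: the classical mixed moments $\varphi\big(f(s_{\gamma_2,\id_2})\, g(s_{\id_2,\gamma_2})\big)$ are captured by tensor moments indexed by block-diagonal permutation pairs, which factorize thanks to the multiplicativity property of $(\varphi_{\underline{\alpha}})$ (\cref{def:tensor-ncps}-\cref{def:tensor-ncps-multiplicativity}). Equivalently, the joint marginal distribution of $(s_{\gamma_2,\id_2}, s_{\id_2,\gamma_2})$ is realized by the tensor product construction of \cref{ex:tensor-product-of-ncps} applied to two copies of a (usual) semicircular $*$-probability space, which yields classical independence by construction. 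All other steps are direct applications of results from the excerpt.
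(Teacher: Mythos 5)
Your proof is correct and follows essentially the same route as the paper: realize $(s_{\gamma_2,\id_2}, s_{\id_2,\gamma_2}, s_{\gamma_2,\gamma_2})$ as the tensor-distribution limit of the independent tensor GUE matrices $G_d^{(1)}\otimes I_d$, $I_d\otimes G_d^{(2)}$, $G_d^{(12)}$, deduce classical independence of the first two and freeness of the pair from the third via \cref{thm-UIasympfree}, and combine with \cref{thm-TensorCLT}. (Only a cosmetic slip: in your final display $D_{\sqrt{\id_2,\gamma_2}(x_1)}$ is missing a $\kappa$.)
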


We plot in \cref{fig:mu-S-infty} numerical simulations of the probability measure $\mu_{S_{\infty}}$ for different values of the tensor free cumulants. 

\begin{figure}
    \centering
    \includegraphics[width=0.45\linewidth]{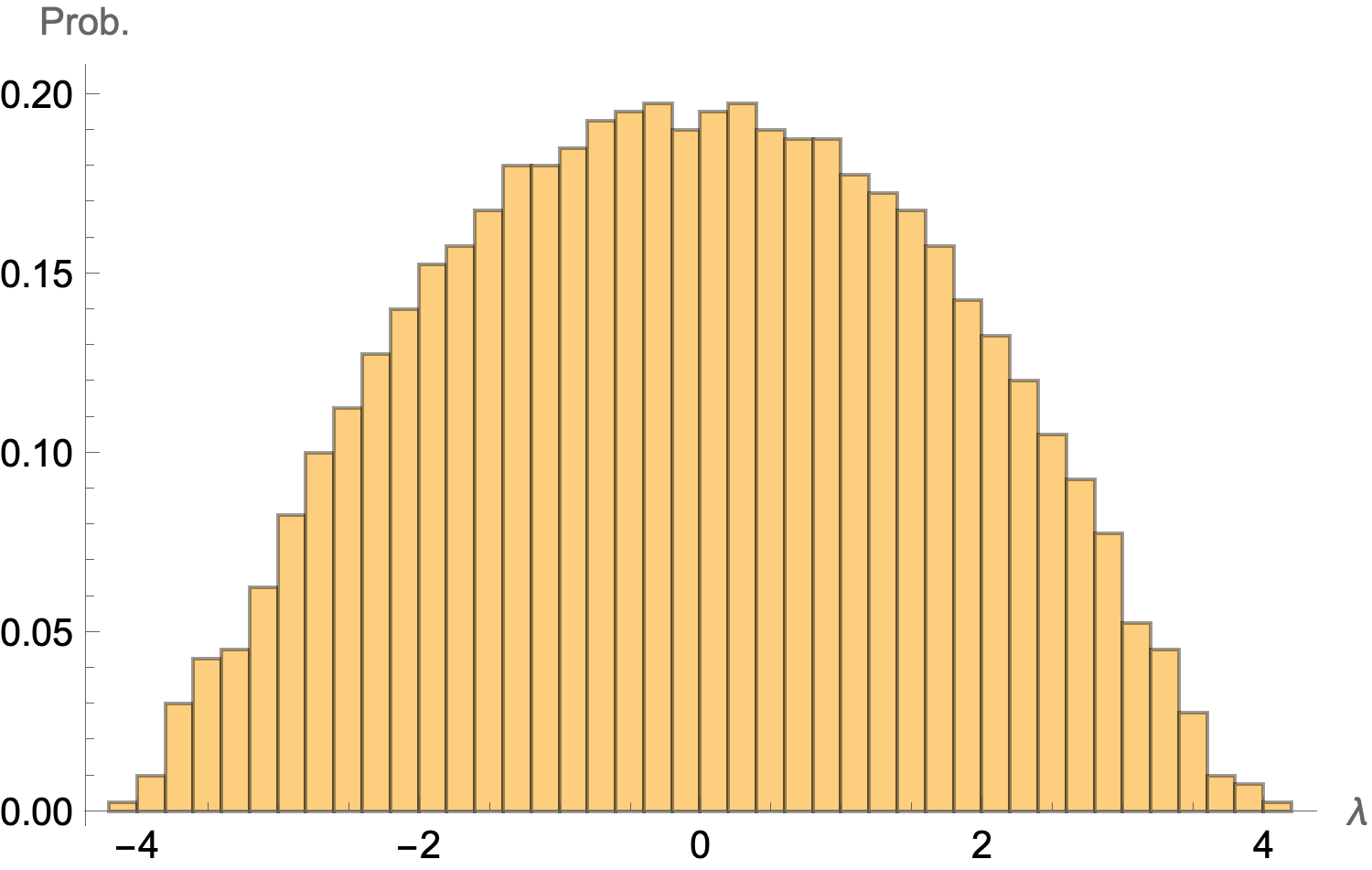} \qquad 
    \includegraphics[width=0.45\linewidth]{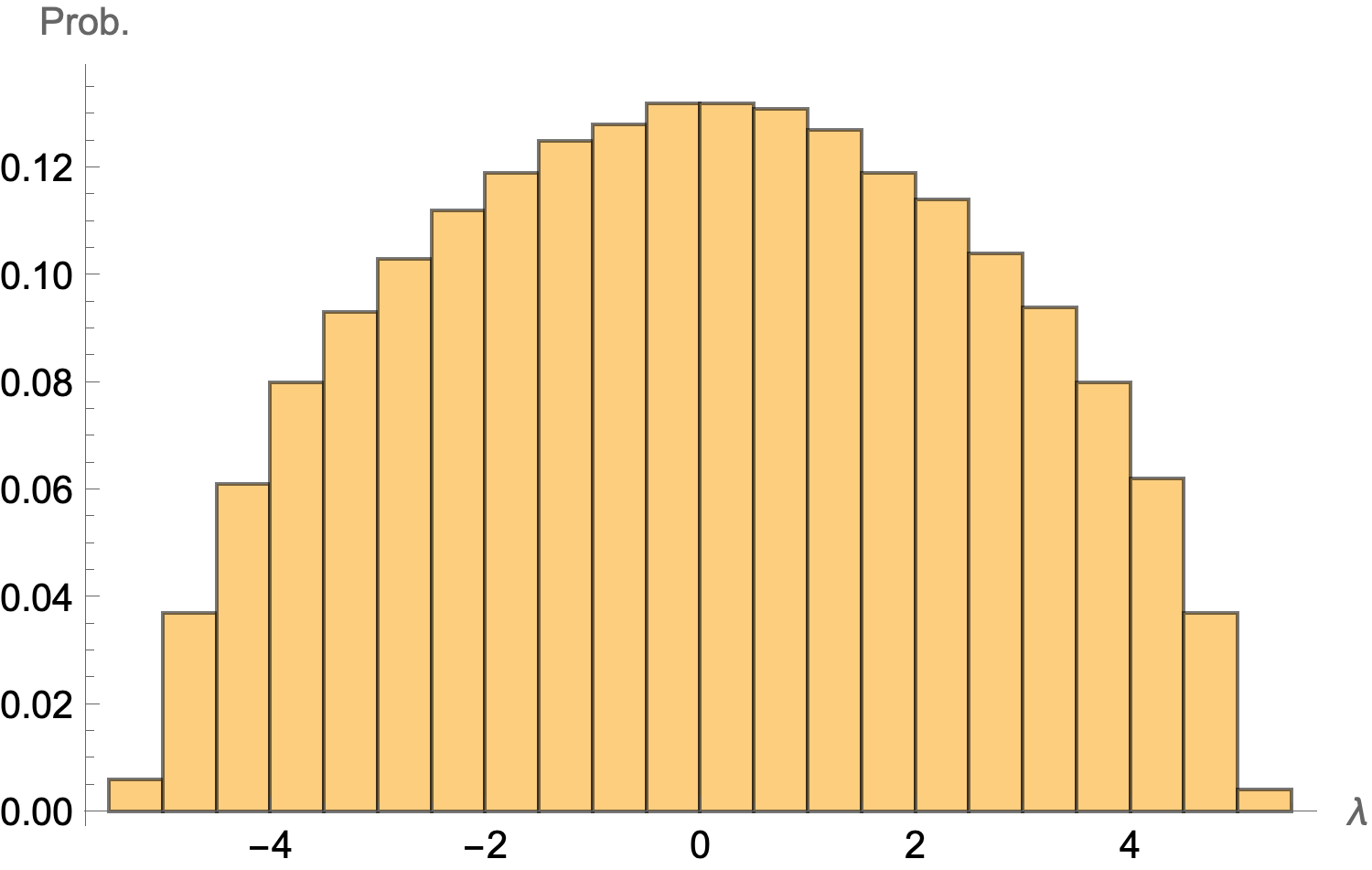} \\ \vspace{1cm}
    \includegraphics[width=0.45\linewidth]{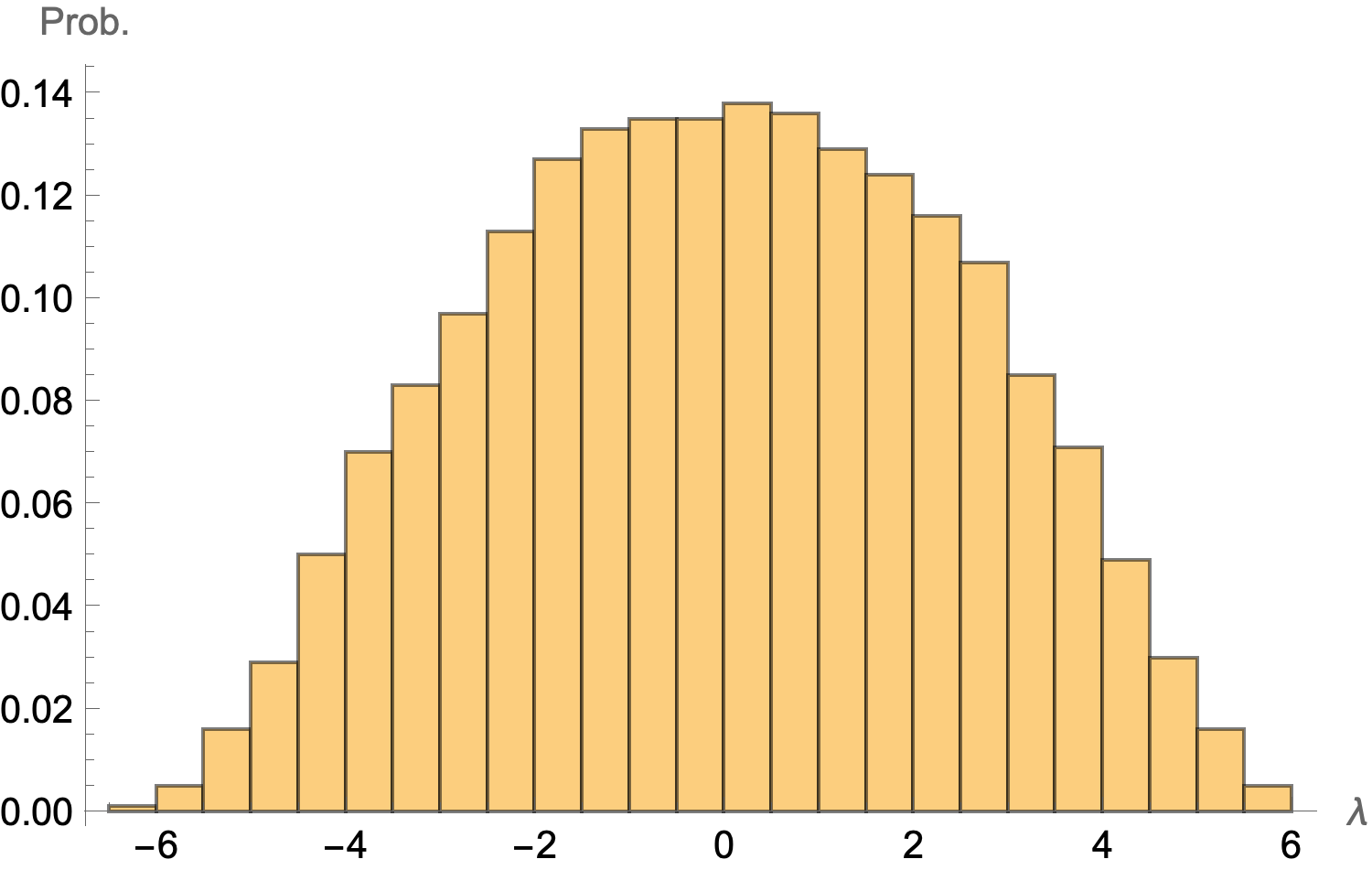} \qquad 
    \includegraphics[width=0.45\linewidth]{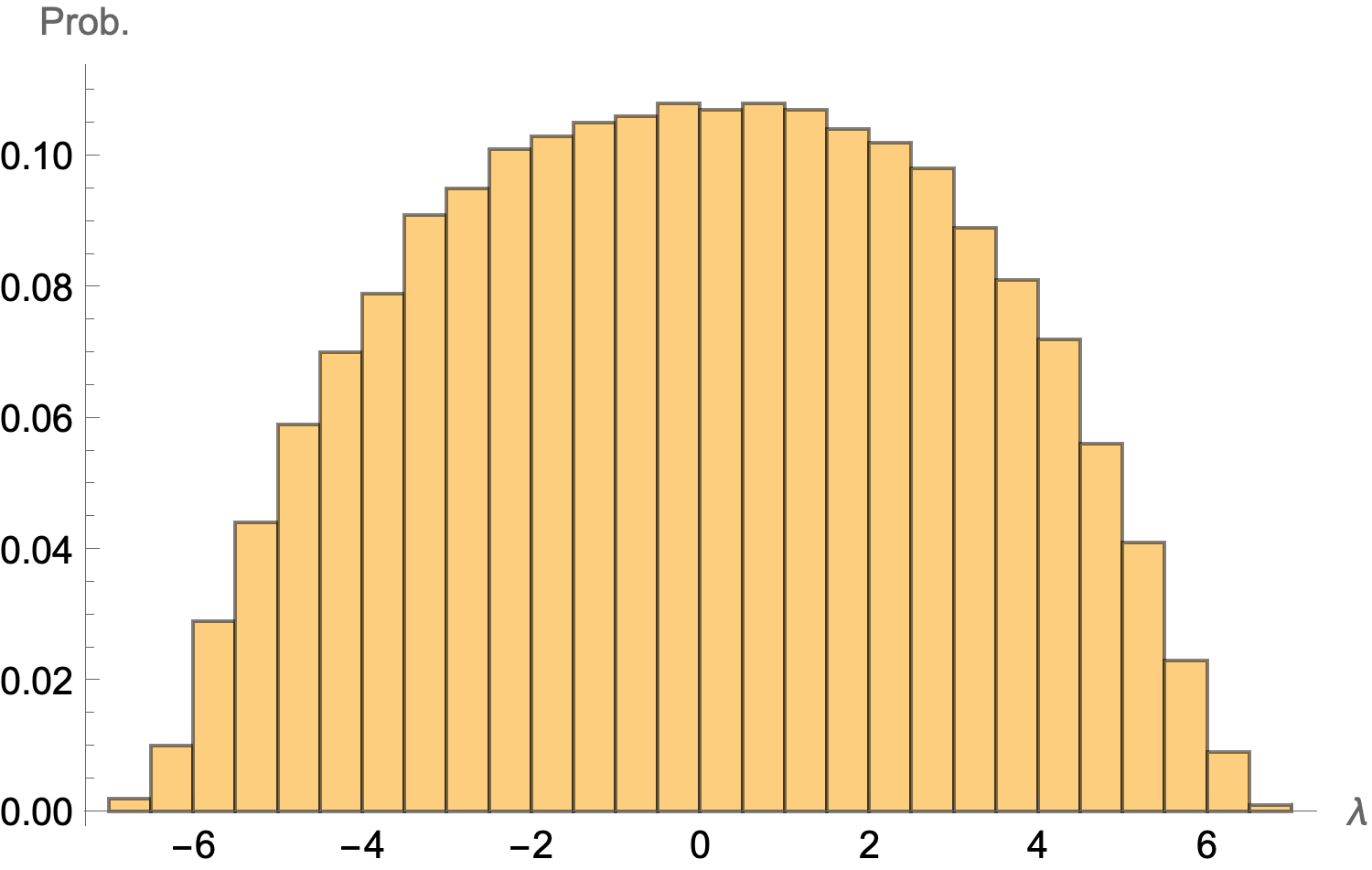}
    \caption{Histograms of $2000$ samples from the distribution $\mu_{S_{\infty}}$ from \cref{eq:mu-S-infty} for different values of the tensor free cumulants $\kappa_{\underline{\alpha}}(x_1)$. From top to bottom, left to right, we have: $(\kappa_{\gamma_2,\id_2},\kappa_{\id_2, \gamma_2},\kappa_{\gamma_2,\gamma_2}) = $ $(1,1,1)$, $(1,1,4)$, $(1,4,1)$, $(1,4,4)$.}
    \label{fig:mu-S-infty}
\end{figure}

\subsection{Central limit theorem for tensor product of free variables} \label{sec-CLTProdFree}

Let $(\B,\tau)$ be a $*$-probability space equipped with a tracial state $\tau$, and suppose that for each $s\in [r]$, $\{a_s^{(i)}\}_{i=1}^{\infty}$ is a family of identically distributed free  self-adjoint elements with mean $\tau(a_s^{(i)})=\lambda_s$ and variance ${\rm var}(a_s^{(i)})=\tau((a_s^{(i)})^2)-\tau(a_s^{(i)})^2=\sigma_s^2$. Consider a family $\{x_i\}_{i=1}^{\infty}$ in the $r$-partite tensor probability space $(\A,\varphi,(\varphi_{\underline{\alpha}}))=(\B^{\otimes r},\tau^{\otimes r},(\bigotimes_{s=1}^r \tau_{\alpha_s}))$ defined by
    $$x_i:=a_1^{(i)}\otimes\cdots \otimes a_r^{(i)}.$$
Then $\{x_i\}_{i=1}^\infty$ are {identically tensor distributed and tensor free}, as noted in \cref{prop:tensor-free-tensor-product}. As before, we are interested in the limit distribution of the normalized sum
    $$\bar{x}_N:=\frac{1}{\sqrt{N}}\big(x_1+\cdots+ x_N-N(\lambda_1\cdots \lambda_r) 1_{\A}\big).$$
Note that for every $\underline{\alpha}\in (S_r)^2\setminus \{\underline{\id_2}\}$,
    $$\kappa_{\underline{\alpha}}(x_1)=\prod_{s=1}^r \tilde{\kappa}_{\alpha_s}(a_s^{(1)})=\prod_{s=1}^r \sigma_s^{2|\alpha_s|}\lambda_s^{2(1-|\alpha_s|)}\geq 0.$$
Therefore, \cref{thm-TensorCLT,cor-TensorCLT2} implies that $\displaystyle \bar{x}_N\to \sum_{\underline{\alpha}\in (S_2)^r\setminus \{\underline{\id_2}\}} \sqrt{\kappa_{\underline{\alpha}}(x_1)}\,s_{\underline{\alpha}}$ in tensor distribution as $N\to \infty$, $\displaystyle \lim_{N\to \infty}\tilde{\kappa}_{2p-1}(\bar{x}_N)=0$, and
\begin{equation} \label{eq-CumulantFreeProd}
    \lim_{N\to \infty}\tilde{\kappa}_{2p}(\bar{x}_N)=\sum_{\substack{\alpha_s\in NC_{1,2}(2p)\\ \bigvee_{NC}\alpha_s=1_{2p} \\ \bigvee_{\mathcal{P}}\alpha_s\in \mathcal{P}_{2}(2p)}}\prod_{s=1}^r \sigma_s^{2|\alpha_s|}\lambda_s^{2p-2|\alpha_s|}, \quad p\geq 1.
\end{equation}
Furthermore, when $r=2$, \cref{thm-CLTBipartite} implies that $\bar{x}_N$ converges in distribution to the probability measure
    $$\mu_{{\infty}}=\big(D_{\sigma_1|\lambda_2|}[\mu_{SC}] * D_{|\lambda_1|\sigma_2}[\mu_{SC}]\big)\boxplus D_{\sigma_1\sigma_2}[\mu_{SC}],$$
which recovers the main result of \cite{LSY24,Sko24}. Indeed, \cite{LSY24} considers the case $\lambda_1=\lambda_2=\lambda$ and $\sigma_1=\sigma_2=\sigma$, and proves that $\bar{x}_N$ converges in distribution as $N\to \infty$ to
    $$\mu_{\lambda, \sigma}=D_{\sigma|\lambda|}[\mu_{SC}*\mu_{SC}] \boxplus D_{\sigma^2}[\mu_{SC}],$$
by computing the moment limit $\displaystyle \lim_{N\to \infty}\varphi(\bar{x}_N^p)$ for every positive integer $p$ and showing that the limit corresponds to the moments of $\mu_{\lambda,\sigma}$ from its free cumulants $\kappa_{2p-1}(\mu_{\lambda,\sigma})=0$ and
    $$\kappa_{2p}(\mu_{\lambda,\sigma})=\begin{cases}
        \sigma^4+ 2\sigma^2\lambda^2 & \text{if $p=1$,} \\
        2\sigma^{2p}\lambda^{2p}M_p & \text{if $p\geq 2$,}
    \end{cases}$$
where $M_p$ is the cardinality of a (combinatorial) set depending only on $p$. On the other hand, another approach using \textit{bi-free} techniques \cite{Voi14} has been taken in \cite{Sko24} to recover the same limit $\bar{x}_N\xrightarrow{\text{distr}} \mu_{\lambda,\sigma}$.

While obtaining the closed form of the general case in \cref{eq-CumulantFreeProd} remains an open problem, we introduce another case in which the explicit limit distribution of $\bar{x}_N$ can be determined.

\begin{corollary}
Suppose $r\geq 2$ and $\lambda_r=0$. Then $\bar{x}_N$ converges in distribution to a semicircular element of mean 0 and variance $\sigma_r^2 \prod_{s=1}^{r-1}(\lambda_s^2+\sigma_s^2)$. 

\end{corollary}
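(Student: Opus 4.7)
Since $\lambda_r = 0$, the centering constant $\lambda_1 \cdots \lambda_r$ vanishes, so $\bar{x}_N = \frac{1}{\sqrt N}(x_1 + \cdots + x_N)$. The plan is to apply \cref{cor-TensorCLT2} directly and show that, in the limit, all free cumulants of $\bar{x}_N$ of order $\geq 3$ vanish, while the second free cumulant equals the claimed variance; this would identify the limit as the announced semicircular element. Odd cumulants already vanish by \eqref{eq-CumulantFreeProd}, so the task reduces to analyzing, for each $p \geq 1$, the sum
\[
    \lim_{N\to \infty}\tilde{\kappa}_{2p}(\bar{x}_N)=\sum_{\substack{\alpha_s\in NC_{1,2}(2p)\\ \bigvee_{NC}\alpha_s=1_{2p} \\ \bigvee_{\mathcal{P}}\alpha_s\in \mathcal{P}_{2}(2p)}}\prod_{s=1}^r \sigma_s^{2|\alpha_s|}\lambda_s^{2p-2|\alpha_s|}
\]
under the constraint $\lambda_r = 0$.

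First, since $\lambda_r = 0$, every nonzero contribution must have $2p - 2|\alpha_r| = 0$, i.e.~$|\alpha_r| = p$. Recalling that $|\alpha| = 2p - \#\alpha$ for $\alpha \in S_{2p}$ and that elements of $NC_{1,2}(2p)$ consist only of singletons and pairs, the condition $|\alpha_r| = p$ forces $\alpha_r$ to be a non-crossing pair partition of $[2p]$, i.e.~$\alpha_r \in NC_2(2p)$. Next, I would use the constraint $\bigvee_{\mathcal P} \alpha_s \in \mathcal{P}_2(2p)$: since $\alpha_r$ already pairs up all $2p$ elements, the join contains the pairs of $\alpha_r$, and for it to remain a pair partition no two distinct pairs of $\alpha_r$ can be merged by any $\alpha_s$ with $s<r$. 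Equivalently, each $\alpha_s$ ($s<r$) must have every one of its pairs coinciding with a pair of $\alpha_r$ (singletons are unrestricted). In particular, each such $\alpha_s$ satisfies $\alpha_s \leq \alpha_r$ in $(\mathcal{P}(2p),\leq)$.

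This implies $\bigvee_{\mathcal P}\alpha_s = \alpha_r$, and because $\alpha_r$ is already non-crossing, also $\bigvee_{NC}\alpha_s = \alpha_r$. The remaining requirement $\bigvee_{NC} \alpha_s = 1_{2p}$ therefore reduces to $\alpha_r = 1_{2p}$, which is possible only when $p = 1$. Consequently, the sum is empty for all $p \geq 2$, so $\lim_N \tilde{\kappa}_{2p}(\bar{x}_N) = 0$ for every $p \geq 2$.

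Finally, for $p=1$, $\alpha_r$ must be the unique pair $\{\{1,2\}\}$ (contributing factor $\sigma_r^2$), while each $\alpha_s$ with $s < r$ is independently either $\{\{1,2\}\}$ (factor $\sigma_s^2$) or $\{\{1\},\{2\}\}$ (factor $\lambda_s^2$); summing gives
\[
    \lim_{N\to\infty}\tilde{\kappa}_2(\bar{x}_N) = \sigma_r^2 \prod_{s=1}^{r-1}(\sigma_s^2 + \lambda_s^2).
\]
Since a self-adjoint element whose only nonvanishing free cumulant is $\tilde\kappa_2 = v > 0$ is semicircular of mean $0$ and variance $v$, the convergence of all free cumulants (together with the fact that semicircular distributions are determined by their moments) yields convergence in distribution to the claimed semicircular element. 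The only delicate step I anticipate is the combinatorial identification $\bigvee_{\mathcal P}\alpha_s = \alpha_r$, which must be argued carefully from the pair-partition constraint; the rest is a direct application of the machinery already developed in \cref{thm-TensorCLT} and \cref{cor-TensorCLT2}.
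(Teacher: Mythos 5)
Your proof is correct and follows essentially the same route as the paper's: both arguments reduce to \cref{eq-CumulantFreeProd}, observe that $\lambda_r=0$ kills every term in which $\alpha_r$ has a singleton, and show that the remaining case $\alpha_r\in NC_2(2p)$ forces $\alpha_s\leq\alpha_r$ for all $s$ (since a pair partition dominated by a pair partition must equal it), contradicting $\bigvee_{NC}\alpha_s=1_{2p}$ unless $p=1$. The paper phrases this as a contradiction while you argue the contrapositive, but the combinatorics and the $p=1$ computation are identical.
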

\begin{proof}
It suffices to show that 
    $$\lim_{N\to \infty} \tilde{\kappa}_{2p}(\bar{x}_N)=\begin{cases}
        \sigma^2_r \prod_{s=1}^{r-1}(\lambda_s^2+\sigma_s^2) & \text{if $p=1$,}\\
        0 & \text{if $p\geq 2$.}
    \end{cases}$$
Let us begin from \cref{eq-CumulantFreeProd}. If $p\geq 2$, we claim that $\alpha_r$ in the RHS of \cref{eq-CumulantFreeProd} contains a singleton, causing all the terms in the sum to become zero. Otherwise, the condition $\alpha_r\in NC_{1,2}(2p)$ forces that $\alpha_r\in NC_2(2p)$, but the condition $\alpha_r\leq \bigvee_{\mathcal{P}} \alpha_s\in \mathcal{P}_2(2p)$ implies that $\alpha_s\leq \alpha_r$ for all $s$ this contradicts another condition $\bigvee_{NC}\alpha_s=1_{2p}$ and hence shows the claim.

On the other hand, if $p=1$, then the condition again implies that $\alpha_r=1_{2}$ and $\alpha_s\in \{0_2,1_2\}$ can be arbitrarily taken for $s=1,\ldots, r-1$. This implies the limit value as above.
\end{proof}

{
\subsection{Classical central limit theorem via tensor freeness}

We have seen in the previous subsections how the free central limit theorem of Voiculescu (\cref{thm-FreeCLT}) can be generalized to the setting of tensor free independence (\cref{thm-TensorCLT}). In this section we consider a realization of the \emph{classical} central limit theorem (see, e.g.~\cite[Theorem 8.5]{nica2006lectures}) in the setting of tensor probability spaces. We shall prove a version of the (classical) central limit theorem that follows from the tensor free independence of the tensor product construction from \cref{ex:tensor-product-of-ncps}, see \cref{prop:tensor-free-tensor-product,prop-TensorIndep}.

\begin{theorem}
    Let $(\A, \phi)$ be a non-commutative probability space and consider an element $a \in \A$ which is centered $\phi(a) = 0$ and has variance $\sigma^2 :=\phi(a^2)$. For an arbitrary positive integer $N \geq 1$, consider the algbraic tensor probability space $\A^{\otimes N}$ from \cref{ex:tensor-product-of-ncps} and the variables $x_1, \ldots, x_N \in \A^{\otimes N}$ defined by
    $$\forall i \in [N], \qquad x_i := 1 \otimes \cdots \otimes 1 \otimes \underarrow[\text{ position~}i]{a} \otimes 1 \otimes \cdots \otimes 1.$$
    Define the random variable
    $$z_N := \frac{x_1 + \cdots + x_N}{\sigma \sqrt N} \in (\A^{\otimes N},\varphi^{\otimes N}).$$
    Then, the random variables $z_N$ converge in distribution, as $N \to \infty$, to the standard Gaussian distribution:
    $$\forall p \geq 1, \qquad \lim_{N \to \infty} \phi(z_N^p) = \begin{cases} 
    (p-1)(p-3) \cdots 3 \cdot 1 &\quad \text{ if $p$ is even}\\
    0 &\quad \text{ if $p$ is odd.}
    \end{cases}
    $$
\end{theorem}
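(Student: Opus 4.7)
The plan is to exploit the tensor product structure of $\phi^{\otimes N}$ together with the fact that each $x_i$ acts nontrivially only on the $i$-th tensor factor. By \cref{prop:tensor-free-tensor-product}, applied with subalgebras $\A_i := 1^{\otimes(i-1)} \otimes \A \otimes 1^{\otimes(N-i)}$, the variables $x_1, \ldots, x_N$ are tensor freely independent in $\A^{\otimes N}$. At the outset let me emphasize that the tensor free CLT (\cref{thm-TensorCLT}) does \emph{not} apply directly, because the $x_i$ are not identically \emph{tensor} distributed: the tensor moment $\varphi_{\underline{\alpha}}(x_i, \ldots, x_i)$ depends on which entry of $\underline{\alpha}$ corresponds to the slot $s = i$ carrying $a$. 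Instead, I would compute the limit moments by hand, using tensor free independence (or equivalently, the tensor product construction) to supply the needed multiplicativity.

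The key computational step is the observation that $\phi^{\otimes N}$ coincides with the tensor moment functional $\varphi_{\underline{\gamma_p}}$ for $\underline{\gamma_p} = (\gamma_p, \ldots, \gamma_p) \in S_p^N$. Using multiplicativity (as in \cref{ex:tensor-product-of-ncps}), for any word $f:[p] \to [N]$ one computes
\begin{equation*}
    \phi^{\otimes N}(x_{f(1)} x_{f(2)} \cdots x_{f(p)}) = \prod_{s=1}^N \phi\bigl(a^{\operatorname{Card}(f^{-1}(s))}\bigr) = \prod_{B \in \ker f} \phi\bigl(a^{\operatorname{Card}(B)}\bigr),
\end{equation*}
since in the $s$-th tensor factor the product collapses to $a^{\operatorname{Card}(f^{-1}(s))}$ (with the convention $a^0 = 1$). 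Crucially, this depends only on the kernel partition $\ker f \in \mathcal{P}(p)$.

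Next, I would expand $\phi^{\otimes N}(z_N^p)$ as a sum over all $f:[p]\to[N]$ and group the words according to $\pi := \ker f$. The count of $f$ with a prescribed kernel is the falling factorial $N(N-1)\cdots(N - \operatorname{Card}(\pi) + 1)$, asymptotic to $N^{\operatorname{Card}(\pi)}$. The hypothesis $\phi(a) = 0$ kills every partition containing a singleton, so only partitions $\pi \in \mathcal{P}(p)$ whose blocks all have size at least $2$ survive; for such $\pi$ the number of blocks is at most $p/2$, with equality achievable only when $p$ is even and $\pi \in \mathcal{P}_2(p)$. Each pair in a pair partition contributes $\phi(a^2) = \sigma^2$, so the product equals $\sigma^p$ and balances the normalization $(\sigma\sqrt{N})^{-p} \cdot N^{p/2}$ exactly.

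The final tally is then immediate: when $p$ is odd the leading exponent of $N$ is at most $(p-1)/2$, so every contribution is $O(N^{-1/2})$ and the moment vanishes; when $p$ is even the surviving pair-partition contributions sum to $\operatorname{Card}(\mathcal{P}_2(p)) = (p-1)(p-3) \cdots 3 \cdot 1$, matching the moments of the standard Gaussian. No step presents a genuine obstacle; the argument is essentially the combinatorial proof of the classical CLT, with tensor free independence (equivalently, via \cref{prop-TensorIndep}, classical independence of the $N$ embedded copies of $a$) playing the role that probabilistic independence plays in the classical setting. The only conceptual subtlety is the initial recognition that \cref{thm-TensorCLT} cannot be invoked directly, which is why the moments must be analyzed by hand rather than through the tensor free cumulant machinery.
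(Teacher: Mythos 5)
Your proof is correct, but it follows a genuinely different route from the paper's. You expand $\phi(z_N^p)$ directly into words $f:[p]\to[N]$, use the multiplicativity of $\phi^{\otimes N}$ on simple tensors to collapse each word's moment to $\prod_{B\in\ker f}\phi\bigl(a^{\operatorname{Card}(B)}\bigr)$, and then run the standard kernel-partition count: singletons are killed by $\phi(a)=0$, the falling factorial $N(N-1)\cdots(N-\operatorname{Card}(\pi)+1)\sim N^{\operatorname{Card}(\pi)}$ balances the normalization $N^{-p/2}$ only when $\pi$ is a pair partition, and the number of pair partitions is $(p-1)(p-3)\cdots 1$. This is the textbook moment proof of the classical CLT transplanted into the tensor product algebra; tensor freeness enters only as commentary, since the computation really exploits the classical independence of the embedded copies of $a$. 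The paper instead deliberately routes the argument through its new formalism: it applies the tensor moment--cumulant formula \cref{eq:tensor-moment-free-cumulant}, invokes the additivity of tensor free cumulants for the tensor free family $x_1,\ldots,x_N$ (\cref{prop-TensorCumulantAdditivity}), identifies the non-vanishing cumulants as the tuples $\underline{\alpha}\in NC(p)^N$ having ``one $\gamma$ per block,'' and counts those tuples, arriving at the same $N^{\operatorname{Card}(\pi)}$ asymptotics and the same pair-partition survival. The paper's route is longer but demonstrates that the tensor free cumulant machinery genuinely encodes classical independence, which is the stated point of the subsection; your route is shorter and entirely elementary. Your opening observation that \cref{thm-TensorCLT} cannot be invoked directly because the $x_i$ are not identically tensor distributed is accurate, and is precisely why a separate argument is needed here.
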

\begin{proof}
    Start by applying the tensor free moment-cumulant formula from \cref{eq:tensor-moment-free-cumulant}:
    $$\phi(z_N^p) = \sigma^{-p} N^{-p/2} \sum_{\underline{\alpha} \in NC(p)^N} \kappa_{\underline{\alpha}}(x_1 + \cdots + x_N),$$
    where the sum is over $N$-tuples of non-crossing permutations $\alpha_i \in S_{NC}(\gamma_p) \cong NC(p)$. 
    By \cref{prop:tensor-free-tensor-product}, the elements $x_1, \ldots, x_N$ are tensor freely independent. Hence, for all \emph{irreducible} tuple $\underline{\alpha}$, we have, by \cref{prop-TensorCumulantAdditivity}
    $$\kappa_{\underline{\alpha}}(x_1 + \cdots + x_N) = \sum_{i=1}^N \kappa_{\underline{\alpha}}(x_i).$$
    The tensor free cumulants of the elements $x_i$ are easily found: for any irreducible tuple $\underline{\alpha} \in NC(p)^r$, 
    $$\kappa_{\underline{\alpha}}(x_i) = \mathds 1_{\underline{\alpha} = (\id_p, \cdots, \id_p, \underarrow[\text{ pos.~}i]{\gamma_p}, \id_p, \ldots, \id_p)} \kappa_p(a).$$
    We say that an irreducible tuple $\underline{\alpha}$ \emph{has one $\gamma$} if 
    $$\underline{\alpha} = (\id_p, \cdots, \id_p, \underarrow[\text{ pos.~}i]{\gamma_p}, \id_p, \ldots, \id_p) \qquad \text{ for some $i \in [N]$}.$$
    Similarly, we say that a tuple $\underline{\alpha}$ \emph{has one $\gamma$ per block} if, for every block $b \in \Pi(\underline{\alpha}):=\bigvee_{\mathcal{P}}\Pi(\alpha_s)$, $\underline{\alpha} \big|_{b}$ has one $\gamma=\gamma_b$, the full cycle on $b\subset [p]$ with increasing order. We have then, for all $\underline{\alpha} \in NC(p)^N$, 
    $$\kappa_{\underline{\alpha}}(x_1+\cdots +x_N) = \prod_{b\in \Pi(\underline{\alpha})}\sum_{i=1}^N \kappa_{\underline{\alpha}|_{b}}(x_i) =  \mathds 1_{\underline{\alpha} \text{ has one $\gamma$ per block}} \cdot \kappa_{\Pi(\underline{\alpha})}(a).$$
    
    Putting everything together, we have 
    \begin{align*}
        \phi(z_N^p) &= \sigma^{-p} N^{-p/2} \sum_{\underline{\alpha} \in NC(p)^N} \mathds 1_{\underline{\alpha} \text{ has one $\gamma$ per block}} \cdot \kappa_{\Pi(\underline{\alpha})}(a)\\
        &= \sigma^{-p} N^{-p/2} \sum_{\pi \in \mathcal P(p)} \kappa_\pi(a) \cdot \underbrace{\left| \{ \underline{\alpha} \in NC(p)^N \, : \, \Pi(\underline \alpha) = \pi \text{ and } \underline \alpha \text{ has one $\gamma$ per block} \} \right|}_{=:S_\pi}.
    \end{align*}
    Clearly, if $\pi$ has a singleton, we have $\kappa_\pi(a) = 0$ (recall that $a$ is centered). Moreover, it is easy to see that whenever $p\leq N$,
    $$N(N-1)\cdots (N-\#\pi+1) \leq S_\pi \leq N^{\# \pi},$$
    since, for every block $b \in \pi$, one has to choose on which position $i \in [N]$ the full cycle $\gamma_{b}$ is found, and since we can always choose $\underline{\alpha}\in NC(p)^N$ such that every $\alpha_s$ is either of the form $\alpha_s=b\sqcup 0_{[p]\setminus b}\in NC(p)$ for some $b\in \pi$ or $\alpha_s=0_p$.

    In the limit $N \to \infty$, only $\emph{pair partitions}$ can contribute: since no singletons are allowed, 
    $$S_\pi\sim N^{\# \pi} \leq N^{p/2},$$
    with equality iff $\pi$ is a pair partition. In that case, $\kappa_\pi(a) = \sigma^p$, finishing the proof. 
\end{proof}

}

\noindent\textbf{Acknowledgments.} The authors thank Rémi Bonnin, Cécilia Lancien, Luca Lionni, Patrick Oliveira Santos, Paul Skoufranis, and Pierre Youssef for the helpful discussions and comments. {We would also like to thank the organizers of the IHP program \href{https://tensors-2024.sciencesconf.org/}{Random tensors and related topics} for the kind invitation to present a preliminary version of this work.} Both authors were supported by the ANR project \href{https://esquisses.math.cnrs.fr/}{ESQuisses}, grant number ANR-20-CE47-0014-01.

\bibliography{references}
\bibliographystyle{alpha}
\bigskip
\hrule
\bigskip

\end{document}